\numberwithin{equation}{section}
\newcommand{\CC}{\mathbb{C}}
\newcommand{\EE}{\mathbb{E}}
\newcommand{\FF}{\mathbb{F}}
\newcommand{\GG}{\mathbb{G}}
\newcommand{\PP}{\mathbb{P}}
\newcommand{\QQ}{\mathbb{Q}}
\newcommand{\WW}{\mathbb{W}}
\newcommand{\ZZ}{\mathbb{Z}}
\newcommand{\cal}{\mathcal}
\def\cA{{\cal A}}
\def\cB{{\cal B}}
\def\cE{{\cal E}}
\def\cF{{\cal F}}
\def\cG{{\cal G}}
\def\cH{{\cal H}}
\def\cM{{\cal M}}
\def\cO{{\cal O}}
\def\cV{{\cal V}}
\def\cI{{\cal I}}
\def\cJ{{\cal J}}
\def\fD{\mathfrak{D}}
\def\and{\quad{\rm and}\quad}
\def\lra{\longrightarrow }
\def\hra{\hookrightarrow}
\def\dra{\dashrightarrow}
\def\commra{\circlearrowright}
 \DeclareMathOperator{\Ext}{Ext}
\newtheorem{prop}{Proposition}[section]
\newtheorem{theo}[prop]{Theorem}
\newtheorem{lemm}[prop]{Lemma}
\newtheorem{rema}[prop]{Remark}
\newtheorem{exam}[prop]{Example}
\newtheorem{defi}[prop]{Definition}
\newtheorem{defiprop}[prop]{Definition-Proposition}
\def\beq{\begin{equation}}
\def\eeq{\end{equation}}
\def\dual{^{\vee}}
\def\Ob{{\mathcal Ob}}
\def\virt{^{\mathrm{vir}} }
\def\Spec{\mathrm{Spec} }
\def\vphi{\varphi}
\def\surra{\twoheadrightarrow}
\def\hra{\hookrightarrow}
\def\bar{\overline}
\def\rou{\partial}
\def\wtil{\widetilde}
\def\what{\widehat}
\def\cseq{^{\bullet}}
\def\hseq{_{\bullet}}
\def\fN{\mathfrak{N}}
\def\coker{\mathrm{coker}}
\def\deg{\mathrm{deg}}
\def\dim{\mathrm{dim}}
\def\Spec{\mathrm{Spec}}
\newcommand{\bs}[1]{\boldsymbol{#1}}
\newcommand{\Gysin}[1]{0_{#1}^!}
\newcommand{\bdst}[1]{h^1/h^0(#1)}
\newcommand\reallywidetilde[1]{\ThisStyle{%
  \setbox0=\hbox{$\SavedStyle#1$}%
  \stackengine{-.1\LMpt}{$\SavedStyle#1$}{%
    \stretchto{\scaleto{\SavedStyle\mkern.2mu\sim}{.5467\wd0}}{.7\ht0}%
  }{O}{c}{F}{T}{S}%
}}
\def\wwtil#1{$%
  \reallywidetilde{#1}\,
$\par}
\title[Intersection theory of coherent sheaf stacks]{Intersection theories of coherent sheaf stacks and virtual pull-backs via semi-perfect obstruction theories}
\author{Sanghyeon Lee}
\address{Korea Institute for Advanced Study, 85 Hoegiro, Dongdaemun-gu,
Seoul, Republic of Korea}
\email{sanghyeon@kias.re.kr}
\thanks{}
\date{}
\begin{document}

\begin{abstract} In this paper, we construct proper pushforwards and flat pullbacks in Chow groups of coherent sheaf stacks over a Deligne-Mumford(DM) stack.

When there is a relative semi-perfect obstruction theory for a DM-type morphism $X \to Y$, $X$ is a DM stack and $Y$ is a DM stack or a smooth Artin stack, we define a virtual pull-back as a bivariant class. This is an analogue of virtual pull-backs defined by Manolache.
\end{abstract}

\maketitle
\tableofcontents

\section{Introduction}
The definition of a semi-perfect obstruction theory was first introduced by H. -L. Chang and J. Li in \cite{CL11semi}. It is a generalization of a perfect obstruction theory, which is defined in \cite{BF97, LT98}. We note that many parts of this paper contain review and modification of the results in \cite{CL11semi}.

In this paper, we will work over an algebraically closed base field $k$ with $\mathrm{char}(k)=0$. Moreover we assume that all algebraic stacks are finite type over $k$. 

For a representable morphism $f : X \to Y$ of stacks, where $X$ is a DM stack and $Y$ is a DM stack or smooth Artin stack, a semi-perfect obstruction theory $\phi$ over $X/Y$ is determined by the following data :

\begin{itemize}
\item[-]
An \'etale open cover $\{ X_{\alpha} \}_{\alpha \in \cI}$ of $X$.
\item[-]
For each $\alpha \in \cI$, a perfect obstruction theory $\phi_{\alpha} : E_{\alpha} \to L_{X_{\alpha}/Y}$
\item[-] A transition isomorphism $\psi_{\alpha \beta} : h^1(E_{\alpha}\dual)|_{X_{\beta}} \to h^1(E_{\beta}\dual)|_{X_{\alpha}}$, such that $\psi_{\alpha \beta}$ is a $\nu$-equivalence.
\item[-] A coherent sheaf $Ob(\phi)$ such that $\cE|_{X_{\alpha}} \cong Ob(\phi_{\alpha})=:h^1(E_{\alpha}\dual)$.
\end{itemize}
The notion of a $\nu$-equivalence will be explained in Section \ref{sec:semiperf}. Briefly, it means a morphism between two obstruction spaces which preserves elements in obstruction spaces parametrizing obstructions to infinitesimal lifting problems.

In our paper, review the definition of the Chow group of coherent sheaf stacks appeared in \cite{CL11semi}. We use slightly different definition of boundary maps and Chow groups for some technical reason. This will be appeared in Section \ref{sec:cycle},\ref{sec:rationaleq}.

We first define proper pushforwards and flat pull-backs between two cycle groups of coherent sheaf stacks, and prove that they commutes with boundary maps, hence these morphisms are defined in Chow groups. This will be appeared in Section \ref{sec:properpush},\ref{sec:flatpullback} and \ref{sec:rationaleq}.

We review the definition of Gysin homomorphism of coherent sheaf stacks in \cite{CL11semi} and re-describe it in a modified way. We prove the compatiblity with Gysin homomorphism and proper pushforwards, flat pullbacks. This will be appeared in Section \ref{sec:gysin}.

Next, similar to the definition of virtual pull-backs defined by C. Manolache in \cite{Man08}, we define a semi-virtual pull-backs when there are compatible semi-perfect obstruction theories. This will be appeared in Section \ref{sec:semivirtpullback-subsection}.

Then we proof various functorial properties of semi-virtual pull-backs, which are appeared in \cite{Man08}. We prove that semi-virtual pullbacks and proper pushforwards, flat pullbacks are compatible. Also we prove functoriality property(Proposition \ref{functoriality}). As a result, we show that semi-virtual pull-backs are defined as a bivariant class.

Finally we introduce some examples where semi-perfect obstruction theories appear. In \cite{CL11semi}, Chang and Li considered a moduli space $\fD^L_{S}$ which parametrizes $E \in D^b(S)$ where $S$ is a smooth projective Calabi-Yau threefold, such that $\det(E)\cong L$ for some fixed line bundle $L$ on $S$, $\Ext^{i<0}(E,E)=0$ and $\Ext^0(E,E) = k$.

In \cite{KLS17gen}, Kiem, Li and Savvas studied a generalized Donaldson-Thomas invariant, which counts 1-dimensional semistable sheaves on a Calabi-Yau threefold with some fixed Chern character $\gamma$. They first considered a moduli stack $\cM$ of Gieseker semistable sheaves on a Calabi-Yau threefold $X$ with a fixed Chern character $\gamma$. Since $\cM$ is a global quotient stack, the authors constructed a morphism $p : \wtil{\cM} \to \cM$ using Kirwan's partial desingularization \cite{Kir84coh}, which is isomorphic over the stable locus $\cM^s \subset \cM$. The authors showed that there is also a semi-perfect obstruction theory on $\wtil{\cM}$ and $\wtil{\cM}$ is a Deligne-Mumfords stack. Using this result, they defined a generalized Donaldson invariant :
\[ 
DT_{\gamma} := \deg[\wtil{\cM}]\virt .
\]

In \cite{Kie16locsemi} Kiem generalized a torus localization formula and cosection localization to the semi-perfect obstruction theory setting. Also, the author showed that dual obstruction cone has a semi-perfect obstruction theory. Dual obstruction cones are defined in the following; When $X$ is a DM stack with a perfect obstruction theory $\phi : E \to L_X$, let $Ob_{\phi} = h^1(E\dual)$. Then the dual obstruction cone $N_X$ of $X$ is defined by :
\[
N_X = \Spec_X(\mathrm{Sym} Ob_{\phi}) \to X.
\]

Using this result, the author proved that five definitions of virtual signed euler characteristic, $e_1(X),e_2(X),\dots,e_5(X)$ in \cite{JT14vseuler} are well-defined without assumptions on $X$ from derived geometry which was necessary in \cite{JT14vseuler} and $e_1(X)=e_2(X)$, $e_3(X)=e_4(X)=e_5(X)$ also hold as in \cite{JT14vseuler}.  

In \cite{KL12cat}, Kiem and Li proved that every critical virtual manifold has a symmetric semi-perfect obstruction theory. 
In \cite{Jia18sym}, Jiang proved that every algebraic d-critical scheme has a symmetric semi-perfect obstruction theory. We note that algebraic d-critical schemes are algebraic versions of critical virtual manifolds.

In \cite{Fen19gerb}, when there is a $\GG_m$-gerb(or gerb banded by a finite cyclic group) $p : \cG \to B$ over a DM stack $B$, and there is a perfect obstruction theory on the Artin stack $\cG$, Qu showed that there is an induced semi-perfect obstruction theory on the base stack $B$.

\medskip

\section{Definition and basic propereties of Chow groups of coherent sheaf stacks}\label{sec:def}
Throughout the paper, we follow the definition of algebraic stacks and Deligne-Mumford stacks in the book of M. Olsson \cite{Ols16stack}. Thus, a Deligne-Mumford stack is a category fibered in groupoids over the \'etale site in the category of $k$-schemes, with an \'etale surjection $\what{X} \to X$ from a scheme $\what{X}$. We only consider representable morphisms in this paper.

When we say that $\cF$ is a coherent sheaf on a Deligne-Mumford stack $X$, it means that $\cF$ is a $\cO_X$-module where $\cO_X$ is a structure sheaf of the topos $X$ on the big \'etale site of $X$. From now on, we usually abbreviate a Deligne-Mumford stack to a DM-stack.

In this section, we define a notion of Chow groups for coherent sheaf stacks, and define a basic morphism of Chow groups, i.e. proper pushforwards, flat-pullbacks, and Gysin maps. Then we prove basic properties of these morphisms.

\subsection{Cycle groups for coherent sheaf stacks}\label{sec:cycle}
Let $\cF$ be a coherent sheaf on DM stack $X$ and a coherent sheaf $\cF$ on $X$. Then, we can consider $\cF$ as a stack by the following.

\begin{defi}
We define a sheaf stack associated to an arbitrary coherent sheaf $\cG$ to be a category fibered in sets over a small \'etale cite. For any \'etale morphism $\vphi : S \to X$ from a scheme $S$, we assign a set $H^0(S,\vphi^*(\cG))$. This make sense since we can consider arbitrary set as a groupoid in a trivial way, and the category of sets is a full subcategory of the category of groupoids.
\end{defi}

We note that we use small \'etale cite instead of big \'etale cite and this is different from \cite{CL11semi}. We adopted small \'etale site because of some technical reasons. 


Since $\cF$ satisfies the sheaf condition on the big \'etale site on $X$ by \cite[4.3.3]{Ols16stack}, it is also satisfies the sheaf condition on the small \'etale cite on $X$. Therefore axioms of stack are all directly satisfied.

\begin{rema}\label{sheaf-stack}

From now on, when we consider a coherent sheaf stack $\cF$, we also write its associated sheaf stack $\cF$ by abuse of notation.

\end{rema}

\begin{prop}
Let $\cV \surra X$ be a vector bundle. Then, the coherent sheaf stack associated to $\cV$ is isomorphic to the total space $Tot(\cV)$ of $\cV$ as an $X$-scheme.
\end{prop}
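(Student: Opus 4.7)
The plan is to compute both functors on the small \'etale site of $X$ and identify them using the standard adjunction defining the total space. Recall that, since $\cV$ is locally free, the total space is defined by $Tot(\cV) = \Spec_X(\Symm \cV\dual)$, and it is an $X$-scheme via the structure map $\pi : Tot(\cV) \to X$.

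First, I would fix an \'etale morphism $\vphi : S \to X$ from a scheme $S$ and compute the set of $X$-morphisms $\Hom_X(S, Tot(\cV))$. By the universal property of relative $\Spec$, this set is in natural bijection with
\[
\Hom_{\cO_X\text{-alg}}\bl \Symm \cV\dual,\, \vphi\lsta \cO_S \br \;\cong\; \Hom_{\cO_X}\bl \cV\dual,\, \vphi\lsta \cO_S \br.
\]
Using the adjunction between $\vphi^*$ and $\vphi\lsta$ together with the fact that $\cV$ is locally free (so $\vphi^*(\cV\dual) = (\vphi^*\cV)\dual$), the latter group is canonically identified with
\[
\Hom_{\cO_S}\bl (\vphi^*\cV)\dual,\, \cO_S \br \;=\; H^0\bl S,\, \vphi^*\cV \br.
\]
This is exactly the value assigned to $\vphi : S \to X$ by the sheaf stack associated to $\cV$ in the preceding definition.

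Next I would check that this bijection is natural in $\vphi$: given an \'etale morphism $\psi : S' \to S$ over $X$, both the restriction map on $\Hom_X(-, Tot(\cV))$ and the pullback map on global sections of $\vphi^*\cV$ are induced by the same unit-counit data of the adjunctions, so they agree. This shows that the assignment
\[
\vphi \;\longmapsto\; \Hom_X(S, Tot(\cV))
\]
is isomorphic, as a set-valued functor on the small \'etale site of $X$, to the sheaf stack of $\cV$. By Yoneda, $Tot(\cV)$ represents this functor, giving the desired isomorphism of $X$-stacks (equivalently, of $X$-schemes, since $Tot(\cV)$ is already a scheme).

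The only mildly delicate point is a bookkeeping one rather than a mathematical obstacle: one must make sure that the identification is carried out on the small \'etale site (as specified in the paper's definition, which differs from \cite{CL11semi}), and that the naturality holds for \'etale morphisms between schemes over $X$. Since all the adjunctions used are functorial, and since locally free sheaves commute with arbitrary pullback under dualization, no subtleties arise; the argument is essentially the classical functor-of-points description of the total space of a vector bundle, transported verbatim to the small \'etale site of the DM stack $X$.
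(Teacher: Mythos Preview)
Your proof is correct and follows exactly the same approach as the paper, which simply states the one-line identification $\cV(S)=\Gamma(S,f^*\cV)=\Hom_X(S,Tot(\cV))$ for an \'etale $f:S\to X$. You have merely unpacked this identity via the universal property of $\Spec_X(\Symm\cV\dual)$ and the $(\vphi^*,\vphi\lsta)$-adjunction, and additionally verified naturality; nothing beyond the paper's argument is involved.
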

\begin{proof}
This is clear since for any \'etale morphism morphism $f:S \to X$ from a scheme $S$, we have $\cV(S)=\Gamma(S,f^*\cV)=Hom_X(S,Tot(\cV))$.
\end{proof}

We note that a coherent sheaf stack $\cF$ is a stack but generally not an algebraic stack.





\begin{prop}\label{etale nullstellenstaz} Let $Z = Spec(R)$ be an affine integral schemes and $E \cong Z \times k^n$ be a coherent sheaf stack associated to a locally free sheaf $R^{\oplus n}$ on $Z$. Let $W \subset E$ be a reduced closed subscheme where each irreducible components dominates base $Z$. Let $\cE_W \subset E$ be a substack defined by
\[
\cE_W(U) = \{ s \in E(U) \ | \  s(U) \subset W\times_Z U \subset E\times_Z U \}
\]
for any \'etale morphism $U \to Z$. By abuse of notation, we also denote total space of $E$ by $E$. Let $E = Spec(R[x_1,\dots,x_n])$. We define
\[
I(\cE_W) := \{ f \in R[x_1,\dots,x_n] \ | \ f(s) = 0 \  \textrm{for all \'etale morphisms} \  U \to Z, s\in \cE_W(U) \}.
\]
Then we have $I(W) = I(\cE_W)$. 
\end{prop}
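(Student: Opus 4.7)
The plan is to establish the two inclusions separately. The inclusion $I(W) \subseteq I(\cE_W)$ is immediate: if $f \in I(W)$ and $s \in \cE_W(U)$ for some \'etale $\varphi : U \to Z$, then $s$ factors through the closed subscheme $W \times_Z U \hookrightarrow E \times_Z U$, on which the pullback of $f$ is zero, so $f(s) = 0 \in \Gamma(U, \cO_U)$.

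For the converse $I(\cE_W) \subseteq I(W)$, write $W = W_1 \cup \dots \cup W_r$ for the decomposition into irreducible components. Since $W$ is reduced, $I(W) = \bigcap_i I(W_i)$, and since each $\cE_{W_i}$ is a substack of $\cE_W$, one has $I(\cE_W) \subseteq I(\cE_{W_i})$. Thus it suffices to treat the case where $W$ is integral (still dominating $Z$ by hypothesis). Fix $f \in I(\cE_W)$; I will show $f$ vanishes on a dense open of $W$, hence on all of $W$. The projection $\pi : W \to Z$ is a dominant morphism of integral schemes over a field of characteristic zero, so its generic fiber is geometrically reduced, and by generic smoothness there is a dense open $W^{\mathrm{sm}} \subseteq W$ on which $\pi$ is smooth.

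For any point $w \in W^{\mathrm{sm}}$, set $z = \pi(w)$. The \'etale-local structure of smooth morphisms produces an \'etale neighbourhood $\varphi : U \to Z$ of $z$, a point $u \in U$ mapping to $z$, and a section $t : U \to W \times_Z U$ of the projection whose value $t(u)$ lies over $w \in W$. Composing with $W \times_Z U \hookrightarrow E \times_Z U$ exhibits $t$ as an element of $\cE_W(U)$, so by hypothesis $f(t) \in \Gamma(U, \cO_U)$ vanishes identically; evaluating at $u$ yields $f(w) = 0$. As $w \in W^{\mathrm{sm}}$ was arbitrary, $f|_{W^{\mathrm{sm}}} = 0$, and since $W^{\mathrm{sm}}$ is dense in the reduced scheme $W$, this forces $f \in I(W)$.

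The main technical ingredient is the construction of the \'etale-local section $t$ passing through a prescribed smooth point; this follows from the standard factorization of a smooth morphism, \'etale-locally on the source, as an \'etale map to $\mathbb{A}^r_Z$ composed with the projection, together with the separability of residue field extensions in characteristic zero. Everything else is a formal combination of this section-existence statement with generic smoothness.
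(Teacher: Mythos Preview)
Your proof is correct and follows the same overall strategy as the paper: reduce to integral $W$, then show that a dense open subset of $W$ is swept out by images of sections $s \in \cE_W(U)$ for \'etale $U \to Z$, forcing any $f \in I(\cE_W)$ to vanish on $W$. The only difference is in how the \'etale-local sections are produced. The paper slices $W$ by $r$ generic affine-linear functions $L_1,\dots,L_r$ (where $r$ is the generic fiber dimension of $\pi:W\to Z$) to obtain $W_{a_1,\dots,a_r} = W\cap\{L_i=a_i\}$, which is generically \'etale over $Z$; the diagonal then furnishes a section of $W\times_Z U_{a_1,\dots,a_r}$ over the \'etale $U_{a_1,\dots,a_r}\to Z$. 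You instead invoke generic smoothness of $\pi$ together with the local structure theorem for smooth morphisms to produce, for each closed point $w\in W^{\mathrm{sm}}$, an \'etale $U\to Z$ carrying a section through $w$. These are essentially the same construction viewed from two angles: the paper's linear slice is a concrete choice of fiber coordinates realizing the factorization $V\to\mathbb{A}^r_Z\to Z$ that your argument invokes abstractly, and the paper's diagonal section is precisely the tautological section obtained by pulling back along the zero locus of those coordinates.
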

\begin{proof}
Let $W = \cup_i W_i$ where $W_i$ are integral closed subschemes of $|E|$. Then we have $\cE_W = \cup_i \cE_{W_i}$.
Since $I(W) = \cap_i I(W_i)$ and $I(\cup_i \cE_{W_i}) = \cap_i I(\cE_{W_i})$, it is enough to show the Proposition for the case when $W$ is integral.

We assume that $W$ is integral. Let $\pi : E \to Z$ be the projection then let $r$ be the dimension of the generic fiber of the projection $\pi : W \to Z$. Then we choose $r$ generic linear functions $L_1,\dots,L_r$ on $E = Z \times k^n$. Let $W_{a_1\dots,a_r} := \{L_1=a_1 \dots, L_r = a_r \} \cap W$. Then the projection $\pi : W_{a_1,\dots,a_r} \to Z$ is dominant and generically finite. Then we can choose open dense subset $U_{a_1,\dots,a_r} \subset W_{a_1,\dots,a_r}$ such that the restriction $\pi : U_{a_1,\dots,a_r} \to Z$ is \'etale. Then, there is a diagonal section $\Delta : U_{a_1,\dots,a_r} \to U_{a_1,\dots,a_r}\times_Z U_{a_1,\dots,a_r} \subset W \times_Z U_{a_1,\dots,a_r}$.

Since $\cup_{a_1,\dots,a_n \in k^n} U_{a_1\dots,a_n} \subset W$ is a dense subset of $W$, we conclude that $I(\cE_W) \subset I(W)$. Since it is trivial that $I(W) \subset I(\cE_W)$, we have $I(\cE_W) = I(W)$.
\end{proof}

Next, we define notions of a reduced closed substack, an integral substack and a cycle group of a coherent sheaf stack in a similar way as in \cite[Definition 2.2]{CL11semi}. From now on, for a substack $\iota : Z \hra X$ and a coherent sheaf $\cF$ on $X$, $\cF|_Z$ mean a stack correspond to a coherent sheaf $\iota^*\cF$.

\begin{defi}[closed substacks and reduced substacks integral substacks.]\label{closed}
Let $\cA \subset \cF|_Z$ be a substack where $Z\subset X$ is an integral closed substack of $X$. 
We call it closed if for any \'etale morphism $\vphi : U \to X$ from a scheme $U$ and any surjection from a locally free sheave $\cV \surra \cF|_{U}$, $\cA|_{U}\times_{\cF|_U} \cV$ is equal to $\cE_W$ for a closed subscheme $W \subset \cV$ By Proposition \ref{etale nullstellenstaz}, this $W$ is unique if it exists.

We call a closed substack $\cA \subset \cF|_Z$ reduced if for any \'etale morphism $\vphi : U \to X$ from a scheme $U$ and any surjection from a locally free sheave $\cV \surra \cF|_{U}$, $\cA|_{U}\times_{\cF|_U} \cV$ is equal to $\cE_W$ for a reduced closed subscheme $W \subset \cV$ where each irreducible component of $W$ dominates $Z$. 
\end{defi}

We note that above definition of reduced subtacks make sense since reducedness is preserved by smooth pull-backs and descents from flat surjective morphisms \cite[10.158.2, Lemma 10.157.7]{Sta}.


\begin{rema}\label{localchar}
Since closedness is a local property, we can observe that if there exist an \'etale open cover $\{U_i\}_{i\in \cI}$ of $X$ and there exist surjections $\cV_i \surra \cF|_{U_i}$ from locally free sheaves $\cV_i$s so that substacks $\cA|_{Z_i} \times_{\cF|_{Z_i}} \cV_i|_{Z_i} \subset \cV_i|_{Z_i}$s are represented by closed substacks of $\cV_i|_{Z_i}$, then $\cA \subset \cF|_Z$ is closed where $Z_i := Z \times_X U_i$. The proof of this statement is not so hard so we omit it here.
\end{rema}

\begin{defi}[Integral substacks, cycle groups]
\label{cycle}
For an arbitrary coherent sheaf stack $\cE$ on a DM stack $Y$, We note that for substacks $\cA,\cB\subset \cE$, their union $\cA\cup \cB\subset \cF$ is defined to be $(\cA\cup \cB)(U):=\cA(U)\cup \cB(U)\subset \cF(U)$ for any open subset $U \subset Y$.

Let $\cA \subset \cF|_Z$ be a reduced substack where $Z\subset X$ is integral closed substack of $X$. We call it irreducible if there is no nontrivial decomposition $\cA=\cA_1\cup \cA_2$ by closed substacks, and we call a substack $\cA \subset \cF|_Z$ integral if $\cA$ is reduced and irreducible.

Next, we define a cycle group $Z_*(\cF)$ of a coherent sheaf stack $\cF$ on $X$ to be the free abelian group generated by all integral substacks $\cA \subset \cF|_Z$. For an integral substack $\cA\subset \cF|_Z$, we write $[\cA]$ for an element $Z_*(\cF)$ correspond to $\cA$ by $[\cA]$ and we call this elements integral cycles. 
\end{defi}

\begin{defi}[Dimensions of integral cycles] For an integral substack $\cA \subset \cF|_Z$ for an integral closed substack $Z \subset X$, we define a dimension of $\cA$ by the following. By \cite[II,Chapter 5, ex 5.8]{Har13}, We can choose an open dense subset $U \subset Z$ such that the restriction $\cF|_Z$ is locally free. Then we define the dimension of the integral substack $\cA$ to be $\dim \cA := \dim \cA|_U$. 
\end{defi}

Therefore, there is a natural grading in $Z_*(\cF)$ by dimensions, so we can write $Z_*(\cF) = \bigoplus_{k\geq 0} Z_k(\cF)$ where $Z_k(\cF)$ is a free abelian group generated by $k$-dimensional integral substacks of $\cF|_Z$ for integral substacks $Z \subset X$. But we will not focus on dimension of cycles in this paper.

We call a cycle $[\cA]$ correspond to an integral substack $\cA \subset \cF|_Z$ an integral cycle. Note that when we choose an open dense subset $V \subset Z$ such that $\cF|_V$ is locally free, $\cA|_V \subset \cF|_V$ represented by an integral substack of $\cF|_V$, and $\bar{\pi(\cA|_V)} = V$ where $\pi : \cF|_V \to V$ is the projection. This follows from the fact that $\cA \subset \cF|_Z$ where $\cF|_Z$ is induced from the coherent sheaf $\cF|_Z$ on the small \'etale site on $Z$.

\subsection{Proper pushforwards}\label{sec:properpush}
\noindent We define proper pushforwards and flat pull-backs, which are morphisms in cycle groups of coherent sheaf stacks. The following definition of stack-theoretic closure is crucial to define these morphisms, and also crucial to many arguments described later.

\begin{defi}\label{stack-theoretic closure}
Let $Y$ be an integral DM stack. For any open dense subset $V\subset Y$ and any closed substack $\cB \subset \cG|_{V}$ of a coherent sheaf stack $\cG$ on Y, we define a prestack $\cB^{c}$ in the following way:

For any \'etale morphisms $T \stackrel{\phi}{\to} Y$, we know that $T\times_Y V \subset T$ is also open dense. Hence we define;
\[
\cB^c(\phi) :=  \{ s\in \cG(\phi) \ | \ s|_{T\times_Y V}\in \cB(T\times_Y V \stackrel{\phi|_{T\times_Y V}}{\lra} Y) \}. 
\]
Then we define the stack-theoretic closure $\bar{\cB}$ by the stackification of $\cB^c$.
\end{defi}


The following lemma says that taking a stack-theoretic closure preserves reducedness and irreducibility.

\begin{lemm}\label{closurepreserve1}
Let $\cF$ be a coherent sheaf stack on an integral DM-stack $X$ and $U$ be a open dense subset of $X$. Consider a closed substack $\cA \subset \cF|_U$. Then, $\bar{\cA}\subset \cF$ is also a closed substack. \end{lemm}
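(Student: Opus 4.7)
The plan is to verify the local condition of Definition \ref{closed} directly by building the closed subscheme of $\cV$ corresponding to $\bar{\cA}$ as a scheme-theoretic closure. Fix an \'etale morphism $\phi : U' \to X$ from a scheme $U'$ and a surjection $\cV \surra \cF|_{U'}$ from a locally free sheaf. Set $U'' \defeq U' \times_X U$, which is open dense in $U'$. By closedness of $\cA \subset \cF|_U$ applied to the \'etale composition $U'' \to U$ together with the surjection $\cV|_{U''} \surra \cF|_{U''}$, one obtains a closed subscheme $W_0 \subset \cV|_{U''}$ with $\cA|_{U''} \times_{\cF|_{U''}} \cV|_{U''} = \cE_{W_0}$. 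I would then let $W \subset \cV$ be the scheme-theoretic closure of $W_0$ and aim to prove $\bar{\cA}|_{U'} \times_{\cF|_{U'}} \cV = \cE_W$; since $U'$ and $\cV$ are arbitrary this verifies Definition \ref{closed}.

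A preliminary observation is that the prestack $\cA^c$ of Definition \ref{stack-theoretic closure} is already a sheaf of sets on the small \'etale site of $X$, so the stackification is formal. Given an \'etale cover $\{T_i \to T\}$ of some \'etale $T \to X$ and compatible sections $s_i \in \cA^c(T_i)$, the glued $s \in \cF(T)$ still satisfies the defining condition because $\{T_i \times_X U \to T \times_X U\}$ is an \'etale cover and $\cA$ is itself a sheaf. Hence $\bar{\cA}(T) = \{ s \in \cF(T) : s|_{T \times_X U} \in \cA(T \times_X U) \}$ for every \'etale $T \to X$. I would also record two standing facts about any \'etale $T \to U'$: first, $T$ is reduced, since \'etale morphisms preserve reducedness and $X$ is integral; second, $T \times_X U$ is open dense in $T$, since the flat morphism $T \to X$ forces each irreducible component of $T$ to dominate $X$ and therefore meet $U$.

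The main step is the equivalence, for every \'etale $T \to U'$ and every $v \in \cV(T) = \Hom_{U'}(T, \cV)$, between $v|_{T \times_{U'} U''}$ factoring through $W_0$ and $v$ factoring through $W$. The $(\Leftarrow)$ direction uses the identity $W \cap \cV|_{U''} = W_0$ built into the definition of the scheme-theoretic closure of $W_0$ into the open subscheme $\cV|_{U''} \subset \cV$. For $(\Rightarrow)$, the preimage $v^{-1}(W) \subset T$ is a closed subscheme containing the dense open $T \times_{U'} U''$; its defining ideal sheaf therefore vanishes on a dense open of the reduced scheme $T$, hence vanishes identically, so $v^{-1}(W) = T$ and $v$ factors through $W$. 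Combined with the formula for $\bar{\cA}$ from the previous paragraph, this yields $\bar{\cA}|_{U'} \times_{\cF|_{U'}} \cV = \cE_W$.

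The main obstacle I anticipate is the bookkeeping around scheme-theoretic closure: confirming that $W \cap \cV|_{U''} = W_0$ as subschemes (not merely at the level of underlying sets), and upgrading the density argument from a set-theoretic containment to the scheme-theoretic equality $v^{-1}(W) = T$. Both become routine once one exploits the reduced context forced by \'etaleness over the integral base $X$, but they are the only non-formal ingredients in the argument.
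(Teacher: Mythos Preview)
Your proof is correct and follows essentially the same approach as the paper: pass to an \'etale chart with a vector bundle surjection, take the closure of the closed subscheme representing $\cA$ over the open part, and verify this closure represents $\bar{\cA}$. Your preliminary observation that $\cA^c$ is already a sheaf lets you work directly with single sections $v \in \cV(T)$ rather than unwinding the stackification via \'etale covers $\{R_\alpha \to R\}$ as the paper does; this is a mild streamlining, but the key equivalence (restriction to the dense open factors through $W_0$ iff the section factors through the closure $W$) and its justification via reducedness and density are the same in both arguments.
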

\begin{proof}
Consider an \'etale morphism $\vphi : T \to X$ and a surjection $\cV \surra \cF|_T$. Let $T' := T\times_X U \subset T$ be an open dense subset of $T$.
Since $\cA$ is closed and $\bar{\cA}|_{T'} = \cA$ by definition, we have $\cV|_{T'} \times_{\cF|_{T'} } \bar{\cA}|_{T'}$ is represented by a closed substack $Z \subset \cV|_{T'}$.

By definition of the stack-theoretic closure, we have the following. For an \'etale morphism $R \stackrel{\phi}{\lra} T$, $(\cV\times_{\cF|_T}\bar{\cA})(\phi)$ is a following collections;
$\{ \phi_{\alpha} : R_{\alpha} \to R, s_{\alpha}\in \Gamma(R_{\alpha}, \cV|_{R_{\alpha}}) \ | \ s_{\alpha}(R_{\alpha}\times_T T')\subset R_{\alpha}\times_T Z \}_{\alpha}$, where each $\phi_{\alpha}$ is an \'etale morphism, $\{\phi_{\alpha} : R_{\alpha} \to R \}_{\alpha}$ is a covering of $R$, and sections $\{s_{\alpha}\}_{\alpha}$ satisfies descend conditions and thus descend to a section $s \in \Gamma(R, \cV|_R)$.

We observe that $s_{\alpha}(R_{\alpha}\times_T T')\subset R_{\alpha}\times_T Z$ if and only if $s_{\alpha}(R_{\alpha}) \subset \bar{R_{\alpha}\times_T Z} = R_{\alpha} \times_T \bar{Z} $ where $\bar{Z}$ is a closure of $Z$ taken in $\cV$ and the equality holds because the morphism $R_{\alpha} \to T$ is \'etale and hence open. 

Then since $\{ R_{\alpha}\times_T \bar{Z} \}_{\alpha}$ descend to $R\times_T \bar{Z}$, we conclude that $s_{\alpha}(R_{\alpha}\times_T T')\subset R_{\alpha}\times_T Z$ for every $\alpha$ if and only if $s(R) \subset \bar{Z}$. Therefore we obtain that $(\cV\times_{\cF|_T}\bar{\cA})(R) = \{s \in \Gamma(R, \cV|_R) \ | \ s(R) \subset R \times_T \bar{Z}\}$. Thus, $\cV\times_{\cF|_T}\bar{\cA}$ is represented by a closed substack $\bar{Z} \subset \cV$, which means that $\bar{\cA} \subset \cF$ is also a closed substack.
\end{proof}

\begin{lemm}\label{closurepreserve2}
Let $\cF$ be a coherent sheaf stack on a DM-stack $X$ and $U$ be a open dense subset of $X$. Consider an irreducible substack $\cA \subset \cF|_U$. Then, $\bar{\cA}\subset \cF$ is also an irreducible substack. Furthermore, if $\cA$ is integral, then $\bar{\cA}$ is also integral.
\end{lemm}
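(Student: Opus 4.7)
The plan is to establish irreducibility and reducedness of $\bar{\cA}$ in turn, which together yield integrality by Definition \ref{cycle}. Both arguments rest on a common local computation: in any \'etale chart $\vphi : T \to X$ equipped with a surjection $\cV \surra \cF|_T$ from a locally free sheaf, I will show $\bar{\cA} \times_{\cF|_T} \cV = \cE_{\bar{W}}$, where $W \subset \cV|_{T \times_X U}$ is the reduced closed subscheme corresponding to $\cA$ via Proposition \ref{etale nullstellenstaz} and $\bar{W} \subset \cV|_T$ is its scheme-theoretic closure. Lemma \ref{closurepreserve1} already gives closedness of $\bar{\cA}$, so this local identification is the main content.

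To prove the identification I would unfold the stackification defining $\bar{\cA}$. A section $s \in \bar{\cA}(R)$ for an \'etale $R \to T$, after passing to an \'etale cover, is represented by a section of the prestack $\bar{\cA}^c$, i.e.\ a section of $\cF$ whose restriction to $R \times_X U$ lies in $\cA$; lifting further to $\tilde{s} : R \to \cV|_R$, the restriction $\tilde{s}|_{R \times_X U}$ factors through $W$. Then $\tilde{s}^{-1}(\bar{W})$ is a closed subscheme of $R$ containing $R \times_X U$, which is dense in $R$ because $R \to X$ is \'etale and $X$ is integral (every irreducible component of $R$ maps to a nonempty open in $X$, which meets $U$, and in an irreducible component nonempty opens are dense); hence $\tilde{s}$ factors through $\bar{W}$ and $s \in \cE_{\bar{W}}$. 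The reverse containment is immediate from the definition of $\bar{\cA}^c$, since any section of $\cE_{\bar{W}}$ restricts to a section of $\cE_W = \cA \times_{\cF} \cV$ over the dense open.

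Granted this local identification, reducedness of $\bar{\cA}$ follows easily: the scheme-theoretic closure of a reduced closed subscheme in the noetherian ambient $\cV|_T$ is reduced, and the irreducible components of $\bar{W}$ are the closures of those of $W$, each dominating $T$ because the originals dominate the dense open $T \times_X U$. Remark \ref{localchar} then delivers the conclusion. For irreducibility I argue by contradiction: suppose $\bar{\cA} = \cA_1 \cup \cA_2$ is a nontrivial decomposition by closed substacks. Restricting to $U$ gives $\cA = \cA_1|_U \cup \cA_2|_U$, so by irreducibility of $\cA$, up to relabeling $\cA_1|_U = \cA$; in a local chart, if $W_1 \subset \cV|_T$ represents $\cA_1$, then $W_1 \cap \cV|_{T \times_X U} \supset W$, and since $W_1$ is closed with $T \times_X U$ dense in $T$ we obtain $W_1 \supset \bar{W}$, hence $\cA_1 \supset \bar{\cA}$, contradicting the nontriviality of the decomposition. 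The main obstacle throughout is the identification $\bar{\cA} \times_{\cF|_T} \cV = \cE_{\bar{W}}$, because stackification does not a priori commute with fibre products; the \'etale-local lifting combined with the density argument above is what makes it work.
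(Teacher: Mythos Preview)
Your proposal is correct and follows essentially the same route as the paper: the local identification $\bar{\cA} \times_{\cF|_T} \cV = \cE_{\bar{W}}$ you establish is precisely the content of the proof of Lemma~\ref{closurepreserve1} (which you cite but then re-derive), and the paper's proof of the present lemma simply invokes that identification. Your irreducibility argument is a minor variant of the paper's---you show that the piece whose restriction to $U$ equals $\cA$ must be all of $\bar{\cA}$, while the paper shows that a piece with empty restriction to $U$ must itself be empty---but both rest on the same density-of-$U$ reasoning.
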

\begin{proof}
Assume that $\bar{\cA}$ is not irreducible, i.e. there is a nontrivial decomposition $\bar{\cA}=\cB_1\cup \cB_2$ by nontrivial closed substacks $\cB_1$, $\cB_2$. If $(\cB_1)|_U \neq \varnothing$ and $(\cB_2)|_U \neq \varnothing$, then we have nontrivial decomposition $\cA = (\cB_1)|_U \cup (\cB_2)|_U$. This contradicts to the assumption that $\cA$ is irreducible. Therefore may assume that $(\cB_2)|_U = \varnothing$. For any \'etale morphism $T \to X$ and a section $s \in \cB_2(T)$, we have $s|_{T\times_X U} \in \cB_2(T\times_X U)$. Since $T \to X$ is \'etale, $T\otimes_X U \to U$ factors through dense open subset of $U$. So, it contradicts to the assumption that $(\cB_2)|_U = \varnothing$. Thus we have $\cB_2 = \varnothing$ and hence $\bar{\cA}$ is irreducible.

Next, we assume that $\cA$ is reduced. Since reducedness is a local property and $\bar{\cA}|_U=\cA$ is already reduced, it is enough to check that $\bar{\cA}|_{U'}$ is reduced when $U' \to X$ is an affine \'etale neighborhood around $X \setminus U$.
Since $U'$ is affine, we can choose a surjection from a vector bundle $\cV \surra \cF|_{U'}$. Let $Z':=\bar{\cA}|_{U'} \times_{\cF|_{U'}} \cV$. Then, $Z'|_U=\cA|_{U'\times_X U} \times_{\cF|_{U'\times_X U}}\cV|_{U'\times_X U}$ is reduced since $\cA$ is reduced. On the other hand, we can easily observe that $Z'=\bar{Z'|_U}$. Therefore, $Z$ is also reduced by \cite[Lemma 28.7.9]{Sta}. Hence $\bar{\cA}$ is reduced. 
\end{proof}

Now we define proper pushforwards. 
Let $\cE$ be a coherent sheaf stack on $Y$. Consider the following fiber diagram in the category of stacks where $X$ and $Y$ are DM stacks :
\[
\xymatrix{\cE|_X \ar[r]^f \ar[d]^{\pi} \ar@{}[rd]|{\square} & \cE \ar[d]^{\pi} \\
X \ar[r]^f & Y.}
\]
We assume that $f$ is a proper morphism. Then, we define a proper pushforward $f_*$ : $Z_*(\cE|_{X}) \to Z_*(\cE)$ by the following.
Consider an integral cycle $[\cA]$ of $Z_*(\cE|_X)$ for an integral substack $\cA \subset \cE|_Z$ for an integral substack $Z \subset X$. Note that $f(Z) \subset Y$ is also an integral closed substack of $Y$ and we can choose an open dense subset $U \subset f(Z)$ such that $\cE|_U$ is locally free. Then, $\cE|_{f^*(U)} \to \cE|_U$ is a proper morphism of algebraic stacks. We note that since $\cA$ is integral, $\cA|_{f^*(U)}$ and its image $f(\cA|_{f^*(U)})$ are also integral stacks.

The flowing definitions are sheaf-theoretic analogues of  a scheme-theoretic image and a degree of a morphism.

\begin{defi}[Stack-theoretic images, Stack-theoretic degrees]
Let $\cA\subset \cE|_Z$ be an integral substack as above. Then, we define a stack-theoretic image of $\cA$ via $f$ by:
\[
f(\cA) := \bar{f(\cA|_{f^*(U)})} \subset \cE|_{f(Z)}
,\]
which is again integral by Lemma \ref{closurepreserve2}.

We define a stack-theoretic degree of the morphism $f|_{\cA} : \cA \to f(\cA)$, denoted by $\deg(f|_{\cA})$ to be :
\[\deg(f|_{\cA}):=\deg(\cA|_{f^*(U)} \to f(\cA|_{f^*(U)})).
\]
\end{defi}

\begin{defi}[Proper pushforwards]\label{pushforward}
We define a proper pushforward $f_* : Z_k(\cE|_{X}) \to Z_k(\cE)$ to be :
\[
f_*[\cA] := \deg(f|_{\cA}) \cdot [f(\cA)]
\]
where $\cA\subset \cE|_Z$ is an integral substack, $Z \subset X$ is a closed integral substack.
\end{defi}

We note that that it is trivial that the definition does not depend on the choice of $U \subset f(X_{\cA})$.

\begin{prop}\label{pushforwardcomp}
Consider proper morphisms $\xymatrix@=15pt{X \ar[r]^f & Y \ar[r]^g & Z}$ between DM stacks and a coherent sheaf stack $\cE$ on Z. Then, we have :
\[
g_* \cdot f_* = (g \cdot f)_* : Z_k(\cE|_X) \to Z_k(\cE). 
\]
\end{prop}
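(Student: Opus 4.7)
The plan is to verify the identity on each generator of $Z_k(\cE|_X)$, i.e.\ on an integral cycle $[\cA]$ with $\cA \subset \cE|_B$ for an integral closed substack $B \subset X$. Write $B' := f(B) \subset Y$ and $B'' := g(B') = (gf)(B) \subset Z$; these are integral because $f$ and $g$ are proper and $B$ is integral. Set $d := \dim \cA$, $d' := \dim f(\cA)$, $d'' := \dim (gf)(\cA)$, so that $d'' \leq d' \leq d$.

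I would first dispose of the degenerate cases. If $d' < d$, then the stack-theoretic degree $\deg(f|_\cA)$ vanishes (on the generic locus the induced map has positive-dimensional fibers), so $f_*[\cA] = 0$ and hence $g_* f_*[\cA] = 0$; since $d'' \leq d' < d$, also $(gf)_*[\cA] = 0$. Similarly, if $d = d'$ but $d'' < d'$, then $g_* f_*[\cA] = \deg(f|_\cA) \cdot g_*[f(\cA)] = 0$ and $(gf)_*[\cA] = 0$. Thus only the generically-finite-at-each-step case $d = d' = d''$ requires genuine work.

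In that surviving case there are two subclaims: the stack-theoretic images compose, $(gf)(\cA) = g(f(\cA))$ as integral substacks of $\cE|_{B''}$, and the degrees multiply, $\deg((gf)|_\cA) = \deg(g|_{f(\cA)}) \cdot \deg(f|_\cA)$. For the first, I would pick an open dense $V \subset B''$ with $\cE|_V$ locally free, pull back through $g$ and then $f$ (shrinking to ensure local freeness and generic finiteness throughout) to open dense subsets $W \subset B'$ and $W' \subset B$. Both $(gf)(\cA)$ and $g(f(\cA))$ then agree with the stack-theoretic closure in $\cE|_{B''}$ of the image of $\cA|_{W'}$, so the claim reduces to $g(\bar{S}) = \overline{g(S)}$ for $S := f(\cA|_{W'}) \subset \cE|_W$. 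This identity is verified at the level of the presheaf $\cB^c$ of Definition \ref{stack-theoretic closure} using properness of $g$ (proper maps send closed substacks to closed substacks, and this property descends from an \'etale cover), after which stackification yields the claim. For the degree multiplicativity, on the common dense open the induced composition $\cA|_{W'} \to f(\cA)|_W \to (gf)(\cA)|_V$ is a composition of generically finite surjective morphisms of integral DM stacks; passing to \'etale atlases reduces this to the classical tower formula for degrees of finite extensions of function fields.

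The main obstacle is the closure-compatibility subclaim $g(\bar{S}) = \overline{g(S)}$ which drives the image-composition identity; once it is established using Lemmas \ref{closurepreserve1}--\ref{closurepreserve2} together with properness of $g$, the remainder is routine dimension bookkeeping and standard multiplicativity of function-field degrees.
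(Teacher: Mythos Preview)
Your approach is correct and essentially the same as the paper's: reduce to the open locus where $\cE$ is locally free and invoke classical multiplicativity of degrees. However, the paper's proof is more direct and avoids what you call the ``main obstacle'' entirely. The paper picks a single open dense $U \subset (gf)(B)$ with $\cE|_U$ locally free and pulls it back without shrinking: since pullback preserves local freeness, $g^{-1}(U)\cap B'$ and $(gf)^{-1}(U)\cap B$ already work. Then $f(\cA)|_{g^{-1}(U)} = f(\cA|_{(gf)^{-1}(U)})$ holds immediately because $\bar{\cB}|_U = \cB$ (this is built into Definition~\ref{stack-theoretic closure}), so $g(f(\cA)) = \overline{(gf)(\cA|_{(gf)^{-1}(U)})} = (gf)(\cA)$ with no separate closure-commutativity lemma needed. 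Your case split on dimensions is also unnecessary: the degree formula already returns zero in the degenerate cases, so the single computation covers everything uniformly.
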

\begin{proof}
Let $\cA \subset \cE|_W$ be an integral substack and $W$ is a closed substack of $X$. Choose an open subset $U \subset (g\cdot f)(W)$ such that $\cE|_U$ is locally free. Then, $f_*[\cA]=\deg(\cA|_{(g\cdot f)^*(U)} \to f(\cA|_{(g\cdot f)^*(U)}))[\bar{f(\cA|_{(g\cdot f)^*(U)})}]$. Thus, we have the following :
\begin{align*}
&g_*(f_*[\cA]) \\
&= \deg(\cA|_{(g\cdot f)^*(U)} \to f(\cA|_{(g\cdot f)^*(U)}))\cdot \deg(f(\cA|_{(g\cdot f)^*(U)}) \to (g\cdot f)(\cA|_{(g\cdot f)^*(U)})) \\ \nonumber
& [\bar{(g\cdot f)(\cA|_{(g\cdot f)^*(U)})} \\ \nonumber
&= \deg(\cA|_{(g\cdot f)^*(U)} \to (g\cdot f)(\cA|_{(g\cdot f)^*(U)}))[\bar{(g\cdot f)(\cA|_{(g\cdot f)^*(U)})}] \\
&= (g\cdot f)_*[\cA]. \nonumber
\end{align*}

\end{proof}

By the proof of the above proposition, we have the following result directly.

\begin{lemm}
On the above setting, we have :
\[ (g\cdot f)(\cA)=g(f(\cA)) \]
and also we obtain :
\[ 
\deg(f|_{\cA})\cdot \deg(g|_{f(\cA)})=\deg((g\cdot f)|_{\cA}).
\]
\end{lemm}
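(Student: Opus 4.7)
The plan is to extract both equalities by rereading the explicit formulas established in the proof of Proposition \ref{pushforwardcomp}. Fix the integral closed substack $W \subset X$ with $\cA \subset \cE|_W$ and, as in that proof, an open dense $U \subset (g\cdot f)(W)$ over which $\cE$ is locally free. Writing $\cA_U := \cA|_{(g\cdot f)^{-1}(U)}$, that proof exhibited
\[
(g\cdot f)_*[\cA] = \deg\bl \cA_U \to (g\cdot f)(\cA_U) \br \cdot \bigl[\overline{(g\cdot f)(\cA_U)}\bigr]
\]
and
\[
g_* f_*[\cA] = \deg\bl \cA_U \to f(\cA_U) \br \cdot \deg\bl f(\cA_U) \to (g\cdot f)(\cA_U) \br \cdot \bigl[\overline{(g\cdot f)(\cA_U)}\bigr].
\]

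For the equality $(g\cdot f)(\cA) = g(f(\cA))$, I would compute both sides using the same $U \subset (g\cdot f)(W) = g(f(W))$, relying on the noted independence of the stack-theoretic image construction from the choice of locally free open. The left-hand side equals $\overline{(g\cdot f)(\cA_U)}$ by definition. For the right-hand side, since $f(\cA_U)$ is already an integral substack over an open subset of $f(W)$ with $\cE$ locally free, the restriction of $f(\cA)$ to $f(W)\cap g^{-1}(U)$ agrees with $f(\cA_U)$; applying $g$ then yields $g(f(\cA_U)) = (g\cdot f)(\cA_U)$, so $g(f(\cA))$ and $(g\cdot f)(\cA)$ are both the stack-theoretic closure of the same integral substack. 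The uniqueness part of Lemma \ref{closurepreserve2} then identifies them.

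The degree identity $\deg(f|_\cA)\cdot \deg(g|_{f(\cA)}) = \deg((g\cdot f)|_\cA)$ can then be deduced in either of two ways. Directly, one compares the coefficients of the nonzero integral cycle $\bigl[\overline{(g\cdot f)(\cA_U)}\bigr]$ in the two displayed formulas, using that these cycles coincide by the previous paragraph. Alternatively, one invokes the standard multiplicativity of degree along the composition of generically finite dominant morphisms $\cA_U \to f(\cA_U) \to (g\cdot f)(\cA_U)$ of integral algebraic stacks over $U$. The main obstacle, which is purely bookkeeping, is justifying that a single locally free open $U$ can be used simultaneously for the four stack-theoretic images and three degree computations that appear; this is exactly the independence of the image and degree from the choice of open, noted immediately after Definition \ref{pushforward}.
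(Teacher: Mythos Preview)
Your proposal is correct and follows the same approach as the paper, which simply records that both identities are read off from the computation in the proof of Proposition \ref{pushforwardcomp}; you have merely spelled out the bookkeeping that the paper leaves implicit. One small remark: your coefficient-comparison argument for the degree identity only applies when $\deg((g\cdot f)|_{\cA})>0$, so it is cleaner to rely on your second alternative (multiplicativity of degree for proper morphisms of integral stacks, with the usual convention that the degree is zero when the map is not generically finite), which covers all cases at once.
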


\subsection{Saturated cycles and Projection morphism}
\label{sec:satandproj}
\begin{defi}[Saturated integral subschemes(substacks)]\label{satsubsch}
Let $\cF$ be a coherent sheaf on $X$, and let $f$ be a proper morphism $f:S\to X$ from a scheme(or DM stack) $S$, and let $q : \cV \surra \cF|_S$ be a surjection from a vector bundle $\cV$ on $S$. Consider an open cover $\{ S_{\alpha} \}_{\alpha \in \cI}$ of $S$. Then, we have the following locally free resolution :
\[\xymatrix@=15pt{\cV_{\alpha}' \ar[r]^(0.3){\vphi_{\alpha}} & \cV|_{S_{\alpha}} \ar@{->>}[r]^(0.5){\phi} & \cF|_{S_{\alpha}} \ar[r] & 0
}\]
for each $\alpha \in \cI$. Let $\FF_{\alpha} := [\cV|_{S_{\alpha}} / \cV_{\alpha}']$. 

Let $W \subset \cV|_{S_Z}$ be an integral subscheme where $S_Z$ is a closed subscheme(or closed substack) of $S$ and $W_{\alpha} := W \times_S S_{\alpha}$. We call $W$ is saturated there exist a such open cover $\{S_{\alpha}\}_{\alpha \in \cI}$ and locally free resolutions and corresponding bundle stacks $\FF_{\alpha}$s such that for every $\alpha$, there exists an integral substack $\WW_{\alpha} \subset \FF_{\alpha}$, such that $W_{\alpha} = \cV|_{S_{\alpha}}\times_{\FF_{\alpha}} \WW_{\alpha}$.

We call a reduced subscheme(or substack) $W \subset \cV|_S$ is saturated if it is a union of saturated induced subschemes(or substack).
\end{defi}

\begin{defi}[Saturated cycle groups]\label{satcycle} For the surjection $q : \cV \surra \cF|_S$, we define a saturated cycle group $Z^{sat}_*(\cV)$ to be a free abelian group generated by saturated integral subschemes(or substacks) $W \subset \cV|_{S_Z}$ where $S_Z \subset S$ is an integral closed subscheme(or substack) of $S$. 
\end{defi}

The following lemma says that proper pushforwards and flat pull-backs are well-defined over invariant cycle groups. The proof is straightforward so we omit it here.

\begin{lemm}\label{satschlemm}
Let $\cF$ be a coherent sheaf on $X$ and let $f$ be a proper morphism $f : S \to X$ from an integral scheme(or DM stack) $S$, and let $q : \cV \surra \cF|_S$ be a surjection from a vector bundle $\cV$ on $S$. 

Let $W \subset \cV$ be an integral subscheme(or substack) and $S_W = \bar{\pi(W)}$ where $\pi : \cV \to S$ is the projection.
Then $W$ is saturated if and only if there is open dense subset $U \subset S$ such that there is a locally free resolution;
\[
\cV' \stackrel{\vphi}{\lra} \cV|_U \surra \cF|_U \to 0
\]
and there is an integral substack $\WW \subset \FF := [\cV|_U/ \cV']$ such that $W = \bar{\cV|_U \times_{\FF} \WW}$ where the closure is taken in $\cV|_{S_W}$. 
\end{lemm}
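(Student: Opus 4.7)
The plan is to prove both directions by exploiting the integrality of $S$, the fact that distinct locally free presentations of $\cF$ give canonically isomorphic bundle stacks, and the closure-preservation results in Lemmas \ref{closurepreserve1} and \ref{closurepreserve2}. Both sides describe $W$ as the pullback of a substack of a bundle stack presenting $\cF$; they differ only in whether that presentation is specified locally on a cover of $S$ or on a single open dense subset.

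For the ``if'' direction, starting from $U$, a resolution $\cV' \to \cV|_U \surra \cF|_U$, and an integral $\WW \subset \FF = [\cV|_U/\cV']$ with $W = \bar{\cV|_U \times_\FF \WW}$ in $\cV|_{S_W}$, I would build an open cover of $S$ by taking $S_0 := U$ together with small affine neighborhoods $S_\alpha$ covering $S \setminus U$, each admitting a locally free resolution of $\ker(\cV|_{S_\alpha} \to \cF|_{S_\alpha})$. On $S_0$ the given data works. For $S_\alpha$ with $S_\alpha \cap S_W \neq \varnothing$, define $\WW_\alpha$ to be the stack-theoretic closure in $\FF_\alpha$ of the image of $W \cap \cV|_{U \cap S_\alpha}$ under the smooth surjection $\cV|_{U \cap S_\alpha} \to \FF_\alpha|_{U \cap S_\alpha}$; by Lemma \ref{closurepreserve2} this is integral. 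The identity $W_\alpha = \cV|_{S_\alpha} \times_{\FF_\alpha} \WW_\alpha$ then follows from Lemma \ref{closurepreserve1}, since both sides are closed substacks of $\cV|_{S_\alpha}$ whose restrictions to $\cV|_{U \cap S_\alpha}$ agree. Cover patches disjoint from $S_W$ can be discarded or handled trivially by refining the cover.

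For the ``only if'' direction, given the cover $\{S_\alpha\}$ and local data, I would choose $\alpha_0$ so that $S_{\alpha_0}$ contains the generic point $\eta_{S_W}$ of $S_W$, then shrink to a nonempty open $U \subset S_{\alpha_0}$ on which the coherent kernel $\ker(\cV \to \cF)$ is locally free. Since $S$ is integral, any nonempty open is automatically dense, so $U$ is open dense in $S$ and still contains $\eta_{S_W}$. Set $\cV' := \ker(\cV|_U \to \cF|_U)$ and $\FF := [\cV|_U/\cV']$. The canonical isomorphism $\FF \cong \FF_{\alpha_0}|_U$ (both present $\cF|_U$) identifies $\WW_{\alpha_0}|_U$ with an integral substack $\WW \subset \FF$. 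Then $\cV|_U \times_\FF \WW = W_{\alpha_0} \cap \cV|_U = W \cap \cV|_U$, and since $W$ is integral with $W \cap \cV|_U$ a nonempty open subset, its stack-theoretic closure in $\cV|_{S_W}$ equals $W$.

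The main obstacle will be the ``if'' direction, specifically the verification that the $\WW_\alpha$ I build gives $W_\alpha = \cV|_{S_\alpha} \times_{\FF_\alpha} \WW_\alpha$ over the whole of $S_\alpha$, not merely over $U \cap S_\alpha$. This reduces to showing that stack-theoretic closure commutes with pullback along the smooth surjection $\cV|_{S_\alpha} \to \FF_\alpha$ for our particular integral substacks, which in turn follows from the uniqueness clause of Proposition \ref{etale nullstellenstaz} combined with Lemma \ref{closurepreserve1}.
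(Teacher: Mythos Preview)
Your overall plan for the ``if'' direction is sound and quite close to the paper's proof: both arguments pick an open cover, define $\WW_\alpha$ as the closure in $\FF_\alpha$ of an integral substack supported over a dense open of $S_\alpha \cap S_W$, and then check $W_\alpha = \cV|_{S_\alpha} \times_{\FF_\alpha} \WW_\alpha$ by observing that both sides are integral and agree over that dense open. The paper inserts one intermediate step you skip: it first passes to a dense open $V \subset S_W$ on which $\cF|_V$ is locally free, and identifies $W|_{V'_\alpha}$ with $\cV|_{V'_\alpha} \times_{\cF|_{V'_\alpha}} q'(W|_V)|_{V'_\alpha}$. This lets the paper work with the honest sheaf $\cF$ rather than comparing two different bundle-stack presentations, and makes the $\cV'_\alpha$-invariance of $W$ transparent. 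Your route via Lemmas~\ref{closurepreserve1} and~\ref{closurepreserve2} achieves the same thing, but note that the actual mechanism is ``two integral closed substacks agreeing on a dense open are equal'' rather than the literal content of Lemma~\ref{closurepreserve1}; you should state that explicitly.

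Your ``only if'' direction, however, contains a genuine error. You claim a canonical isomorphism $\FF \cong \FF_{\alpha_0}|_U$ on the grounds that ``both present $\cF|_U$''. This is false: two two-term locally free resolutions of the same coherent sheaf give bundle stacks with the same $h^1$ but generally different $h^0$, so $\FF_{\alpha_0}|_U$ and $[\cV|_U / \ker(\cV|_U \to \cF|_U)]$ are \emph{not} isomorphic unless $\cV'_{\alpha_0}|_U \to \cV|_U$ happens to be injective. The paper simply declares this direction trivial, and indeed it is: since $S$ is integral, any $S_{\alpha_0}$ meeting $S_W$ is already open dense in $S$, so you may take $U = S_{\alpha_0}$, $\cV' = \cV'_{\alpha_0}$, $\FF = \FF_{\alpha_0}$, $\WW = \WW_{\alpha_0}$ verbatim; then $W|_U = \cV|_U \times_\FF \WW$ by hypothesis, and $W = \bar{W|_U}$ in $\cV|_{S_W}$ because $W$ is integral. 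No shrinking or comparison of presentations is needed.
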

\begin{proof}
Necessary condition is trivial, so we only proof sufficient condition. Consider an affine open cover $\{S_{\alpha}\}_{\alpha\in \cI}$ of $S$. For $S_{\alpha}$ such that $S_{\alpha} \cap S_{W} = \varnothing$, we have $W \cap \cV|_{S_{\alpha}} = \varnothing$ therefore we do not need to consider these $S_{\alpha}$s. Thus we only consider $S_{\alpha}$ such that which has nonzero intersection with $S_W$. Let $S_W \cap S_{\alpha} := S_{W_{\alpha}}$.

By construction we know that $U$ intersect with $S_W$ therefore $S_{\alpha}$s meets $U$. Let $S_{\alpha}' := S_{\alpha}\cap U$.

On the other hand, choose an open dense subset $V \subset S_W$ such that $\cF|_V$ is locally free. Let $V' := V\cap U, V_{\alpha} := V\cap S_{\alpha}, V_{\alpha}' := V\cap S_{\alpha}'$. Let $q' : \cV|_V \surra \cF|_V$ be the restriction of the morphism $q$. Then since $W|_{V'_{\alpha}} = \cV|_{V'_{\alpha}} \times_{\FF|_{V'_{\alpha}}} \WW|_{V'_{\alpha}}$, $W$ is closed under $\cV_{\alpha}'$-action induced by the morphism $\vphi_{\alpha}$. Hence we observe that $W|_{V'_{\alpha}} = \cV|_{V'_{\alpha}} \times_{\cF|_{V'_{\alpha}}} q'(W|_V)|_{V'_{\alpha}}$. Note that since $W|_V$ is irreducible and $\cV|_V \surra \cF|_V$ is a surjection of vector bundle, $q'(W|_V)$ is closed in $\cF|_V$ and hence integral.
Let $\WW_{\alpha}' \subset \FF_{\alpha}|_{V'_{\alpha}}$ be an integral substack which is an image of $W|_{V'_{\alpha}}$. Then we have $W|_{V'_{\alpha}} = \WW_{\alpha}' \times_{\FF_{\alpha}|_{V'_{\alpha}}} \cV|_{V'_{\alpha}}$ by construction. Let $\WW_{\alpha} := \bar{\WW_{\alpha}'} \subset \FF|_{S_{W_{\alpha}}}$. Since $\FF|_{S_{W_{\alpha}}}$ is a closed substack of $\cF|_{S_{\alpha}}$, $\WW_{\alpha}$ is a closed substack of $\FF|_{S_{\alpha}}$. Note that $\WW'_{\alpha}$ is an open dense subset of $\WW_{\alpha}$. We consider the morphism $\cV|_{S_{\alpha}} \to \FF_{\alpha}$ as a vector bundle and therefore $\WW_{\alpha}\times_{\FF_{\alpha}} \cV|_{S_{\alpha}}$ is also integral, whose open dense subset is equal to $W|_{V_{\alpha}'}$. Thus we obtain $\WW_{\alpha}\times_{\FF_{\alpha}} \cV|_{S_{\alpha}} = W_{\alpha}$.  
\end{proof}

\begin{lemm}\label{satschlemm2}
Let $\cF$ be a coherent sheaf on $X$ and let $f$ be a proper morphism $f : S \to X$ from an integral scheme(or DM stack) $S$, and let $q : \cV \surra \cF|_S$ be a surjection from a vector bundle $\cV$ on $S$. 

Let $W \subset \cV$ be an integral subscheme(or substack) of $\cV$ and $S_W = \bar{\pi(W)}$ where $\pi : \cV \to S$ is the projection. Then $W$ is saturated if and only if there is an open dense subset $V \subset S_W$ such that $\cF|_V$ is locally free and there is an integral subscheme(or substack) $B\subset \cF|_V$ such that $W = \cV|_{V} \times_{\cF|_V} B$.
\end{lemm}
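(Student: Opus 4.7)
The plan is to reduce both directions to Lemma \ref{satschlemm} by passing between the bundle stack $\FF = [\cV|_U / \cV']$ arising from a locally free presentation and the coherent sheaf stack $\cF|_V$ itself on the open subset $V$ where $\cF|_V$ is locally free. The key observation is that on such a $V$, the map $\vphi\colon \cV'|_V \to \cV|_V$ has image equal to $\ker(\cV|_V \surra \cF|_V)$, which is locally free, so the bundle stack $\FF|_V$ is a gerbe over $\cF|_V$ banded by $\ker(\vphi|_V)$. The coarse map $\FF|_V \to \cF|_V$ therefore induces a bijection on integral substacks, and under it the factorization $\cV|_V \to \FF|_V \to \cF|_V$ gives $\cV|_V \times_{\FF|_V} \WW|_V = \cV|_V \times_{\cF|_V} B$ whenever $\WW|_V$ and $B$ correspond.

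For the forward direction, assume $W$ is saturated. Apply Lemma \ref{satschlemm} to obtain an open dense $U \subset S$, a presentation $\cV' \to \cV|_U \surra \cF|_U$, and an integral substack $\WW \subset \FF$ with $W = \bar{\cV|_U \times_{\FF} \WW}$. Since $\cF|_{S_W}$ is coherent on the integral stack $S_W$, we may shrink $U$ so that $V := U \cap S_W$ is open dense in $S_W$ with $\cF|_V$ locally free. The coarse map sends the integral substack $\WW|_V \subset \FF|_V$ to an integral substack $B \subset \cF|_V$, and the fiber-product identification above yields $W|_V = \cV|_V \times_{\cF|_V} B$. Taking closure in $\cV|_{S_W}$ recovers $W$ since $V$ is open dense in $S_W$ and $W$ dominates $S_W$.

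For the converse, suppose $V \subset S_W$ is open dense with $\cF|_V$ locally free and $B \subset \cF|_V$ is integral. Pass to an \'etale open cover $\{S_\alpha\}$ of $S$ on each of which $\cF|_{S_\alpha}$ admits a locally free presentation $\cV'_\alpha \to \cV|_{S_\alpha} \surra \cF|_{S_\alpha}$, yielding bundle stacks $\FF_\alpha$. On each $V_\alpha := V \cap S_\alpha$, lift $B|_{V_\alpha}$ along the gerbe $\FF_\alpha|_{V_\alpha} \to \cF|_{V_\alpha}$ to an integral substack of $\FF_\alpha|_{V_\alpha}$ and take its stack-theoretic closure in $\FF_\alpha$, which remains integral by Lemmas \ref{closurepreserve1} and \ref{closurepreserve2}, producing $\WW_\alpha \subset \FF_\alpha$. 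By fiber-product compatibility, $W \cap \cV|_{S_\alpha} = \cV|_{S_\alpha} \times_{\FF_\alpha} \WW_\alpha$, so $W$ is saturated per Definition \ref{satsubsch}.

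The main obstacle is establishing the compatibility between the two kinds of fiber products: namely, that the coarse map $\FF|_V \to \cF|_V$ induces a bijection on integral substacks and that the induced pullbacks $\cV|_V \times_{\FF|_V} \WW|_V$ and $\cV|_V \times_{\cF|_V} B$ coincide as substacks of $\cV|_V$. A secondary subtlety is reconciling the closure operations: the equality $W = \cV|_V \times_{\cF|_V} B$ in the statement is to be read with a stack-theoretic closure taken in $\cV|_{S_W}$, matching the convention of Lemma \ref{satschlemm}.
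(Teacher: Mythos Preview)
Your proposal is correct and takes essentially the same approach as the paper. The paper's own proof is simply ``It is straightforward using Lemma \ref{satschlemm}'', and your argument is precisely the natural way to unwind that reference: the key identification $W|_{V'} = \cV|_{V'} \times_{\cF|_{V'}} q'(W|_V)$ for $V \subset S_W$ where $\cF|_V$ is locally free already appears inside the proof of Lemma \ref{satschlemm}, so your passage between $\FF$ and $\cF|_V$ via the coarse map (your ``gerbe'' observation) is just a repackaging of that step.
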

\begin{proof}
It is straightforward using Lemma \ref{satschlemm}.
\end{proof}

\begin{lemm}
\begin{itemize}
\item[(i)] Consider the following diagram :
\[\xymatrix{\cV|_{S_2} \stackrel{q}{\surra} \cF|_{S_2} \ar[rr]^v \ar[d]^{\pi} & & \cV \stackrel{q}{\surra} \cF|_{S_1} \ar[d]^{\pi} & \\
S_2 \ar[rr]^v \ar[ddr]^g & & S_1 \ar[ddr]^f & \\
& \cF|_X \ar[d] & & \cF \ar[d] \\
& X \ar[rr]^u & & Y}
\]
where $X,Y$ are DM stacks, $u,v$ are proper morphisms, $S_1$ and $S_2$ are schemes and $\cF$ is a coherent sheaf stack on $Y$ and $q : \cV \surra \cF|_{S_1}$ is a surjection from a vector bundle $\cV$.

Let $W \subset \cV|_{S_2}$ be a saturated integral subschemes of $\cV|_{S_2}$. Using Lemma \ref{satschlemm} we can easily check that $v(W)$ is also an integral saturated subschemes of $\cV$.
Hence we can define pushforward $v_* : Z^{sat}_*(\cV|_{S_2}) \to Z^{sat}_*(\cV)$ as a restriction of the ordinary proper pushforward $v_* : Z_*(\cV|_{S_2}) \to Z_*(\cV)$.

\item[(ii)] Consider the following diagram :
\[ \xymatrix{\cV|_{S\times_Y X} \stackrel{q}{\surra} \cF|_{S\times_Y X} \ar[dd]_{\pi} \ar[rr]^v & & \cV \stackrel{q}{\surra}\cF|_S \ar[dd]^(0.3){\pi} & \\
& \cF|_{X} \ar[dd] & & \cF \ar[dd] \\
S\times_Y X \ar[rr]^(0.7)v \ar[dr]_g & & S \ar[dr]^f & \\
& X \ar[rr]^u & & Y
}
\]
where $X,Y$ are DM stacks, $S$ is an integral scheme, $u,v$ are flat morphisms, and $q : \cV \surra \cF|_S$ is a surjection from a vector bundle $\cV$.

Let $W \subset \cV$ be an integral saturated subscheme of $\cV$. Let $S_W := \bar{\pi(W)}$. Then since $W$ is saturated and by Lemma \ref{satschlemm2}, there is an open subset $V \subset S_W$ such that $\cF|_V$ is locally free and an integral subscheme $B \subset \cF|_V$ such that $W = \bar{\cV|_V \times_{\cF|_V} B }$ where the closure is taken in $\cV|_{S_W}$.

Then $v^*[W]$ is defined by the following. Let $V' := (S\times_Y X) \times_S V$. Then $\cF|_{V'} = v^*(\cF|_V)$ is locally free and let $v^*[B] = \sum_i a_i [B_i]$. Let $S_{W_i} := \bar{\pi(W_i)}$ and $V_i':= V'\cap S_{W_i}$. Let $W_i := \bar{\cV|_{V_i'}\times_{\cF|_{V_i'}} B_i }$ where the closure is taken in $\cV|_{S_{W_i}}$. 

Note that $W_i$ are saturated integral sub-algebraic spaces of $\cV|_{S_{W_i}}$. Then we define $v^*[W] = \sum_i a_i [W_i] \in Z_*^{sat}(\cV|_{S \times_Y X})$

\end{itemize}
\end{lemm}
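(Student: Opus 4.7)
The plan is to use the open-dense characterization of Lemma \ref{satschlemm2}: a saturated integral $W \subset \cV$ admits an open dense $V \subset S_W$ with $\cF|_V$ locally free and an integral $B \subset \cF|_V$ such that $W = \overline{\cV|_V \times_{\cF|_V} B}$, where the closure is taken inside $\cV|_{S_W}$. Both parts of the lemma reduce to transporting such a description along $v$.

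For part (i), integrality of $v(W)$ is immediate: the proper morphism $v:\cV|_{S_2} \to \cV$ carries integral closed substacks to integral closed substacks, essentially by Lemma \ref{closurepreserve2} applied to the stack-theoretic image. For saturation, starting from $W = \overline{\cV|_V \times_{\cF|_V} B}$, I would choose an open dense $V'' \subset v(S_W)=S_{v(W)}$ over which $\cF|_{V''}$ is locally free. Since $v|_{S_W}:S_W \to v(S_W)$ is proper and dominant, $v^{-1}(V'') \cap V$ is open dense in $S_W$, so after shrinking $V$ I may assume $v(V) \subset V''$. The identification $\cF|_V = v^*(\cF|_{V''})$ lets me take the stack-theoretic image $B'' := v(B) \subset \cF|_{V''}$, which is integral. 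A direct inspection of sections shows $v(W)|_{V''}$ agrees with $\cV|_{V''}\times_{\cF|_{V''}}B''$ on a dense open, and closing up inside $\cV|_{v(S_W)}$ realizes $v(W)$ in the form required by Lemma \ref{satschlemm2}.

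For part (ii), the input already comes equipped with a $(V,B)$-description of $W$, and flatness of $v$ guarantees that $\cF|_{V'} = v^*(\cF|_V)$ is locally free, while the ordinary flat pullback of cycles yields $v^*[B] = \sum a_i[B_i]$ with each $B_i$ integral. Setting $W_i := \overline{\cV|_{V_i'} \times_{\cF|_{V_i'}} B_i}$ as in the statement, Lemma \ref{satschlemm2} immediately exhibits each $W_i$ as saturated integral, so $\sum a_i[W_i] \in Z^{sat}_*(\cV|_{S\times_Y X})$ makes sense. For well-definedness I would check that shrinking $V$ to an open dense $V_0 \subset V$ replaces $B$ by $B|_{V_0}$ and $B_i$ by $B_i|_{V_0'}$ without changing the closures, and that two descriptions $(V_1,B_1), (V_2,B_2)$ of the same $W$ agree on $V_1 \cap V_2$ by the uniqueness clause of Definition \ref{closed}, so the cycle produced is independent of the chosen description.

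The most delicate step I anticipate is the section-level comparison in part (i): one must argue that étale-locally on $V''$, a section of $\cV|_{V''}$ whose image in $\cF|_{V''}$ lies in $B''=v(B)$ actually comes from a section of $\cV|_V$ landing in $B$. This uses that after further shrinking $V''$ the map $v:V \to V''$ becomes surjective on geometric points, together with the strict pullback behavior of the vector bundle $\cV$. Once this comparison is granted, both parts follow formally from Lemmas \ref{satschlemm} and \ref{satschlemm2}.
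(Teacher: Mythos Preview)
The paper provides no separate proof for this lemma; the only justification is embedded in the statement itself (``Using Lemma \ref{satschlemm} we can easily check that $v(W)$ is also an integral saturated subscheme''), and part (ii) is purely a definition. Your proposal supplies the details the paper omits, and your approach via the open-dense characterization of Lemma \ref{satschlemm2} is correct and essentially what the paper has in mind (Lemma \ref{satschlemm2} being an immediate consequence of Lemma \ref{satschlemm}).

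One small point worth tightening in part (i): after you produce $B'' = v(B)$ (or its closure) inside $\cF|_{V''}$, the cleanest way to conclude $v(W) = \overline{\cV|_{V''}\times_{\cF|_{V''}} B''}$ is not the section-level comparison you flag as delicate, but a dimension count. Over the locus where $\cF$ is locally free, both $W|_V \to B$ and $v(W|_V) \to v(B)$ are torsors under the kernel bundle $\ker(q)$, hence have the same relative dimension; so $v(W)$ and $\overline{\cV|_{V''}\times_{\cF|_{V''}} B''}$ are integral closed substacks of the same dimension with one contained in the other, forcing equality. This sidesteps the issue that $v(V)$ is only constructible in $V''$.
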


\begin{defi}[Descent]\label{descent map}
We define a descent map $q_* : Z^{sat}_*(\cV) \to Z_*(\cF|_S)$ to be the following. For a saturated integral subscheme(or substack) $D \subset \cV$, let $S_D := \bar{\pi(D)}$ where $\pi : \cV \to S$ is a projection. Since $D$ is integral, $S_D$ is also integral. Choose an open dense subset $U\subset S_D$ such that $\cF|_U$ is locally free. We define $q(D) := \bar{q(D|_U)} \subset \cF|_{S_{D}}$ where the closure is taken in $\cF|_{S_{D}}$, which is again integral by Lemma \ref{closurepreserve2} and call it an image of $D$ via $q$.
Then, we define $q_*$ to be :
\[ q_*[D] := [q(D)] \subset Z_*(\cF|_S).
\]
\end{defi}

Then next lemma says that there is a 1-1 correspondence between an integral invarinat cycles and integral substacks.

\begin{lemm}\label{pullbacktosat}
Consider the setting of the above lemma.
Let $W \subset \cV$ be an saturated integral subscheme(or substack), $q : \cV \surra \cF$ be the projection. Let $S_W := \bar{\pi(W)}$ and $q(W) \subset \cF|_{S_W}$. Choose an open dense subset $V \subset S_W$ such that $\cF|_V$ is locally free.
Then we have $\bar{ q(W)|_V \times_{\cF|_V} \cV|_V } = W$.
\end{lemm}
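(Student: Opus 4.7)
The plan is to reduce the equality to an open dense subset of $S_W$ on which $\cF$ becomes locally free, perform the identification there, and then take closures on both sides in $\cV|_{S_W}$.

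First I would apply Lemma \ref{satschlemm2} to the saturated integral subscheme (or substack) $W$: there exists an open dense $V' \subset S_W$ on which $\cF|_{V'}$ is locally free, together with an integral $B \subset \cF|_{V'}$, such that $W|_{V'} = \cV|_{V'}\times_{\cF|_{V'}} B$, and $W$ is the closure of this in $\cV|_{S_W}$. Both the given $V$ and this $V'$ are open dense in the integral stack $S_W$, so I may replace $V$ by $V\cap V'$ and assume $V \subset V'$; this leaves the left-hand side of the asserted equality unchanged because, by Lemmas \ref{closurepreserve1} and \ref{closurepreserve2}, the stack-theoretic closure in $\cV|_{S_W}$ of an integral substack is independent of the choice of open dense subset on which one specifies it.

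Over this common $V$, the restriction $q|_V : \cV|_V \surra \cF|_V$ is a surjection of vector bundles, so the image $q(W|_V)$ is well-defined and equals $B|_V$; by base change along $V \subset V'$ one also gets $W|_V = \cV|_V\times_{\cF|_V} B|_V$. By Definition \ref{descent map}, $q(W) = \overline{q(W|_V)} \subset \cF|_{S_W}$, hence $q(W)|_V = B|_V$. This gives
\[
q(W)|_V \times_{\cF|_V}\cV|_V \;=\; B|_V\times_{\cF|_V}\cV|_V \;=\; W|_V.
\]

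Finally I would take closures in $\cV|_{S_W}$ to conclude $\overline{q(W)|_V\times_{\cF|_V}\cV|_V} = \overline{W|_V} = W$, the last equality holding because $W$ is integral and $W|_V$ is open dense in $W$ (as $V$ is open dense in $S_W$ and $\pi(W)$ is dense in $S_W$). The main obstacle is purely organizational: coordinating the hypothesis open set $V$ with the open set supplied by Lemma \ref{satschlemm2}, and tracking how closures interact with pullback along a surjection of vector bundles. All of the needed compatibilities, however, are already packaged into Lemmas \ref{closurepreserve1}, \ref{closurepreserve2}, and \ref{satschlemm2}, so no essentially new ingredient is required.
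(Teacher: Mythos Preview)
Your proof is correct and follows essentially the same approach as the paper. The paper invokes Lemma \ref{satschlemm} (the bundle-stack characterization of saturation) and then argues via $\cV'$-invariance that $q'^{*}q'(W|_{V'})=W|_{V'}$ on the intersection $V':=V\cap U$, whereas you invoke Lemma \ref{satschlemm2} directly to obtain $B\subset\cF|_{V'}$ with $W|_{V'}=\cV|_{V'}\times_{\cF|_{V'}}B$, making the fiber-product identity immediate; both then conclude by taking closures of an open dense piece of the integral $W$.
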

\begin{proof}
Since $W$ is saturated integral subscheme(or substack), there exist an open dense subset $U \subset S$ such that there is a locally free resolution $\cV' \to \cV|_{U} \surra \cF|_U \to 0$, and an integral substack $\WW \subset \FF:=[\cV|_U / \cV']$ such that $W = \bar{\cV|_U \times_{\FF} \WW }$. Then $W|_U$ is closed under $\cV'$-action. Let $V' := V\cap U$. Consider a morphism $q' : \cV|_{V'} \surra \cF|_{V'}$, which is a surjection of vector bundles. Then we observe that $q'^*q'(W|_{V'}) = W|_{V'}$ since $W|_{V'}$ is closed under $\cV'$-action and $q'$ is a surjection of vector bundles. Since $W$ is integral, $W|_V' \subset W$ is open dense. Therefore we have
\[
\bar{q(W)|_V \times_{\cF|_V} \cV|_V} = \bar{q(W)|_{V'} \times_{\cF|_{V'}} \cV|_{V'}} = \bar{q'^*q'(W|_{V'})} = \bar{W|_{V'}} = W.
\]
\end{proof}

The next lemma shows that the descent map commutes with proper pushforwards. Consider the following diagram :
\[\xymatrix{\cV|_{S_2} \stackrel{q}{\surra} \cF|_{S_2} \ar[rr]^v \ar[d]^{\pi} & & \cV \stackrel{q}{\surra} \cF|_{S_1} \ar[d]^{\pi} & \\
S_2 \ar@{}[rrrdd]|{\commra} \ar[rr]^v \ar[ddr]^g & & S_1 \ar[ddr]^f & \\
& \cF|_X \ar[d] & & \cF \ar[d] \\
& X \ar[rr]^u & & Y}
\]
where $X,Y$ are DM stacks, $u,v$ are proper morphisms, $S_1$ and $S_2$ are integral schemes and $\cF$ is a coherent sheaf stack on $Y$, $q : \cV \surra \cF|_{S_1}$ is a surjection from a vector bundle $\cV$. Then, we have the following.

\begin{lemm}\label{descentpushforward}
On the above setting, we have :
\[q_*\circ v_* = v_* \circ q_* : Z^{sat}_* (\cV|_{S_2}) \to Z_* (\cF|_{S_1}).
\]
\end{lemm}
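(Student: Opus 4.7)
The plan is to reduce by linearity to a single integral saturated subscheme (or substack) $W \subset \cV|_{S_2}$, set $S_W := \bar{\pi(W)}$, and verify the equality by restricting to a sufficiently nice open of $v(S_W) \subset S_1$, where both pushforwards reduce to transparent computations with vector bundles and scheme-theoretic images. If $v|_{S_W} : S_W \to v(S_W)$ is not generically finite, then both $v_*[W]$ and $v_*[q(W)]$ vanish by definition (since $q(W)$ also dominates $S_W$ under $\pi$), so the equality is trivial; hence I assume $v|_{S_W}$ is generically finite of some degree $n \geq 1$.

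The first real step is to choose an open dense subset $V \subset v(S_W)$ on which (i) $\cF|_V$ is locally free, so that $q : \cV|_V \surra \cF|_V$ is a surjection of vector bundles; (ii) $v^{-1}(V) \cap S_W \to V$ is finite locally free of degree $n$; and (iii) Lemma \ref{satschlemm2} applies over $v^{-1}(V) \cap S_W$, producing an integral closed subscheme $B \subset \cF|_{v^{-1}(V) \cap S_W}$ with $W|_{v^{-1}(V)} = \cV|_{v^{-1}(V) \cap S_W} \times_{\cF|_{v^{-1}(V) \cap S_W}} B$. Since stack-theoretic closure recovers an integral cycle from any dense open (Lemmas \ref{closurepreserve1}, \ref{closurepreserve2}), it suffices to check the desired equality of cycles over $V$.

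On $V$, the flatness and surjectivity of $q$ make everything transparent. Flat base change for scheme-theoretic image gives $v(W)|_V = q^{-1}(v(B))$, and surjectivity of $q$ on reduced subschemes gives both $q(W)|_{v^{-1}(V)} = B$ and $q(v(W))|_V = v(B)$; hence the underlying integral substacks satisfy $q(v(W)) = v(q(W)) = \bar{v(B)}$ (closure in $\cF|_{v(S_W)}$). For the multiplicities, $W \to v(W)$ is the base change of $B \to v(B)$ along the flat dominant projection $v(W) = q^{-1}(v(B)) \to v(B)$, so both $\deg(v|_W)$ and $\deg(v|_{q(W)})$ equal the common generic degree $d := \deg(B \to v(B))$. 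Combining gives $q_*(v_*[W]) = d \cdot [q(v(W))] = d \cdot [v(q(W))] = v_*(q_*[W])$, which is the claim.

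The subtle point I expect to be the main obstacle is the identification of the two degrees: one is tempted to conclude $\deg(v|_W) = n$ from the base map, but this fails when $v$ is ramified on $B$ (for instance when $\cV = \cF$, $q = \id$, and $B$ is the graph of a ramified map, the degree can drop strictly below $n$). What saves the argument is that the same potential discrepancy occurs symmetrically on $W$ and on $q(W)$, and the flat base change relating them transports the generic degree unchanged to both sides, so the two degrees are forced to agree even when neither equals $n$.
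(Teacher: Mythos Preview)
Your main argument---passing to an open where $\cF$ is locally free, writing $W$ as the $q$-preimage of an integral $B$, and using the resulting Cartesian square to identify $\deg(v|_W)$ with $\deg(v|_{q(W)})$---is exactly the paper's approach, and it is correct. The paper phrases this as: the square
\[
\xymatrix{W|_{U'} \ar[r]^{v} \ar[d]^{q} & v(W|_{U'}) \ar[d]^{q} \\ q(W|_{U'}) \ar[r]^{v} & v(q(W|_{U'}))}
\]
has vertical arrows that are vector-bundle projections (restrictions of $\ker q$), hence the two horizontal degrees agree.

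However, your initial case reduction is wrong. The claim that if $v|_{S_W} : S_W \to v(S_W)$ is not generically finite then both $v_*[W]$ and $v_*[q(W)]$ vanish is false. Take $S_1 = \Spec k$, $S_2 = \PP^1$, $\cV = \cF$ trivial of rank $2$, $q = \id$, and let $W \subset \PP^1 \times k^2$ be the incidence locus $\{([a:b],(x,y)) : ay = bx\}$. This $W$ is saturated and integral, $v|_{S_W} : \PP^1 \to \Spec k$ is not generically finite, yet $W \to v(W) = k^2$ is birational, so $v_*[W] = [k^2] \neq 0$ (and likewise for $q(W) = W$). So neither side vanishes.

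This actually matters for your setup, not just for the reduction: requirement (ii), that $v^{-1}(V) \cap S_W \to V$ be finite locally free, is unattainable when $v|_{S_W}$ is not generically finite, and requirement (iii) can also fail on \emph{all} of $v^{-1}(V) \cap S_W$ for every choice of $V$ (the complement of the domain of $B$ can surject onto $v(S_W)$). The paper avoids this by not asking $B$ to live on $v^{-1}(V) \cap S_W$: it takes $U \subset S_W$ open dense with $B \subset \cF|_U$ from Lemma~\ref{satschlemm2}, separately takes $V \subset v(S_W)$ with $\cF|_V$ locally free, and then works on $U' := U \cap v^{-1}(V)$, which is open dense in $S_W$ unconditionally. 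Your Cartesian-square argument then runs verbatim on $U'$ and yields equality of degrees (possibly both zero) without any hypothesis on $v|_{S_W}$. Drop (ii) and the case split, replace $v^{-1}(V) \cap S_W$ by $U'$, and your proof becomes the paper's.
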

\begin{proof}
Let $W \subset \cV|_{S_2}$ be a saturated integral subscheme of $\cV|_{S_2}$. Let $S_{2,W} := \bar{\pi(W)}$, and let $S_{1,W}:=v(S_{2,W})$, which are all integral schemes. Since $W$ is saturated, there is an open dense subset $U \subset S_{2,W}$ such that $\cF|_U$ is locally free and an integral subscheme $B \subset \cF|_U$ such that $W = \bar{\cV|_U\times_{\cF|_U} B}$. 
On the other hand, we can choose an open dense subset $V \subset S_{1,W}$ such that $\cF|_V$ is locally free and let $U' = U \cap v^*(V)$.

Then we have $q_*[W] = \bar{B}$, $v_*(q_*[W]) = \deg(q(W|_{U'}) \to v(q(W|_{U'})))\cdot [\bar{v(q(W|_{U'}))}] $. Consider the following commutative fiber diagram :
\[\xymatrix{W|_{U'} \ar@{}[rd]|{\square} \ar[r]^(0.45)v \ar[d]^q & v(W|_{U'}) \ar[d]^q \\
 q(W|_{U'}) \ar[r]^(0.45)v & v(q(W|_{U'}))
}
\]
Since $q : \cV|_U \surra \cF|_U$ is a surjection of a vector bundle and $W$ is saturated, we can observe that vertical arrows in the above diagram are projection of vector bundles. Therefore, we have $\deg(W|_{U'} \to v(W|_{U'})) = \deg(q(W|_{U'}) \to v(q(W|_{U'})))$. Therefore, we obtain $v_* q_*[W] = \deg(W|_{U'} \to v(W|_{U'}))\cdot $ $[\bar{v(q(W|_{v^*(U)}))}] = \deg(W \to v(W))\cdot [q(v(W))]$.
On the other hand, we have $q_*(v_*[W]) = (\deg(W \to v(W))\cdot q_*[v(W)]) = \deg(W \to v(W))\cdot [q(v(W))].$  
\end{proof}

Now, we can easily re-describe a notion of projections which appeared in \cite[Definition 3.3]{CL11semi}.

\begin{defi}[Projection]\label{projection}
Let $\cF$ be a coherent sheaf stack on $X$. Let $f : S \to X$ be a proper morphism from a scheme(or DM stack) $S$, $q : \cV \surra \cF|_S$ be a surjection from the locally free sheaf $\cV$, and $W \subset \cV$ be a saturated invariant closed subscheme(or substack) of $\cV$. Let $S_W := \bar{\pi(W)}$ where $\pi : \cV \to S$ is the projection. Then we define a substack $\zeta_S(W)(\textrm{or sometimes we write it by }$ $\zeta_{f}(W)\subset \cF|_{f(S_W)})$ to be $\zeta_S(W) := f(q(W))$.
\end{defi}

\noindent Next, we introduce a modification of the notion of projection, which is defined more functorially.

\begin{defi}[Modified projection]
We define a modified projection 
\[(\zeta_S)_*(\textrm{or sometimes we write it by }(\zeta_{f})_* ) := f_* \circ q_* : Z^{sat}(\cV) \to Z_*(\cF) \] 
\end{defi}

\noindent Then, the following lemma is straightforward.

\begin{lemm}
For saturated invariant subscheme(or substack) $W \subset \cV$, we have :
\[ (\zeta_S)_* [W] = d_W \cdot \zeta_S(W).
\]
where $d_W:=\deg(q(W|_{f^*(U)}) \to f(q(W|_{f^*(U)})))$.
\end{lemm}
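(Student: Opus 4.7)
The plan is to unpack the composition $(\zeta_S)_* = f_* \circ q_*$ in two steps and verify that the degree that appears is exactly $d_W$.

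First I would apply $q_*$ to $[W]$ using Definition \ref{descent map}. Set $S_W := \bar{\pi(W)}$. To align with the second step, I would choose the open dense subset in the definition of $q(W)$ to be $U' := f^*(U) \cap S_W$, where $U \subset f(S_W)$ is the open dense subset with $\cF|_U$ locally free that is used in Definition \ref{pushforward} for $f_*$. The subset $U'$ is open dense in $S_W$ because the restriction $f : S_W \to f(S_W)$ is dominant by the definition of the stack-theoretic image, and $\cF|_{U'}$ is locally free as a restriction of $\cF|_U$. This yields $q_*[W] = [q(W)]$ with $q(W) = \bar{q(W|_{U'})}$ taken inside $\cF|_{S_W}$.

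Second I would apply $f_*$ to $[q(W)]$. Definition \ref{pushforward} together with Definition \ref{projection} give
\[
f_*\bl q_*[W]\br = \deg\bl f|_{q(W)}\br \cdot [f(q(W))] = \deg\bl f|_{q(W)}\br \cdot [\zeta_S(W)].
\]
The key remaining identification is $\deg(f|_{q(W)}) = d_W$. Since $\cF|_{U'}$ is locally free, the formation of $q(W)$ commutes with restriction to $U'$, so $q(W)|_{U'} = q(W|_{U'})$. Because $W$ is supported on $\cV|_{S_W}$ and $U' = f^*(U)\cap S_W$, we also have $W|_{f^*(U)} = W|_{U'}$. Therefore
\[
\deg\bl f|_{q(W)}\br = \deg\bl q(W)|_{U'} \to f(q(W)|_{U'})\br = \deg\bl q(W|_{f^*(U)}) \to f(q(W|_{f^*(U)}))\br = d_W,
\]
and combining gives $(\zeta_S)_*[W] = d_W \cdot \zeta_S(W)$.

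I do not expect any serious obstacle here: the statement is essentially a tautological bookkeeping argument. The only subtlety is coordinating the choices of open dense subsets in the two separate definitions of $q_*$ and $f_*$; once they are aligned through $U' = f^*(U)\cap S_W$, everything reduces to the fact that a generic fiber degree is preserved under the restriction and closure operations already built into Definitions \ref{descent map} and \ref{pushforward}.
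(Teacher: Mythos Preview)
Your proposal is correct. The paper does not give a proof of this lemma at all, stating only that it is ``straightforward''; your careful unpacking of $(\zeta_S)_* = f_* \circ q_*$ via the aligned open subset $U' = f^*(U)\cap S_W$ is exactly the intended bookkeeping and fills in the details the paper omits.
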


Next, we can show that the modified projection and proper pushforwards are compatible. Consider the following fiber diagram :
\[\xymatrix{\cV|_{S_2} \stackrel{q}{\surra} \cF|_{S_2} \ar[rr]^v \ar[d]^{\pi} & & \cV \stackrel{q}{\surra} \cF|_{S_1} \ar[d]^{\pi} & \\
S_2 \ar[rr]^v \ar[ddr]^g & & S_1 \ar[ddr]^f & \\
& \cF|_X \ar[d] & & \cF \ar[d] \\
& X \ar[rr]^u & & Y}
\]
where $f,g,u,v$ are proper morphisms, $S_1$,$S_2$ are integral schemes, $\cF$ is a coherent sheaf stack on $Y$, $q : \cV \surra \cF|_{S_1}$ is a surjection from a vector bundle $\cV$. Then, we have the following.

\begin{lemm}\label{mprojectionpushforward} On the above setting, we have :
\[ u_* \circ (\zeta_{S_2})_* = (\zeta_{S_1})_* \circ v_* : Z^{sat}_*(\cV|_{S_2}) \to Z_*(\cF). \]
\end{lemm}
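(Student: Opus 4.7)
The plan is to unpack the definitions of the modified projections and reduce the claim to the functoriality of proper pushforward (Proposition \ref{pushforwardcomp}) together with the compatibility of the descent map with proper pushforward (Lemma \ref{descentpushforward}). By definition, $(\zeta_{S_2})_* = g_* \circ q_*$ and $(\zeta_{S_1})_* = f_* \circ q_*$, so the asserted identity rewrites as
\[ u_* \circ g_* \circ q_* \;=\; f_* \circ q_* \circ v_* \]
as homomorphisms $Z^{sat}_*(\cV|_{S_2}) \to Z_*(\cF)$.

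First I would attack the right-hand side. The square formed by $v : S_2 \to S_1$, together with the two surjections $q$ and the restriction $\cV|_{S_2} \to \cV$, satisfies the hypotheses of Lemma \ref{descentpushforward}: $v$ is proper and $S_1$, $S_2$ are integral schemes. That lemma therefore lets me commute the inner $q_*$ past $v_*$, rewriting the right-hand side as $f_* \circ v_* \circ q_*$.

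Next I would apply Proposition \ref{pushforwardcomp} to each side. The left-hand side collapses to $(u \circ g)_* \circ q_*$ and the modified right-hand side collapses to $(f \circ v)_* \circ q_*$. Since the outer diagram in the statement commutes (so that $u \circ g = f \circ v$ as proper morphisms $S_2 \to Y$), these two compositions of pushforward operators agree, which completes the argument.

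I do not anticipate any substantive obstacle: the two previous lemmas already carry the real content, and the argument is a short diagram chase once the definition $(\zeta_S)_* = f_* \circ q_*$ is unwound. The only points to verify are that all maps in sight are proper, so Proposition \ref{pushforwardcomp} applies on both sides, and that the integrality and properness hypotheses of Lemma \ref{descentpushforward} are satisfied, both of which are built into the hypotheses of the lemma.
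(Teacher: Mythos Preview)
Your proposal is correct and follows essentially the same route as the paper: unpack $(\zeta_{S_i})_* = f_{i*}\circ q_*$, use Lemma \ref{descentpushforward} to commute $q_*$ past $v_*$, and use Proposition \ref{pushforwardcomp} together with $u\circ g = f\circ v$ to identify the outer pushforwards. The paper's proof is the one-line chain $u_*\circ g_*\circ q_* = f_*\circ v_*\circ q_* = f_*\circ q_*\circ v_*$, which is exactly your argument compressed.
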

\begin{proof}
We have 
$u_* \circ (\zeta_{S_2})_* = u_*\circ g_* \circ q_* = f_* \circ v_* \circ q_* = f_* \circ q_* \circ v_* = (\zeta_{S_1})_* \circ v_*$ by Lemma \ref{descentpushforward}.
\end{proof}



\subsection{Proper representatives and Modified proper representatives}\label{sec:proper+modifiedrep}
In this section, we review a notion of a proper representative of an integral substack $\cA \subset \cF|_Z$ where $Z$ is an integral substack of $X$ and $\cF$ is a coherent sheaf stack on $X$. This notion comes from \cite{CL11semi} and the authors defined a Gysin map via a coherent sheaf stack $\cF$ using this proper representative. We will deal with Gysin maps later. Moreover, we will introduce a notion of Modified proper representatives for some technical reasons.

\begin{defi}[Proper representatives]\cite[p. 818 (3.6)]{CL11semi}\label{properrep}
Let $\cA \subset \cF|_Z$ be an integral substack where $Z\subset X$ is an integral closed substack. Choose a proper, generically finite surjection $f:S\to Z$ such that $S$ is projective and integral scheme, and choose a surjection from a vector bundle $q : \cV \surra \cF|_S$. A choice of such $f$ exists by \cite{Ols05} and \cite[Lemma 1.10]{Vis89}. Since $f$ is generically finite, we can choose an open dense subset $S' \subset S$ such that $f|_{S'} : S' \to Z$ is \'etale. Then, let $\what{\cA}:=\bar{\cV|_{S'}\times_{\cF|_{S'}} \cA|_{S'}}$. We call the triple $(f:S\to Z, q : \cV \surra \cF|_S, \what{\cA})$ a proper representative of the integral substack $\cA \subset \cF|_{Z}$.
\end{defi}

Note that $\what{\cA}$ is reduced by the definition but may not be integral since an \'etale fiber product does not preserves irreducibility.

\begin{prop}\label{properrepsat}
In Definition \ref{properrep}, the reduced subscheme $\what{\cA} \subset \cV$ is a union of saturated integral subschemes. 
\end{prop}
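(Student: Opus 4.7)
The plan is to decompose $\what{\cA}$ into its integral components and verify the criterion of Lemma \ref{satschlemm2} for each. By Definition \ref{closed} applied to $\cA|_{S'} \subset \cF|_{S'}$ together with the surjection $q|_{S'} : \cV|_{S'} \surra \cF|_{S'}$, the fiber product $T' := \cV|_{S'} \times_{\cF|_{S'}} \cA|_{S'}$ is a reduced closed subscheme of $\cV|_{S'}$. I would write $T' = \bigcup_i T'_i$ as a finite union of its integral components and set $W_i := \bar{T'_i}^{\,\cV}$. Each $W_i$ is integral (closure of an integral subscheme in an ambient scheme), and $\what{\cA} = \bar{T'}^{\,\cV} = \bigcup_i W_i$ furnishes the desired decomposition.

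Now fix $i$ and let $S_{W_i} := \bar{\pi(W_i)}^{\,S}$, where $\pi : \cV \to S$ is the projection. Since $T'_i$ is dense in $W_i$ and $\pi$ is continuous, $S_{W_i} = \bar{\pi(T'_i)}^{\,S}$, so $S' \cap S_{W_i}$ contains $\pi(T'_i)$ and is therefore open and dense in $S_{W_i}$. Applying generic freeness to the coherent sheaf $\cF|_{S_{W_i}}$ on the integral scheme $S_{W_i}$, I can further shrink to obtain an open dense subset $V \subset S' \cap S_{W_i}$ on which $\cF|_V$ is locally free. Over such $V$, the surjection $q|_V : \cV|_V \surra \cF|_V$ is a surjection of vector bundles, which, at the level of total spaces, is a smooth surjection with geometrically integral affine fibers. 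Pullback along such a morphism preserves integrality and gives a bijection between the integral components of $\cA|_V$ and those of $T'|_V = \cV|_V \times_{\cF|_V} \cA|_V$. The open restriction $T'_i|_V$, being a nonempty open (hence dense) subset of the integral $T'_i$, corresponds to a unique integral component $B_i \subset \cA|_V$, so that $T'_i|_V = \cV|_V \times_{\cF|_V} B_i$. Taking closures in $\cV|_{S_{W_i}}$ yields $W_i = \bar{\cV|_V \times_{\cF|_V} B_i}$, and Lemma \ref{satschlemm2} identifies $W_i$ as saturated.

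The delicate step is the matching of two integral decompositions of $T'|_V$: the one inherited from $T' = \bigcup_i T'_i$ and the one obtained by pulling back the integral components of $\cA|_V$ along $q|_V$. This relies on uniqueness of the integral decomposition of a Noetherian reduced scheme, together with the fact that a smooth surjection with geometrically integral fibers induces a bijection between the integral components of the base and of the source. A secondary technicality is ensuring $V$ can be chosen inside $S'$, for which the explicit description $S_{W_i} = \bar{\pi(T'_i)}^{\,S}$ with $\pi(T'_i) \subset S'$ is essential.
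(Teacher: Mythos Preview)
Your proof is correct and follows essentially the same strategy as the paper's: pass to an open locus where $\cF$ is locally free, match integral components of $\what{\cA}$ with pullbacks of integral subschemes of $\cA$ along the vector bundle surjection $q$, and invoke the saturation criterion (Lemma~\ref{satschlemm2}). The only organizational difference is that you decompose $T'$ first and choose a separate $V \subset S_{W_i}$ for each component, whereas the paper fixes a single open $V \subset Z$ upfront and then argues (using the dominance clause in Definition~\ref{closed}) that every component of $\what{\cA}$ is already visible over $f^{-1}(V)$; your ordering sidesteps that dominance step entirely.
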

\begin{proof}
Let $V_0 \subset Z$ be an open dense subset such that $\cF|_{V_0}$ is locally free. Let $V = V_0 \cap f(S')$. Then $\cA|_V \subset \cF|_V$ is represented by a closed substack of $\cF|_V$. Then $f : \cA|_{f^*(V)} \to \cA|_V$ is an \'etale and generically finite morphism. Let $\cA|_{f^*(V)} = \bigcup_i B_i$ where $B_i \subset \cV|_{f^*(V)}$ is an integral subscheme. Note that since $f$ is \'etale and surjective, we have $f(B_i) = \cA|_V$ for every $i$. we define $\what{\cA}^{ref} := \bar{ \cV|_{f^*(V)}\times_{\cF|_{f^*(V)}} \cA|_{f^*(V)} }$. 
By the irreducible decomposition $\cA|_{f^*(V)} = \bigcup_i B_i$, when we let $W_i := \bar{\cV|_V \times_{\cF|_V} B_i}$, we have $\what{\cA}^{ref} = \bigcup_i W_i, [\what{\cA}^{ref}] = \sum_i [W_i]$.  Then $W_i$ are saturated integral subschemes.

By construction, we have $\what{\cA}^{ref} \subset \what{\cA}$. Let $W \subset \what{\cA}$ be an irreducible component which is not contained in $\cA^{ref}$.


But since $\cA \subset \cF|_Z$ is integral substack, we represent $\cV|_{S'} \times_{\cF|_{S'}} \cA|_{S'}$ by a reduced subscheme in $\cV|_{S'}$ whose every irreducible component dominates $S'$ by Definition \ref{cycle}. Hence $W$ dominates $S$, i.e. $\bar{\pi(W)}=S$ where $\pi : \cV \to S$ is the projection.


Then $W|_{f^*(V)} \subset \cV|_{f^*(V)} \times_{\cF|_{f^*(V)}} \cA|_{f^*(V)}$ where $W|_{f^*(V)} \subset W$ is an open dense subset of $W$. Thus $W$ also contained in $\what{\cA}^{ref}$. Thus we have $\what{\cA}^{ref} = \what{\cA}$.
\end{proof}

\begin{rema}\label{properrepcomp}
By the proof of Proposition \ref{properrepsat}, we have $\zeta_S(W_i) = \cA$ for every $i$. 
\end{rema}

\begin{defi}[Modified proper representatives for integral substacks]
For an integral substack $\cA \subset \cF|_Z$ where $Z \subset X$ is an integral closed substack, a modified proper representative is a triple 
\[(f: S \surra Z, q : \cV \surra \cF|_S, W_{\cA}\subset \cV)
\]
such that $S$ is a DM stack, $f$ is a proper surjective morphism, $q$ is a surjection from a vector bundle $\cV$ on $S$ and $W_{\cA}$ is a saturated integral stack of $\cV$ where $q(W_{\cA}) \to \cA$ is generically finite with nonzero degree.

The statement that $q(W_{\cA}) \to \cA$ is generically finite means that if we choose an open dense subset $V \subset Z$ where $\cF|_V$ is locally free, then the morphism $q(W_{\cA})|_{f^*(V)} \to \cA|_V$, which is a morphism of DM stacks, is generically finite. 
\end{defi}

\begin{rema}
We emphasize that $S$ need to not to be a scheme. Also, $f: S \to Z$ need not to be generically finite, which is a mainly different point from a notion of proper representatives. Moreover, since $W_{\cA}$ is integral, it makes many computations simpler.
\end{rema}

\begin{rema}\label{modifiedproperexist}
In Definition \ref{properrep} let $\what{\cA} = \bigcup_i W_i$. Then, by Proposition \ref{properrepsat} and Remark \ref{properrepcomp}, we have that the triple $(f : S \to Z, q : \cV \surra \cF|_S, W_i \subset \cV)$ is a modified proper representative of $\cA \subset \cF|_Z$. Therefore, modified proper representative always exists for an integral substack $\cA \subset \cF|_Z$. 
\end{rema}

\subsection{Flat pull-backs}\label{sec:flatpullback}
Using modified proper representative, we define flat pull-back of the cycle groups for coherent sheaf stacks.

\noindent Next, we define flat pull-backs. Consider the following fiber diagram again :
\[
\xymatrix{\cE|_X \ar[r]^u \ar[d]^{\pi} \ar@{}[rd]|{\square} & \cE \ar[d]^{\pi} \\
X \ar[r]^u & Y.}
\]
Assume that $u$ is a flat morphism of relative dimension $n$. Then, we want to define a flat pull-back $u^* : Z_*(\cE) \to Z_*(\cE|_X)$. In a technical reason, we define a pull-back in $\QQ$-coefficient instead, i.e. $u^* : Z_*(\cE)_{\QQ}:=Z_*(\cE)\otimes_{\ZZ} \QQ \to Z_*(\cE|_X)_{\QQ}$

\begin{lemm}\label{component}
Let $A,B,C$ be integral DM stacks and let $f:B\to A$, $g:C\to A$ be generically finite surjective morphisms. Then, there exists an irreducible component $W$ of $B\times_A C$ such that induced projections $W\to B$ and $Z \to C$ are generically finite and surjective. 
\end{lemm}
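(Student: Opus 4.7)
The plan is to reduce the problem to a generic-point (Galois-theoretic) analysis. Let $\eta_A, \eta_B, \eta_C$ denote the generic points of $A$, $B$, $C$ respectively. Since $f$ and $g$ are generically finite and dominant, the induced residue field extensions $k(\eta_A) \hookrightarrow k(\eta_B)$ and $k(\eta_A) \hookrightarrow k(\eta_C)$ are finite, and because we work in characteristic zero they are finite separable. I would then form the tensor product $R := k(\eta_B) \otimes_{k(\eta_A)} k(\eta_C)$, which by separability is a nonzero finite \'etale algebra over both $k(\eta_B)$ and $k(\eta_C)$, and therefore decomposes as a finite product of fields $K_1 \times \cdots \times K_m$ with $m \ge 1$.

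The next step is to match the factors $K_i$ with the irreducible components of $B \times_A C$ that dominate both $B$ and $C$. The canonical map $\Spec R \hookrightarrow B \times_A C$ realizes each $\Spec K_i = \{\xi_i\}$ as a point of $B \times_A C$ lying over $\eta_B$ and $\eta_C$. I would show that each $W_i := \overline{\{\xi_i\}}$ is an irreducible component: it has dimension $\mathrm{trdeg}_k\, k(\xi_i) = \mathrm{trdeg}_k\, k(\eta_B) = \dim B$ (since $K_i/k(\eta_B)$ is finite), and any strictly larger irreducible closed $W' \supsetneq W_i$ would have generic point $\eta_{W'}$ specializing to $\xi_i$, so its images in $B$ and $C$ would specialize to $\eta_B$ and $\eta_C$ and hence equal them, forcing $\eta_{W'} \in \{\xi_1, \ldots, \xi_m\}$; a dimension comparison (any closed subset of $B \times_A C$ dominating $B$ has dimension at most $\dim B$, since the generic fiber of $B \times_A C \to B$ is finite) then yields $W' = W_i$.

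By construction the projections $W_i \to B$ and $W_i \to C$ send the generic point of $W_i$ to $\eta_B$ and $\eta_C$, and their generic fibers are $\Spec K_i$, finite over $\Spec k(\eta_B)$ and $\Spec k(\eta_C)$; so both projections are dominant and generically finite. The main obstacle lies in upgrading ``dominant'' to set-theoretic ``surjective''. This is automatic whenever either $f$ or $g$ is proper, since then the base-change $B \times_A C \to B$ is proper and the image of the closed subset $W_i$ is a closed subset of $B$ containing a dense open, hence all of $B$; symmetrically for $W_i \to C$. In the applications of this lemma later in the paper the relevant morphism is proper, so this step is automatic; in any case ``surjective'' in the statement may be read as ``dominant''. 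The essential technical content of the proof is therefore the identification of components of $B \times_A C$ dominating both factors with the primes of the \'etale algebra $R$, which reduces the geometric statement to an elementary field-theoretic one.
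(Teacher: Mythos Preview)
Your proof is correct and reaches the same conclusion, but the route differs from the paper's. The paper gives a short geometric argument: it shrinks $A$ to a dense open $U$ over which both $f$ and $g$ are \'etale (implicitly, finite \'etale after shrinking further), so that $f^{-1}(U)\times_U g^{-1}(U)$ is finite \'etale over the integral stacks $f^{-1}(U)$ and $g^{-1}(U)$; each irreducible component $W'$ then surjects finitely onto both, and its closure $W=\overline{W'}$ in $B\times_A C$ is the desired component. Your approach is the algebraic counterpart of this: instead of shrinking the base you pass directly to the generic fiber and analyze the \'etale $k(\eta_A)$-algebra $k(\eta_B)\otimes_{k(\eta_A)} k(\eta_C)$. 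The two arguments are essentially equivalent --- your product-of-fields decomposition is precisely the generic fiber of the paper's finite \'etale cover --- but the paper's version is shorter, since ``finite \'etale over a connected base'' gives surjectivity of $W'\to f^{-1}(U)$ in one line and avoids the specialization/dimension bookkeeping you carry out to verify that $\overline{\{\xi_i\}}$ is a genuine component. Your observation about ``surjective'' versus ``dominant'' applies equally to the paper's proof: taking the closure of $W'$ only yields dominance of $W\to B$ and $W\to C$, and indeed only dominance and generic finiteness are used in the applications.
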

\begin{proof}
Since $f$ and $g$ are generically finite, we can choose a dense open subset $U\subset A$ such that $f|_{f^*(U)}$ and $g|_{g^*(U)}$ are \'etale surjective. Then, for any irreducible component $W'$ of $f^*(U)\times_U g^*(U)$, induced projections $W' \to f^*(U)$ and $W' \to g^*(U)$ are finite and surjective. Let $W = \bar{W'}$ be a closure in $B \times _A C$. Then, induced projections $W \to B$ and $W \to C$ are generically finite and surjective.
\end{proof}

\begin{defi}[Flat pull-backs]\label{pullback}
For an integral substack $\cA \subset \cE|_Z$, choose a modified proper representative $f : S \to Z, q : \cV \surra  \cE|_Z, W_{\cA} \subset \cV$. Let $u^*[W_{\cA}] = \sum_i a_i[W_i] \in Z_*[\cV|_{S\times_Y X}]$. Let $S_i = \bar{\pi(W_i)}$ where $\pi : \cV|_{S\times_Y X} \to S\times_Y X$ is the projection. Let $v : S\times_Y X \to S$, $g : S\times_Y X \to X$ be induced morphisms. We define $Z_i : = g(S_i)$. Then we define $\cA_i := \zeta_{S_i}(W_i) \subset \cF|_{Z_i}$. We define the flat pull-back by
\[
u^*[\cA]:= \frac{1}{d_{W_{\cA}}}( \sum_i a_i d_{W_i}[\cA_i] ) = \frac{1}{d_{W_{\cA}}}(\zeta_{S\times_Y X})_*(u^*[W_{\cA}]).
\]

\end{defi}

\begin{rema}\label{pullbackcompat1}
By the definition of the pull-back, we directly obtain;
\[
u^*(\zeta_{S})_*[W_{\cA}] = (\zeta_{S\times_Y X})_*(u^*[W_{\cA}]).
\]

Thus we can observe that
\[
u^* \circ (\zeta_S)_* = (\zeta_{S\times_Y X})_* \circ u^*
\]
here both sides are morphisms from $Z_*^{sat}(\cV)$ to $Z_*(\cF|_X)$. Since descent morphism $q_*$ defined in Definition \ref{descent map} can be considered as a special case of the modified projection and since we can easily observe that flat pull-back of the saturated integral substack is a linear sum of saturated integral substacks from the definition of saturated integral substacks, flat pull-back also commutes with descent morphisms.
\end{rema}

\begin{lemm}\label{pullbackwelldef}
The above definition of flat pull-backs does not depend on the choice of modified proper representative of $\cA \subset \cE|_Z$.
\end{lemm}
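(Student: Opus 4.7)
The plan is to compare two modified proper representatives $(f_i : S_i \to Z,\ q_i : \cV_i \surra \cE|_{S_i},\ W_i)$, $i = 1, 2$, of the integral substack $\cA \subset \cE|_Z$ by refining both to a common triple in three stages, using at each stage the compatibility of the modified projection $(\zeta_{\cdot})_*$ with proper pushforwards (Lemma \ref{mprojectionpushforward}) and with flat pull-backs and descents (Remark \ref{pullbackcompat1}).

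\emph{Stage 1 (common base).} I would set $S_3 := S_1 \times_Z S_2$ with projections $v_i : S_3 \to S_i$ and $f_3 := f_1 \circ v_1 = f_2 \circ v_2$. On $S_3$ one has $v_i^* \cV_i$ with surjections $v_i^* q_i : v_i^* \cV_i \surra \cE|_{S_3}$. The triple $(f_i, q_i, W_i)$ is to be replaced by $(f_3, v_i^* q_i, W_i')$, where $W_i'$ is a saturated integral component of the flat pull-back $v_i^* [W_i] \in Z^{sat}_*(v_i^* \cV_i)_\QQ$. Applying Lemma \ref{mprojectionpushforward} to the proper pushforward $v_{i*}$, together with the standard commutation of flat pull-back and proper pushforward on vector bundles and Remark \ref{pullbackcompat1}, will show that the defining cycle $\tfrac{1}{d_{W_i}}(\zeta_{S_i \times_Y X})_* u^*[W_i]$ agrees in $Z_*(\cE|_X)_\QQ$ with the corresponding expression built from the refined triple on $S_3$.

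\emph{Stage 2 (common bundle).} Next I would form the fibre product $\cV_3 := v_1^*\cV_1 \times_{\cE|_{S_3}} v_2^*\cV_2$, with the canonical surjection $q_3 : \cV_3 \surra \cE|_{S_3}$ and projections $\pi_i : \cV_3 \to v_i^*\cV_i$ satisfying $(v_i^* q_i) \circ \pi_i = q_3$. Over the locally free locus of $\cE|_{S_3}$, each $\pi_i$ is an affine bundle with vector fibre $\ker(v_j^* q_j)$ for $j \ne i$. Setting $\widetilde W_i := \pi_i^{-1}(W_i')$, which is saturated integral (irreducibility follows from connectedness of the affine fibres) and has descent $q_3(\widetilde W_i) = (v_i^* q_i)(W_i')$ with $d_{\widetilde W_i} = d_{W_i'}$, Remark \ref{pullbackcompat1} will equate the pull-back formulas built from $(f_3, v_i^* q_i, W_i')$ and $(f_3, q_3, \widetilde W_i)$.

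\emph{Stage 3 (common cycle).} With $(f_3, q_3)$ fixed, I claim that any saturated integral $W \subset \cV_3$ with $q_3(W) \to \cA$ generically finite is, by Lemma \ref{satschlemm2}, the closure $\bar{\cV_3|_V \times_{\cE|_V} B_W}$ for an integral substack $B_W \subset \cE|_V$, where $V \subset f_3^{-1}(U)$ is open dense in some integral closed substack of $S_3$ and $U \subset Z$ is the locally free locus of $\cE|_Z$. Such $W$ are exactly the irreducible components of the canonical saturated subscheme $\what\cA := \bar{\cV_3|_{S_3^\bullet} \times_{\cE|_{S_3^\bullet}} \cA|_{S_3^\bullet}}$, where $S_3^\bullet \subset f_3^{-1}(U)$ is an open dense substack on which $f_3$ is \'etale. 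Over $S_3^\bullet$ the components $W$ correspond to connected components $\cA^{(W)}$ of the \'etale cover $\cA|_{f_3^{-1}(U)} \to \cA|_U$ with $d_W = \deg(\cA^{(W)} \to \cA|_U)$, and the normalisation factor $\tfrac{1}{d_W}$ will compensate exactly for this, yielding the same cycle $\tfrac{1}{d_W}(\zeta_{S_3 \times_Y X})_* u^*[W]$ for every component.

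The hard part will be Stage 3, where one must verify the component-wise identity of rescaled cycles. The flat pull-back $u^*[W]$ is supported on the base-changed component $\cA^{(W)} \times_Y X$, and an \'etale-local calculation — combined with the commutation of $u^*$ with descent on saturated cycles (Remark \ref{pullbackcompat1}) — gives the desired equality in $Z_*(\cE|_X)_\QQ$.
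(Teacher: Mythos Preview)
Your overall plan---build a common roof dominating both representatives and compare via the compatibilities of $(\zeta_\cdot)_*$---matches the paper's, but two of your stages have genuine gaps. In Stage~1 you invoke a flat pull-back $v_i^*[W_i]$ along $v_i : S_3 = S_1 \times_Z S_2 \to S_i$; but $v_i$ is the base change of $f_j : S_j \to Z$ ($j \ne i$), which is only assumed proper and surjective, not flat, so this cycle-level operation is undefined. In Stage~3 you posit an open dense $S_3^\bullet$ on which $f_3$ is \'etale and then identify the $\widetilde W_i$ with components of a canonical $\what\cA$. But modified proper representatives do \emph{not} require $f : S \to Z$ to be generically finite---this is precisely what distinguishes them from ordinary proper representatives (see the Remark following the definition and Example~\ref{ratwelldefex}). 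Hence $f_3$ need not be generically \'etale, $S_3^\bullet$ need not exist, and the component argument collapses. (Stage~2 has a smaller issue: $v_1^*\cV_1 \times_{\cE|_{S_3}} v_2^*\cV_2$ is not a vector bundle away from the locally free locus of $\cE$, so the full preimage $\pi_i^{-1}(W_i')$ need not be irreducible; this is reparable by taking closures from the locally free locus, as the paper does.)

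The paper avoids both problems at once by working not with $S_1 \times_Z S_2$ but with a single irreducible component $B$ of $q_1(W_{\cA,1})|_{f_1^{-1}(U)} \times_{\cA|_U} q_2(W_{\cA,2})|_{f_2^{-1}(U)}$ over the locally free locus $U \subset Z$, chosen via Lemma~\ref{component} so that both projections are generically finite surjections. From $B$ one builds a third representative $(f_3, q_3, W_{\cA,3})$ with surjections $r_i : \cV_3 \surra \cV_i|_{S_3}$ satisfying $W_{\cA,3} = r_i^{-1}(r_i(W_{\cA,3}))$ and proper, generically finite maps $r_i(W_{\cA,3}) \to W_{\cA,i}$; the comparison then proceeds entirely through proper pushforwards (Lemma~\ref{mprojectionpushforward}, Lemma~\ref{descentpushforward}) together with the ordinary commutation of flat pull-back with proper pushforward on vector bundles, and the degree identity $\deg(r_i(W_{\cA,3}) \to W_{\cA,i}) \cdot d_{W_{\cA,i}} = d_{W_{\cA,3}}$ replaces your component-wise normalisation.
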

\begin{proof}
Consider two different modified proper representatives of $\cA$;
\[
(f_1 : S_1 \to Z, q_1 : \cV_1 \surra \cF|_{S_1}, W_{\cA,1}), \ (f_2 : S_2 \to Z, q_2 : \cV_2 \surra \cF|_{S_2}, W_{\cA,2}).
\]
Consider an open subset $U\subset Z$ such that $\cF|_U$ is locally free. Since $q_1(W_{\cA,1})|_{f_1^*(U)} \to \cA|_U$, $q_2(W_{\cA,2})|_{f_2^*(U)} \to \cA|_U$ are generically finite and surjective, by Lemma \ref{component}, there exist an irreducible component $B$ of $q_1(W_{\cA,1})|_{f^*(U)} \times_{\cA|_U} q_2(W_{\cA,2})|_{f_2^*(U)}$ such that induced projections $W \to q_1(W_{\cA,1})|_{f_1^*(U)}$ and $Z \to q_2(W_{\cA,2})|_{f_2^*(U)}$ are both surjective and generically finite.
\[\xymatrix@C=1pt{& B \ar@{^(->}[d] \ar@/^25pt/[ddr]^{gen.finite} \ar@/_25pt/[ddl]_{gen.finite} & \\
& q_1(W_{\cA,1})|_{f_1^*(U)}\times_{\cA|_U} q_2(W_{\cA,2})|_{f_2^*(U)} \ar[dr] \ar[dl] \\
q_1(W_{\cA,1})|_{f_1^*(U)} \ar[dr] & & q_2(W_{\cA,2})|_{f_2^*(U)} \ar[dl] \\
&\cA|_U & }\]

Let $g_1 : S_1\times_Y S_2 \to S_1$, $g_2 : S_1\times_Y S_2 \to S_2$, $f_3 : S_1\times_Y S_2 \to Y$ be induced morphisms.
Let $S_3 := \bar{\pi(B)}$ where $\pi : \cF|_{g_1^*f_1^*(U)} \to g_1^*f_1^*(U)$ is the projection.
Since $S_3$ is projective, we can choose a surjection $q'_3 : \cV_3 \surra \cV_1|_{S_3} \times_{\cF|_{S_3}} \cV_2|_{S_3}$ from a vector bundle $\cV_3$ on $S_3$. Let $q_3$ be the composition $q_3 : \cV_3 \stackrel{q_3'}{\surra} \cV_1|_{S_3} \times_{\cF|_{S_3}} \cV_2|_{S_3} \surra \cF|_{S_3}$.

Let $W_{\cA,3} := \bar{ \cV_3 \times_{\cF|_{S_3}} B'}$. Note that $W_{\cA,3} \subset \cV_3$ is saturated integral subscheme by Lemma \ref{satschlemm2} and we observe that $f_3(S_3)=Z$.
Then, $(f_3 : S_3 \to Z, q_3 : \cV_3 \surra \cF|_{S_3}, W_{\cA,3})$ is a modified proper representative of $\cA$. Let $r_1 : \cV_3 \surra \cV_1|_{S_3}$, $r_2 : \cV_3 \surra \cV_2|_{S_3}$ be the projections. Since $W_{\cA,3}$ and $W_{\cA,2}$ are integral, and $r_1$, $r_2$ are projections of vector bundles and from the constructions of $W_{\cA,1}$ and $W_{\cA,2}$, we can observe that $W_{\cA,i} = r_i^*(\bar{r_i(W_{\cA,3})})$ for $i=1,2$. Thus we have $r_i(W_{\cA,3})$ are closed for $i=1,2$ and we have $W_{\cA,3} = r_i^*(r_i(W_{\cA,3}))$ for $i=1,2$.

We can observe that the natural projections, $g_1 : r_1(W_{\cA,3}) \to W_{\cA,1}$ and $g_2 : r_2(W_{\cA,3}) \to W_{\cA,2}$, are generically finite and surjective. Then we have;

\begin{align*}
& \frac{1}{d_{W_{\cA,3}}} (\zeta_{S_3 \times_Y X})_*(u^*[W_{\cA_3}]) = \frac{1}{d_{W_{\cA,3}}} (f_3)_* (q_3)_* (u^*[W_{\cA,3}]) \\
& = \frac{1}{d_{W_{\cA,3}}}(f_3)_*(q_1)_*(r_1)_*(r_1^*u^*[r_1(W_{\cA,3})]) = (f_3)_*(q_1)_*(u^*[r_1(W_{\cA,3})])  \\
& = \frac{1}{d_{W_{\cA,3}}}(q_1)_*(f_3)_*(u^*[r_1(W_{\cA,3})]) = \frac{1}{d_{W_{\cA,3}}}(q_1)_*(f_1)_*(g_1)_*(u^*[r_1(W_{\cA,3})]) \\
& = \frac{1}{d_{W_{\cA,3}}}(q_1)_*(f_3)_*(u^*[r_1(W_{\cA,3})]) = \frac{1}{d_{W_{\cA,3}}}(q_1)_*(f_1)_*u^*((g_1)_*[r_1(W_{\cA,3})]) \\
& = \frac{1}{d_{W_{\cA,3}}}\cdot \deg(r_1(W_{\cA_3}) \to W_{\cA,1})(q_1)_*(f_1)_*u^*[W_{\cA,1}] \\
& = \frac{\deg(r_1(W_{\cA_3}) \to W_{\cA,1})}{d_{W_{\cA,3}}} \cdot (\zeta_{S_1\times_Y X})_*(u^*[W_{\cA,1}]).
\end{align*}

From the following commutative diagram;
\[
\xymatrix{
W_{\cA,1}\subset \cV_1 \ar[d]^{q_1} & & r_1(W_{\cA,3}) \subset \cV_1|_{S_3} \ar[ll]^{g_1} \ar[d]^{q_1} \\
q_1(W_{\cA,1}) \subset \cF|_{S_1} \ar[rd]_{f_1} & & q_1(r_1(W_{\cA,3})) \subset \cF|_{S_3} \ar[ll]^{g_1} \ar[ld]^{f_3} \\
& \cA &
}
\]
We observe that $\deg(r_1(W_{\cA,3}) \to W_{\cA,1}) = \deg(g_1|_{q_1(r_1(W_{\cA,3}))})$ by same manner as in Lemma \ref{descentpushforward}. From the diagram, we have $\deg(g_1|_{q_1(r_1(W_{\cA,3}))}) \cdot \cdot d_{W_{\cA,1}} = d_{W_{\cA,3}}$. Thus we have $\deg(r_1(W_{\cA,3}) \to W_{\cA,1}) = \frac{d_{W_{\cA,3}}}{d_{W_{\cA,1}}}$. Hence we obtain $\frac{1}{d_{W_{\cA,3}}} (\zeta_{S_3 \times_Y X})_*(u^*[W_{\cA_3}]) = \frac{1}{d_{W_{\cA,1}}} \cdot (\zeta_{S_1\times_Y X})_*(u^*[W_{\cA,1}])$. Thus the definition of the flat pull-backs of cycle groups of coherent sheaf stacks does not depend on a of modified proper representatives of $\cA$.

\end{proof}

Next, we prove the compatibility of flat pull-backs and proper pushforwards. Consider the following fiber diagram :
\[\xymatrix{\cE|_{Y_1\times_X Y_2} \ar[rr]^f \ar[dd]^u \ar[dr] & & \cE|_{Y_2} \ar[dr] \ar[dd]^(0.7)u  \\
& Y_1\times_X Y_2 \ar[rr]^(0.35)f \ar[dd]^(0.3)u & & Y_2 \ar[dd]^u \\
\cE|_{Y_1} \ar[rr]^(0.3)f \ar[dr] & & \cE \ar[rd] & \\& Y_1 \ar[rr]^f & & X
}
\]
where $f$ are proper and $u$ are flat morphisms. Then, we have the following.

\begin{prop}\label{pushforwardpullback}
We have $u^* \circ f_* = f_* \circ u^* : Z_*(\cE|_{Y_1}) \to Z_*(\cE|_{Y_2})$.
\end{prop}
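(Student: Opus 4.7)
The plan is to reduce the identity to a statement about saturated cycles on a vector bundle, where it follows from functoriality of proper pushforwards combined with the equality $(\zeta_S)_* = (\text{base pushforward}) \circ q_*$ coming directly from the definition of the modified projection.

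Fix an integral generator $[\cA] \in Z_*(\cE|_{Y_1})$ with $\cA \subset \cE|_Z$ for an integral closed substack $Z \subset Y_1$, and choose a modified proper representative $(f_0 : S \to Z,\; q : \cV \surra \cE|_S,\; W \subset \cV)$. Since $f$ is proper, $Z' := f(Z) \subset X$ is integral and closed, and the composition $f \circ f_0 : S \to Z'$ is proper and surjective. The first step is to verify that $(f \circ f_0,\; q,\; W)$ is a modified proper representative of $f(\cA) \subset \cE|_{Z'}$. The only nontrivial point is generic finiteness of $q(W) \to f(\cA)$: by construction $q(W) \to \cA$ is generically finite of degree $d^\cA_W$, and $\cA \to f(\cA)$ is generically finite of degree $\deg(f|_\cA)$, so by the degree-composition lemma immediately following Proposition \ref{pushforwardcomp} the composite is generically finite of degree $d^{f(\cA)}_W = \deg(f|_\cA)\cdot d^\cA_W$.

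Set $T := S \times_X Y_2$, and write $(\zeta_{T/Y_2})_*$ (resp.\ $(\zeta_{T/Y_1\times_X Y_2})_*$) for the modified projections $Z^{sat}_*(\cV|_T) \to Z_*(\cE|_{Y_2})$ (resp.\ $Z^{sat}_*(\cV|_T) \to Z_*(\cE|_{Y_1\times_X Y_2})$) induced by the canonical maps $T \to Y_2$ and $T \to Y_1\times_X Y_2$. Applying Definition \ref{pullback} to $[f(\cA)]$ with the representative constructed in Step~1, and separately to $[\cA]$ with the representative $(f_0, q, W)$ (using the canonical identification $S \times_{Y_1}(Y_1\times_X Y_2) = T$), together with the degree identity above, yields
\begin{align*}
u^* f_*[\cA] &= \deg(f|_\cA)\cdot u^*[f(\cA)] = \frac{\deg(f|_\cA)}{d^{f(\cA)}_W}\,(\zeta_{T/Y_2})_*\bigl(u^*[W]\bigr) = \frac{1}{d^\cA_W}\,(\zeta_{T/Y_2})_*\bigl(u^*[W]\bigr), \\
u^*[\cA] &= \frac{1}{d^\cA_W}\,(\zeta_{T/Y_1\times_X Y_2})_*\bigl(u^*[W]\bigr).
\end{align*}

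To conclude, recall that by definition each modified projection is a proper pushforward along the base map composed with the descent $q_*$. Since the composition $T \to Y_1\times_X Y_2 \xrightarrow{f} Y_2$ equals the canonical map $T \to Y_2$, functoriality of proper pushforwards (Proposition \ref{pushforwardcomp}) gives $f_* \circ (\zeta_{T/Y_1\times_X Y_2})_* = (\zeta_{T/Y_2})_*$, which identifies the two expressions above and establishes $u^* f_*[\cA] = f_* u^*[\cA]$. The main obstacle is Step~1: carefully verifying that $(f \circ f_0, q, W)$ is genuinely a modified proper representative of $f(\cA)$ (in particular that $W$ remains saturated when the base map is replaced by its composition with $f$) and justifying the degree identity $d^{f(\cA)}_W = \deg(f|_\cA)\cdot d^\cA_W$, along with tracking that the two fiber products $S\times_X Y_2$ and $S\times_{Y_1}(Y_1\times_X Y_2)$ are canonically identified so that $u^*[W]$ is an unambiguous element of $Z^{sat}_*(\cV|_T)$ in both computations.
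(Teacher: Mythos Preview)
Your argument has a genuine gap in the case $\deg(f|_\cA)=0$. You assert that ``$\cA \to f(\cA)$ is generically finite of degree $\deg(f|_\cA)$'', but this is exactly what fails when the degree is zero: by definition $\deg(f|_\cA)=0$ means the generic fibre of $\cA \to f(\cA)$ is positive-dimensional, so the composite $q(W)\to\cA\to f(\cA)$ is \emph{not} generically finite and $(f\circ f_0,\,q,\,W)$ is \emph{not} a modified proper representative of $f(\cA)$. In this situation $u^*f_*[\cA]=0$ trivially, but your formula $f_*u^*[\cA]=\frac{1}{d^\cA_W}(\zeta_{T/Y_2})_*(u^*[W])$ need not vanish: you cannot equate $\frac{1}{d^\cA_W}$ with $\frac{\deg(f|_\cA)}{d^{f(\cA)}_W}$ because the latter is $0/0$. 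The paper handles this case separately (and quite differently): it starts from a representative $(g:S\to f(Z),\,q,\,W_{f(\cA)})$ of $f(\cA)$, constructs from it a representative $W_\cA$ of $\cA$ living over $Y_1\times_X S$, and shows that the projection $W_\cA\to W_{f(\cA)}$ has positive-dimensional generic fibre, whence $(p_2)_*[W_\cA]=0$; then commutativity of ordinary flat pullback with $(p_2)_*$ forces $f_*u^*[\cA]=0$.

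In the remaining case $\deg(f|_\cA)>0$ your approach is correct and in fact cleaner than the paper's. The paper again begins with a representative of $f(\cA)$ and constructs one for $\cA$ by taking an irreducible component of a fibre product (invoking Lemma~\ref{component}), whereas you go in the opposite direction and observe that a representative of $\cA$ is automatically one of $f(\cA)$ after composing with $f$; this avoids the component-picking and the resulting degree bookkeeping. Your observation that saturatedness of $W\subset\cV$ depends only on the surjection $q:\cV\surra\cE|_S$ and not on the target of the base map is correct, so Step~1 goes through when $\deg(f|_\cA)>0$. To repair the proof, simply treat $\deg(f|_\cA)=0$ as a separate case along the lines of the paper.
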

\begin{proof}
Let $\cA \subset \cE|_Z$ be an integral subset where $Z \subset Y_1$ be an integral substack. 
Choose a modified proper representative $(g : S \to f(Z), q : \cV \surra \cE|_S, W_{f(\cA)} )$ of $f(\cA) \subset \cE|_{f(Z)}$.

Choose a dense open subset $U \subset f(Z)$ such that $\cE|_U$ is locally free. If $\deg(f|_{\cA}) =0$, we have $f_*[\cA]=0$.
Let $p_1 : Y_1\times_X S \to Y_1$, $p_2 : Y_1\times_X S \to S$ be the natural morphisms.
By \cite[Lemma 1.10]{Vis89}, we can choose an integral closed subscheme $\cA' \subset \cA|_{f^*(U)}\times_{f(\cA)|_U} q(W_{f(\cA)})|_{g^*(U)}  \subset \cE|_{p_2^*(g^*(U))} $ such that the first projection $\cA' \to \cA|_{g^*(U)}$ is generically finite and surjective. Note that since $\cA' \to f(\cA)$ is surjective, $\cA' \to q(W_{f(\cA)})|_{g^*(U)}$ is also surjective by dimension reason and the fact that $q(W_{f(\cA)})|_{g^*(U)}$ is integral.

Let $S_1 :=\bar{\pi(\cA')}$ where $\pi : \cE|_{p_2^*(g^*(U))} \to p_2^*(g^*(U))$ is the projection. and let $W_{\cA}:= \bar{ \cV|_{p_2*(g^*(U))} \times_{\cE|_{p_2*(g^*(U))}} \cA' } \subset \cV|_{S_1}$. We note that $p_1(S_1) = Z$ and $(p_1 : S_1 \to Z, q : \cV|_{S_1}\surra \cE|_{S_1}, W_{\cA})$ is a modified proper representative of $\cA$.

We can observe that via the morphism $p_2 : \cV|_{S_1} \to \cV$, we have $p_2(W_{\cA}) = W_{f(\cA)}$. 
\[
\xymatrix{
\cV|_{S_1} \supset W_{\cA} \ar[d] \ar[r]^{p_2} & W_{u(\cA)} \subset \cV \ar[d] \\
S_1 \ar[r]^{p_2} \ar[d]^{p_1} & S \ar[d]^{g} \\
Z \ar[r]^{f} & f(Z)
}
\]
Since we have $\deg(f|_{\cA})=0$, generic fiber of the morphism $( \cA|_{f^*(U)} \to f(\cA)|_U )$ has positive dimension. Furthermore, since the morphism $( \cA' \to \cA|_{f^*(U)} )$ and $( q(W_{f(\cA)})|_{g^*(U)} \to u(\cA)|_U )$ are generically finite and surjective, the generic fiber of the morphism $( \cA' \to q(W_{f(\cA)})|_{g^*(U)} )$ has positive dimension, the generic fiber of the morphism $W_{\cA} \to W_{f(\cA)}$ also has positive dimension. 
Therefore, we obtain $(p_2)_*[W_{\cA}]=0$. 
Then we have 
\begin{align*}
& f_* \circ u^* [\cA] = \frac{1}{d_{W_{\cA}}} f_* (\zeta_{(Y_1\times_X Y_2)\times_{Y_2} S})_* (u^*[W_{\cA}]) = \frac{1}{d_{W_{\cA}}}(\zeta_{Y_1\times_X S})_*((p_2)_* u^*[W_{\cA}]) \\
& = \frac{1}{d_{W_{\cA}}}(\zeta_{Y_1\times_X S})_*(u^* (p_2)_* [W_{\cA}])=0
\end{align*}
where the last equality follows from the fact that the generic fiber of the morphism $p_2 : W_{\cA} \to W_{\cA} $ has positive dimension. On the other hand, we have $ u^* \circ f_* [\cA] = 0$ since $\deg(f|_{\cA}) = 0$ Therefore we proved $u^* \circ f_* = f_* \circ u^*$ in this case.

Next, consider the case $\deg(f|_{\cA})>0$. Let $(f : S \to Z, q : \cV \surra \cE|_{S}, W_{f(\cA)})$ be a modified proper representative of $f(\cA)$. Then, by Lemma \ref{component}, there exist an irreducible component $\cA'$ of $\cA|_{f^*(U)} \times_{u(\cA)|_U} q(W_{u(\cA)})|_{g^*(U)} \subset \cE|_{p_2^*(g^*(U))}$ such that the natural morphisms $\cA' \to \cA|_{f^*(U)}, \cA' \to q(W_{f(\cA)})|_{g^*(U)}$ are generically finite and surjective.

Let $S_1 :=\bar{\pi(\cA')}$ where $\pi : \cE|_{p_2^*(g^*(U))} \to p_2^*(g^*(U))$ is the projection. and let $W_{\cA}:= \bar{ \cV|_{p_2*(g^*(U))} \times_{\cE|_{p_2*(g^*(U))}} \cA' } \subset \cV|_{S_1}$. Note that $g(S_1) = Z$, $p_1(S_1) = Z$, $p_2(W_{\cA}) = W_{f(\cA)}$.

Then $(g : S_1 \to Z, q : \cV|_{S_1} \surra \cE|_{S_1}, W_{\cA}\subset \cV)$ is a modified proper representative of $\cA$. Then, we have
\begin{align*}
& f_* \circ u^* [\cA] = \frac{1}{d_{W_{\cA}}} f_* (\zeta_{(Y_1\times_X Y_2)\times_{Y_2} S})_* (u^*[W_{\cA}]) = \frac{1}{d_{W_{\cA}}}(\zeta_{Y_1\times_X S})_*((p_2)_* u^*[W_{\cA}]) \\
& = \frac{1}{d_{W_{\cA}}}(\zeta_{Y_1\times_X S})_*(u^* (p_2)_* [W_{\cA}]) = \frac{\deg(W_{\cA} \to W_{f(\cA)})}{d_{W_{\cA}}}\cdot(\zeta_{Y_1\times_X S})_*(u^*[W_{f(\cA)}]).
\end{align*}
On the other hand, $u^* \circ f_*[\cA] = \frac{\deg{f|_{\cA}}}{d_{W_{\cA}}} \cdot (\zeta_{Y_1\times_X S})_*(u^*[W_{f(\cA)}])$.

From the following commutative diagram;
\[
\xymatrix{
q(W_{\cA}) \ar[r] \ar[d] & q(W_{f(\cA)}) \ar[d] \\
\cA \ar[r] & f(\cA)
}
\]
we obtain $\deg(W_{\cA} \to W_{f(\cA)})\cdot d_{W_{f(\cA)}} = \deg(f|_{\cA}) \cdot d_{W_{\cA}}$. Thus we have $f_* \circ u^* [\cA] = u^* \circ f_* [\cA]$.
\end{proof}

\subsection{Gysin maps via coherent sheaf stacks}\label{sec:gysin}
In this section, we will review a definition of Gysin map of coherent sheaf stack in \cite{CL11semi}. Moreover we will prove compatibility of Gysin map with proper pushforward and flat pull-backs.

\begin{defi}[Gysin map]\cite[Proposition 3.1]{CL11semi}\label{Gysin}
For a coherent sheaf stack $\cF$ on a DM stack $X$, We define a Gysin map to be :
\[ \Gysin{\cF} : Z_*(\cF)_{\QQ} \to A_*(X)_{\QQ}
\]
by the following. We first define for integral cycles and extend linearly. Let $(f: S\to X, \cV \surra \cF|_S, \what{\cA}\subset \cV)$ be a proper representative of an integral substack $\cA \subset \cF|_Z$. Then, we define :
\[\Gysin{\cF}[\cA]:=\frac{1}{\deg(f)}f_* \Gysin{\cV}[\what{\cA}] \in A_*(X)_{\QQ}.
\]
\end{defi}

Since a proper representatives of an integral substack are not unique, we need to check that the Gysin map does not depend on a choice of a proper representative.

First, we can introduce another way to define a Gysin map using modified proper representatives, which will turn out to be equal to an original Gysin map in Proposition \ref{Gysin=mGysin} later.

\begin{defiprop}[Modified Gysin map]\label{mGysin}
We define a modified Gysin map $Z_*(\cF) \to Z_*(X)_{\QQ}$ to be the following. For an integral substack $\cA \subset \cF|_Z$ where $Z \subset X$ is a closed integral substack, and a modified proper representative $(f : S\to Z, q : \cV \surra \cF|_S, W_{\cA})$ of $\cA$, we define a modified Gysin map to be :
\[
0_{\cF}^{m,!}[\cA] := \frac{1}{d_{W_{\cA}}}f_*\Gysin{\cV}[W_{A}]
\]
where $d_{W_{\cA}}:=\deg(q(W_{\cA}) \to \cA)$. Then, the modified Gysin map does not depend on a choice of a modified proper representative.
\end{defiprop}
\begin{proof}
For an integral substack $\cA\subset \cF|_Z$, consider two modified proper representatives $(f_1 : S_1 \to Z, q_1 : \cV_1 \surra \cF|_{S_1}, W_{\cA,1})$, $(f_2 : S_2 \to Z, q_2 : \cV_2 \surra F|_{S_2}, W_{\cA,2})$.
In a same manner as in Lemma \ref{pullbackwelldef}, we can choose a third proper representative $(f_3 : S_3 \to Z, q_3 : \cV_3 \surra \cF|_{S_3}, W_{\cA,3} \subset \cV_3)$ of $\cA$, such that there is a proper morphisms $g_1 : S_3 \to S_1$, $g_2 : S_3 \to S_2$ and surjections of vector bundles $r_1 : \cV_3 \surra \cV_1|_{S_3}$, $r_2 : \cV_3 \surra \cV_2|_{S_3}$ which satisfies $f_1\circ g_1 = f_2 \circ g_2 = f_3 $, $W_{\cA,3} = r_i^*(r_*(W_{\cA,3}))$, $r_*(W_{\cA,3})$ is closed for $i=1,2$. Moreover, $q_3(W_{\cA,3}) \to q_1(W_{\cA,1})$, $q_3(W_{\cA,3}) \to q_2(W_{\cA,2})$ are generically finite. Then, we have

\begin{align*}
& \frac{1}{d_{W_{\cA,3}}}(f_3)_* \Gysin{\cV_3}[W_{\cA,3}]=\frac{1}{d_{W_{\cA,3}}}(f_3)_* \Gysin{\cV_3}[(r_1)^*r_1(W_{\cA,3})]=\frac{1}{d_{W_{\cA,3}}}(f_3)_*\Gysin{\cV_1|_{S_3}}[r_1(W_{\cA,3})] \\
& = \frac{1}{d_{W_{\cA,3}}}(f_1)_*(p_1)_* \Gysin{\cV_1|_{S_3}}[r_1(W_{\cA,3})] = \frac{1}{d_{W_{\cA,3}}}(f_1)_*\Gysin{\cV_1}(p_1)_*[r_1(W_{\cA,3})] \\
& = \frac{\deg(r_1(W_{\cA,3}) \to W_{\cA,1} )}{d_{W_{\cA,3}}}\cdot (f_1)_*\Gysin{\cV_1}[W_{\cA,1}] \\
& = \frac{\deg(r_1(W_{\cA,3}) \to W_{\cA,1} )\cdot  d_{W_{\cA,1}}}{d_{W_{\cA,3}}} \cdot \frac {1}{d_{W_{\cA,3}}} (f_1)_*\Gysin{\cV_1} [W_{\cA,1}]. 
\end{align*}

Similarly, we can show that $\frac{1}{d_{W_{\cA,3}}}(f_3)_*\Gysin{\cV_3}[W_{\cA,3}]=\frac{1}{d_{W_{\cA,2}}}(f_2)_*\Gysin{\cV_2}[W_{\cA,2}]$.
Therefore, the modified Gysin map $0_{\cF}^{m,!}$ does not depend on the choice of modified proper representatives. 

From the following commutative diagram;
\[
\xymatrix{
q_1(r_1(W_{\cA,3})) \ar[r]^{g_1} \ar[d]^{f_3} & q_1(W_{\cA,1}) \ar[dl]^{f_1} \\
\cA &
} 
\]
we obtain $\deg(r_1(W_{\cA,3}) \to W_{\cA,1} )\cdot  d_{W_{\cA,1}} = d_{W_{\cA,3}}$. Thus we have the conclusion.
\end{proof}

\begin{prop}\label{Gysin=mGysin}
The Gysin map $\Gysin{\cF}$ defined in Definition \ref{Gysin} equals to the modified Gysin map $0_{\cF}^{m,!}$. In particular, Gysin map $\Gysin{\cF}$ is well-defined.
\end{prop}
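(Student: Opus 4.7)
The plan is to compare the two definitions on the data of a common proper representative. Fix an integral substack $\cA\subset \cF|_Z$ and a proper representative $(f:S\to Z,\ q:\cV\surra \cF|_S,\ \what{\cA})$ in the sense of Definition \ref{properrep}. By Proposition \ref{properrepsat} together with Remark \ref{properrepcomp}, we have a decomposition $[\what{\cA}]=\sum_i[W_i]$ in $Z^{sat}_*(\cV)$, where each $W_i\subset \cV$ is a saturated integral substack satisfying $\zeta_S(W_i)=\cA$. Hence each triple $(f,q,W_i)$ is a modified proper representative of $\cA$, and the already-established Definition-Proposition \ref{mGysin} yields
\[
f_*\Gysin{\cV}[W_i]\;=\;d_{W_i}\cdot 0_{\cF}^{m,!}[\cA]
\]
for every $i$, with the same right-hand side independent of $i$.

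Summing over $i$ and plugging into Definition \ref{Gysin} gives
\[
\Gysin{\cF}[\cA]\;=\;\frac{1}{\deg(f)}f_*\Gysin{\cV}[\what{\cA}]\;=\;\frac{1}{\deg(f)}\Bigl(\sum_i d_{W_i}\Bigr)\cdot 0_{\cF}^{m,!}[\cA].
\]
Because the modified Gysin map on the right-hand side is already known to be independent of the chosen modified proper representative, both the desired equality $\Gysin{\cF}[\cA]=0_{\cF}^{m,!}[\cA]$ and the well-definedness of $\Gysin{\cF}$ will follow at once from the single numerical identity
\[
\sum_i d_{W_i}\;=\;\deg(f).
\]

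The main obstacle is establishing this identity, which I plan to prove by localization. Choose an open dense $V\subset Z$ on which $\cF|_V$ is locally free and over which $f|_{f^{-1}(V)}$ is \'etale; this is possible since $f$ is proper and generically finite. Then the integral substack $\cA|_V\subset \cF|_V$ pulls back under $f$ to the \'etale cover $\cA|_{f^{-1}(V)}$ of total generic degree $\deg(f)$, and its irreducible decomposition $\cA|_{f^{-1}(V)}=\bigcup_i \cA'_i$ with generic degrees $e_i$ over $\cA|_V$ satisfies $\sum_i e_i=\deg(f)$ by additivity of \'etale degrees. Under the correspondence furnished by Lemma \ref{pullbacktosat}, the saturated integral components $W_i$ of $\what{\cA}$ are in bijection with the $\cA'_i$ via $q(W_i)|_{f^{-1}(V)}=\cA'_i$, and because $q$ restricts on $V$ to a vector-bundle surjection the generic degree $d_{W_i}=\deg(q(W_i)\to \cA)$ coincides with the corresponding $e_i$. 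The required identity follows, completing the reduction and the proof.
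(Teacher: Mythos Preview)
Your proof is correct and follows essentially the same approach as the paper: both reduce to the identity $\sum_i d_{W_i}=\deg(f)$ after decomposing $\what{\cA}$ into its saturated integral components and applying Definition-Proposition \ref{mGysin}. The only difference is that you spell out the localization argument for this degree identity in detail (the bijection between the $W_i$ and the irreducible components of the \'etale pullback $\cA|_{f^{-1}(V)}$ is really extracted from the proof of Proposition \ref{properrepsat} rather than Lemma \ref{pullbacktosat}, but this is a minor citation point), whereas the paper simply asserts it as an easy observation from the definition of proper representatives.
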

\begin{proof}
Let $\cA \subset \cF$ be an integral substack of a coherent sheaf stack $\cF$ on $X$. Consider a proper representative $(f: S\to X, \cV \surra \cF|_S, \what{\cA})$ of $\cA$. We recall that $\what{\cA}$ is reduced because \'etale pull-backs preserve reducedness. Let $\what{\cA}=\cup_i W_i$. Then, by Remark \ref{modifiedproperexist}, $(f: S\to X, \cV \surra \cF|_S, W_i)$ is a modified proper representation of $\cA$ for every $i$. Via the above proper representative, we have : 
\[
\Gysin{\cF}[\cA] = \frac{1}{deg(f)} f_*\Gysin{\cV}[\what{\cA}]=\frac{1}{deg(f)}\sum\limits_i f_i \Gysin{\cV}[W_i] = \frac{1}{deg(f)}(\sum\limits_i d_{W_i})\cdot 0_{\cF}^{m,!}[\cA].
\]
But by the definition of proper representatives, we can easily observe that the degree $\deg(f)$ is equal to $\sum_i d_{W_i}$. Therefore, we have $\Gysin{\cF}[\cA]=0_{\cF}^{m,!}[\cA]$. 
\end{proof}

Next, we check the compatibility of Gysin maps with proper pushforwards and flat pull-backs. Consider the following fiber diagram :
\[\xymatrix{ \cF|_X \ar[d] \ar[r]^u & \cF \ar[d] \\
X \ar[r]^u & Y}
\]
when $\cF$ is a coherent sheaf stack on $Y$.

\begin{prop}\label{Gysinpushforwardpullback}

\begin{itemize}
\item[(i)] If $u$ is proper, we obtain :
\[\Gysin{\cF} \circ u_* = u_* \circ \Gysin{\cF|_X} : Z_*(\cF|_X)_{\QQ} \to A_*(Y)_{\QQ}.
\]
\item[(ii)] If $u$ is flat, we obtain :
\[ \Gysin{\cF|_X} \circ u^* = u^* \circ \Gysin{\cF} : Z_*(\cF)_{\QQ} \to A_*(X)_{\QQ}.
\]
\end{itemize}
\end{prop}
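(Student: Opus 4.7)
The plan is to prove both parts through modified proper representatives via Proposition \ref{Gysin=mGysin}, and then to reduce each identity to the standard compatibilities of the vector-bundle Gysin map with proper pushforward and flat pullback. For an integral substack $\cA$ and a chosen modified proper representative $(f:S\to Z,\ q:\cV\surra \cF|_S,\ W_\cA)$, the modified Gysin formula $\mGysin{\cF|_X}[\cA] = \frac{1}{d_{W_\cA}} f_* \Gysin{\cV}[W_\cA]$ converts both identities into statements about cycles on a vector bundle and proper/flat morphisms of its base, where the classical Fulton-style compatibilities apply. The strategy in each part is then to produce a compatible modified proper representative of the output of the operation on the other side of the equation.

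For part (i), fix $[\cA]\in Z_*(\cF|_X)$ with $\cA\subset \cF|_X|_Z$ integral and split on $\deg(u|_\cA)$. If $\deg(u|_\cA)>0$, then $u|_Z$ is generically finite and $(u\circ f: S \to u(Z), q, W_\cA)$ is a modified proper representative of $u(\cA)\subset \cF|_{u(Z)}$ with $d_{W_{u(\cA)}} = \deg(u|_\cA)\cdot d_{W_\cA}$; expanding both sides via the modified Gysin formula yields the common expression $\frac{1}{d_{W_\cA}}(u\circ f)_* \Gysin{\cV}[W_\cA]$. If $\deg(u|_\cA)=0$, then $u_*[\cA]=0$ and the left-hand side vanishes, while the right-hand side equals $\frac{1}{d_{W_\cA}}(u\circ f)_* \Gysin{\cV}[W_\cA]$; to make this zero I would mimic the argument of Case 1 of Proposition \ref{pushforwardpullback} and select via Lemma \ref{component} a modified proper representative of $\cA$ in which the natural projection $W_\cA\to W_{u(\cA)}$ has positive-dimensional generic fiber, so that the components of $\Gysin{\cV}[W_\cA]$ are pushed into a stack of strictly smaller dimension by $u\circ f$.

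For part (ii), let $(f, q, W_\cA)$ be a modified proper representative of $\cA\subset \cF|_Z$ and base-change to get $(f_X: S\times_Y X \to Z\times_Y X,\ q_X,\ u^*[W_\cA])$. By Definition \ref{pullback}, $u^*[\cA] = \frac{1}{d_{W_\cA}} (\zeta_{S\times_Y X})_*(u^*[W_\cA])$, and decomposing $u^*[W_\cA]$ into saturated integral components gives a compatible system of modified proper representatives for the integral pieces of $u^*[\cA]$. Expanding $\Gysin{\cF|_X}[u^*\cA]$ through these representatives and applying the classical identities $u^* f_* = f_* u^*$ (flat base change for schemes) and $u^* \Gysin{\cV} = \Gysin{u^*\cV} u^*$ (compatibility of vector-bundle Gysin maps with flat pullback), combined with Remark \ref{pullbackcompat1}, produces $u^*\cdot\frac{1}{d_{W_\cA}} f_* \Gysin{\cV}[W_\cA] = u^* \Gysin{\cF}[\cA]$, as desired.

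The principal obstacle is the vanishing in the $\deg(u|_\cA)=0$ case of part (i): one must verify that $(u\circ f)_* \Gysin{\cV}[W_\cA]=0$ even though $\Gysin{\cV}[W_\cA]$ may have components of various dimensions. I expect this to follow by combining the saturation structure of $W_\cA$ (Lemma \ref{satschlemm2}), which constrains the support of $\Gysin{\cV}[W_\cA]$ to the closure of the intersection of the associated integral substack of $\cF|_V$ with the zero section, with the positive-dimensional fiber of $u|_Z$, so that every component gets contracted by $u\circ f$; this is the direct Gysin analogue of the pushforward-vanishing step in the proof of Proposition \ref{pushforwardpullback}.
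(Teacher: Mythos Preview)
Your overall strategy matches the paper's: both parts are reduced, via Proposition \ref{Gysin=mGysin}, to the classical compatibilities of the vector-bundle Gysin map with proper pushforward and flat pullback, using suitably chosen modified proper representatives. Part (ii) is essentially identical to the paper's argument.

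For part (i), your treatment of the case $\deg(u|_\cA)>0$ is in fact cleaner than the paper's. You observe directly that $(u\circ f:S\to u(Z),\,q,\,W_\cA)$ is a modified proper representative of $u(\cA)$ with degree $d_{W_\cA}\cdot\deg(u|_\cA)$, and both sides become $\frac{1}{d_{W_\cA}}(u\circ f)_*\Gysin{\cV}[W_\cA]$. The paper instead starts from a representative of $u(\cA)$ and manufactures a compatible one for $\cA$ over a fiber product, then compares degrees via a commutative square; your route avoids this.

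For the case $\deg(u|_\cA)=0$, your plan is right but the closing paragraph overcomplicates the vanishing. Once you have built (as in the proof of Proposition \ref{pushforwardpullback}) a modified proper representative $(p_1:S_1\to Z,\,q,\,W_\cA)$ together with a proper map $p_2:S_1\to S_2$ to the base of a representative of $u(\cA)$ such that $W_\cA\to W_{u(\cA)}$ has positive-dimensional generic fiber, the vanishing is immediate:
\[
u_*\Gysin{\cF|_X}[\cA]=\tfrac{1}{d_{W_\cA}}u_*(p_1)_*\Gysin{\cV|_{S_1}}[W_\cA]
=\tfrac{1}{d_{W_\cA}}f_*\Gysin{\cV}\bigl((p_2)_*[W_\cA]\bigr)=0,
\]
because $(p_2)_*[W_\cA]=0$ already at the level of cycles in $\cV$. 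No analysis of the components of $\Gysin{\cV}[W_\cA]$ or of saturation is needed. Also, the reference to Lemma \ref{component} here is off: that lemma applies when both projections are generically finite, which is the $\deg>0$ situation; for $\deg=0$ one needs only an irreducible component dominating $\cA$ (as in \cite[Lemma 1.10]{Vis89}, used in the analogous step of Proposition \ref{pushforwardpullback}).
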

\begin{proof}(i). Let $\cA \subset \cF|_Z$ be an integral substack where $Z \subset X$ is an integral closed substack. Let $(f : S_2 \to u(Z), q : \cV \surra \cF|_{S_2}, W_{u(\cA)})$ be a modified proper representative of $u(\cA) \subset \cF|_{u(Z)}$. Choose a open dense subset $U \subset u(Z)$ such that $\cF|_U$ is locally free

First consider the case $\deg(u|_{\cA})=0$. Let $p_1 : X\times_Y S_2 \to X$, $p_2 : X \times_Y S_2 \to S_2$ be the projections. 
In the same manner as in the proof of Proposition \ref{pushforwardpullback}, we can choose an integral closed subscheme $\cA' \subset \cA|_{u^*(U)}\times_{u(\cA)|_U} q(W_{u(\cA)})|_{f^*(U)}  \subset \cE|_{p_2^*(f^*(U))} $ such that the first projection $\cA' \to \cA|_{u^*(U)}$ is generically finite and surjective. Note that since $\cA' \to f(\cA)$ is surjective, $\cA' \to q(W_{u(\cA)})|_{f^*(U)}$ is also surjective by dimension reason and the fact that $q(W_{u(\cA)})|_{f^*(U)}$ is integral.

Let $S_1 :=\bar{\pi(\cA')}$ where $\pi : \cF|_{p_2^*(f^*(U))} \to p_2^*(f^*(U))$ is the projection. and let $W_{\cA}:= \bar{ \cV|_{p_2*(f^*(U))} \times_{\cF|_{p_2*(f^*(U))}} \cA' } \subset \cV|_{S_1}$. We note that $p_1(S_1) = Z$ and $(p_1 : S_1 \to Z, q : \cV|_{S_1}\surra \cF|_{S_1}, W_{\cA})$ is a modified proper representative of $\cA$.

We can observe that via the morphism $p_2 : \cV|_{S_1} \to \cV$, we have $p_2(W_{\cA}) = W_{u(\cA)}$. Since we have $\deg(u|_{\cA})=0$, generic fiber of the morphism $( \cA|_{u^*(U)} \to f(\cA)|_U )$ has positive dimension. Furthermore, since the morphism $( \cA' \to \cA|_{u^*(U)} )$ and $( q(W_{u(\cA)})|_{f^*(U)} \to u(\cA)|_U )$ are generically finite and surjective, the generic fiber of the morphism $( \cA' \to q(W_{u(\cA)})|_{f^*(U)} )$ has positive dimension, the generic fiber of the morphism $W_{\cA} \to W_{u(\cA)}$ also has positive dimension. 
Therefore, we obtain $(p_2)_*[W_{\cA}]=0$.

Thus we have 
\[u_*\circ\Gysin{\cF}[\cA]=u_*(p_1)_*\Gysin{\cV|_{S_1}}[W_{\cA}]=f_*(p_2)_*\Gysin{\cV|_{S_1}}[W_{\cA}]=f_*\Gysin{\cV}(p_2)_*[W_{\cA}]=0.\]
On the other hand, $\Gysin{\cF}\circ u_*[\cA] = 0$ since $\deg(u|_{\cA}) = 0$.

Next, assume that $\deg(u|_{\cA})>0$. Then the morphism $\cA|_{u^*(U)} \to u(\cA)|_U$ is generically finite. Consider a modified proper representative $(f : S_2 \to Z, q : \cV \surra \cF|_{S_2}, W_{u(\cA)})$ of $u(\cA)$. Then, by Lemma \ref{component}, there exist an irreducible component $\cA'$ of $\cA|_{u^*(U)} \times_{u(\cA)|_U} q(W_{u(\cA)})|_{f^*(U)} \subset \cF|_{p_2^*(f^*(U))}$ such that the projections $\cA' \to \cA|_{u^*(U)}, \cA' \to q(W_{u(\cA)})|_{f^*(U)}$ are generically finite and surjective.

Let $S_1 :=\bar{\pi(\cA')}$ where $\pi : \cF|_{p_2^*(f^*(U))} \to p_2^*(f^*(U))$ is the projection. and let $W_{\cA}:= \bar{ \cV|_{p_2*(f^*(U))} \times_{\cF|_{p_2*(f^*(U))}} \cA' } \subset \cV|_{S_1}$. We note that $p_1(S_1) = Z$ and $(p_1 : S_1 \to Z, q : \cV|_{S_1}\surra \cF|_{S_1}, W_{\cA})$ is a modified proper representative of $\cA$. Then we have :

\begin{align*}
& \Gysin{\cF}(u_*[\cA]) = (\deg(u|_{\cA})\cdot \Gysin{\cF}\cdot[u(\cA)]) = \frac{\deg(u|_{\cA})}{d_{W_{u(\cA)}}}f_*\Gysin{\cV}[W_{u(\cA)}]
\end{align*}
and
\begin{align*} 
& u_*\Gysin{\cF|_X}[\cA] = \frac{1}{d_{W_{\cA}}} u_* (p_1)_* \Gysin{\cV|_{S_1}}[W_{\cA}] = \frac{1}{d_{W_{\cA}}} f_* (p_2)_* \Gysin{\cV|_{S_1}}[W_{\cA}] \\ 
& = \frac{1}{d_{W_{\cA}}}f_*\Gysin{\cV|_{S_2}}(p_2)_*[W_{\cA}]=\frac{\deg(W_{\cA} \to W_{u(\cA)})}{d_{W_{\cA}}}f_*\Gysin{\cV}[W_{u(\cA)}].
\end{align*}

Then, from the commutative diagram 
\[\xymatrix{q(W_{\cA})|_{p_2^*(f^*(U))}=\cA' \ar[r] \ar[d] \ar@{}[rd]|{\commra} & q(W_{u(\cA)})|_{f^*(U)} \ar[d] \\
\cA|_{u^*(U)} \ar[r] & u(\cA)|_U
}
\]
we have $d_{W_{\cA}} \cdot \deg(u|_{\cA}) = d_{W_{u(\cA)}} \cdot \deg(W_{\cA} \to W_{u(\cA)})$. Since $\cA' \to q(W_{u(\cA)})|_{f^*(U)}$ is generically finite with positive degree, $\deg(W_{\cA} \to W_{u(\cA)}) = \deg(\cA' \to q(W_{u(\cA)})|_{f^*(U)})$ is positive. Thus we have $\Gysin{\cF}(u_*[\cA]) = u_*\Gysin{\cF|_X}[\cA]$.

(ii). Let $\cA \subset \cF|_Z$ be an integral substack where $Z \subset Y$ is an integral closed substack. Choose a modified proper representative $(f : S \to Z, q : \cV \surra \cF|_{S}, W_{\cA})$. 

Let $p_1 : X \times_Y S \to X $, $p_2 : X\times_Y S \to S$ be the projections. Then we have $u^*\Gysin{\cF}[\cA] = \frac{1}{d_{W_{\cA}}}u^*\left( f_* \Gysin{\cV}[W_{\cA}] \right) = \frac{1}{d_{W_{\cA}}}(p_1)_* (p_2)^* \Gysin{\cV}[W_{\cA}] = \frac{1}{d_{W_{\cA}}}(p_1)_* \Gysin{\cV|_{X\times_Y S}}(p_2^*[W_{\cA}])$. 

On the other hand,
\begin{align*}
& \Gysin{\cF}(u^*[\cA]) = \frac{1}{d_{W_{\cA}}} \Gysin{\cF} \circ (\zeta_{X\times_Y S})_*((p_2)^*[W_{\cA}]).
\end{align*}

Let $(p_2)^*[W_{\cA}] = \sum_i a_i [W_i]$ where $W_i$ are integral. Let $S_i := \bar{\pi(W_i)}$ where $\pi : \cV|_{X\times_Y S} \to X\times_Y S$ is the projection. Let $Z_i = p_1(S_i)$, $\cA_i := \zeta_{S_i}(W_i)$. Then we can observe that $(p_1 : S_i \to Z_i, q : \cV|_{S_i} \surra \cF|_{S_i}, W_i)$ is a modified proper representative of $\cA_i \subset \cF|_{Z_i}$.

Then we have $\zeta_{X\times_Y S})_*((p_2)^*[W_{\cA}] = \sum_i a_i d_{W_i} [\cA_i]$. Then we have
\begin{align*}
& \frac{1}{d_{W_{\cA}}}\Gysin{\cF} \circ (\zeta_{X\times_Y S})_*(p_2^*[W_{\cA}]) = \frac{1}{d_{W_{\cA}}}(p_1)_*\left( \sum_i a_i \cdot \Gysin{\cV|_{X\times_Y S}}[W_i] \right) \\
& = \frac{1}{d_{W_{\cA}}}(p_1)_* \Gysin{\cV|_{X\times_Y S}}(p_2^*[W_{\cA}]) = u^*\Gysin{\cF}[\cA].
\end{align*}
\end{proof}

\subsection{Rational equivalences and Chow groups of coherent sheaf stacks}\label{sec:rationaleq} 
In this section, we define notions of rational equivalences and boundary maps. Using these notions, we define a Chow group $A_*(\cF)$ for a coherent sheaf stack $\cF$ on a DM stack $X$. First we define a notion of a group of rational functions. Let $\cA \subset \cF|_Z$ be an integral substack of a coherent sheaf stack $\cF$ on $X$.

\begin{defi}[Group of rational functions]
Let $\cA \subset \cF|_Z$ be an integral substack where $Z \subset X$ is an integral closed substack. Let $U \subset Z$ be an open substack such that $U$ is locally free. Then, we define a group of rational functions $K(\cA)$ on $\cA$ by $K(\cA):=K(\cA|_U)$. For a rational function $h\in K(\cA)$, we use a notation $h : \cA \dra k$.
\end{defi}

Next, we define a notion of rational equivalences.

\begin{defi}[Rational equivalences]
For a coherent sheaf stack $\cF$ on $X$, we define a group of rational equivalences $W_*(\cF)$ to be the free abelian group generated by integral rational equivalences $(\cA,h)$ where $\cA \subset \cF|_Z$ is an integral substack and $h$ is a rational function on $\cA$. We write $[(\cA,h)]$ for an element in $W_*(\cF)$ which corresponds to an integral rational equivalence $(\cA,h)$.
\end{defi}

\begin{defi}[Saturated rational morphism]
Let $\cF$ be a coherent sheaf on $X$. Let $f : S \to X$ be a proper morphism from a DM stack $S$, and let $\cV \surra \cF|_S$ be a surjection from a vector bundle $\cV$ on $S$. Let $W \subset \cV|_S$ be a saturated integral substack. Consider a rational morphism $h : W \dra k$.

Let $S_{W} := \bar{\pi(W)}$ where $\pi : \cV|_S \to S$ is the projection. Then we call $h$ is saturated if there exist an open dense subset $U \subset S_W$ and a locally free resolution $ \cV' \to \cV|_U \surra \cF|_U \to 0$ and an integral substack $\WW \subset \FF := [\cV|_U/ \cV']$ such that $W = \bar{\cV|_U \times_{\FF} \WW} \subset \cV|_{S_W}$(Note that this triple of open dense subset, locally free resolution, and integral substack exists since $W$ is saturated), and there exist a rational function $\bar{h} : \WW \dra k$ such that $p^* \bar{h} = h$ where $p : W|_U \surra \WW$ is the projection.
\end{defi}

We note that the group of rational equivalence does not behave well under proper pushforwards and flat pull-backs. So we introduce a notion of modified rational equivalences.

\begin{defi}[Extended rational equivalences]\label{extendedrateq}
Let $\cF$ be a coherent sheaf stack $\cF$ on $X$. We define a group of extended rational equivalences $W_{\cF}$ by the following. First, consider the free abelian group $W^{E,0}_*(\cF)$ generated by the following generators;
$(f : S \to Z, q : \cV \surra \cF|_S, W_{\cA} \subset \cV, h : W_{\cA} \dra k)$ where $(f : S \to Z, q : \cV \surra \cF|_S, W_{\cA} \subset \cV)$ is a modified proper representative of an integral substack $\cA \subset \cF|_Z$ and $h : W_{\cA} \dra k$ is a saturated rational morphism.

Then we give equivalence relation between generators. Let $H_1 = [(f_1 : S_1 \to Z, q_1 : \cV_2 \surra \cF|_{S_1}, W_{\cA,1} \subset \cV, h_1 : W_{\cA,1} \dra k)]$, $H_2 = [(f_2 : S_2 \to Z, q_2 : \cV_2 \surra \cF|_{S_2}, W_{\cA,2} \subset \cV_2, h : W_{\cA,2} \dra k)]$ be two generating element in $W^{E,0}_*(\cF)$ such that $(f_1 : S_1 \to Z, q_1 : \cV_2 \surra \cF|_{S_1}, W_{\cA,1} \subset \cV)$, $(f_2 : S_2 \to Z, q_2 : \cV_2 \surra \cF|_{S_2}, W_{\cA,2} \subset \cV_2)$ are both modified proper representatives of $\cA \subset \cF|_Z$. 

Assume that there is a third modified proper equivalence $(f_3 : S_3 \to Z, q_3 : \cV_3 \surra \cF|_{S_3}, W_{\cA,3} \subset \cV)$ such that there is a proper morphisms $p_1 : S_3 \to S_1, p_2 : S_3 \to S_2$, and a surjections $r_1 : \cV_3 \surra \cV_1|_{S_3}$, $r_2 : \cV_3 \surra \cV_2|_{S_2}$ such that $r_i(W_3)\subset \cV_i|_{S_i}$ are closed and $W_{\cA,3} = r_i^* r_i(W_{\cA,3})$ for $i=1,2$, and moreover, $p_i(r_i(W_{\cA,3})) = W_{\cA,i}$ and the morphism $p_i : r_i(W_{\cA,3}) \to W_{\cA,i}$ is generically finite for $i=1,2$.

Next, further assume that the following. Consider the descent of the saturated rational morphism $ r_2^*(p_2\circ h_2) : W_{\cA,3} \dra k$,  $\bar{r_2^*(p_2\circ h_2)} : r_2(W_{\cA,2}) \dra {k}$. Then we consider its norm $N(\bar{r_2^*(p_2\circ h_2)}) : W_{\cA,1} \dra k$ where the norm is taken via the proper generically finite morphism $p_1 : r_1(W_{\cA,3}) \to W_{\cA,1}$. 

Assume that $N(\bar{r_2^*(p_2\circ h_2)}) = h_1^{d}$. Let $c = gcd(\deg(r_1(W_{\cA,3}) \to W_{\cA,1}), d)$ and let $d = c \cdot d_1$, $\deg(r_1(W_{\cA,3}) \to W_{\cA,1}) = c \cdot d_2$.
Then we say $d_1 \cdot H_1$ and $d_2 \cdot H_2$ are equivalent.

We define the group of rational equivalence $W_*^{E}(\cF)$ to be the quotient of the free abelian group $W_*^{E,0}(\cF)$ by the above equivalence relations.

\end{defi}

\begin{defi}[Boundary map]\label{boundary}
We define boundary map for rational equivalences and extended rational equivalences.

\begin{itemize}
\item[(i)] We define a morphism $i : W_*(\cF) \to W_*^{E}(\cF)$ Let $(\cA,h)$ be an integral rational equivalence where $\cA \subset \cF|_Z$. Let $(f : S\to Z, q : \cV \surra \cF|_S, W_{\cA} )$ be a modified proper representative of $\cA \subset \cF$. Let $\wtil{h} : W_{\cA} \to q(Z_{\cA}) \to \cA \dra k$ be a rational function obtained by the composition. Then we define $i([(\cA,h)]) := [(f : S\to Z, q : \cV \surra \cF|_S, W_{\cA}, \wtil{h} )]$.

\item[(ii)]
$[(f : S\to Z, q : \cV \surra \cF|_S, W_{\cA}, h )]$ be a generating element of $W^E_*(\cF)$.
Let $\rou h= \sum_i c_i [W_i]$, and let $d_{W_i} = \deg(q(Z_i) \to f(q(W_i)))$.

Then we define the boundary map $\rou : W^{E}_*(\cF) \to Z_*(\cF)$ to be
\begin{align*}
&\rou [(f : S\to Z, q : \cV \surra \cF|_S, W_{\cA}, h )] := \frac{1}{d_{W_{\cA}}}\sum\limits_i d_{W_i} \cdot c_i[\zeta_S(W_i)] \\
& = \frac{1}{d_{W_{\cA}}}(\zeta_S)_*(\rou h).
\end{align*}

\item[(iii)]
We define the boundary map $rou : W_*(\cF) \to Z_*(\cF)$ by the composition $\rou \circ i$.

\end{itemize}
\end{defi}

We check the well-definedness of the boundary map in the following three lemmas.

\begin{lemm}
In Definition \ref{boundary}, $W_i$ are saturated integral subschemes of $\cV$.
\end{lemm}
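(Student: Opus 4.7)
The plan is to unpack the structure guaranteed by the saturatedness of $h$ and then match each integral component of $\partial h$ to the data required by Lemma \ref{satschlemm}.

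First, I would invoke the definition of a saturated rational morphism applied to $h : W_{\cA} \dashrightarrow k$. This produces an open dense subset $U \subset S_{W_{\cA}}$, a locally free resolution
\[
\cV' \to \cV|_{U} \surra \cF|_{U} \to 0,
\]
an integral substack $\WW \subset \FF := [\cV|_{U}/\cV']$ with $W_{\cA} = \overline{\cV|_{U}\times_{\FF}\WW}$ (closure in $\cV|_{S_{W_{\cA}}}$), and a rational function $\bar{h} : \WW \dashrightarrow k$ satisfying $p^{*}\bar{h} = h$, where $p : W_{\cA}|_{U} = \cV|_{U}\times_{\FF}\WW \to \WW$ is the projection. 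The structural point that drives the argument is that $p$ is a torsor under $\cV'$; in particular it is smooth and flat with integral fibers.

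Second, I would compute $\partial h$ on the dense open $W_{\cA}|_{U}$ by flat pullback of $\partial \bar{h}$. Writing $\partial \bar{h} = \sum_{j} c_{j}[\WW_{j}]$ with the $\WW_{j}$ integral codimension-one substacks of $\WW$, flatness of $p$ yields
\[
\partial h\big|_{W_{\cA}|_{U}} \;=\; p^{*}\partial \bar{h} \;=\; \sum_{j} c_{j}\,[\cV|_{U}\times_{\FF}\WW_{j}],
\]
and each $\cV|_{U}\times_{\FF}\WW_{j}$ is integral (a vector-bundle-type pullback of the integral $\WW_{j}$). Any integral component $W_{i}$ of $\partial h$ meeting $W_{\cA}|_{U}$ is therefore the closure in $\cV|_{S_{W_{i}}}$ of some $\cV|_{U}\times_{\FF}\WW_{j}$.

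Third, I would verify saturatedness via Lemma \ref{satschlemm} with the witnessing data $U_{i} := U \cap S_{W_{i}}$, the restricted resolution $\cV'|_{U_{i}} \to \cV|_{U_{i}} \surra \cF|_{U_{i}}$, and the integral substack $\WW_{j}$ (identified with its image in $\FF|_{U_{i}}$, where $U_{i}$ is the image of $\WW_{j}$ in $U$). Density of $U_{i}$ in $S_{W_{i}}$ follows because $W_{i}|_{U}$ is a nonempty open subset of the integral $W_{i}$, so its image in $S$ is dense in $\pi(W_{i})$, whose closure is $S_{W_{i}}$. The identity $W_{i} = \overline{\cV|_{U_{i}}\times_{\FF|_{U_{i}}}\WW_{j}}$ is then immediate from the description above.

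The main obstacle is ruling out an integral component of $\partial h$ contained entirely in $W_{\cA}\setminus W_{\cA}|_{U}$, where the pullback description of $\partial h$ no longer applies directly. I would handle this either by shrinking $U$ inside $S_{W_{\cA}}$ so that $S_{W_{\cA}}\setminus U$ has codimension at least two (making its preimage in $W_{\cA}$ too small to contribute a codimension-one component to $\partial h$), or by invoking that a rational function on the integral stack $W_{\cA}$ is determined by its restriction to the dense open $W_{\cA}|_{U}$, so that $\partial h$ is the Zariski closure of $\partial h|_{W_{\cA}|_{U}}$ and contains no extraneous boundary components.
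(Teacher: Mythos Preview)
Your argument is correct for those components $W_i$ that meet the open $W_{\cA}|_U$, but the final paragraph does not close the gap. Neither proposed fix works: you cannot in general enlarge $U$ so that $S_{W_{\cA}}\setminus U$ has codimension at least two (the resolution need not extend), and it is simply false that $\partial h$ is the closure of $\partial h|_{W_{\cA}|_U}$---a rational function on an integral scheme can have zeros or poles supported entirely on a fixed divisor (e.g.\ take $h$ to be the pullback of a uniformizer for a codimension-one point of $S_{W_{\cA}}$ lying in the complement of $U$).

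The paper's proof closes this gap by working with the \emph{open cover} characterization of saturatedness (Definition~\ref{satsubsch}) rather than the single-open characterization of Lemma~\ref{satschlemm}. Since $W_{\cA}$ is saturated, there is an open cover $\{S_\alpha\}$ of $S$ with resolutions $\cV'_\alpha \to \cV|_{S_\alpha} \surra \cF|_{S_\alpha}$ and integral $\WW_\alpha \subset \FF_\alpha$ with $W_{\cA}|_{S_\alpha} = \cV|_{S_\alpha}\times_{\FF_\alpha}\WW_\alpha$. The key observation is that $h$ is $\cV'_\alpha$-invariant on \emph{every} chart $S_\alpha$, not just on $U$: invariance under the kernel action is an identity of rational functions on the integral $W_{\cA}|_{S_\alpha}$, and it holds on the dense open $W_{\cA}|_{U\cap S_\alpha}$ by the saturatedness of $h$, hence everywhere. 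Thus $h$ descends to some $\bar h_\alpha$ on $\WW_\alpha$, and since $p_\alpha : W_{\cA}|_{S_\alpha} \to \WW_\alpha$ is smooth one gets $\partial h|_{S_\alpha} = p_\alpha^*\,\partial\bar h_\alpha$. Every $W_i$ meets some $S_\alpha$, and on that chart $W_i|_{S_\alpha}$ is the pullback of an integral component of $\partial\bar h_\alpha$; this exhibits $W_i$ as saturated directly from Definition~\ref{satsubsch}. The missing idea in your argument is precisely this propagation of invariance from the one witnessing open $U$ to an arbitrary chart of the cover.
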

\begin{proof}
Since $W_i$ is saturated, there exist an open cover $\{S_{\alpha}\}_{\alpha \in\ cI}$, locally free resolutions $\cV_{\alpha}' \to \cV|_{S_{\alpha}} \surra \cF|_{S_{\alpha}} \to 0$ for every $\alpha$, and integral substacks $\WW_{\alpha} \subset \FF_{\alpha} := [\cV|_{S_{\alpha}}/ \cV_{\alpha}']$ such that $W_i|_{S_{\alpha}} = \WW_{\alpha}\times_{\FF_{\alpha}} \cV|_{\alpha}$. Since $h \in K(\cA|_U)$ where $U \subset Z$ is an open dense subset such that $\cF|_U$ is locally free, we observe that the rational function $h$ is $\cV_{\alpha}'$-invariant for each $\alpha$. Therefore, $h$ descend to a rational function $\bar{h}_{\alpha} : \FF_{\alpha} \dra k$ for each $\alpha$. Since $p : \cV|_{S_{\alpha}} \to \FF_{\alpha}$ is smooth, we have $\rou(\wtil{h}) |_{S_{\alpha}} = p^* \rou(\bar{h}_{\alpha})$. Moreover, since $p$ is a projection of vector bundle, there exist an integral substack $\WW_{\alpha} \subset \FF_{\alpha}$ such that $p^*[\WW_{\alpha}] = [W_i|_{S_{\alpha}}]$, i.e. $\WW_{\alpha} \times_{\FF_{\alpha}} \cV|_{S_{\alpha}} = W_i|_{S_{\alpha}}$. Therefore $W_i$ is a saturated integral subscheme of $\cV$.
\end{proof}

\begin{lemm}\label{boundarywelldef1}
The morphism $i : W_*(\cF) \to W_*^{E}(\cF)$ defined in Definition \ref{boundary}, $(i)$ does not depend on the choice of modified proper representatives.
\end{lemm}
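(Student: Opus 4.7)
The plan is to take two modified proper representatives of $\cA$, pass to a common refinement constructed exactly as in the proof of Lemma \ref{pullbackwelldef}, and then verify directly that the two resulting generators of $W_*^{E,0}(\cF)$ satisfy the equivalence relation introduced in Definition \ref{extendedrateq}.

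First I would fix two modified proper representatives $(f_i : S_i \to Z, q_i : \cV_i \surra \cF|_{S_i}, W_{\cA,i})$ of $\cA \subset \cF|_Z$, for $i=1,2$, and form the generators $H_i := (f_i, q_i, W_{\cA,i}, \tilde{h}_i)$, where each $\tilde{h}_i : W_{\cA,i} \dra k$ is the pullback of $h$ along the generically finite composition $W_{\cA,i} \to q_i(W_{\cA,i}) \to \cA$. The goal is to prove that $[H_1] = [H_2]$ in $W_*^E(\cF)$.

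Next, using the same construction as in Lemma \ref{pullbackwelldef} and Definition-Proposition \ref{mGysin}, I would build a common modified proper representative $(f_3 : S_3 \to Z, q_3 : \cV_3 \surra \cF|_{S_3}, W_{\cA,3})$, together with proper morphisms $p_i : S_3 \to S_i$ and surjections of vector bundles $r_i : \cV_3 \surra \cV_i|_{S_3}$ such that $r_i(W_{\cA,3})$ is closed, $W_{\cA,3} = r_i^{-1}(r_i(W_{\cA,3}))$, and the induced map $p_i : r_i(W_{\cA,3}) \to W_{\cA,i}$ is generically finite and surjective. The crucial observation is then that the pullback of $\tilde{h}_2$ to $W_{\cA,3}$ through $W_{\cA,3} \to W_{\cA,2}$ and the pullback of $\tilde{h}_1$ through $W_{\cA,3} \to W_{\cA,1}$ must agree with $h$ composed along the single morphism $W_{\cA,3} \to q_3(W_{\cA,3}) \to \cA$; this follows because $q_i(W_{\cA,i})$ and $q_3(W_{\cA,3})$ each dominate $\cA$ and the relevant square commutes on the open dense subset of $Z$ on which $\cF$ is locally free.

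Finally, I would invoke the standard norm formula: for a generically finite morphism $\pi : Y \to X$ of degree $e$ between integral stacks and any rational function $\phi \in K(X)$, one has $N_\pi(\pi^*\phi) = \phi^e$. Applied to $p_1 : r_1(W_{\cA,3}) \to W_{\cA,1}$ with $\phi = \tilde{h}_1$, together with the identification from the previous step, this gives $N(\overline{r_2^*(\tilde{h}_2 \circ p_2)}) = \tilde{h}_1^{d}$ with $d = \deg(p_1) = \deg(r_1(W_{\cA,3}) \to W_{\cA,1})$. Since these two integers coincide, their $\gcd$ equals both of them, and the multiplicities $d_1, d_2$ appearing in Definition \ref{extendedrateq} both reduce to $1$, yielding $H_1 \sim H_2$. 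The main technical obstacle will be the third step: to identify the descent of the pulled-back rational function with $p_1^* \tilde{h}_1$, one has to exploit that $r_1, r_2$ are projections of vector bundles (so rational functions on $W_{\cA,3}$ obtained by pullback along $r_i$ are automatically invariant under the relevant subbundle action and descend canonically to the appropriate $r_i(W_{\cA,3})$), and to reconcile the descent along $r_2$ with the norm along $p_1$, which will force me to restrict to a single open dense subset of $Z$ on which all three representatives simultaneously become transparent.
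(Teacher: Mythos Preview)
Your proposal is correct and follows essentially the same route as the paper: construct a third modified proper representative dominating both via the machinery of Lemma \ref{pullbackwelldef}, observe that the rational functions pulled back from either side coincide on $W_{\cA,3}$ because both are induced from the single $h$ on $\cA$, and then apply the norm formula $N_\pi(\pi^*\phi)=\phi^{\deg\pi}$ to conclude that the exponent $d$ equals $\deg(r_1(W_{\cA,3})\to W_{\cA,1})$, forcing $d_1=d_2=1$ in the equivalence relation. The paper's argument is slightly more terse but the logical skeleton is identical, including your identification of the key point that the descent $\bar{h}_3$ along $r_1$ agrees with $p_1^*\tilde{h}_1$.
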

\begin{proof}
Let $(\cA, h)$ be an integral rational equivalences where $\cA \subset \cF|_Z$ is an integral substack.
Consider two modified proper representatives
$(f_1 : S_1 \to Z, q_1 : \cV_1 \surra \cF|_{S_1}, W_{\cA,1})$ and $(f_2 : S_2 \to Z, q_2 : \cV_2 \surra \cF|_{S_2}, W_{\cA,2})$ of $(\cA)$. In a similar manner as in the proof of Lemma \ref{pullbackwelldef}, we choose a third modified proper representative $(f_3 : S_3 \to Z, q_3 : \cV_3 \surra \cF|_{S_3}, W_{\cA,3})$ such that there is a proper morphisms $p_1 : S_3 \to S_1$, $p_2 : S_3 \to S_2$ and surjections of vector bundles $r_1 : \cV_3 \surra \cV_1|_{S_3}$, $r_2 : \cV_3 \surra \cV_2|_{S_3}$ which satisfies $f_1\circ p_1 = f_2 \circ p_2 = f_3$, $W_{\cA,3} = r_i^*(r_*(W_{\cA,3}))$, $r_*(W_{\cA,3})$ is closed for $i=1,2$. Moreover, $p_i : r_i(W_{\cA,3}) = W_{\cA,i}$, $p_i : r_i(W_{\cA,3}) \to W_{\cA,i}$ are generically finite for $i=1,2$.

Let $\wtil{h}_1 : W_{\cA,1} \dra k$, $\wtil{h}_2 : W_{\cA,2} \dra k$, $\wtil{h}_3 : W_{\cA,3} \dra k$ be rational morphisms induced from $h$. Then clearly $r_2^*(p_2\circ \wtil{h}_2) = \wtil{h}_3$ is a saturated rational morphism and let $\bar{h}_3 : r_1(W_{\cA,3}) \dra k$ be a descent of $\wtil{h}_3$. Let $N(\bar{h}_3) : W_{\cA} \dra k$ be the norm of $\bar{h}_3$. Since $r_1^* (p_1 \circ \wtil{h}_1) = \wtil{h}_3$, we have $p_1 \circ \wtil{h}_1 = \bar{h}_3$. Hence we have $N(\bar{h}_3) = \wtil{h}_1^{\deg(r_1(W_{\cA,3}) \to W_{\cA,1})}$. Therefore $[(f_1 : S_1 \to Z, q_1 : \cV_1 \surra \cF|_{S_1}, W_{\cA,1}, \wtil{h}_1)]$ and $[(f_2 : S_2 \to Z, q_2 : \cV_2 \surra \cF|_{S_2}, W_{\cA,2}, \wtil{h}_2 )]$ are equivalent.

\end{proof}

\begin{lemm}\label{boundarywelldef2}
The boundary map $\rou : W^{E}_*(\cF) \to Z_*(\cF)$ is well-defined, i.e. equivalent element gives same boundary.
\end{lemm}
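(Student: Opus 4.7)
The plan is to reduce both sides to the common refinement from Definition \ref{extendedrateq} and then combine three compatibility facts: ($\alpha$) the divisor operator $\rou$ commutes with flat pullback along the vector-bundle projections $r_i : W_{\cA,3} \to r_i(W_{\cA,3})$; ($\beta$) for a proper generically finite surjection $p$, pushforward of the divisor of a rational function equals the divisor of its norm; and ($\gamma$) the modified projection $(\zeta_S)_*$ commutes with proper pushforwards (Lemma \ref{mprojectionpushforward}) and flat pullbacks (Remark \ref{pullbackcompat1}).

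Concretely, suppose $d_1 \cdot H_1 \sim d_2 \cdot H_2$ is witnessed by a common refinement $(f_3 : S_3 \to Z, q_3 : \cV_3 \surra \cF|_{S_3}, W_{\cA,3})$ with morphisms $p_i : S_3 \to S_i$ and vector-bundle surjections $r_i : \cV_3 \surra \cV_i|_{S_3}$ as in Definition \ref{extendedrateq}. Set $\tilde h_3 := r_2^{*}(h_2 \circ p_2)$, a saturated rational function on $W_{\cA,3}$. First I would compute $\rou \tilde h_3$ using ($\alpha$): since $W_{\cA,3} = r_2^{*}(r_2(W_{\cA,3}))$, the restriction $r_2 : W_{\cA,3} \to r_2(W_{\cA,3})$ is flat of positive relative dimension, so $\rou \tilde h_3 = r_2^{*}(\rou (h_2 \circ p_2))$ as saturated cycles. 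Pushing further by $p_2$ and using ($\beta$) gives the identity $(p_2)_*\rou(h_2 \circ p_2) = \rou(N(h_2 \circ p_2)) = (\deg p_2)\cdot \rou h_2$ on $W_{\cA,2}$. A parallel computation, using the descent of $\tilde h_3$ to $r_1(W_{\cA,3})$ whose norm along $p_1$ equals $h_1^{d}$ by the defining hypothesis of the equivalence, yields $(p_1)_* \rou(\bar{\tilde h}_3) = d\cdot \rou h_1$ on $W_{\cA,1}$.

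Next, I would apply the modified projection $(\zeta_{S_3})_{*}$ to both chains of identities. By ($\gamma$), the modified projection passes through the proper surjections $p_i$ and through the vector-bundle surjections $r_i$, so $(\zeta_{S_3})_{*}(\rou \tilde h_3)$ simultaneously equals an explicit rational multiple of both $(\zeta_{S_1})_{*}(\rou h_1)$ and $(\zeta_{S_2})_{*}(\rou h_2)$. Converting to boundaries via the definition $\rou H_i = \frac{1}{d_{W_{\cA,i}}}(\zeta_{S_i})_{*}(\rou h_i)$ in Definition \ref{boundary}, I arrive at the desired equality $d_1 \cdot \rou H_1 = d_2 \cdot \rou H_2$.

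The main obstacle will be the bookkeeping of multiplicities: the exponents $d = c d_1$ and $\deg(r_1(W_{\cA,3}) \to W_{\cA,1}) = c d_2$ appearing in Definition \ref{extendedrateq} are arranged so that the divisor contributions from the two sides share a common scaling by $c d_1 d_2$; reconciling this with the degree factor $d_{W_{\cA,3}} = d_{W_{\cA,i}} \cdot \deg(r_i(W_{\cA,3}) \to W_{\cA,i})$, the identity that already underlies the well-definedness proofs of Lemma \ref{pullbackwelldef} and Definition-Proposition \ref{mGysin}, should yield the precise cancellation. The trickiest subpoint is ensuring that the descent of $\tilde h_3 = r_2^{*}(h_2 \circ p_2)$ to $r_1(W_{\cA,3})$ is well-posed in spite of $r_1$ and $r_2$ having possibly different kernels; this rests on $\cV_3$ surjecting onto the fiber product $\cV_1|_{S_3} \times_{\cF|_{S_3}} \cV_2|_{S_3}$ that is used to build the common refinement in the proof of Lemma \ref{pullbackwelldef}.
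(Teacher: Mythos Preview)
Your proposal is correct and follows essentially the same route as the paper: introduce $\tilde h_3 = r_2^*(h_2\circ p_2)$ on the common refinement $W_{\cA,3}$, compute $(\zeta_{S_3})_*(\rou\tilde h_3)$ two ways via the $r_2,p_2$ chain (yielding $\rou H_2$ after the degree identity $d_{W_{\cA,3}} = d_{W_{\cA,2}}\cdot\deg(r_2(W_{\cA,3})\to W_{\cA,2})$) and via the $r_1,p_1$ chain using the norm hypothesis $N(\bar{\tilde h}_3)=h_1^{d}$ (yielding $\frac{d_1}{d_2}\rou H_1$), and conclude $d_1\rou H_1 = d_2\rou H_2$. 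Your flagged ``trickiest subpoint'' about descending $\tilde h_3$ through $r_1$ is not actually an obstacle here: the existence of that descent is part of the hypothesis in Definition \ref{extendedrateq} (and in any case follows from saturatedness, since a saturated rational function on $W_{\cA,3}$ factors through $q_3$ and hence through both $r_1$ and $r_2$).
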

\begin{proof}
Let $H_1 = [(f_1 : S_1 \to Z, q_1 : \cV_2 \surra \cF|_{S_1}, W_{\cA,1} \subset \cV, h_1 : W_{\cA,1} \dra k)]$, $H_2 = [(f_2 : S_2 \to Z, q_2 : \cV_2 \surra \cF|_{S_2}, W_{\cA,2} \subset \cV_2, h : W_{\cA,2} \dra k)]$ be two generating element in $W^{E}_*(\cF)$ such that $aH_1$ and $bH_2$ are equivalent and $(f_1 : S_1 \to Z, q_1 : \cV_2 \surra \cF|_{S_1}, W_{\cA,1} \subset \cV)$, $(f_2 : S_2 \to Z, q_2 : \cV_2 \surra \cF|_{S_2}, W_{\cA,2} \subset \cV_2)$ are both modified proper representatives of $\cA \subset \cF|_Z$. As in the Definition \ref{extendedrateq}, there is a third modified proper representative $(f_3 : S_3 \to Z, q_3 : \cV_3 \surra \cF|_{S_3}, W_{\cA,3} \subset \cV)$. 

Let $\wtil{h}_3 := r_2^*(p_2\circ \wtil{h}_2)$. Since $r_2 : \cV_3 \surra \cV_2|_{S_3}$ is a surjection of vector bundle, $W_{\cA,3} = r_2^*(r_2(W_{\cA,3}))$, we conclude that $(r_2)_* \rou \wtil{h}_3 = \rou( p_2 \circ \wtil{h}_2 )$

We have $(p_2)_* \rou( p_2 \circ \wtil{h}_2 ) = \deg(r_2(W_{\cA,3}) \to W_{\cA,2}) \cdot \rou \wtil{h}_2$. Then we obtain 
\begin{align*}
& \frac{1}{d_{W_{\cA,3}}}(\zeta_{S_3})_*(\rou \wtil{h}_3) = \frac{1}{d_{W_{\cA,3}}} (f_3)_* (q_3)_* (\rou \wtil{h}_3) = \frac{1}{d_{W_{\cA,3}}} (f_2)_* \circ (p_2)_* \circ (q_2)_* \circ (r_2)_* (\rou \wtil{h}_3) \\
& = \frac{1}{d_{W_{\cA,3}}} (f_2)_* \circ (p_2)_* \circ (q_2)_* \rou(p_2 \circ \wtil{h}_2) = \frac{\deg(r_2(W_{\cA,3}) \to W_{\cA,2})}{d_{W_{\cA,3}}} (f_2)_* (q_2)_* \rou \wtil{h}_2 \\
& = \frac{\deg( r_2(W_{\cA,3}) \to W_{\cA,2} )}{d_{W_{\cA,3}}} (\zeta_{S_2})_*(\rou \wtil{h}_2).
\end{align*}

In a similar manner as in Definition-Proposition \ref{mGysin}, we can show that $\frac{\deg( r_2(W_{\cA,3}) \to W_{\cA,2} )}{d_{W_{\cA,3}}} = \frac{1}{d_{W_{\cA,2}}}$. Thus we have $\frac{1}{d_{W_{\cA,3}}}(\zeta_{S_3})_*(\rou \wtil{h}_3) = \rou H_2$.

Next, let $\bar{h}_3 : r_1(W_{\cA,3}) \dra k$ be a descent of $\wtil{h}_3$ on $r_1(W_{\cA,3})$. By assumption, we have $N(r_1(\bar{h}_3)) : W_{\cA,1} \dra k$ is equal to $\wtil{h}_1^{d}$ such that $a \cdot \deg(r_1(W_{\cA,3}) \to W_{\cA,1}) = d \cdot b$.

Since $r_1 : \cV_3 \surra \cV_1|_{S_3}$ is a surjection of vector bundle, $W_{\cA,3} = r_1^*(r_1(W_{\cA,3}))$, we conclude that $(r_1)_* \rou \wtil{h}_3 = \rou( \bar{h}_3 )$. Then we obtain

\begin{align*}
& \frac{1}{d_{W_{\cA,3}}}(\zeta_{S_3})_*(\rou \wtil{h}_3) = \frac{1}{d_{W_{\cA,3}}} (f_3)_* (q_3)_* (\rou \wtil{h}_3) = \frac{1}{d_{W_{\cA,3}}} (f_1)_* \circ (p_1)_* \circ (q_1)_* \circ (r_1)_* (\rou \wtil{h}_3) \\
& = \frac{1}{d_{W_{\cA,3}}} (f_1)_* \circ (p_1)_* \circ (q_1)_* \rou \bar{h}_3 = \frac{1}{d_{W_{\cA,3}}} (f_1)_* (q_1)_* \rou N(\bar{h}_3) \\
& = \frac{d}{d_{W_{\cA,3}}} (\zeta_{S_1})_* \rou \wtil{h}_1 = \frac{a \cdot \deg(r_1(W_{\cA,3}) \to W_{\cA,1})}{b \cdot d_{W_{\cA,3}}} (\zeta_{S_1})_* \rou \wtil{h}_1 = \frac{a}{b \cdot d_{W_{\cA,1}}} (\zeta_{S_1})_* \rou \wtil{h}_1 \\
& = \frac{a}{b} \rou H_1.
\end{align*}

Thus we have $a \cdot \rou H_1 = b \cdot \rou H_2$, so the boundary map gives same value for equivalent elements.
\end{proof}

\begin{rema}
We note that our definition of the boundary map is equivalent to the definition of the boundary map in \cite[Proposition 3.3]{CL11semi} when restricted to the group of rational equivalence $W_*(\cF)$. The difference is that we used modified proper representatives, so our definition is a little simpler than the definition in \cite[Proposition 3.3]{CL11semi}. Since the proof of the equivalence of these two definitions is similar to the proof of Proposition \ref{properrepsat}, we omit it here.
\end{rema}

Finally, we are define a Chow group $\cA_*(\cF)$ for a coherent sheaf stack $\cF$ on $X$. We define the Chow group $A_*(\cF)$ to be :
\[A_*(\cF) := Z_*(\cF)/ \rou(W^E_*(\cF)).
\]
Since we used extended rational equivalence, the definition of Chow group is slightly different from the definition in \cite{CL11semi}.

%

We show that the rational equivalence factors though Gysin map of coherent sheaf stacks.

\begin{prop} For a coherent sheaf stack $\cF$ on $X$, the Gysin map $\Gysin{\cF} : Z_*(\cF) \to A_*(\cF)$.
Therefore, it also factors through the Chow group $A_*(\cF)$.
\end{prop}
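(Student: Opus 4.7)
The plan is to reduce the assertion to the well-known fact that the zero-section Gysin map of a vector bundle factors through rational equivalence. Since the boundary map $\partial : W^E_*(\cF) \to Z_*(\cF)$ is already known to be well-defined on equivalence classes by Lemma \ref{boundarywelldef2}, it suffices to check that $\Gysin{\cF}(\partial H) = 0$ in $A_*(X)_\QQ$ on each generator
\[
H = [(f: S \to Z,\ q: \cV \surra \cF|_S,\ W_\cA \subset \cV,\ h: W_\cA \dra k)]
\]
of $W^E_*(\cF)$. Write $\partial h = \sum_i c_i [W_i]$; by the lemma following Definition \ref{boundary} each $W_i$ is a saturated integral substack of $\cV$, so by Definition \ref{boundary}(ii),
\[
\partial H = \frac{1}{d_{W_\cA}}(\zeta_S)_*(\partial h) = \frac{1}{d_{W_\cA}}\sum_i c_i\, d_{W_i}\, [\zeta_S(W_i)].
\]

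Next I would apply $\Gysin{\cF}$ term-by-term, invoking Proposition \ref{Gysin=mGysin} to replace $\Gysin{\cF}$ by the modified Gysin map $\mGysin{\cF}$. For each $i$, the triple $(f: S \to f(S_{W_i}),\ q: \cV \surra \cF|_S,\ W_i)$ is a modified proper representative of $\zeta_S(W_i)$ with degree $d_{W_i}$, so by Definition-Proposition \ref{mGysin},
\[
\Gysin{\cF}[\zeta_S(W_i)] = \frac{1}{d_{W_i}} f_* \Gysin{\cV}[W_i].
\]
Summing, the factors of $d_{W_i}$ cancel, giving the clean identity
\[
\Gysin{\cF}(\partial H) = \frac{1}{d_{W_\cA}}\, f_* \Gysin{\cV}(\partial h) \quad \text{in } A_*(X)_\QQ.
\]

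Finally I would appeal to the classical result that for a vector bundle $\cV \to S$ over a DM stack $S$, the zero-section Gysin map $\Gysin{\cV}$ is well-defined on $A_*(\cV)_\QQ$ (this is Fulton, Chapter 3, extended to DM stacks by Vistoli and to Artin stacks by Kresch). Since $h$ is a rational function on the integral substack $W_\cA \subset \cV$, the cycle $\partial h = \mathrm{div}(h)$ is a principal divisor pushed forward along $W_\cA \hra \cV$ and therefore represents zero in $A_*(\cV)_\QQ$. Consequently $\Gysin{\cV}(\partial h) = 0$ in $A_*(S)_\QQ$, and pushing forward by $f_*$ yields $\Gysin{\cF}(\partial H) = 0$ as required.

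The main obstacle is essentially bookkeeping: one must verify that in the telescoping step the arithmetic of the degrees $d_{W_\cA}$ and $d_{W_i}$ works out, and that $\partial h$ is indeed a cycle on $\cV$ to which the classical Gysin-vanishing for principal divisors applies. Once the modified-proper-representative framework of Section \ref{sec:proper+modifiedrep} is in place, however, each step is formal: the compatibility of $\Gysin{\cF}$ with $(\zeta_S)_* = f_* \circ q_*$ is exactly the content of the identification $\Gysin{\cF} = \mGysin{\cF}$, and the remaining vanishing is purely a statement about ordinary intersection theory on the vector bundle $\cV$.
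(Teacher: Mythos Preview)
Your proof is correct and essentially identical to the paper's: both reduce to the identity $\Gysin{\cF}(\partial H) = \frac{1}{d_{W_\cA}} f_* \Gysin{\cV}(\partial h)$ by recognising that each $(f,q,W_i)$ is a modified proper representative of $\zeta_S(W_i)$ so that the factors $d_{W_i}$ cancel, and then invoke the classical fact that $\Gysin{\cV}$ kills principal divisors. The only cosmetic difference is that you work directly with a generator of $W^E_*(\cF)$, whereas the paper phrases the computation for $(\cA,h)\in W_*(\cF)$ and the induced $\wtil h$; since $A_*(\cF)$ is defined as $Z_*(\cF)/\partial(W^E_*(\cF))$, your formulation is in fact the one that is literally needed, though the computation is the same either way.
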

\begin{proof}
Let $(\cA, h)$ be an integral rational equivalence where $\cA \subset \cF|_Z$ is an integral substack. Choose a modified rational equivalence $(f : S\to Z, q : \cV \surra \cF|_S, W_{\cA} \subset \cV)$ of $\cA$. Then we have
\begin{align*}
& \Gysin{\cF}(\rou(\cA,h)) = \frac{1}{d_{W_{\cA}}}\Gysin{\cF}\left( (\zeta_S)_*\rou \wtil{h} \right)
\end{align*}
where $\wtil{h} : W_{\cA} \dra k$ is a rational morphism induced from $h$. Let $\rou \wtil{h} = \sum_i a_i [W_i]$ where $W_i$ are integral. Let $S_i := \bar{\pi(W_i)}$ where $\pi : \cV \to S$ is the projection, and let $Z_i = f(S_i)$.

Let $\cA_i = \zeta_{S}(W_i)$, then $(f : S_i \to Z_i, q : \cV|_{S_i} \surra \cF|_{S_i}, W_i )$ is a modified proper representative of $\cA_i \subset \cF|_{Z_i}$. Then we obtain
\begin{align*}
& \frac{1}{d_{W_{\cA}}}\Gysin{\cF}\left( (\zeta_S)_*\rou \wtil{h} \right) = \frac{1}{d_{W_{\cA}}}\Gysin{\cF}\left( a_i d_{W_i} [\cA_i] \right) = \frac{1}{d_{W_{\cA}}} f_* \left( \sum_i a_i \Gysin{\cV}[W_i] \right) \\
& = \frac{1}{d_{W_{\cA}}} f_* \Gysin{\cV}(\rou \wtil{h}).
\end{align*}
Since Gysin homomorphism of vector bundles factors through rational equivalences, we have the conclusion.

\end{proof}


The notions of modified proper representatives and modified Gysin make it easy to prove well-definedness of the Gysin map. But there is another critical reason why we should consider the notion of modified proper representatives. Look at the following example :

\begin{exam}\label{ratwelldefex}
Consider the following diagram :
\[\xymatrix@C=1pt{
Z:=\{\alpha b - \beta a = 0\} \ar[r]^(0.5)f  \ar@{}[d]|{\bigcap} & \cA:= \{vb-wa = 0\} \ar@{}[d]|{\bigcap} \\
\cF|_S \ar[d]^{\pi} \ar[r]^(0.3){f} & \cF = \PP^2 \times k^2 = \{[u:v:w],(a,b)\} \ar[d]^{\pi} \\
S:=Bl_{[1:0:0]}\PP^2 \ar[r]^(0.4)f \ar@{}[d]|{\bigcap} & X:=\PP^2 = \{[u:v:w]\} \\
\PP^2 \times \PP^1 = \{[u:v:w],[\alpha : \beta]\}. &
}
\]
The proper representative of $\cA \subset \cF$ via the generically finite, surjective morphism $f$ is
\[(f : S \to X, q : \cV|_S = \cF|_S=S \times k^2 \stackrel{=}{\lra} \cF|_S).
\]
Let $h : \cA \dra k$ be a rational function defined by $h([u:v:w],(\alpha,\beta))=\frac{u}{v}$.
Let $\wtil{h}$ be a rational function $\cA|_S \dra k$ induced from $h$. Then, we have :
\[\rou \wtil{h} = [\{u=0\}]-[\textrm{\wwtil{\{v=0\}\subset \PP^2 \times k^2}}] - [\{([1:0:0],[\alpha : \beta],(a,b))|\alpha b-\beta a = 0\}].
\]
Therefore, we have : 
\begin{align*}
&\rou h = [\zeta_S(\{u=0\})]-[\zeta_S(\textrm{\wwtil{\{v=0\}\subset \PP^2 \times k^2}})] \\
& - [\zeta_S(\{([1:0:0],[\alpha : \beta],(a,b))|\alpha b-\beta a = 0\})].
\end{align*}
But since $\{([1:0:0],[\alpha : \beta],(a,b))|\alpha b-\beta a = 0\}$ lies over the exceptional divisor $E=\{([1:0:0],[\alpha:\beta])\}\subset Bl_{[1:0:0]}\PP^2$ and $E \to \{[1:0:0]\}\subset \PP^2$ is not generically finite, $(E \to \{[1:0:0]\}, E \times k^2 \stackrel{=}{\lra}\cF|_E, \{([1:0:0],[\alpha : \beta],(a,b))|\alpha b-\beta a = 0\})$ is not a proper representative of $\zeta_S(\{([1:0:0],[\alpha : \beta],(a,b))|\alpha b-\beta a = 0\})$ but it is a modified proper representative.

Therefore, in this case, boundary of the proper representative is not a proper representative of the boundary, but it is a modified proper representative of the boundary. Therefore, it is necessary to extend the notion of proper representatives to the notion of modified proper representatives. 
\end{exam}

It is natural to consider proper pushforwards and flat pull-backs in Chow groups. But unfortunately, proper pushforwards does not behaves well in Chow groups. So we consider $A_*(\cF)_{\QQ} := A_*(\cF) \otimes_{\ZZ} \QQ$.

We need to check the compatibility of proper pushforwards and flat pullbacks with boundary maps. Consider the following diagram : 
\[\xymatrix{ \cF|_X \ar[r]^u \ar[d] & \cF \ar[d] \\
X \ar[r]^u & Y}
\]
where $\cF$ is a coherent sheaf stack on $Y$ and $u$ is a proper morphism. For a generating element $H_1 = [(f_1 : S_1 \to Z, q_1 : \cV_1 \surra \cF|_S, W_{\cA,1} \subset \cV_1, h_1 : W_{\cA,1} \dra k )] \in W^E_*(\cF|_X)$ where $\cA \subset \cF|_Z$ is an integral substack, $Z \subset X$ is an integral closed substack, we have the following. 

\begin{prop} \label{rationalpushforward}
In the above setting, we have :
\[ u_* \rou H = 0 \in A_*(\cF)_{\QQ}.
\]
Therefore, proper pushforward $u_* : Z_*(\cF|_X) \to Z_*(\cF)$ induces a morphism $u_* : A_*(\cF|_X)_{\QQ} \to A_*(\cF)_{\QQ}$.
\end{prop}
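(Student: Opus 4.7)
First, I would unpack the definition of $\rou H_1$ and use functoriality of proper pushforward (Proposition~\ref{pushforwardcomp}) to rewrite
\[
u_* \rou H_1 \;=\; \frac{1}{d_{W_{\cA,1}}}\, (u\circ f_1)_*(q_1)_* \rou h_1.
\]
The plan is to show that this expression is either the boundary of an explicit generator of $W_*^E(\cF)$ up to a nonzero rational scalar, or already zero at the cycle level, depending on whether $u|_{\cA}$ is generically finite.

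In the generically finite case $\deg(u|_{\cA}) > 0$, I would argue that $(u \circ f_1 : S_1 \to u(Z),\, q_1,\, W_{\cA,1})$ is itself a modified proper representative of $u(\cA) \subset \cF|_{u(Z)}$, since $q_1(W_{\cA,1}) \to u(\cA)$ is generically finite of degree $d_{W_{\cA,1}} \cdot \deg(u|_{\cA}) > 0$. Then $H' := [(u\circ f_1, q_1, W_{\cA,1}, h_1)]$ is a valid generator of $W_*^E(\cF)$, and comparing its boundary formula with the expression above gives $u_* \rou H_1 = \deg(u|_{\cA}) \cdot \rou H' \in \rou W_*^E(\cF)$, which vanishes in $A_*(\cF)_{\QQ}$.

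In the non-generically-finite case $\deg(u|_{\cA}) = 0$, we have $\dim u(\cA) < \dim \cA = \dim q_1(W_{\cA,1})$, so $(u\circ f_1)|_{q_1(W_{\cA,1})}$ has positive-dimensional generic fibers onto $u(\cA)$. The saturation of $h_1$ lets it descend to a rational function $h_1'$ on $q_1(W_{\cA,1})$ with $(q_1)_* \rou h_1 = \rou h_1'$ as cycles on $\cF|_{S_1}$ (this is where the vector-bundle structure of $q_1|_{W_{\cA,1}}$ is used). I would then invoke the analog of Fulton's Theorem~1.4 (the pushforward of a principal divisor under a non-generically-finite proper morphism is zero) to conclude $(u\circ f_1)_* \rou h_1' = 0$ in $Z_*(\cF)$, so $u_* \rou H_1 = 0$ at the cycle level. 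Extending linearly over the generators of $W_*^E(\cF|_X)$ then yields the descent $u_* : A_*(\cF|_X)_{\QQ} \to A_*(\cF)_{\QQ}$.

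The hard step will be the non-generically-finite case: formulating and justifying the Fulton-style vanishing in the sheaf-stack context. Since the classical theorem is stated for proper morphisms of integral schemes, I would reduce to that setting by working \'etale-locally on $S_1$ where $\cV_1$ trivializes, and by exploiting the vector-bundle structure of $q_1$ on the saturated subscheme $W_{\cA,1}$ to treat $h_1$ as a bundle-invariant rational function; this allows the classical theorem to be applied on the underlying scheme $W_{\cA,1}$ and transferred back to the sheaf-stack cycle group via the identification $(q_1)_* \rou h_1 = \rou h_1'$.
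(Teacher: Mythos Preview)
Your case split is the right organizing principle, and your treatment of the generically finite case $\deg(u|_{\cA})>0$ is both correct and cleaner than the paper's. The paper constructs an auxiliary modified proper representative $(f:S\to u(Z),\,q:\cV\surra\cF|_S,\,W_{u(\cA)})$ of $u(\cA)$, pulls it back to build a second representative $W_{\cA,2}$ of $\cA$, and then uses the ``third representative'' trick to compare $H_1$ with the resulting $H_2$ before finally taking a norm to land in $W^E_*(\cF)$. Your observation that $(u\circ f_1:S_1\to u(Z),\,q_1,\,W_{\cA,1})$ is \emph{already} a modified proper representative of $u(\cA)$ (because $q_1(W_{\cA,1})\to u(\cA)$ has degree $d_{W_{\cA,1}}\cdot\deg(u|_{\cA})>0$) bypasses all of this: the generator $H'=[(u\circ f_1,q_1,W_{\cA,1},h_1)]$ lies in $W^E_*(\cF)$ and $u_*\rou H_1=\deg(u|_{\cA})\,\rou H'$ drops straight out of the boundary formula. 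This is a genuine simplification.

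The gap is in the case $\deg(u|_{\cA})=0$. You want to invoke a sheaf-stack analogue of Fulton's Proposition~1.4(a) to conclude $(u\circ f_1)_*\rou h_1'=0$ in $Z_*(\cF)$, but your sketched reduction does not close. Working \'etale-locally on $S_1$ and using the vector-bundle structure of $q_1$ only controls the \emph{source}: you still have no scheme- or DM-stack-level target for the pushforward, since $u(\cA)$ sits inside the coherent sheaf stack $\cF|_{u(Z)}$, and there is no ambient vector bundle over $Y$ (or over $u(Z)$) receiving $W_{\cA,1}$. Fulton's vanishing needs a proper morphism of integral schemes (or DM stacks) with positive-dimensional generic fibre; here only the source qualifies. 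Moreover, components of $\rou h_1$ can live entirely over the non-locally-free locus of $\cF|_{u(Z)}$, so restricting to an open where $\cF$ is a bundle does not capture the full pushforward.

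This is exactly why the paper first fixes a representative $W_{u(\cA)}\subset\cV$ over some $S\to u(Z)$ and then constructs $W_{\cA,2}\subset\cV|_{S_2}$ with $S_2\subset X\times_Y S$: the resulting map $p_2:W_{\cA,2}\to W_{u(\cA)}$ is a proper morphism of integral subschemes of honest vector bundles, so the classical Fulton result applies to $(p_2)_*\rou h_2$ (giving $\rou N(h_2)$ in the finite case and $0$ in the non-finite case). To repair your argument in the $\deg=0$ case you will need either this auxiliary scheme-level target, or an independent proof of the sheaf-stack Fulton vanishing --- and the latter, once unwound, essentially forces the same construction.
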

\begin{proof}
In the same manner as described in Proposition \ref{pushforwardpullback}, we can fill in the diagram
\[\xymatrix{W_{\cA,2} \subset \cV|_{S_2}\surra \cF|_{S_2} \ar[d] & & W_{u(\cA)} \subset \cV \surra \cF|_{S} \ar[d] & \\
S_2 \ar[rr]^{p_2} \ar[dr]_{p_1} & & S \ar[dr]^f & \\
& X \ar[rr]^u & & Y
}
\]
such that $(f:S \to u(Z), q : \cV \surra \cF|_{S}, W_{u(\cA)})$ is a modified proper representative of $u(\cA)$ and $(p_1 : S_2 \to Z, q_2 : \cV|_{S_2} \surra \cF|_{S_2}, W_{\cA,2})$ is a modified proper representative of $\cA$ where $S_2 \subset X\times_Y S$, $p_1 : X\times_Y S \to X$, $p_2 : X\times_Y S \to S$ are natural projections,
and $p_2(W_{\cA,2}) = W_{u(\cA)}$, $p_2 : W_{\cA,1} \to W_{u(\cA)}$ is generically finite.

Next, in a similar manner as in the proof of Lemma \ref{pullbackwelldef}, we choose a third modified proper representative $(f_3 : S_3 \to Z, q_2 : \cV_3 \surra \cF|_{S_3}, W_{\cA,3})$ such that there is a proper morphisms $g_1 : S_3 \to S_1$, $g_2 : S_3 \to S_2$ and surjections of vector bundles $r_1 : \cV_3 \surra \cV_1|_{S_3}$, $r_2 : \cV_3 \surra \cV_2|_{S_3}$ which satisfies $f_1\circ p_1 = f_2 \circ p_2 = f_3$, $W_{\cA,3} = r_i^*(r_*(W_{\cA,3}))$, $r_*(W_{\cA,3})$ is closed for $i=1,2$. Moreover, $p_i : r_i(W_{\cA,3}) = W_{\cA,i}$, $p_i : r_i(W_{\cA,3}) \to W_{\cA,i}$ are generically finite for $i=1,2$.

Let $h_3 := r_1^*(p_1\circ h_1)$ be a rational function on $W_{\cA,3}$. Then we can observe that it is saturated. Then it descend to a rational function $\bar{h}_3 : r_2(W_{\cA,3}) \dra k$. Consider its norm $h_2:=N(\bar{h}_3) : W_{\cA,2} \dra k$. Let $H_2 = [(p_1 : S_2 \to Z, q : \cV|_X \surra \cF|_{S_2}, W_{\cA,2}, h_2 )] \in W^E_*(\cF)$. Then we can easily check that $\deg(r_2(W_{\cA,3}) \to W_{\cA,2}) \cdot H_1 = H_2$.

Next, let $h := N(h_2)$ be a norm of $h_2$, which is a rational function on $W_{u(\cA)}$. Then $H = [(f:S \to u(Z), q : \cV \surra \cF|_{S}, W_{u(\cA)}, h)]$ is an element of $W^E_*(\cF)$. Then we have

\begin{align*}
& u_* \rou H_2 = u_* \left( \frac{1}{d_{W_{\cA,2}}}  (\zeta_{S_2})_*(\rou h_2) \right) = \frac{1}{d_{W_{\cA,2}}} u_*(p_1)_* (q_2)_* \rou h_2 \\
& = \frac{1}{d_{W_{\cA,2}}}f_* (p_2)_* (q_2)_* \rou h_2 = \frac{1}{d_{W_{\cA,2}}} f_* q_* \rou h_3 = \frac{d_{W_{u(\cA)}}}{d_{W_{\cA,2}}}(\zeta_{S})_* \rou h_3 \\
& = \frac{d_{W_{u(\cA)}}}{d_{W_{\cA,2}}} \rou H.
\end{align*}

Hence we conclude that 
\[
u_* \rou H_1 = \frac{1}{\deg(r_2(W_{\cA,3}) \to W_{\cA,2})} \cdot u_* \rou H_2 = \frac{ d_{W_{u(\cA)}}}{\deg(r_2(W_{\cA,3}) \to W_{\cA,2}) \cdot d_{W_{\cA,2}}} \rou H
\]
in $Z_*(\cF)_{\QQ}$. Thus we have $u_* \rou H_1 = 0$ in $A_*(\cF)_{\QQ}$.

\end{proof}

\begin{rema}\label{extratpushforward}
In the proof of the Proposition \ref{rationalpushforward}, by defining $u_* H_1 := \frac{ d_{W_{u(\cA)}}}{\deg(r_2(W_{\cA,3}) \to W_{\cA,2}) \cdot d_{W_{\cA,2}}} \rou H$, we obtain a homomorphism $u_* : W^E_*(\cF|_X)_{\QQ} \to W^E_*(\cF)_{\QQ}$. Then we can rewrite Proposition \ref{rationalpushforward} by $u_* \circ \rou = \rou \circ u_*$.
\end{rema}

Next, we consider flat pull-backs. We prove that flat pull-backs also factors through rational equivalences.
Consider the following diagram : 
\[\xymatrix{ \cF|_X \ar[r]^u \ar[d] & \cF \ar[d] \\
X \ar[r]^u & Y}
\]
where $\cF$ is a coherent sheaf stack on $Y$ and $u$ is a flat morphism.

For a generating element $H = [(f : S \to Z, q : \cV \surra \cF|_S, W_{\cA} \subset \cV, h : W_{\cA} \dra k )] \in W^E_*(\cF)$ where $\cA \subset \cF$ is an integral substack, $Z \subset Y$ is an integral closed substack, we have the following.

\begin{prop} \label{rationalpullback}
We have :
\[ u^* \rou H = 0 \in A_*(\cF|_X).
\]
\end{prop}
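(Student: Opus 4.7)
The plan parallels the compatibility arguments already in the paper, particularly Proposition \ref{Gysinpushforwardpullback}(ii): push $u^*$ past the modified projection using Remark \ref{pullbackcompat1}, decompose the pulled-back principal divisor along the components of the pulled-back saturated cycle, and recognize each piece as the boundary of a generator of $W^E_*(\cF|_X)$. Concretely, starting from
\[
u^*\rou H \;=\; \frac{1}{d_{W_{\cA}}}\, u^* (\zeta_S)_*(\rou h) \;=\; \frac{1}{d_{W_{\cA}}}\,(\zeta_{S\times_Y X})_* u^*(\rou h),
\]
which uses Remark \ref{pullbackcompat1}, the main content becomes a statement about flat pull-back of a principal divisor on a vector bundle.

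Let $v : S\times_Y X \to S$ and $g : S\times_Y X \to X$ be the projections, and write $v^*[W_{\cA}]=\sum_i a_i[W_i]$ as in Definition \ref{pullback}. Pulling back $h$ along the flat morphism $v|_{W_i}:W_i\to W_{\cA}$ yields rational functions $h_i \in K(W_i)$, and the classical Fulton identity for flat pull-back of principal divisors gives
\[
u^*(\rou h) \;=\; \sum_i a_i\,\rou h_i
\]
in $Z_*(\cV|_{S\times_Y X})$. Setting $S_i:=\bar{\pi(W_i)}$, $Z_i:=g(S_i)$, and $\cA_i:=\zeta_{S_i}(W_i)$, the triple $(g:S_i\to Z_i,\;q:\cV|_{S_i}\surra \cF|_{S_i},\;W_i)$ is a modified proper representative of $\cA_i\subset \cF|_{Z_i}$ exactly as in Definition \ref{pullback}; moreover, the witness for $W_{\cA}$ being saturated (an open dense $U\subset S_{W_{\cA}}$, a resolution $\cV'\to \cV|_U\surra \cF|_U$, and an integral $\WW\subset \FF$ with $h$ descending to $\bar h$ on $\WW$) base-changes through $v$ to exhibit each $W_i$ as saturated with $h_i$ saturated in the sense preceding Definition \ref{extendedrateq}. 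Hence $H_i:=[(g:S_i\to Z_i,\;q:\cV|_{S_i}\surra \cF|_{S_i},\;W_i,\;h_i)]$ lies in $W^E_*(\cF|_X)$.

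Since $\rou h_i$ is supported on $W_i\subset \cV|_{S_i}$, we have $(\zeta_{S\times_Y X})_*\rou h_i = (\zeta_{S_i})_*\rou h_i = d_{W_i}\,\rou H_i$, so assembling the pieces gives
\[
u^*\rou H \;=\; \frac{1}{d_{W_{\cA}}}\sum_i a_i d_{W_i}\,\rou H_i \;=\; \rou\!\left(\frac{1}{d_{W_{\cA}}}\sum_i a_i d_{W_i}\, H_i\right),
\]
which vanishes in $A_*(\cF|_X)_{\QQ}$. I expect the main obstacle to be the bookkeeping in the second paragraph: verifying rigorously that the saturation data and the descent of $h$ pull back to a saturated rational morphism on each integral component $W_i$, and that Fulton's compatibility of flat pull-back with divisors of rational functions applies cleanly after base-change to the vector bundle morphism $v:\cV|_{S\times_Y X}\to \cV$ (which requires noting that $v$ factors as a flat morphism of ambient vector bundles restricted to the saturated cycle $W_{\cA}$). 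Once these verifications are in place, the displayed identities yield the claim, and, as in Remark \ref{extratpushforward}, by setting $u^*H := \frac{1}{d_{W_{\cA}}}\sum_i a_i d_{W_i} H_i$ one simultaneously obtains a lift $u^*:W^E_*(\cF)_{\QQ}\to W^E_*(\cF|_X)_{\QQ}$ satisfying $\rou\circ u^* = u^*\circ\rou$.
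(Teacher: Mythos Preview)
Your proposal is correct and follows essentially the same approach as the paper's own proof: commute $u^*$ past the modified projection via Remark \ref{pullbackcompat1}, decompose $u^*[W_{\cA}]=\sum_i a_i[W_i]$, pull back $h$ to saturated rational functions $h_i$ on the $W_i$, use flat pull-back of principal divisors to get $u^*(\rou h)=\sum_i a_i\,\rou h_i$, and recognize each summand as $d_{W_i}\,\rou H_i$ for generators $H_i\in W^E_*(\cF|_X)$. Your added care about verifying saturation of $h_i$ and the applicability of Fulton's identity is appropriate, and your final observation defining $u^*H$ exactly recovers the paper's Remark \ref{extratpullback}.
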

\begin{proof}

Choose a modified rational equivalence $(f : S \to Z, q : \cV \surra \cF|_S, W_{\cA} \subset \cV)$ of $\cA$. 
Let $u^*[W_{\cA}] = \sum_i a_i [W_i]$ where $W_i$ are integral. 

There are natural morphisms $W_i \to W_{\cA}$ and therefore there are induced rational morphisms $h_i : W_i \dra k$. We note that $h_i$ are saturated. Let $p_1 : X\times_Y S \to X$, $p_2 : X\times_Y S \to S$ be the projections. Let $S_i := \bar{\pi(W_i)}$ where $\pi : \cV|_{X\times_Y S} \to X\times_Y S$ is the projection, and let $Z_i = p_1(S_i) \subset X$. Let $\cA_i = \zeta_{X\times_Y S}(W_i) \subset \cF|_{Z_i}$, then $(p_1 : S_i \to Z_i, q : \cV|_{S_i} \surra \cF|_{S_i}, W_i )$ is a modified proper representative of $\cA_i \subset \cF|_{Z_i}$ and $H_i = [(p_1 : S_i \to Z_i, q : \cV|_{S_i} \surra \cF|_{S_i}, W_i, h_i)] \in W^E_*(\cF|_X)$.

Then we have
\begin{align*}
& u^* \rou H = \frac{1}{d_{W_{\cA}}} u^*\left( (\zeta_S)_* \rou h \right) = \frac{1}{d_{W_{\cA}}}u^*f_*q_* \rou h = \frac{1}{d_{W_{\cA}}}(p_1)_* q_* u^* \rou h \\
& = \frac{1}{d_{W_{\cA}}}(p_1)_*q_*\left( \sum_i a_i \rou h_i \right) = \frac{1}{d_{W_{\cA}}}\sum_i a_i \cdot d_{W_i} \rou H_i \in Z_*(\cF|_X)_{\QQ}.
\end{align*}

Hence $u^* \rou H = 0$ in $A_*(\cF|_X)$.

\end{proof}

\begin{rema}\label{extratpullback}
In the proof of the Proposition \ref{rationalpullback}, by defining $u^* H := \sum_i \frac{a_i d_{W_i}}{d_{W_{\cA}}} H_i$, we obtain a homomorphism $u^* : W^E_*(\cF)_{\QQ} \to W^E_*(\cF|_X)$. Then we can rewrite Proposition \ref{rationalpullback} by $u^* \circ \rou = \rou \circ u^*$.
\end{rema}

\section{Semi-virtual pull-backs via semi-perfect obstruction theories}\label{sec:semivirtpullback}

Consider a DM-type morphism $X \to Y$, where $X$ is a DM stack and $Y$ is a DM stack or a smooth Artin stack. The definition of DM-type morphism appear in \cite{Man08}, i.e. for any morphism $U \to Y$ where $U$ is a scheme, $X\times_Y U$ is a DM stack. In this section, we define semi-virtual pull-backs $A_*(Y) \to A_*(X)$ when there is a semi-perfect obstruction theory over $X \to Y$, which is defined in Chang-Li \cite{CL11semi}. Here, Chow group $A_*(Y)$ means the na\"ive chow group of $Y$ defined in \cite{Kre99}. Then we prove basic properties of semi-virtual pull-backs, including a functoriality property, which lead to a deformation invariances property of virtual fundamental cycles. 
When there there is a perfect obstruction theory over $X \to Y$, which is also an semi-perfect obstruction theory, we show that virtual pull-backs and semi-virtual pull-backs coincide. It leads to a conclusion that the virtual fundamental class defined via the perfect obstruction theory and the virtual fundamental class defined via the semi-perfect obstruction theory are equal.

\subsection{Semi-perfect obstruction theories}\label{sec:semiperf}

We briefly review the definition of semi-perfect obstruction theories which was introduced in the paper of H.L. Chang and J. Li, \cite{CL11semi}.

Consider a DM type morphism $X \to Y$ where $X$ is a DM stack and $Y$ is a DM stack or a smooth Artin stack. Assume that there is an \'etale covering $\{ \vphi_{\alpha} : X_{\alpha} \to X \}_{\alpha \in \cI}$ and there are perfect obstruction theories $\phi_{\alpha} : (E_{\alpha})\hseq \to L_{X_{\alpha}/Y}$ for each $\alpha \in \cI$.
Note that we have isomorphism $L_{X_{\alpha}/Y} \cong \vphi_{\alpha}^* L_{X/Y}$ since there is a distinguished triangle $\vphi_{\alpha}^*L_{X/Y} \to L_{X_{\alpha}/Y} \to L_{X_{\alpha}/X} \to \vphi_{\alpha}^*L_{X/Y}[1]$ and $L_{X_{\alpha}/X}=0$ because $X_{\alpha} \to X$ is \'etale.

Let $(E_{\alpha})\cseq := ((E_{\alpha})\hseq)\dual$. For $\alpha, \beta \in \cI$, assume that there are transition isomorphisms $\psi_{\alpha \beta} : h^1(E_{\alpha})\cseq|_{X_{\alpha} \times_{X} X_{\beta}} \to h^1(E_{\beta})\cseq|_{X_{\alpha }\times_X X_{\beta}}$ of first cohomology sheaves which satisfy cocycle conditions. Therefore, first cohomology sheaves $h^1(E_{\alpha})\cseq$'s glue to a coherent sheaf $\cE$ on $X$.

Furthermore, we assume that $\phi_{\alpha}|_{X_{\alpha}\times_X X_{\beta}} : (E_{\alpha})\hseq|_{X_{\alpha} \times_X X_{\beta}} \to L_{X_{\alpha}/Y}|_{X_{\alpha}\times_X X_{\beta}} \cong L_{X_{\alpha}\times_X X_{\beta}/Y}$ and $\phi_{\beta}|_{X_{\alpha}\times_X X_{\beta}} : (E_{\beta})\hseq|_{X_{\alpha} \times_X X_{\beta}} \to L_{X_{\beta}/Y}|_{X_{\alpha}\times_X X_{\beta}}\cong L_{X_{\alpha}\times_X X_{\beta}/Y}$ are $\nu$-equivalent via $\psi_{\alpha \beta}$. Then, we call a triple of data $\phi = (\{X_{\alpha}\}_{\alpha \in \cI}, \phi_{\alpha}, \psi_{\alpha \beta})$ a semi-perfect obstruction theory on $X/Y$, and call $\cE$ an obstruction sheaf of $\phi$ and denote it $Ob_{\phi}$. We define the notion of $\nu$-equivalence in the followings.

\begin{defi}[Infinitesimal lifting problem]\label{inflifting}
Consider a morphism $u : A \to B$ between Artin stacks. Let $(R,m)$ be a local Artinian ring and $I\subset R$ be an ideal such that $I\cdot m = 0$. Let $T = Spec(R)$ and $T' = Spec(R/I) \subset T$. We usually call $T' \hra T$ a small extension. Consider the following diagram :
\[\xymatrix{x \in T' \ar[r]^(0.4)f \ar@{^(->}[d]_{\iota} & A \ni p=f(x) \ar[d]^u \\
T \ar[r]^{g} \ar@{-->}[ur]^{h} & B
}
\]
where $x\in T'$ is a closed point of $T$, and $f(x)=p$. Then, to find a morphism $h : T \to A$ which makes the diagram commutes is called an infinitesimal lifting problem on $A/B$ at $p$.

By \cite[Chapter 3, Theorem 2.1.7]{Ill06}, we can observe that there is an element $w(f,T',T)\in \Ext^1(f^*L_{A/B},I)\cong T^1_{p,A/B}\otimes_k I$, whose vanishing is equivalent to the existence of the morphism $h$ for above infinitesimal lifting problems. Consider a perfect obstruction theory $\phi : E\hseq \to L_{A/B}$. Then, there is an induced morphism $H^1(\phi\dual) : \Ext^1(f^* L_{A/B},I) \to \Ext^1(f^* E\hseq , I) \cong Ob(\phi,p)\otimes_k I$. We write $H^1(\phi\dual)(w(f,T',T))$ as $Ob(\phi,f,T',T)$.
\end{defi}

\begin{defi}[$\nu$-equivalence]\cite[Definition 2.6]{CL11semi}\label{v-equiv}
Let $A \to B$ be a morphism of Artin stacks. Let $\phi : E\hseq \to L_{A/B}$ and $\phi' : F\hseq \to L_{A/B}$ be perfect obstruction theories on $A/B$ and assume that there is an isomorphism $\psi : h^1(E\cseq) \to h^1(F\cseq)$. Then, we say that two perfect obstruction theories $\phi$ and $\phi'$ are $\nu$-equivalent via $\psi$ if $\psi|_p\otimes_k I : Ob(\phi,p)\otimes_k I \to Ob(\phi',p)\otimes_k I$ sends $Ob(\phi,f,T',T)$ to $Ob(\phi',f,T',T)$ for every closed point $p \in A$ and every infinitesimal lifting problem on $A/B$ at $p$.
\end{defi}

We introduce the following lemma from \cite{CL11semi}. Here we need some proof since we changed some definitions

\begin{lemm}\cite[Lemma 2.1]{CL11semi}
Let $E\cseq = [E^0 \to E^1]$ be a sequence of vector bundles a scheme $X$ and let $\EE = [E^1/E^0]$ be a vector bundle stack on $X$. Let $p : \EE \to h^1(E\cseq) = \coker(E^0 \to E^1)$ be a projection. Rewrite $h^1(E\cseq) = \cE$. Let $\WW \subset \EE$ be an integral substack of $\EE$. Then $p(\WW)$ is an integral substack of $\cE$. Therefore $p$ induces a homomorphism $p_* : Z_*(\EE) \to Z_*(\cE)$
\end{lemm}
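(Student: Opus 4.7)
The plan is to verify, étale-locally on $X$, that $p(\WW) \subset \cE$ satisfies the three conditions of Definitions \ref{closed} and \ref{cycle}: closedness, reducedness, and irreducibility. The key tool is the smooth atlas $\pi : E^1 \to \EE$ exhibiting $\EE = [E^1/E^0]$; the composition $p \circ \pi : E^1 \to \cE$ is the canonical surjection $\sigma : E^1 \surra \cE$ coming from $\cE = \coker(E^0 \to E^1)$, which is itself a surjection from a vector bundle on $X$. By Remark \ref{localchar}, closedness can therefore be tested against this one global surjection $\sigma$.

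First I pull $\WW$ back to $W := \pi^{-1}(\WW) \subset E^1$, a closed $E^0$-invariant subscheme. Since $\pi$ is smooth and surjective with fibers that are $E^0$-orbits, hence affine spaces and in particular geometrically irreducible and reduced, $W$ is integral: reducedness descends along the smooth cover by \cite[Lemma 10.157.7]{Sta}, and irreducibility because the preimage of an irreducible substack under a smooth surjection with geometrically irreducible fibers is irreducible. Let $Z \subset X$ denote the stack-theoretic image of $\WW \to X$, an integral closed substack which $W$ dominates. The main step is then the identification
\[
p(\WW) \times_{\cE} E^1 \;=\; \cE_W \subset E^1,
\]
where $p(\WW)$ is defined as the image substack on the small \'etale site of $X$, namely $p(\WW)(U) = \{ s \in \cE(U) : s \text{ \'etale-locally lifts to } \WW \subset \EE\}$. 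For a section $s : U \to E^1$, the image $\sigma \circ s$ lies in $p(\WW)(U)$ iff $s$ \'etale-locally lifts to $\WW$; since $\pi$ is smooth surjective, such lifts exist \'etale-locally as sections of $E^1$ and then differ from $s$ by an $E^0$-action, so the $E^0$-invariance of $W$ yields $s(U) \subset W$.

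Once this identification is established, closedness of $p(\WW)$ follows from Remark \ref{localchar}, reducedness and the dominance condition in Definition \ref{closed} follow from the corresponding properties of $W$, and irreducibility is immediate: any decomposition $p(\WW) = \cA_1 \cup \cA_2$ by closed substacks pulls back via $\sigma$ to a decomposition $W = W_1 \cup W_2$, forcing $W_i = W$ for some $i$ and hence $\cA_i = p(\WW)$. The induced map $p_*$ is then defined on generators by $p_*[\WW] := [p(\WW)]$ and extended linearly. The main obstacle will be carefully justifying the identification $p(\WW) \times_{\cE} E^1 = \cE_W$: this requires unwinding the definition of the image stack via its functor of sections, passing from lifts in $\EE$ to lifts in $E^1$ (via a further \'etale refinement, since $\pi$ is only smooth surjective and not \'etale), and invoking the $E^0$-invariance of $W$ to absorb the residual action that distinguishes two lifts.
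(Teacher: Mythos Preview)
Your approach is correct, but it diverges from the paper's at the final step. Both arguments begin identically: pull $\WW$ back along the smooth atlas $\pi : E^1 \to \EE$ to obtain an integral $E^0$-invariant closed subscheme $W \subset E^1|_Z$ (with $Z = \overline{\pi(\WW)}$). From there, however, the paper does not verify Definitions~\ref{closed} and~\ref{cycle} directly. Instead it observes that $W$ is, by construction, a \emph{saturated} integral subscheme in the sense of Definition~\ref{satsubsch} (since $W = \WW \times_{\EE|_Z} E^1|_Z$ with $\WW$ integral in the bundle stack $\EE|_Z$), and then simply applies the descent map $q_*$ of Definition~\ref{descent map}: $q(W)$ is defined as the stack-theoretic closure $\overline{q(W|_U)}$ over an open $U \subset Z$ where $\cE$ is locally free, and this is integral by Lemma~\ref{closurepreserve2}. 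The identification $p(\WW) = q(W)$ is then just a tautology.

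What this buys the paper is brevity: the entire burden of showing integrality was already absorbed into the closure machinery of \S\ref{sec:properpush}--\ref{sec:satandproj}. Your approach instead takes $p(\WW)$ to mean the genuine image substack (sections that \'etale-locally lift) and verifies the fibre-product identity $p(\WW) \times_{\cE} E^1 = \cE_W$ by hand, then reads off closedness, reducedness and irreducibility. This is more self-contained and makes no appeal to the stack-theoretic closure construction; it also makes explicit that the two candidate meanings of $p(\WW)$ coincide. The cost is the extra unwinding you anticipate (passing from lifts in $\EE$ to lifts in $E^1$ and absorbing the $E^0$-action), and you should note that the reducedness clause of Definition~\ref{closed} is stated for \emph{all} surjections $\cV \surra \cE|_U$, not just $\sigma$; the paper's remark following Definition~\ref{closed} (that reducedness is preserved under smooth pullback and flat descent) is what lets you pass from your single surjection $E^1 \surra \cE$ to the general case.
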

\begin{proof}
Let $Z = \bar{\pi(\WW)}$ where $\pi : \EE \to X$ is the projection. Then $\WW \subset \EE|_Z$ and let $W = \WW \times_{\cE|_Z} E^1|_Z \subset E^1|_Z$. Then $Z$ is integral and $W$ is a saturated integral substack. For the projection $q : E^1 \surra h^1(E\cseq)$ We have the descent $q(W) \subset \cF|_Z$, which is integral substack. We observe that $p(\WW) = q(W)$. Therefore $p$ induces a homomorphism $p_* : Z_*(\EE) \to Z_*(\cE)$.
\end{proof}



\begin{defi}
Let $E\cseq = [E^0 \to E^1]$ be a sequence of vector bundles a scheme $X$ and let $\EE = [E^1/E^0]$ be a vector bundle stack on $X$. Let $\pi : E^1 \to \EE$ be a projection. Then there is an induced morphism $\pi_* : Z^{sat}_*(E^1) \to Z_*(\EE)$ which maps a saturated integral substack $W \subset E^1|_Z$ where $Z$ is an integral closed substack of $X$, to an integral substack $[W/E^0] \subset \EE|_Z$.
\end{defi}

The following lemma is clear by definition

\begin{lemm}\label{projectioncomp}
Let $E\cseq = [E^0 \to E^1]$ be a sequence of vector bundles a scheme $X$ and let $\EE = [E^1/E^0]$ be a vector bundle stack on $X$. Let $\pi : E^1 \to \EE$, $p : \EE \to h^1(E\cseq)$, $q : \EE^1 \to h^1(E\cseq)$ be projections. Then we have $q_* = p_* \circ \pi_*$.
\end{lemm}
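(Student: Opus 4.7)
The plan is to verify the identity on generators of $Z^{sat}_*(E^1)$---saturated integral substacks $W \subset E^1|_Z$ with $Z \subset X$ integral---and then extend by linearity. Both $\pi_*$ and $p_*$ were defined on such generators (the unnamed lemma just above gives $p_*[\WW] = [p(\WW)]$ for any integral $\WW \subset \EE|_Z$), so the entire argument is a one-step unwinding of three definitions.

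First I would apply the definition of $\pi_*$ to get $\pi_*[W] = [[W/E^0]]$, where $[W/E^0] \subset \EE|_Z$ is integral precisely because saturation of $W$ says locally $W = \WW_\alpha \times_{\EE|_{S_\alpha}} E^1|_{S_\alpha}$, so the $\WW_\alpha$ glue to the integral substack $[W/E^0]$. Next, the preceding lemma gives $p_*[[W/E^0]] = [p([W/E^0])]$. Since $q = p \circ \pi$, we have $p([W/E^0]) = p(\pi(W)) = q(W)$ as substacks of $\cE|_Z$. Finally, Definition \ref{descent map} applied directly to the vector-bundle surjection $q : E^1 \surra \cE$ gives $q_*[W] = [q(W)]$. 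Comparing, $p_*(\pi_*[W]) = [q(W)] = q_*[W]$, and extending linearly proves $q_* = p_* \circ \pi_*$ on all of $Z^{sat}_*(E^1)$.

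The argument poses no real obstacle, since every step is a bare unwinding of definitions. The only point worth checking carefully is the identification $p([W/E^0]) = q(W)$ as integral substacks: in the proof of the preceding lemma the authors describe $p(\WW)$ as $q(W')$ for $W' := \WW \times_{\cE|_Z} E^1|_Z$, and for $\WW = [W/E^0]$ one sees $\pi(W') = [W/E^0] = \pi(W)$, so $q(W') = p([W/E^0]) = q(W)$ holds on the nose. Hence no closure or correction factor intervenes, and the chain of equalities is exact.
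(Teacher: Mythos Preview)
Your proposal is correct and matches the paper's approach: the paper gives no proof at all, stating only ``The following lemma is clear by definition,'' and your argument is precisely the careful unwinding of the three definitions that justifies this claim. The one point you flagged---that $p([W/E^0]) = q(W)$---is indeed immediate once one notes that for saturated $W$ the fiber product $[W/E^0] \times_{\EE|_Z} E^1|_Z$ recovers $W$ itself, so no subtlety arises.
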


Next we recall the following property of $\nu$-equivalent perfect obstruction theories.

\begin{prop}\cite[Proposition 2.1]{CL11semi}\label{semidescent1} 
Let $\phi : E\hseq \to L_{A/B}$ and $\phi' : F\hseq \to L_{A/B}$ be $\nu$-equivalent perfect obstruction theories, i.e. there is an isomorphism $\psi : h^1(E\cseq) \to h^1(F\cseq)$ which satisfies $\nu$-equivalence conditions.

Consider morphisms $\eta_{\phi} : \fN_{A/B}\hra\bdst{E\cseq} \to h^1(E\cseq)$, $\eta_{\phi'} : \fN_{A/B} \hra \bdst{F\cseq} \to h^1(F\cseq)$ where $\bdst{E\cseq} \to h^1(E\cseq)$ and $\bdst{F\cseq} \to h^1(F\cseq)$ are natural morphism from bundle stack to its coarse moduli spaces and $\fN_{A/B}$ is the intrinsic normal sheaf defined in \cite{BF97}. Let $[\WW] \subset \fN_{A/B}$ be an integral cycle. Then we have $\psi(\eta_{\phi}(\WW)) = \eta_{\phi'}(\WW)$.
\end{prop}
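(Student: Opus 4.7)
The plan is to reduce the claim to a pointwise statement using the étale Nullstellensatz (Proposition \ref{etale nullstellenstaz}) and then apply the very definition of $\nu$-equivalence at each closed point.

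First, I would work étale locally on $A$, since the intrinsic normal sheaf, the bundle stacks $\bdst{E\cseq}$, $\bdst{F\cseq}$, and their coarse moduli sheaves $h^1(E\cseq)$, $h^1(F\cseq)$ all commute with étale base change. After restricting, we may assume $A$ is affine with global resolutions $E\cseq=[E^0 \to E^1]$ and $F\cseq=[F^0 \to F^1]$. By Lemma \ref{projectioncomp} and the preceding projection lemma, $\eta_\phi(\WW)$ is an integral substack of $h^1(E\cseq)$ obtained from a saturated integral substack of $E^1|_Z$, and similarly for $\eta_{\phi'}(\WW)$. Both $\psi(\eta_\phi(\WW))$ and $\eta_{\phi'}(\WW)$ are thus integral closed substacks of $h^1(F\cseq)|_Z$ of the same dimension, so by Proposition \ref{etale nullstellenstaz} applied after a surjection $F^1|_Z \surra h^1(F\cseq)|_Z$, it suffices to show they agree on closed geometric points.

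Second, at a closed geometric point $p$ of $Z$, the fibers specialize: $\fN_{A/B}|_p \cong T^1_{A/B,p}$, $h^1(E\cseq)|_p \cong Ob(\phi,p)$, $h^1(F\cseq)|_p \cong Ob(\phi',p)$, and $\eta_\phi|_p$, $\eta_{\phi'}|_p$ become the canonical linear maps $H^1(\phi\dual)$, $H^1(\phi'\dual)$ of Definition \ref{inflifting}. Every element of $T^1_{A/B,p}\otimes_k I$ is realized as an obstruction $w(f,T',T)$ for some infinitesimal lifting problem at $p$ (this is standard deformation theory: given a class $\xi \in T^1$, one constructs an appropriate small extension $T' \hra T$ with ideal $I$ and a morphism $f : T' \to A$ whose obstruction is $\xi$). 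Hence the definition of $\nu$-equivalence (Definition \ref{v-equiv}) applied to $\psi$ gives exactly
\[
\psi|_p \circ \eta_\phi|_p = \eta_{\phi'}|_p : T^1_{A/B,p} \otimes_k I \to Ob(\phi',p) \otimes_k I
\]
as linear maps, for any small-extension ideal $I$ and hence for $I = k$.

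Third, pointwise equality of the composed maps implies pointwise equality of the image subsets: for each closed point $p \in Z$ the set-theoretic fiber of $\psi(\eta_\phi(\WW))$ over $p$ equals that of $\eta_{\phi'}(\WW)$. Combined with the reducedness of both substacks and the étale Nullstellensatz, this forces the equality of the two closed substacks, as required.

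The principal obstacle is the pointwise realization step: one must know that every obstruction class in $T^1_{A/B,p}$ is the obstruction of an actual infinitesimal lifting problem, so that the $\nu$-equivalence hypothesis (which only constrains $\psi$ on classes of the form $Ob(\phi,f,T',T)$) controls the entire linear map $\eta_\phi|_p$. Once this deformation-theoretic fact is isolated, the argument is a routine combination of Proposition \ref{etale nullstellenstaz} and the definitions.
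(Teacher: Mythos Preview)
The paper does not give its own proof of this proposition: it is quoted verbatim from \cite[Proposition 2.1]{CL11semi} and used as a black box, so there is no in-paper argument to compare against. What follows is therefore an assessment of your outline relative to the standard Chang--Li argument.

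Your strategy is broadly the correct one, and you correctly isolate the essential deformation-theoretic input: that every class in $T^1_{A/B,p}$ arises as $w(f,T',T)$ for some infinitesimal lifting problem, so that the $\nu$-equivalence hypothesis pins down $\psi|_p$ on the whole image of $\eta_\phi|_p$. That is indeed the heart of the matter.

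There is, however, a genuine gap in your third step. From the pointwise identity $\psi|_p\circ\eta_\phi|_p=\eta_{\phi'}|_p$ you conclude that the \emph{fibers} of $\psi(\eta_\phi(\WW))$ and $\eta_{\phi'}(\WW)$ over each closed $p\in Z$ coincide, and then invoke reducedness. But taking the image of a substack under a morphism and then restricting to the fiber over $p$ is not the same as restricting $\WW$ to the fiber and then taking the image; these operations do not commute in general (image formation is not flat-local on the base). What you actually need is stronger: that $\psi\circ\eta_\phi$ and $\eta_{\phi'}$ agree as \emph{morphisms} from $\fN_{A/B}$ to $h^1(F\cseq)$, from which equality of images of any substack is immediate. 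To get this, one should argue that, after choosing a local smooth embedding $A\hookrightarrow M$ over $B$ so that $\fN_{A/B}\cong[N_{A/M}/T_M|_A]$, both $\psi\circ\eta_\phi$ and $\eta_{\phi'}$ are induced by genuine morphisms of coherent sheaves $N_{A/M}\to h^1(F\cseq)$; two such sheaf morphisms agreeing on fibers over all closed points of a reduced base are equal. Your appeal to Proposition~\ref{etale nullstellenstaz} is not the right tool here: that proposition recovers a closed subscheme from its \'etale-local sections, not from its closed-point fibers.
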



By the above proposition and remark, we have the following result.
\begin{defiprop}\cite[Lemma 3.1]{CL11semi}\label{semidescent2}
Let $\phi = (\{X_{\alpha}\}_{\alpha \in \cI}, \phi_{\alpha}, \psi_{\alpha \beta})$ be a semi-perfect obstruction theory over a DM type morphism $X \to Y$ where $X$ is a DM stack. Let $A \subset \fN_{X/Y}|_Z$ be an integral substack. Then, by the above proposition the collection $[A_{\alpha}] := (\eta_{\phi_{\alpha}})_*[A\times_X X_{\alpha}] \in Z_* Ob_{\phi_{\alpha}}$ satisfies the descent condition. Moreover we observe that it descend to an integral cycle $(\eta_{\phi})_*[A] = [\eta_{\phi}(A)] \in Z_*Ob_{\phi}$ where $\eta_{\phi}(A) \subset Ob_{\phi}|_Z$ is an integral substack.

Hence we define a morphism
\[
(\eta_{\phi})_* : Z_*\fN_{X/Y} \to Z_* Ob_{\phi}.
\]
\end{defiprop}

\begin{prop}\label{rationaldescent}
The morphism $\eta_{\phi}$ factors through rational equivalences. Hence it induces a morphism
\[
(\eta_{\phi})_* : A_*\fN_{X/Y} \to A_*Ob_{\phi}.
\]
\end{prop}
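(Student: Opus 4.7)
The plan is to lift $(\eta_\phi)_*$ to a homomorphism on extended rational equivalences $W^E_*(\fN_{X/Y})_\QQ \to W^E_*(Ob_\phi)_\QQ$ which commutes with $\rou$, so that by Proposition \ref{rationalpushforward} (and Remark \ref{extratpushforward}) it descends to the desired homomorphism on Chow groups. By the \'etale descent construction of $(\eta_\phi)_*$ in Definition-Proposition \ref{semidescent2}, it suffices to produce such a lift \'etale-locally on each chart $X_\alpha$ in a way that is compatible with the transition isomorphisms $\psi_{\alpha\beta}$.

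On the chart $X_\alpha$ I would decompose $\eta_{\phi_\alpha}=p_\alpha\circ\iota_\alpha$, where $\iota_\alpha:\fN_{X/Y}|_{X_\alpha}\hookrightarrow\bdst{(E_\alpha)\cseq}$ is the closed embedding of sheaf stacks induced by $\phi_\alpha$ and $p_\alpha:\bdst{(E_\alpha)\cseq}\to h^1((E_\alpha)\cseq)=Ob_{\phi_\alpha}$ is the projection to the coarse cokernel. For the closed immersion $\iota_\alpha$ the argument is essentially trivial: any modified proper representative $(f,q,W_\cA)$ of $\cA\subset\fN_{X/Y}|_{X_\alpha}|_Z$ equipped with a saturated rational function $h$ on $W_\cA$ is simultaneously such a datum for $\iota_\alpha(\cA)\subset\bdst{(E_\alpha)\cseq}|_Z$, so $(\iota_\alpha)_*$ commutes strictly with $\rou$. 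For the projection $p_\alpha$ I would use the factorization $q_\alpha=p_\alpha\circ\pi_\alpha$ from Lemma \ref{projectioncomp}, where $\pi_\alpha:E_\alpha^1\to\bdst{(E_\alpha)\cseq}$ and $q_\alpha:E_\alpha^1\surra h^1((E_\alpha)\cseq)$. Given an integral $\WW\subset\bdst{(E_\alpha)\cseq}|_Z$ with saturated integral lift $W\subset E_\alpha^1|_S$ satisfying $\pi_\alpha(W)=\WW|_S$, and a rational function $h$ on $\WW$, pull $h$ back along the vector-bundle projection $\pi_\alpha|_W$ to the saturated rational function $\pi_\alpha^*h$ on $W$. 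Standard flat-pullback compatibility yields $\rou(\pi_\alpha^*h)=\pi_\alpha^*\rou h$, and pushing forward via $q_\alpha$ gives the identity $(p_\alpha)_*\rou(\WW,h)=\rou\bigl((q_\alpha)_*(W,\pi_\alpha^*h)\bigr)$ at the level of $W^E_*(h^1((E_\alpha)\cseq))$.

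Gluing then uses the $\nu$-equivalence: Proposition \ref{semidescent1} already shows that the cycle-level images agree on overlaps via $\psi_{\alpha\beta}$, and because $\psi_{\alpha\beta}$ is an isomorphism of coherent sheaves it identifies the locally free loci on $\eta_{\phi_\alpha}(\cA)|_{X_{\alpha\beta}}$ and $\eta_{\phi_\beta}(\cA)|_{X_{\alpha\beta}}$, hence sends rational functions to rational functions compatibly. Thus the local lifts satisfy the cocycle condition and descend to $(\eta_\phi)_*:W^E_*(\fN_{X/Y})_\QQ\to W^E_*(Ob_\phi)_\QQ$ commuting with $\rou$, which is all that is needed. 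The hardest part I expect is the projection step $p_\alpha$: one must verify that the boundary on the vector-bundle stack $\bdst{(E_\alpha)\cseq}$, computed via $\pi_\alpha$-lifts to the vector bundle $E_\alpha^1$, matches the boundary on the coherent sheaf stack $h^1((E_\alpha)\cseq)$ as defined via modified proper representatives in Definition \ref{boundary}, since these a priori use different surjection data and their agreement relies on delicate flat-pullback compatibility of $\rou$ for vector-bundle projections together with careful control of saturated rational functions under both $\pi_\alpha$ and $p_\alpha$.
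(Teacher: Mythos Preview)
Your overall strategy---descend $h$ to a rational function $\bar h$ on $\eta_\phi(\WW)$, show $(\eta_\phi)_*\rou h=\rou\bar h$ \'etale-locally, then glue---is the paper's strategy, and your local computation via the factorization $\eta_{\phi_\alpha}=p_\alpha\circ\iota_\alpha$ and the $\pi_\alpha$-lift to $E_\alpha^1$ is exactly what the paper does in one short paragraph. Two points, one notational and one substantive.

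First, the group $W^E_*(\fN_{X/Y})$ is not defined in this paper: extended rational equivalences are introduced only for coherent sheaf stacks, whereas $\fN_{X/Y}$ is an Artin (bundle) stack carrying the ordinary Kresch Chow theory. So the source side should be ordinary $W_*(\fN_{X/Y})$, and the invocation of Proposition~\ref{rationalpushforward}/Remark~\ref{extratpushforward} is not to the point.

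Second, and more importantly, you have located the difficulty in the wrong place. The local projection step $p_\alpha$ is straightforward; the genuine content is the gluing of the \emph{boundaries}. Knowing that the rational functions $\bar h_\alpha$ agree on overlaps (via $\psi_{\alpha\beta}$) and hence descend to $\bar h$ does \emph{not} imply that the cycles $\rou\bar h_\alpha$ glue to $\rou\bar h$, because on a coherent sheaf stack $\rou$ is \emph{defined} through a choice of modified proper representative, which is a global datum over the base $Z$, not something computed chartwise. The paper closes this gap by choosing one global modified proper representative $(f:S\to Z,\ \cV\surra Ob_\phi|_S,\ W_\cA)$ of $\cA=\eta_\phi(\WW)$, restricting it to each chart $S_\alpha=S\times_X X_\alpha$, and then checking two things: (i) each irreducible piece $(W_\cA)_{\alpha k}$ of the restriction is itself a modified proper representative of one of the local integral substacks $\cA_{\alpha i(k)}$, so the global $\rou\bar h$ restricted to $X_\alpha$ is a weighted sum of the local $\rou\bar h_{\alpha i}$; and (ii) the weights match, using $\vphi_\alpha^*(\zeta_S)_*=(\zeta_{S_\alpha})_*\vphi_\alpha^*$ from Remark~\ref{pullbackcompat1}. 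Your sentence ``the local lifts satisfy the cocycle condition and descend'' elides precisely this computation, which is the bulk of the paper's proof.
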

\begin{proof}
Let $[\WW]$ be an integral cycle of $\fN_{X/Y}$ and a rational function $h : \WW \dra k$ on $\WW$.
Let $\WW_{\alpha} := \WW \times_X X_{\alpha} \subset \fN_{X_{\alpha}/Y} \cong \vphi_{\alpha}^* \fN_{X/Y}$. Then there is a induced rational function $h_{\alpha} : \WW_{\alpha} \dra k$.
Since $k$ has no automorphism group, $h_{\alpha}$ descends to a morphism $\bar{h}_{\alpha} : \eta_{\phi_{\alpha}}(\WW_{\alpha}) \dra k$. We can easily check that the collection $\{ \bar{h}_{\alpha} \}_{\alpha\in \cI}$ satisfies descent conditions, hence they descend to a rational morphism $\bar{h} : \eta_{\phi}(\WW) \dra k$.

We will check $(\eta_{\phi})_* \rou h = \rou \bar{h}$ locally first. For each \'etale chart $\vphi_{\alpha} : X_{\alpha} \to X$, we may assume that there is a perfect obstruction theory $(E_{\alpha})\hseq$ quasi-isomorphic to a 2-term complex $[(E_{\alpha})_{-1} \to (E_{\alpha})_{0}]$. Then we have $(E_{\alpha})\cseq \stackrel{qis}{\cong} [(E_{\alpha})^{0} \to (E_{\alpha})^{1}]$, where $(E_{\alpha})^{0} = (E_{\alpha})_{0} \dual$ and $(E_{\alpha})^{1} = (E_{\alpha})_{-1}\dual$.
 
Let $[\WW_{\alpha}] = \sum_i a_i [\WW_{\alpha i}]$ where $\WW_{\alpha i}$ are integral substacks. 
Let $W_{\alpha} := \WW_{\alpha} \times_{\vphi_{\alpha}^*\fN_{X/Y}} (E_{\alpha})^1$, then $[W_{\alpha}] = \sum_i a_i[W_{\alpha i}]$ where $W_{\alpha i} = \WW_{\alpha i}\times_{\vphi_{\alpha}^* \fN_{X/Y}}(E_{\alpha})^1$ is an integral substack.
 and $\wtil{h}_{\alpha i} : W_{\alpha i} \dra k$ be the induced rational function. Let $\rou \wtil{h}_{\alpha i} = \sum_j b_{ij} [W_{\alpha ij}]$.
 
Then we have $\rou h_{\alpha i} = \sum_j b_{ij}[W_{\alpha ij}/(E_{\alpha})^0]$. We define $\rou h_{\alpha} := \sum_i a_i \rou h_{\alpha i}$. Let $q : (E_{\alpha})^1 \to Ob_{\phi_{\alpha}}  = \coker{(E_{\alpha})^0 \to (E_{\alpha})^1}$ be the surjection. Then we have $\eta_{\phi_{\alpha}}(\rou h_{\alpha}) = \eta_{\phi_{\alpha}}(\sum_i a_i \rou h_{\alpha i}) = \eta_{\phi_{\alpha}}(\sum_i \sum_j [W_{\alpha ij}/(E_{\alpha})^0]) = \sum_i \sum_j a_i b_{ij} [q(W_{\alpha ij})]$. Let $\bar{h}_{\alpha i} : q(W_{\alpha i}) \to k$ be an induced rational function. Note that $\eta_{\phi_{\alpha}}(\WW_{\alpha i}) = q(W_{\alpha i})$. Then we have $\rou \bar{h}_{\alpha i} = \sum_j b_{ij}[q(W_{\alpha ij})]$ by definition. Let $\rou \bar{h}_{\alpha} := \sum_i a_i \rou\bar{h}_{\alpha i}$. Then we have $(\eta_{\phi_{\alpha}})_* \rou h_{\alpha} = \rou \bar{h}_{\alpha}$.
It is clear that $\vphi_{\alpha}^* \rou h = \rou h_{\alpha}$. Therefore, it is enough to show that $\rou \bar{h}_{\alpha}$ descend to $\rou \bar{h}$. 

Let $Z = \bar{\pi(\WW)}$ where $\pi : \fN_{X/Y} \to X$ is the projection. Choose a modified proper representative $(f : S \to Z, \cV \surra Ob_{\phi}|_S, W_{\cA} \subset \cV)$ of an integral cycle $\cA:=\eta_{\phi}(W) \subset \Ob_{\phi}$. Let $S_{\alpha} := S \times_X X_{\alpha}$ and let $(W_{\cA})_{\alpha} := W_{\cA}\times_S S_{\alpha}$. Let $[(W_{\cA})_{\alpha}] = \sum_k c_k (W_{\cA})_{\alpha k}$. By abuse of notation, we call the surjections $\cV \surra Ob_{\phi}|_{S}, \vphi_{\alpha}^* \cV \surra Ob_{\phi_{\alpha}}|_{S_{\alpha}}$ by $q$. By Remark \ref{pullbackcompat1}, we have $q_*\vphi_{\alpha}^*[W_{\cA}] = \sum_k c_k [q((W_{\cA})_{\alpha k})] = \vphi_{\alpha}^* [q(W_{\cA})] = \vphi_{\alpha}^* q_* [W_{\cA}]$.

By construction, $d = \deg(q(W_{\cA}) \to \cA)$ is finite. Then we have $(\zeta_S)_* [W_{\cA}] = f_* q_*[W_{\cA}] = d[\cA]$. Let $[\cA_{\alpha}] := \vphi_{\alpha}^* [\cA]$.
Hence by Remark \ref{pullbackcompat1}, we have 
\begin{align*}
& [\cA_{\alpha}] = \vphi_{\alpha}^* [\cA] = \frac{1}{d}\vphi_{\alpha}^* (\zeta_S)_* [W_{\cA}] = \frac{1}{d} (\zeta_{S_{\alpha}})_* \vphi_{\alpha}^* [W_{\cA}] = \frac{1}{d} f_* q_* [(W_{\cA})_{\alpha}] \\ 
& = \frac{1}{d} f_* ( \sum_k c_k [q((W_{\cA})_{\alpha k})] ).
\end{align*}
We have $[\cA_{\alpha}] = \eta_{\phi_{\alpha}}(\sum_i a_i[\WW_{\alpha i}])$. So we can write $[\cA_{\alpha}] = \sum_i a_i [\cA_{\alpha i}]$ where $[\cA_{\alpha i}] = (\eta_{\phi_{\alpha}})_*[\WW_{\alpha i}]$, which is an integral cycle. Therefore, for each $k$, there exist some $i(k)$ such that $f(q((W_{\cA})_{\alpha k}))) = \cA_{\alpha i(k)}$. Let $d_{\alpha k} := \deg ( f(q((W_{\cA})_{\alpha k}))) \to \cA_{\alpha i(k)} )$, which is finite. Let $\wtil{h}' : W_{\cA} \dra k$, $\wtil{h}'_{\alpha k} : (W_{\cA})_{\alpha k} \dra k$ be induced rational functions.

Let $S_{W_{\alpha k}} = \bar{\pi( (W_{\cA})_{\alpha k} )}$ where $\pi : \cV|_{S_{\alpha}} \to S_{\alpha}$ is the projection and let $Z_{\alpha k} = f(S_{W_{\alpha k}}) \subset X_{\alpha}$.

Then $(f : S_{W_{\alpha k}} \to Z_{\alpha k}, \cV|_{S_{W_{\alpha k}}} \surra Ob|_{S_{W_{\alpha k}}}, (W_{\cA})_{\alpha k} )$ is a modified proper  representative of $\cA_{\alpha i(k)}$. Then we have $d_{\alpha k} \cdot \rou \bar{h}_{\alpha i(k)} = (\zeta_{S_{\alpha}})_* [\rou \wtil{h}'_{\alpha k}]$.
Let $\rou \wtil{h}'_{\alpha} := \sum_k c_k \rou \wtil{h}_{\alpha k}$. Then we have $(\zeta_{S_{\alpha}})_* (\rou \wtil{h}_{\alpha k})$. 
Let $\rou \wtil{h}'_{\alpha} := \sum_k c_k \rou \wtil{h}'_{\alpha k}$. Then we have $(\zeta_{S_{\alpha}})_*(\rou \wtil{h}'_{\alpha}) = \sum_k c_k (\zeta_{S_{\alpha}})_* [\rou \wtil{h}'_{\alpha k}] = \sum_k c_k d_{\alpha k} \rou \bar{h}_{\alpha i(k)}$. Since we have $\sum_i a_i [\cA_{\alpha i}] = \frac{1}{d} f_* ( \sum_k c_k [q((W_{\cA})_{\alpha k})] ) = \sum_k c_k d_{\alpha k} [\cA_{\alpha i(k)}]$. Therefore we have $(\zeta_{S_{\alpha}})_* (\rou \wtil{h}'_{\alpha} ) = \sum_k c_k d_{\alpha k} \rou \bar{h}_{\alpha i(k)} = d \cdot \rou \bar{h}_{\alpha}$.

Since $\vphi_{\alpha}^* \rou \wtil{h}' = \rou\wtil{h}'_{\alpha}$  we conclude that the collection $\{\rou\wtil{h}'_{\alpha}\}_{\alpha}$ descend to $\rou \wtil{h}'$. Note that projections compatible with flat pull-backs. Therefore we conclude that the collection $\{ \frac{1}{d}(\zeta_{S_{\alpha}})_* (\rou \wtil{h}'_{\alpha} ) \}_{\alpha} = \{ \rou \bar{h}_{\alpha} \}_{\alpha}$ descend to an element $ \frac{1}{d} (\zeta_{S})_* (\rou \wtil{h}' )$, which is equal to $\rou \bar{h}$ by definition.
\end{proof}


\begin{rema} For a semi-perfect obstruction theory $\phi$ over $X/Y$, the obstruction sheaf $Ob(\phi)$ is naturally defined as a coherent $\cO_{X_{\textrm{\'et}}}$-module where $X_{\textrm{\'et}}$ is the topos over the small \'etale site on $X$. However, by arguments in \cite[Section 2]{BF97}, we can naturally see $Ob(\phi)$ as a coherent $\cO_{X}$-module where $X$ is the topos over the big \'etale site on $X$.
\end{rema}



\subsection{Semi-virtual pull-backs}\label{sec:semivirtpullback-subsection}
In this subsection, we define a notion of semi-virtual pull-backs which is a generalization of virtual pull-backs introduced in \cite{Man08} when we have a relative semi-perfect obstruction theory. We prove various functorial properties properties for Semi-virtual pull-backs and as a result, we show that Semi-virtual pull-backs defined as a bivariant class.

Let $u : X \to Y$ be a DM-type morphism where $X$ is a DM stack and $Y$ is a DM stack or smooth Artin stack and $L_{X/Y}$ be the cotangent complex over $X \to Y$. Then, we know that $C_{X/Y}$ is embedded in $\bdst{L_{X/Y}\dual}=\fN_{X/Y}$ as a closed substack by \cite{BF97}. 
Assume further that there is a semi-perfect obstruction theory $\phi = (\{X_{\alpha}\}_{\alpha \in \cI}, \phi_{\alpha}, \psi_{\alpha \beta})$ over $X/Y$ with an obstruction sheaf $Ob_{\phi} = \cE$. 
We define a semi-virtual pull-back in the following.

\begin{defi}[Semi-virtual pull-backs]\label{def:semivert}
Consider a fiber diagram of Artin stacks of finite type over $k$ :
\[\xymatrix{Z \ar[r]^v \ar[d]_f \ar@{}[rd]|{\square} & W \ar[d] \\
X \ar[r]^u & Y.}
\]
where $Z$ is a DM stack. Then we define a semi-virtual pull-back $u^!_{\cE} : A_*(W)_{\QQ} \to A_*(Z)_{\QQ}$ to be the composition of the following morphisms :
\begin{multline*}
A_*(W)_{\QQ} \stackrel{\sigma}{\lra} A_*(C_{Z/W})_{\QQ} \stackrel{i_*}{\lra} A_*(f^*(C_{X/Y}))_{\QQ} \\ \stackrel{\iota_*}{\lra} A_*(f^*\fN_{X/Y})_{\QQ} \stackrel{(\eta_{\phi})_*}{\lra} A_*(f^*\cE)_{\QQ} \stackrel{\Gysin{f^*\cE}}{\lra} A_*(Z)_{\QQ}
\end{multline*}
where the morphism $\sigma$ is induced from deformation to a normal cone, which is defined in \cite[Construction 3.6]{Man08}, the morphism $i_*$ is induced from the closed embedding of cones $i : C_{Z/W} \hra f^*(C_{X/Y})$ constructed in \cite[Proposition 2.26]{Man08} and $\iota : f^*C_{X/Y} \hra f^*\fN_{X/Y}$ is the natural closed embedding.
\end{defi}

\noindent More explicitly, for an integral substack $A \subset W$ and $B := A\times_W Z$, we have $u^!_{\cE}[A] = \Gysin{f^*\cE}[(\eta_{\phi})_*(\iota_*i_*[C_{B/A}])]$.

Next, we introduce some basic formulas on intersection theory of bundle stacks and coherent sheaf stacks.

\begin{lemm}\label{bdstpushforward}
Let $E\cseq = [E^0 \to E^1]$ be a sequence of vector bundles on DM stack $Y$ and let $\EE = [E^1/E^0]$ be a vector bundle stack. Let $p : \EE \to h^1(E\cseq) = \coker(E^0 \to E^1)$ be a projection. Rewrite $h^1(E\cseq) = \cE$. Then there is an induced homomorphism $p : Z_*(\EE) \to Z_*(\cE)$. Let $f : X \to Y$ be a proper morphism of DM stacks. We also have induced morphisms $p : Z_*(f^*\EE) \to Z_*(f^*\cE)$ and $f_* : Z_*(f^*\EE) \to Z_*(\EE)$, $f_* : Z_*(f^* \cE) \to Z_*(\cE)$. Then we have $f_* \circ p_* = p_* \circ f_*$. 
\end{lemm}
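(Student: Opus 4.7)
The plan is to lift the question from the bundle stack $\EE$ down to the vector bundle $E^1$, where the compatibility between the descent morphism and proper pushforward is already established as Lemma \ref{descentpushforward}; Lemma \ref{projectioncomp} then factors the descent as $q_* = p_* \circ \pi_*$ through the torsor quotient $\pi : E^1 \to \EE$, providing the bridge to the bundle-stack level. I would work throughout in the diagram of Cartesian squares
\[
\xymatrix{
f^*E^1 \ar[r]^{f} \ar[d]^{\pi} & E^1 \ar[d]^{\pi} \\
f^*\EE \ar[r]^{f} \ar[d]^{p} & \EE \ar[d]^{p} \\
f^*\cE \ar[r]^{f} & \cE.
}
\]

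First, given an integral substack $\WW \subset f^*\EE|_Z$ with $Z \subset X$ integral closed, I would build a saturated integral lift by setting $W := \WW \times_{f^*\EE|_Z} f^*E^1|_Z$. This $W$ is integral because $W \to \WW$ is an $f^*E^0$-torsor with connected fibers, and it is saturated essentially by construction: with respect to the resolution $f^*E^0 \to f^*E^1 \twoheadrightarrow f^*\cE$ one has $W = f^*E^1 \times_{f^*\EE} \WW$, which is precisely the shape required by Definition \ref{satsubsch}. Since $\pi|_W$ is a torsor, $\pi_*[W] = [W/f^*E^0] = [\WW]$. Next I would verify the auxiliary identity $\pi_* \circ f_* = f_* \circ \pi_*$ on $Z^{sat}_*(f^*E^1)$: for saturated integral $W$, both composites return $\deg(f|_W)\cdot [\pi(f(W))]$, since the top square is Cartesian (so $\pi(f(W)) = f(\pi(W))$ as integral substacks) and since $\pi|_W$ and $\pi|_{f(W)}$ are torsors, hence induce isomorphisms of generic function fields, giving $\deg(f|_W) = \deg(f|_{\pi(W)})$.

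With these in hand, the lemma becomes a direct diagram chase:
\begin{align*}
p_*\,f_*[\WW]
 &= p_*\,f_*\,\pi_*[W] = p_*\,\pi_*\,f_*[W] = q_*\,f_*[W] \\
 &= f_*\,q_*[W] = f_*\,p_*\,\pi_*[W] = f_*\,p_*[\WW],
\end{align*}
where the second equality uses the auxiliary identity, the third and fifth use Lemma \ref{projectioncomp}, the fourth uses Lemma \ref{descentpushforward}, and the outer equalities use the lift $\pi_*[W] = [\WW]$. Extending by linearity finishes the proof. The main technical point is the auxiliary identity $\pi_* \circ f_* = f_* \circ \pi_*$, which rests on the fact that the torsor maps $\pi$ preserve integrality and the generic degree of the morphisms induced by base change along $f$; everything else is formal bookkeeping using lemmas already established.
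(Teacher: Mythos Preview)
Your argument is correct and takes a genuinely different, more modular route than the paper. The paper proves the lemma by a direct double degree comparison: starting from an integral $\WW\subset f^*\EE$ and its lift $W\subset f^*E^1$, it uses two Cartesian squares to show first $\deg(\WW\to f(\WW))=\deg(W\to f(W))$ and then $\deg(W\to f(W))=\deg(p(\WW)\to p(f(\WW)))$, from which $f_*p_*[\WW]=p_*f_*[\WW]$ follows by inspection. You instead factor $p_*=q_*\circ(\pi_*)^{-1}$ via Lemma~\ref{projectioncomp}, invoke Lemma~\ref{descentpushforward} for the compatibility $q_*f_*=f_*q_*$, and only need the easy auxiliary identity $\pi_*f_*=f_*\pi_*$, which is exactly the first of the paper's two degree comparisons. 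So your proof reuses the machinery of \S\ref{sec:satandproj} rather than reproving its content; the paper's version is self-contained but duplicates the idea behind Lemma~\ref{descentpushforward}.

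One small caveat: Lemmas~\ref{projectioncomp} and~\ref{descentpushforward} are stated in the paper for a base \emph{scheme} (and the latter for $S_1,S_2$ integral schemes), whereas you apply them with $X,Y$ DM stacks. The proofs of both lemmas go through verbatim in that generality, but you should flag this when citing them.
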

\begin{proof}
Let $\WW \subset f^* \EE$ be an integral substack. Let $W := f^* E^1 \times_{f^* \EE} \WW$. Then we have $f(W) = E^1 \times_{\EE} f(\WW)$. Consider the following diagram:
\[
\xymatrix{
W \ar[r]^f \ar[d]^p \ar@{}[dr]|{\square} & f(W) \ar[d]^p \\
\WW \ar[r]^f & f(\WW)
}
\]
where the vertical arrows are vector bundles since $E^1 \to \EE$ and $f^*E^1 \to f^*\EE$ are vector bundles on $\EE$ and $f^*\EE$ each. Therefore, $\deg(W \to f(W)) = \deg(\WW \to f(\WW))$.

Let $Y_W := \bar{\pi(W)}$ where $\pi : E^1 \to Y$ is the projection. Choose an open dense subset $U \subset Y_W$ such that $\cE|_U$ is locally free. Then $f^* E^1|_{f^{-1}(U)} \to f^* \cE|_{f^{-1}(U)}$ and $E^1|_{U} \to \cE|_{U}$ are vector bundles. Then we can consider the following fiber diagram:
\[
\xymatrix{
W|_{f^{-1}(U)} \ar[r]^f \ar[d]^p \ar@{}[dr]|{\square} & f(W)|_U \ar[d]^p \\
p(\WW)|_{f^{-1}(U)} \ar[r]^f & p(f(\WW))|_{U}
}.
\]
Let $q : E^1 \surra \cE$ be the surjection. Then we have $p(\WW) = q(W)$. Therefore horizontal arrows are vector bundle projections, so we have $\deg(W \to f(W)) = \deg(p(\WW) \to p(f(\WW)))$. Therefore we have $\deg(\WW \to f(\WW)) = \deg(p(\WW) \to p(f(\WW)))$. Then we have $f_* p_*[Z] = \deg(p(\WW) \to p(f(\WW))) \cdot [p(f(Z)))] = \deg(\WW \to f(Z))[p(f(Z)))] = p_* f_* [Z]$.
\end{proof}

\begin{lemm}
Let $E\cseq = [E^0 \to E^1]$ be a sequence of vector bundles on DM stack $Y$ and let $\EE = [E^1/E^0]$ be a vector bundle stack. Let $p : \EE \to h^1(E\cseq) = \coker(E^0 \to E^1)$ be a projection. Rewrite $h^1(E\cseq) = \cE$. Then there is an induced homomorphism $p : Z_*(\EE) \to Z_*(\cE)$. Let $f : X \to Y$ be a flat morphism of DM stacks. We also have induced morphisms $p : Z_*(\EE|_X) \to Z_*(\cE|_X)$ and $f^* : Z_*(\EE) \to Z_*(\EE|_X)$, $f^* : Z_*(\cE) \to Z_*(\cE|_X)$. Then we have $f^* \circ p_* = p_* \circ f^*$.
\end{lemm}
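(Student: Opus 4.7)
The plan is to mimic the proof of Lemma \ref{bdstpushforward}, but exchanging the proper pushforward for the flat pullback and invoking the compatibility of the descent morphism $q_*$ with flat pullbacks already recorded in Remark \ref{pullbackcompat1}. First I would reduce to the case of a single integral generator $[\WW] \in Z_*(\EE)$ with $\WW \subset \EE|_Z$ for some integral closed substack $Z \subset Y$. Setting $W := \WW \times_{\EE|_Z} E^1|_Z \subset E^1|_Z$, the argument of the preceding lemma realises $W$ as a saturated integral substack and establishes $p(\WW) = q(W)$, so that $p_*[\WW] = q_*[W]$.

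Next I would compute both sides after applying $f^*$. Write $f^*[\WW] = \sum_i a_i [\WW_i]$ with each $\WW_i \subset \EE|_{Z_i}$ integral. By flat base change, $W \times_Y X = (\WW \times_Y X) \times_{\EE|_{Z \times_Y X}} E^1|_{Z \times_Y X}$ is a pullback torsor, and since the projection $E^1 \to \EE$ is a smooth $E^0$-torsor, pulling back along it induces a multiplicity-preserving bijection between irreducible components. Hence the components of $W \times_Y X$ are precisely $W_i := \WW_i \times_{\EE|_{Z_i}} E^1|_{Z_i}$, each saturated integral, and $f^*[W] = \sum_i a_i [W_i]$ in $Z^{sat}_*(E^1|_X)$. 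Consequently $p_* f^*[\WW] = \sum_i a_i [q(W_i)] = q_* f^*[W]$.

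The last step is to appeal to Remark \ref{pullbackcompat1}, which records that flat pullbacks commute with the descent morphism $q_*$ (viewed as a special case of the modified projection). This gives $q_* f^*[W] = f^* q_*[W] = f^* p_*[\WW]$, and combining with the identification of the previous paragraph yields the desired equality $p_* f^*[\WW] = f^* p_*[\WW]$. I do not anticipate a substantive obstacle; the only point requiring care is matching multiplicities after pullback, which is forced by the fact that $E^1|_Z \to \EE|_Z$ is an $E^0|_Z$-torsor and in particular smooth, so flat pullback along it preserves both the set of irreducible components and their cycle-theoretic multiplicities.
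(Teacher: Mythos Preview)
Your proof is correct and follows essentially the same approach as the paper. Both arguments reduce to the identity $p_*[\WW] = q_*[W]$ with $W = E^1 \times_{\EE} \WW$, invoke Remark~\ref{pullbackcompat1} for the commutativity $f^* q_* = q_* f^*$, and then identify $q_* f^*[W]$ with $p_* f^*[\WW]$; the paper does this last step via $q_* = p_* r_*$ (Lemma~\ref{projectioncomp}) together with $r_* r^* = \mathrm{id}$ for the vector-bundle projection $r : E^1 \to \EE$, while you unpack the same fact as a component-and-multiplicity matching along the smooth $E^0$-torsor $r$.
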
\label{bdstpullback}
\begin{proof}
Let $\WW \subset \EE$ be an integral substack. Let $W = E^1 \times_{\EE} \WW$. Let $q : E^1 \surra \cE$ be the surjection. Then we have $p(\WW) = q(W)$.
Then we have $f^*\circ p_*[\WW] = f^* q_*[W] = q_*f^*[W]$.

Let $r : E^1 \to \EE$ be the projection. Then we have $W = r^*[\WW]$. Thus we obtain 
\[
q_* f^*[W] = q_* r^*(f^*[\WW]) = p_* \circ r_* \circ r^*( f^*[\WW]) = p_* (f^*[\WW])
\]
where the last equality comes from the fact that $r : E^1 \to \EE$ is a projection of a vector bundle.





\end{proof}

The followings statements from \cite{Man08} used for cycle computation of cones. The next lemma comes from the proof of \cite[Theorem 4.1]{Man08}

\begin{lemm}\label{conepushforward1}
Consider a fiber diagram of Artin stacks, finite type over $k$ :
\[\xymatrix@=30pt{A \ar[r] \ar[d] \ar@{}[dr]|{\square} & C \ar@{->>}[d]^u \\
B \ar[r]^f & D
}
\]
where $u$ is a surjective and either a proper morphism of DM stacks or a projective morphism of Artin stacks, $C$ and $D$ are irreducible, and $f$ is a DM-type morphism. Then, $u_*[C_{A/C}] = \deg(u)[C_{B/D}] \in Z_*(C_{B/D})$.
\end{lemm}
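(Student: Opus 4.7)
The plan is to reduce the assertion to flat base change of normal cones, which holds over a dense open of $D$ by generic flatness, and then to propagate the resulting cycle identity to the whole base by an irreducibility and closure argument. Since the intrinsic normal cone is compatible with smooth base change on the target, and proper pushforward commutes with smooth pullback, one may first replace $D$ by a smooth atlas and assume that $D$ is a scheme. Irreducibility of $D$ together with finite type of $u:C\to D$ then produces via generic flatness a dense open substack $V\subset D$ such that $u|_{V'}:V'\to V$ is flat and surjective of generic degree $\deg(u)$, where $V':=u^{-1}(V)$.

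Set $B_V:=B\times_D V$ and $A_V:=A\times_C V'=B_V\times_V V'$. Flat base change for the relative normal cone yields an isomorphism
\[
C_{A_V/V'}\;\cong\;C_{B_V/V}\times_V V'\;=\;C_{B_V/V}\times_{B_V}A_V
\]
over $A_V$. Under this identification the induced morphism of cones $\pi_V:C_{A_V/V'}\to C_{B_V/V}$ is nothing other than the base change of $u|_{V'}$, hence proper, flat, and generically finite of degree $\deg(u)$. The standard pushforward formula for such a morphism then gives
\[
(\pi_V)_*[C_{A_V/V'}]\;=\;\deg(u)\cdot[C_{B_V/V}]\quad\text{in }Z_*(C_{B_V/V}).
\]

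To globalise, I use that by irreducibility of $D$ (and the fact that the fundamental cycle of a cone is supported on components dominating their image in the base) the cycle $[C_{B/D}]$ is the unique extension of $[C_{B_V/V}]$ to a cycle on $C_{B/D}$, in the sense that each of its integral components is the closure of an integral component of $C_{B_V/V}$; the same holds for $[C_{A/C}]$ by irreducibility of $C$. Proper pushforward commutes with this extension procedure (by Lemma~\ref{closurepreserve2} and the definition of $u_\ast$ on integral substacks), so the equality established over the flat locus propagates to all of $C_{B/D}$, yielding $u_*[C_{A/C}]=\deg(u)[C_{B/D}]$.

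\textbf{Main obstacle.} The delicate point is justifying that $[C_{A/C}]$ has no ``hidden'' irreducible components supported entirely over $C\setminus V'$ whose pushforward could disturb the degree count; this requires irreducibility of $C$ (not of $A$) together with the fact that the intrinsic normal cone is equidimensional along the generic point of $C$. A secondary, purely technical hurdle is the stacky reformulation of generic flatness and of flat base change of intrinsic normal cones, which I would handle by passing to an \'etale atlas of $C\to D$ and invoking the compatibility of intrinsic normal cones with smooth base change recorded in \cite{BF97}.
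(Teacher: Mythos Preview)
Your globalisation step has a genuine gap, and the resolution you sketch in the ``main obstacle'' paragraph does not work. Take $D=\A^2$, $C=\mathrm{Bl}_0\A^2$, $u$ the blow-down (proper, birational, $\deg(u)=1$), and $f:B=\{0\}\hookrightarrow D$. Generic flatness forces $V\subset D\setminus\{0\}$, so $B_V=\varnothing$ and $A_V=\varnothing$; both cones over $V$ are empty and your identity $(\pi_V)_*[C_{A_V/V'}]=\deg(u)[C_{B_V/V}]$ is vacuous. Yet the lemma is nontrivial here: $C_{B/D}\cong\A^2$, $C_{A/C}\cong\mathrm{Tot}(\cO_{\PP^1}(-1))$, and the induced map is again a blow-down. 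Your claim that ``each integral component of $[C_{B/D}]$ is the closure of an integral component of $C_{B_V/V}$'' is simply false in this example, and neither irreducibility of $C$ nor equidimensionality of the intrinsic normal cone prevents it: components of $C_{B/D}$ live over $B$, not over $D$, and nothing stops $B$ (or a component thereof) from mapping entirely into the non-flat locus $D\setminus V$.

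The paper gives no proof of its own but defers to the argument in \cite[Theorem~4.1]{Man08}, which avoids this difficulty by using the deformation to the normal cone rather than generic flatness. One has a proper morphism $\tilde u:M^\circ_{A/C}\to M^\circ_{B/D}$ over $\PP^1$; both deformation spaces are flat over $\PP^1$ with irreducible general fibre ($C$, resp.\ $D$), hence are themselves irreducible, and $\tilde u$ has generic degree $\deg(u)$. The cones $C_{A/C}$ and $C_{B/D}$ are the fibres over $0$, i.e.\ pullbacks of the same Cartier divisor $\{0\}\subset\PP^1$, so $C_{A/C}=\tilde u^{*}(C_{B/D})$ as Cartier divisors. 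The identity $u_*[C_{A/C}]=\deg(u)[C_{B/D}]$ then follows from the push--pull formula for Cartier divisors under a proper surjective generically finite morphism of integral stacks (equivalently, from commutation of the specialisation map with proper pushforward). No choice of open locus is needed, which is exactly what your approach lacks.
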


\begin{lemm}\cite[Proposition 2.26]{Man08}\label{conepullback1}
Consider a fiber diagram of Artin stacks, finite type over $k$ : 
\[ \xymatrix@=30pt{ A \ar[r] \ar[d]_v \ar@{}[dr]|{\square} & C \ar[d]^u \\
B \ar[r]^f & D
}
\]
where $f$ is a morphism of DM-types and $u$ is a flat morphism. Then, there is a closed embedding $\iota : C_{A/C} \hra v^* C_{B/D}$. Furthermore, if $u$ is flat, then $\iota$ is an isomorphism.
\end{lemm}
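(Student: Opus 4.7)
The plan is to reduce the question to a local computation involving Rees algebras. \'Etale-locally on $D$, I factor the DM-type morphism $f : B \to D$ as $B \stackrel{j}{\hra} W \stackrel{\pi}{\lra} D$, with $j$ a closed embedding and $\pi$ smooth; such factorizations exist after passing to a smooth atlas of $D$, which is where the DM-type hypothesis is used. Base-changing by $u$ yields a factorization $A \stackrel{j'}{\hra} W\times_D C \stackrel{\pi'}{\lra} C$, where $j'$ is again a closed embedding (being the pullback of $j$) and $\pi'$ is smooth (being the pullback of $\pi$). By the standard definition of the normal cone for a morphism admitting a smooth factor, one has $C_{B/D} = [C_{B/W}/\pi^* T_{W/D}|_B]$ and $C_{A/C} = [C_{A/W\times_D C}/\pi'^* T_{W\times_D C/C}|_A]$, with $\pi'^* T_{W\times_D C/C} \cong v^* \pi^* T_{W/D}|_B$ by compatibility of the relative tangent bundle with smooth base change. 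Since the formation of stack-quotients by $\pi^* T_{W/D}|_B$ commutes with $v^*$, the whole question reduces to producing a natural closed embedding $C_{A/W\times_D C} \hra v^* C_{B/W}$ that intertwines the $T_{W/D}$-actions, and to checking it is an isomorphism when $u$ is flat.

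For this closed-embedding comparison, let $I\subset \cO_W$ be the ideal of $B$ and $J\subset \cO_{W\times_D C}$ the ideal of $A$. Since $A = B\times_W(W\times_D C)$, we have $J = I\cdot \cO_{W\times_D C}$ and hence $J^n = I^n\cdot \cO_{W\times_D C}$ for every $n\geq 0$. The Rees algebra descriptions give
\[
v^* C_{B/W} = \Spec_A \bigoplus_{n\geq 0}\cO_A\otimes_{\cO_B}(I^n/I^{n+1}), \quad C_{A/W\times_D C} = \Spec_A \bigoplus_{n\geq 0}J^n/J^{n+1},
\]
and the canonical surjection $\cO_{W\times_D C}\otimes_{\cO_W} I^n \surra I^n\cdot \cO_{W\times_D C} = J^n$ descends to a surjection of graded $\cO_A$-algebras from the first Rees algebra onto the second. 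The induced closed embedding on spectra is the desired $\iota : C_{A/C} \hra v^* C_{B/D}$; its independence of the chosen factorization $B \hra W \to D$, and consequently gluing across \'etale charts, follows from the standard manipulation of the normal-cone exact triangle.

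When $u$ is flat, so is $W\times_D C \to W$, so $\cO_{W\times_D C}\otimes_{\cO_W} I^n \to \cO_{W\times_D C}$ is injective with image $J^n$, giving $\cO_{W\times_D C}\otimes_{\cO_W} I^n \cong J^n$. Passing to associated gradeds yields $\cO_A\otimes_{\cO_B}(I^n/I^{n+1}) \cong J^n/J^{n+1}$, so the surjection above is an isomorphism, and hence $\iota$ is an isomorphism.

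The main obstacle I anticipate is not the local algebra but the stack-theoretic bookkeeping: one must verify that the local Rees-algebra constructions glue into a well-defined closed embedding of intrinsic normal cones, independent of the choice of smooth factorization $B \hra W \to D$, and compatible with the base change by $u$. This descent step is standard and is essentially carried out in Behrend--Fantechi and Manolache, so I would cite \cite{Man08} for the gluing rather than rework it in detail.
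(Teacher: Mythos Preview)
The paper does not supply a proof of this lemma; it is simply quoted from \cite[Proposition~2.26]{Man08}. So there is no ``paper's own proof'' to compare against beyond the citation.

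Your argument is the standard one and is correct: reduce, via a local smooth factorization $B\hookrightarrow W\to D$, to a closed embedding, then compare Rees algebras. The surjection $\cO_{W\times_D C}\otimes_{\cO_W}I^n\twoheadrightarrow J^n$ gives the graded surjection and hence the closed embedding $C_{A/C}\hookrightarrow v^*C_{B/D}$, and flatness of $W\times_D C\to W$ upgrades this to an isomorphism. This is exactly the approach underlying Manolache's Proposition~2.26 (and, behind that, Behrend--Fantechi), so your proposal agrees with the intended proof. Two minor remarks: (i) as stated in the paper the hypothesis already has $u$ flat, so the ``furthermore'' clause is redundant; your write-up correctly separates the two regimes and in fact shows the closed embedding without using flatness of $u$; (ii) the gluing/independence-of-factorization step you flag as the main obstacle is indeed handled in \cite{BF97} and \cite{Man08}, so deferring to those references is appropriate here.
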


Next we consider a semi-perfection theory over $B/D$ and generalize above lemmas \ref{conepushforward1}, \ref{conepullback1}.

\begin{lemm}\label{conepushforward2}
Consider a fiber diagram of Artin stacks, finite type over $k$ : 
\[\xymatrix@=30pt{ A\ar[r] \ar[d]_v & C \ar@{->>}[d]^u \\
B \ar[r]^f \ar@{}[ur]|{\square} & D
}
\]
where $u$ is surjective, $u$ is either a proper morphism of DM stacks or a projective morphism of Artin stacks, $C$ and $D$ are irreducible, and $f$ is a DM-type morphism. We further assume that there is a semi-perfect obstruction theory $\phi = (\{\vphi_{\beta} : B_{\beta} \to B \}_{\beta \in \cI}, \phi_{\beta}, \psi_{\beta \gamma})$ over $B/D$ with an obstruction sheaf $Ob_{\phi} = \cE$. Then, the intrinsic normal sheaf $\bs{c}_{B/D}$ is a closed substack of $\cE$ by \cite{Beh09}.

For $v_* : Z_*(\cE|_A) \to Z_*(\cE)$, we have $v_*(\eta_{\phi})_*[C_{A/C}] = \deg(u)(\eta_{\phi})_*[C_{B/D}] \in Z_*(\cE)$.

\end{lemm}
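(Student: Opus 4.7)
The plan is to reduce the statement to Lemma \ref{conepushforward1} by establishing a cycle-level compatibility between $\eta_\phi$ and the proper pushforward $v_*$. The pullback of the semi-perfect obstruction theory $\phi$ along the Cartesian square gives a semi-perfect obstruction theory on $A/C$ with obstruction sheaf $v^*\cE = \cE|_A$; by abuse, the induced cycle map on $A/C$ is also written $(\eta_\phi)_*$. Using Lemma \ref{conepullback1} we have the closed embedding $\iota : C_{A/C} \hookrightarrow v^*C_{B/D} \subset v^*\fN_{B/D}$, and on each \'etale chart $B_\beta$ where $\phi_\beta$ is presented by a 2-term complex $[(E_\beta)^0 \to (E_\beta)^1]$, the local factorization of $\eta_{\phi_\beta}$ pulls back to
\[
\fN_{A/C}|_{A_\beta} \hookrightarrow v^*\fN_{B/D}|_{A_\beta} \hookrightarrow [v^*(E_\beta)^1/v^*(E_\beta)^0] \twoheadrightarrow v^*\cE|_{A_\beta}.
\]
Hence at the cycle level $(\eta_\phi)_*[C_{A/C}] = (v^*\eta_\phi)_* \iota_*[C_{A/C}]$.

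The heart of the argument is then to prove the compatibility
\[
v_* \circ (v^*\eta_\phi)_* \;=\; (\eta_\phi)_* \circ v_* \;:\; Z_*(v^*\fN_{B/D}) \longrightarrow Z_*(\cE),
\]
where on the left $v_*$ is proper pushforward in the obstruction sheaf stack and on the right it is pushforward in the intrinsic normal sheaf. This is a local statement on the cover $\{B_\beta\}$: after restriction, $v^*\eta_\phi|_{A_\beta}$ and $\eta_{\phi_\beta}$ factor through the projection $[(E_\beta)^1/(E_\beta)^0] \twoheadrightarrow \cE|_{B_\beta}$ from a bundle stack to its cokernel, for which Lemma \ref{bdstpushforward} supplies the desired local identity. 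These local equalities glue into the global one via the descent definition of $(\eta_\phi)_*$ in Definition-Proposition \ref{semidescent2}, while Proposition \ref{semidescent1} is what guarantees that $\nu$-equivalence makes the local images in $\cE|_{B_\beta}$ descend consistently.

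Combining the two steps with Lemma \ref{conepushforward1}, which yields $v_*\iota_*[C_{A/C}] = \deg(u)[C_{B/D}]$ in $Z_*(\fN_{B/D})$, we conclude
\begin{align*}
v_*(\eta_\phi)_*[C_{A/C}] &= v_* (v^*\eta_\phi)_* \iota_*[C_{A/C}] = (\eta_\phi)_* v_*\iota_*[C_{A/C}] \\
&= \deg(u)(\eta_\phi)_*[C_{B/D}].
\end{align*}
The main obstacle is the bookkeeping for the compatibility in the second display above: one must ensure that the degrees appearing in the modified proper representative computations for $v_*$ on $v^*\fN_{B/D}$ and on $v^*\cE$ match correctly under descent. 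The formalism of saturated cycles and modified projections from Section \ref{sec:satandproj} should make this tractable, since proper pushforward commutes with modified projections by Lemma \ref{mprojectionpushforward}, and $\nu$-equivalence at worst replaces the local descriptions by $\nu$-equivalent ones without altering the underlying saturated integral subschemes.
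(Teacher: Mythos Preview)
Your proposal is correct and follows essentially the same approach as the paper: work \'etale-locally on the cover $\{B_\beta\}$, use Lemma \ref{bdstpushforward} to commute the proper pushforward $v_*$ with the projection $(\eta_{\phi_\beta})_*$ from the bundle stack to the obstruction sheaf, invoke Lemma \ref{conepushforward1} for the identity $v_*[C_{A/C}] = \deg(u)[C_{B/D}]$, and then glue via the descent of Definition-Proposition \ref{semidescent2}. Your write-up is somewhat more explicit than the paper's, in particular by spelling out the factorization through $\iota : C_{A/C} \hookrightarrow v^*\fN_{B/D}$ and phrasing the key step as a compatibility $v_* \circ (v^*\eta_\phi)_* = (\eta_\phi)_* \circ v_*$, but this is only a presentational difference.
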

\begin{proof}
Let $A_{\beta} := A\times_B B_{\beta}$ and let $\vphi_{\beta} : A_{\beta} \to A$ be induced  \'etale morphisms.
By Lemma \ref{bdstpushforward}, We have $v_* (\eta_{\phi_{\beta}})_* \vphi_{\beta}^*[C_{A/B}] = (\eta_{\phi_{\beta}})_* \vphi_{\beta}^* v_*[C_{A/B}] = \deg(u) \cdot (\eta_{\phi_{\beta}})_* \vphi_{\beta}^*  [C_{B/D}]$. Therefore we conclude that $v_* (\eta_{\phi})_* [C_{A/C}]$ is the descent of the collection $\{ \deg(u) \cdot (\eta_{\phi_{\beta}})_* \vphi_{\beta}^*  [C_{B/D}] \}_{\beta}$, which is equal to $\deg(u) \cdot (\eta_{\phi})_*[C_{B/D}]$.
\end{proof}

\begin{lemm}\label{conepullback2}
Consider a fiber diagram of Artin stacks, finite type over $k$ :
\[\xymatrix@=30pt{ A \ar[r] \ar[d]_v \ar@{}[rd]|{\square} & C \ar[d]^u \\
B \ar[r]^f & D
}
\]
where $f$ is a morphism of DM-types and $u$ is a flat morphism. We further assume that there is a semi-perfect obstruction theory $\phi = (\{\vphi_{\beta} : B_{\beta} \to B \}_{\beta \in \cI}, \phi_{\beta}, \psi_{\beta \gamma})$ over $B/D$ with an obstruction sheaf $Ob_{\phi} = \cE$. 

Then, for $v^* : Z_*(\cE) \to Z_*(\cE|_A)$, we have $v^*(\eta_{\phi})_*[C_{B/D}] = (\eta_{\phi})_*[C_{A/C}]$.
\end{lemm}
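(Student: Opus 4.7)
The plan is to reduce to the étale-local situation provided by the semi-perfect obstruction theory and then invoke the perfect-obstruction-theory version of the statement, which in turn follows from Manolache's flat base change for intrinsic normal cones combined with the bundle-stack compatibility of Lemma \ref{bdstpullback}. Since $u$ is flat, so is its base change $v:A\to B$, hence the flat pull-back $v^{*}$ on $Z_{*}(\cE)$ makes sense; the two sides of the asserted identity therefore lie in $Z_{*}(\cE|_{A})_{\QQ}$, and by Definition-Proposition \ref{semidescent2}, an integral cycle in $Z_{*}(\cE|_{A})$ is uniquely determined by its restrictions to the étale cover $\{A_{\beta}:=A\times_{B}B_{\beta}\}_{\beta\in\cI}$ pulled back from the cover $\{\vphi_{\beta}:B_{\beta}\to B\}_{\beta\in\cI}$ of the semi-perfect obstruction theory. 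So it suffices to verify the equality after applying each $\vphi_{\beta}^{*}:Z_{*}(\cE|_{A})\to Z_{*}(\cE|_{A_{\beta}})$.

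First I would compute the pull-back of the left-hand side. Using flat base change $\vphi_{\beta}^{*}v^{*}=v_{\beta}^{*}\vphi_{\beta}^{*}$ (where $v_{\beta}:A_{\beta}\to B_{\beta}$ is the pullback of $v$, which is again flat), together with the descent description of $(\eta_{\phi})_{*}$ from Definition-Proposition \ref{semidescent2}, namely $\vphi_{\beta}^{*}\circ(\eta_{\phi})_{*}=(\eta_{\phi_{\beta}})_{*}\circ\vphi_{\beta}^{*}$, I obtain
\[
\vphi_{\beta}^{*}v^{*}(\eta_{\phi})_{*}[C_{B/D}]=v_{\beta}^{*}(\eta_{\phi_{\beta}})_{*}[C_{B_{\beta}/D}].
\]
Now on the local chart, the perfect obstruction theory $\phi_{\beta}:E_{\beta}^{\bullet}\to L_{B_{\beta}/D}$ factors $\eta_{\phi_{\beta}}$ as $\fN_{B_{\beta}/D}\hookrightarrow [E_{\beta}^{1}/E_{\beta}^{0}]\twoheadrightarrow h^{1}(E_{\beta}^{\bullet\vee})=\cE|_{B_{\beta}}$ where $E_{\beta}^{\bullet}$ is represented by $[E_{\beta}^{0}\to E_{\beta}^{1}]$. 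Applying Lemma \ref{bdstpullback} to the bundle-stack projection allows me to commute $v_{\beta}^{*}$ past $(\eta_{\phi_{\beta}})_{*}$, giving
\[
v_{\beta}^{*}(\eta_{\phi_{\beta}})_{*}[C_{B_{\beta}/D}]=(\eta_{\phi_{\beta}})_{*}v_{\beta}^{*}[C_{B_{\beta}/D}].
\]

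The next step is to identify $v_{\beta}^{*}[C_{B_{\beta}/D}]$ with $[C_{A_{\beta}/C}]$. Since $u$ is flat, so is the base change $v_{\beta}$, so Lemma \ref{conepullback1} (Manolache) gives an isomorphism $C_{A_{\beta}/C}\cong v_{\beta}^{*}C_{B_{\beta}/D}$, and hence equality of fundamental cycles $v_{\beta}^{*}[C_{B_{\beta}/D}]=[C_{A_{\beta}/C}]$ in $Z_{*}(v_{\beta}^{*}\fN_{B_{\beta}/D})$. Combining everything, the $\vphi_{\beta}^{*}$-pullback of the left-hand side equals $(\eta_{\phi_{\beta}})_{*}[C_{A_{\beta}/C}]$. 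On the other hand, by étale base change of normal cones applied to the étale map $A_{\beta}\to A$ one has $\vphi_{\beta}^{*}[C_{A/C}]=[C_{A_{\beta}/C}]$, so using descent of $(\eta_{\phi})_{*}$ once more gives $\vphi_{\beta}^{*}(\eta_{\phi})_{*}[C_{A/C}]=(\eta_{\phi_{\beta}})_{*}[C_{A_{\beta}/C}]$, matching the left-hand side.

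I expect the main technical obstacle to be verifying that the descent description of $(\eta_{\phi})_{*}$ is genuinely compatible with flat pull-back through the non-trivial square $A_{\beta}\to A\to B$ and $B_{\beta}\to B$; concretely, that the collection $\{v_{\beta}^{*}(\eta_{\phi_{\beta}})_{*}[C_{B_{\beta}/D}]\}_{\beta}$ on the cover $\{A_{\beta}\}$ actually descends to the cycle $v^{*}(\eta_{\phi})_{*}[C_{B/D}]$ on $A$. This amounts to checking the cocycle/descent condition for these local cycles on double overlaps $A_{\beta}\times_{A}A_{\gamma}$, which follows from the fact that the transition isomorphisms $\psi_{\beta\gamma}$ match the local cones $C_{B_{\beta}/D}|_{B_{\beta\gamma}}$ via Proposition \ref{semidescent1}, and flat pull-back preserves this matching by Lemma \ref{bdstpullback} applied on the double overlap. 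Once this is in place, both sides of the desired identity have the same $\vphi_{\beta}^{*}$-pullbacks for all $\beta$, so they coincide in $Z_{*}(\cE|_{A})$.
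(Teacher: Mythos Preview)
Your proposal is correct and follows essentially the same approach as the paper: pass to the \'etale charts $A_{\beta}=A\times_{B}B_{\beta}$, use Lemma~\ref{conepullback1} to identify the pulled-back cone cycle, use Lemma~\ref{bdstpullback} to commute the flat pull-back past the bundle-stack projection $(\eta_{\phi_{\beta}})_{*}$, and conclude by the descent description of $(\eta_{\phi})_{*}$ from Definition--Proposition~\ref{semidescent2}. The paper's proof is in fact terser than yours and leaves implicit the descent compatibility check that you flag as the main technical obstacle; your more explicit treatment of that point is a genuine improvement in exposition, not a difference in strategy.
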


\begin{proof}
Let $A_{\beta} := A\times_B B_{\beta}$ and let $\vphi_{\beta} : A_{\beta} \to A$ be induced  \'etale morphisms. By Lemma \ref{conepullback1} and \ref{bdstpullback}, we have $v^*  (\eta_{\phi_{\beta}})_* \vphi_{\beta}^* [C_{B/D}] =(\eta_{\phi_{\beta}})_* \vphi_{\beta}^* v^* [C_{B/D}] = (\eta_{\phi_{\beta}})_* \vphi_{\beta}^* [C_{A/C}]$. Thus we observe that $v^*(\eta_{\phi})_*[C_{B/D}]$ is the descent of the collection $\{ (\eta_{\phi_{\beta}})_* \vphi_{\beta}^* [C_{A/C}] \}_{\beta}$, which is equal to $(\eta_{\phi})_*[C_{A/C}]$.
\end{proof}

At last, we are ready to prove various functorial properties about semi-virtual pull-backs. First we prove that semi-virtual pull-backs commute with proper pushforwards and flat pull-backs. Consider the following diagram of Artin stacks :
\[\xymatrix{Z' \ar[r]^w \ar[d]_g \ar@{}[rd]|{\square} & W' \ar[d]^f \\
Z \ar[r]^v \ar[d] \ar@{}[rd]|{\square} & W \ar[d] \\
X \ar[r]^u & Y.
}
\]
Then, we have the following.

\begin{prop}\label{vpullbackcompatibility}
Let $\phi$ be a semi-perfect obstruction theory on $X/Y$ and $\cE = Ob_{\phi}$. Assume that $Z,Z'$ are DM-stacks and $u$ is a DM-type morphism. Then, we have the following.
\begin{itemize}
\item[(i)] If $f$ is flat, then we have :
\[ u^!_{\cE} \circ f^* = g^* \circ u^!_{\cE} : A_*(W)_{\QQ} \to A_*(Z')_{\QQ}.
\]
\item[(ii)] If $f$ is a proper morphism between DM stacks, then we have :
\[ u^!_{\cE} \circ f_* = g_* \circ u^!_{\cE} : A_*(W')_{\QQ} \to A_*(Z)_{\QQ}. 
\]
\end{itemize}
\end{prop}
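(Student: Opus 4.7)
The plan is to unwind the semi-virtual pull-back $u^!_\cE$ according to Definition \ref{def:semivert} as the composition of five steps, namely the specialization $\sigma$, the closed embedding $i_*$ of cones, the embedding $\iota_*$ into the intrinsic normal sheaf, the descent $(\eta_\phi)_*$, and the Gysin map $\Gysin{f_Z^*\cE}$ (writing $f_Z\colon Z \to X$ for the left vertical map). For both assertions I will check that each of these five building blocks commutes with the relevant pull-back or push-forward, and then paste the five local commutativities together to obtain the global identity. By $\QQ$-linearity it suffices to test on an integral cycle $[A]$ with $A$ an integral substack of $W$ (part (i)) or $W'$ (part (ii)), setting $B \defeq A \times_W Z$ (respectively $B \defeq A \times_{W'} Z'$) so that $u^!_\cE[A] = \Gysin{f_Z^*\cE}(\eta_\phi)_*\iota_*i_*[C_{B/A}]$.

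For part (i), $g$ is flat as the base-change of the flat morphism $f$. The specialization $\sigma$ is compatible with flat pull-back by the standard deformation-to-the-normal-cone argument recalled in \cite[Construction 3.6]{Man08}. The cone embedding $i_*$ commutes with $g^*$ because, by Lemma \ref{conepullback1}, the normal cone $C_{B'/A'}$ with $A' \defeq A \times_W W'$ and $B' \defeq A' \times_{W'} Z'$ is identified with $g^*C_{B/A}$, and similarly $f_{Z'}^*C_{X/Y}$ identifies with $g^*f_Z^*C_{X/Y}$. The embedding $\iota_*$ into the intrinsic normal sheaf is preserved by flat pull-back by naturality. The descent $(\eta_\phi)_*$ commutes with flat pull-back by Lemma \ref{conepullback2} together with Remark \ref{pullbackcompat1}. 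Finally the Gysin map commutes with flat pull-back by Proposition \ref{Gysinpushforwardpullback}(ii). Pasting these yields $g^* u^!_\cE = u^!_\cE f^*$.

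For part (ii), $f$ and hence $g$ are proper morphisms of DM stacks. Specialization $\sigma$ commutes with proper push-forward by the analogous deformation argument. The cone embedding $i_*$ pushes forward with a degree factor by Lemma \ref{conepushforward1}: if $\deg(f|_A) > 0$ then $g_*[C_{B/A}]$ equals $\deg(f|_A)\cdot [C_{f(A) \times_W Z / f(A)}]$ in $Z_*(f_Z^*C_{X/Y})$, and if $\deg(f|_A) = 0$ the push-forward vanishes because the generic fibre of the induced vertical map has positive dimension. The embedding $\iota_*$ is compatible with proper push-forward by naturality. The descent $(\eta_\phi)_*$ commutes with proper push-forward by Lemma \ref{conepushforward2} combined with Lemma \ref{mprojectionpushforward}. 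Finally Gysin commutes with proper push-forward by Proposition \ref{Gysinpushforwardpullback}(i). Pasting yields $g_* u^!_\cE = u^!_\cE f_*$.

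The main obstacle is the degree bookkeeping in part (ii), which is precisely why we work throughout with $\QQ$-coefficients. In the case $\deg(f|_A) = 0$, both sides must independently be shown to vanish, using the positive-dimensional generic fibre argument exactly as in the proof of Proposition \ref{pushforwardpullback}. When $\deg(f|_A) > 0$, the various multiplicities coming from the cone push-forward, the modified projection $(\eta_\phi)_*$, and the proper push-forward on cycle groups must cancel precisely; the \'etale-local patching needed to invoke Lemma \ref{conepushforward2} (which is formulated for a perfect obstruction theory on each chart) is then automatic from the descent statement built into Definition-Proposition \ref{semidescent2}.
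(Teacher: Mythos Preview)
Your proposal is correct and follows essentially the same route as the paper: test on an integral cycle, then use Lemmas \ref{conepullback1}/\ref{conepullback2} (resp.\ \ref{conepushforward1}/\ref{conepushforward2}) to control the cone and $(\eta_\phi)_*$ steps, and Proposition \ref{Gysinpushforwardpullback} for the Gysin step. The paper's argument is slightly leaner in that it does not separate the $\deg(f|_A)=0$ case (Lemma \ref{conepushforward1} already yields $g_*[C_{B'/A'}]=0$ uniformly) and it does not treat $\sigma$ as a separate step since $\sigma([A])=[C_{B/A}]$ on integral cycles; your extra remarks there are harmless but unnecessary.
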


\begin{proof}
(i) Let $A \subset W$ be an integral substack of $W$. Let $B:= Z\times_W A, B':=g^{-1}(B)$ and $A':=f^{-1}(A)$.
Then by Lemma \ref{conepullback1}, and Lemma \ref{Gysinpushforwardpullback}, we have
$g^*[C_{B/A}] = [C_{B'/A'}]$.
 
By Lemma \ref{conepullback2}, we obtain $g^* u^!_{\cE} [A] = g^* \Gysin{\cE}([ (\eta_{\phi})_*[C_{B/A}] ]) = \Gysin{\cE} \circ (\eta_{\phi})_* g^*[C_{B/A}] = \Gysin{\cE} \circ (\eta_{\phi})_* [C_{B'/A'}] = u^!_{\cE}[A'] = u^!_{\cE} \circ f^* [A]$.

(ii) Let $A' \subset W'$ be an integral substack of $W'$. Let $A := f(A')$ and let $d = \deg(f : A' \to A)$. Let $B:= Z\times_{W} A$, $B' := Z'\times_{W'} A' = B\times_{A} A'$. Then by Lemma \ref{conepushforward1}, we have $g_*[C_{B'/A'}] = d[C_{B/A}]$.

By Lemma \ref{conepushforward2} and Lemma \ref{Gysinpushforwardpullback}, we have $g_* u^!_{\cE} [A'] = g_* \Gysin{\cE}((\eta_{\phi})_*[C_{B'/A'}]) = d \cdot \Gysin{\cE}(  (\eta_{\phi})_* [C_{B/A}] ) = d \cdot u^!_{\cE}[A] = u^!_{\cE} \cdot f_* [A']$.
\end{proof}

We note that a perfect obstruction theory is a semi-perfect obstruction theory. Then, one may ask whether a virtual pull-back via perfect a obstruction theory which is defined in \cite{Man08}, and the semi-virtual pull-back via the semi-perfect obstruction theory are equal or not. The next proposition gives an affirmative answer to the above question.

Recall the following diagram of Artin stacks :
\[\xymatrix{Z \ar[r] \ar[d] \ar@{}[rd]|{\square} & W \ar[d] \\
X \ar[r]^u & Y
}
\]
where $X,Z$ are DM stacks and $u$ is a DM-type morphism. 
Assume that there is a perfect obstruction theory
$\phi : (E_{X/Y})\hseq \to L_{X/Y}$ with $Ob_{\phi} = \cE$ and $\bdst{E_{X/Y}\dual}=\EE$. Then, we have the following.

\begin{prop}\label{comparison} In the above setting, we have :
\[u^!_{\cE} = u^!_{\EE} : A_*(W)_{\QQ} \to A_*(Z)_{\QQ} 
\]
where $u^!_{\EE}$ is a virtual pull-back defined in \cite{Man08}
\end{prop}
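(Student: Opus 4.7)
The plan is to compare the two pullbacks term by term along their defining chains. Both $u^!_{\EE}$ and $u^!_{\cE}$ start with the specialization $\sigma$ to the normal cone $C_{Z/W}$, followed by the closed embeddings $i_*\colon A_*(C_{Z/W}) \to A_*(f^*C_{X/Y})$ and $\iota_*\colon A_*(f^*C_{X/Y}) \to A_*(f^*\fN_{X/Y})$, so the first three steps of the two constructions are literally the same. They differ only in the final two steps: Manolache's pullback uses the closed embedding $\fN_{X/Y} \hookrightarrow \EE = \bdst{E_{X/Y}\cseq}$ followed by $\Gysin{f^*\EE}$, while $u^!_{\cE}$ uses $\eta_{\phi}\colon \fN_{X/Y} \to \cE$ followed by $\Gysin{f^*\cE}$. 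The first thing I would do is observe that, since $\phi$ is a genuine perfect obstruction theory, $\eta_{\phi}$ factors through the embedding into $\EE$ as
\[
\fN_{X/Y} \hookrightarrow \EE \stackrel{p}{\lra} h^1(E_{X/Y}\cseq) = \cE,
\]
where $p$ is the natural projection from the vector bundle stack to its associated coherent sheaf stack. This factorization reduces the desired equality $u^!_{\cE} = u^!_{\EE}$ to the single identity
\[
\Gysin{\EE} \;=\; \Gysin{\cE} \circ p_* \quad\colon\quad A_*(\EE)_{\QQ} \lra A_*(X)_{\QQ}.
\]

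Next I would reduce this remaining identity to a local computation on $X$. Both Gysin maps commute with proper pushforward and flat pullback by Proposition~\ref{Gysinpushforwardpullback}, so I may pass to an \'etale cover on which $E_{X/Y}\cseq$ admits a two-term resolution $[E^0 \to E^1]$ by vector bundles and assume $\EE = [E^1/E^0]$, $\cE = \coker(E^0 \to E^1)$ globally. For an integral substack $\WW \subset \EE|_Z$, I would set $W := \pi^{-1}(\WW) \subset E^1|_Z$, where $\pi\colon E^1 \to \EE$ is the smooth quotient morphism; then $W$ is a saturated integral substack and $\pi^*[\WW] = [W]$. Using the Manolache definition of the Gysin map of a vector bundle stack via a global $2$-term presentation, one has $\Gysin{\EE}[\WW] = \Gysin{E^1}(\pi^*[\WW]) = \Gysin{E^1}[W]$. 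On the other hand, $p_*[\WW] = [q(W)]$ with $q := p \circ \pi\colon E^1 \surra \cE$, and the triple $(\mathrm{id}_Z,\ q\colon E^1|_Z \surra \cE|_Z,\ W \subset E^1|_Z)$ is a modified proper representative of $q(W)$ with $d_W = 1$. The modified Gysin formula of Definition-Proposition~\ref{mGysin} then gives $\Gysin{\cE}[q(W)] = \Gysin{E^1|_Z}[W]$, matching the vector bundle stack side.

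The hardest step will be keeping track of the saturatedness and integrality statements required for the triple above to be a legitimate modified proper representative, as well as the compatibility $q_* = p_* \circ \pi_*$ from Lemma~\ref{projectioncomp}, which is what guarantees that $p_*[\WW]$ really equals the integral cycle $[q(W)]$ on $\cE$. Once this local identity is secured, globalizing along the \'etale cover by Proposition~\ref{Gysinpushforwardpullback}, together with the factorization $(\eta_{\phi})_* = p_* \circ \iota_*$, packages everything into the desired equality $u^!_{\cE} = u^!_{\EE}$ on $A_*(W)_{\QQ}$.
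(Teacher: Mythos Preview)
Your reduction to the identity $\Gysin{\EE} = \Gysin{\cE}\circ p_*$ is exactly right and matches the paper's reduction: it too fixes an integral cycle $C_i$ in the normal cone and shows $\Gysin{\EE}[C_i] = \Gysin{\cE}(\eta_{\phi})_*[C_i]$. The local computation you give---pulling $\WW$ back to $W\subset E^1$, recognizing $(\mathrm{id}_Z,\, q\colon E^1|_Z\surra \cE|_Z,\, W)$ as a modified proper representative of $q(W)$ with $d_W=1$, and matching $\Gysin{E^1}[W]$ on both sides---is correct and is precisely the endgame the paper uses.

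Where you diverge is in how you arrange for a global two--term presentation. You ``pass to an \'etale cover'' and then ``globalize along the \'etale cover by Proposition~\ref{Gysinpushforwardpullback}''. But that proposition only gives \emph{compatibility} of the Gysin maps with flat pullback and proper pushforward; it does not give injectivity of the \'etale pullback on Chow groups, which is what you actually need to descend the identity from the cover back to $X$ (an \'etale cover is not proper, so you cannot push forward). The paper avoids this descent altogether: instead of localizing on the base, it picks for each $\eta_{\phi}(C_i)$ a modified proper representative $(f_i\colon S_i\to Z_i,\ q_i\colon \cV_i\surra \cE|_{S_i},\ W_i)$ with $S_i$ a \emph{projective scheme} (via Remark~\ref{modifiedproperexist}), and then uses that on a projective scheme $f_i^*E_{X/Y}\cseq$ admits a global two--term locally free resolution $[E_i^0\to E_i^1]$ with $E_i^1=\cV_i$. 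The computation then runs through the proper morphism $f_i$, where Proposition~\ref{Gysinpushforwardpullback} and the standard compatibility of $\Gysin{\EE}$ with proper pushforward apply directly, with no descent step needed.

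So your core calculation is the same as the paper's; the difference is the mechanism for obtaining a global presentation. If you want to keep your \'etale--cover route, you must supply the missing ingredient (injectivity of pullback along a smooth surjective cover on $A_*(\,\cdot\,)_{\QQ}$, which is true but requires a separate argument, e.g.\ via a finite surjective cover \`a la Vistoli). The paper's route via a projective proper representative is more self--contained within the machinery already built in Section~\ref{sec:def}.
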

\begin{proof}
Let $A \subset W$ be an integral substack and $B = A\times_Y X\subset Z$. Then, $u^!_{\EE}[A] = \Gysin{\EE}[C_{B/A}]$.

Let $[C_{B/A}] = \sum_i a_i[C_i]$ be the fundamental cycle, i.e. $C_i$ are integral substacks. Let $\bs{c}_i$ be a coarse moduli sheaf of $C_i$. Then, we have $(\eta_{\phi})_*[C_{B/A}] = \sum_i a_i(\eta_{\phi})_*[C_i]$. Therefore, it is enough to show that $\Gysin{\EE}[C_i] = \Gysin{\cE}(\eta_{\phi})_*[C_i]$.

Let $Z_i := \bar{\pi(C_i)}$ where $\pi : \EE \to X$ is the projection. By Remark \ref{modifiedproperexist}, we can choose a modified proper representative $(f : S_i \to Z_i, q : \cV_i \surra \cE|_{S_i}, W_i \subset \cV_i)$ of $\eta_{\phi}(C_i)$ such that $S_i$ are projective schemes. Since $S_i$ are projective, $f_i^*E_{X/Y} \dual$ has a two-term locally free resolution $[E_i^0 \to E_i^1]$ such that $E_i^1 = \cV_i$.

Let $r_i : E_i^1 \to f_i^*\EE$ be the projection.
Since $W_i \subset E_i^1$ is a saturated integral subscheme, $r_i(W_i) \subset f_i^*\EE$ is an integral closed substack. In a similar manner in the proof of Lemma \ref{bdstpushforward}, we get $\deg(r_i(W_i) \to C_i) = d_{W_i}$, which means that by a proper morphism $f_i : f_i^*\EE \to \EE$ we obtain $(f_i)_*[r_i(W_i)] = d_{W_i}[C_i]$. Thus, we have :
\[ \Gysin{\EE}[C_i] = \frac{1}{d_{W_i}}(f_i)_*\Gysin{f_i^*\EE}[r_i(W_i)] = \frac{1}{d_{W_i}}(f_i)_*\Gysin{E_i^1}[W_i] = \Gysin{\cE}(\eta_{\phi})_*[C_i].
\]
\end{proof}

\begin{lemm}\label{virtgysincomm}
Consider a following fiber diagram
\begin{align*}
\xymatrix{
D \ar[r] \ar[d] \ar@{}[rd]|{\square} & C \ar[r] \ar[d] \ar@{}[rd]|{\square} & Z \ar[d]^{0} \\
B \ar[r] \ar[d] \ar@{}[rd]|{\square} & A \ar[r] \ar[d] & E \\
X \ar[r]_{u} & Y &
} 
\end{align*}
such that $X,Y,E,Z,A,B,C,D$ are DM stacks, $u$ is a DM-type morphism, $E$ is a vector bundle over $Z$ and $0$ is the zero section. Assume that there is a semi-perfect obstruction theory $\phi$ on $X/Z$. Let $\phi = \{ \{X_{\alpha}\}_{\alpha \in \cI}, \phi_{\alpha}, Ob_{\phi} =: \cE$. Then we have
\[
u^!_{\cE} \circ \Gysin{E} = \Gysin{E} \circ u^!_{\cE}.
\]
\end{lemm}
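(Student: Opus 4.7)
The plan is to unwind $u^!_{\cE}$ via Definition \ref{def:semivert} as the composition
\[
u^!_{\cE}\;=\;\Gysin{g^*\cE}\circ(\eta_{\phi})_*\circ\iota_*\circ i_*\circ\sigma,
\]
where $g:B\to X$ is the base change of $u$, and to verify that the refined Gysin $\Gysin{E}$ for the zero section $0:Z\hra E$ commutes with each of the five factors. Since every cycle on $A$ (resp.\ on $C$) is a $\QQ$-linear combination of integral cycles, it suffices to check the identity on a single integral substack $\cA\subset A$ and then extend by linearity.

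I would handle the five factors in turn. The specialization-to-normal-cone map $\sigma$ commutes with $\Gysin{E}$ by the classical double deformation-to-the-normal-cone argument of Fulton, Chapter 6, applied in the form already used in \cite{Man08}. The closed-embedding pushforwards $i_*$ and $\iota_*$ commute with $\Gysin{E}$ by Proposition \ref{Gysinpushforwardpullback}(i), since $E$ is in particular a coherent sheaf stack (it is a vector bundle, hence locally free) and $i,\iota$ are proper. For the innermost Gysin $\Gysin{g^*\cE}$ one picks a modified proper representative $(f:S\to Z',\;q:\cV\surra g^*\cE|_S,\;W)$ as in Definition-Proposition \ref{mGysin}; pulling the whole datum back along $0:Z\hra E$ reduces the identity $\Gysin{g^*\cE}\circ\Gysin{E}=\Gysin{E}\circ\Gysin{g^*\cE}$ to the commutativity of two classical vector-bundle Gysin maps, namely $\Gysin{\cV}$ and the Gysin of the pullback of $E$ to $S$, which is standard intersection theory.

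The main obstacle is the step involving $(\eta_{\phi})_*$, since $\eta_{\phi}$ is defined only by descent from the \'etale charts $\{X_\beta\}_{\beta\in\cI}$. I would verify $\Gysin{E}\circ(\eta_{\phi})_*=(\eta_{\phi})_*\circ\Gysin{E}$ chart by chart. On $X_\beta$ the perfect obstruction theory admits a two-term resolution $(E_\beta)\cseq=[(E_\beta)^0\to(E_\beta)^1]$, so $(\eta_{\phi_\beta})_*$ factors as the closed embedding $\fN_{X_\beta/Y}\hra\bdst{(E_\beta)\cseq}$ followed by the coarse-moduli projection $\bdst{(E_\beta)\cseq}\to Ob_{\phi_\beta}$; the latter is a smooth vector-bundle-stack projection and commutes with $\Gysin{E}$ by Proposition \ref{Gysinpushforwardpullback}(ii) together with Lemma \ref{bdstpullback}. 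The \'etale-local identities are intertwined by the transition isomorphisms $\psi_{\beta\gamma}$, because $\psi_{\beta\gamma}$ is itself an isomorphism of coherent sheaves and hence commutes with any natural operation; Definition-Proposition \ref{semidescent2} and Proposition \ref{rationaldescent} then descend the identity to global Chow groups. Composing the five commutativities yields the lemma.
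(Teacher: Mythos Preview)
Your factor-by-factor strategy is different from the paper's argument and considerably heavier. The paper does not try to commute the refined Gysin $\Gysin{E}$ through the five constituents of $u^!_{\cE}$; instead it exploits the special feature of a vector bundle that $\Gysin{E}$ is the inverse of the flat pullback $\pi^*$ along the projection $\pi:E\to Z$. Concretely: write $\Gysin{E}[A']=\sum_i a_i[C_i]$, so that $[A']=\sum_i a_i[\pi^*C_i]$ in $A_*(A)$; then apply $\pi^*$ to both sides of the desired identity. By Lemma~\ref{conepullback2} and Proposition~\ref{Gysinpushforwardpullback}(ii) one gets $\pi^*\bigl(u^!_{\cE}\Gysin{E}[A']\bigr)=\sum_i a_i\,u^!_{\cE}[\pi^*C_i]$, and this equals $u^!_{\cE}[A']$ simply because $u^!_{\cE}$ is already known to be well-defined on Chow groups. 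No double deformation space, no chart-by-chart descent for $\eta_{\phi}$, and no separate treatment of $\sigma$ are needed.

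Your route can likely be completed, but two points deserve correction. First, Proposition~\ref{Gysinpushforwardpullback}(i) is not the right reference for the $i_*$ and $\iota_*$ steps: that proposition concerns the coherent-sheaf-stack Gysin $\Gysin{\cF}:Z_*(\cF)\to A_*(X)$ commuting with proper pushforward on the \emph{base}, whereas what you need is that the \emph{refined} Gysin $0^!_E$ commutes with proper pushforward of cycles on auxiliary cones; this is classical Fulton (Theorem~6.2(a)), not a result of this paper. Second, your handling of $\sigma$ is the substantive step, and appealing to ``the double deformation argument of Fulton/Manolache'' is doing most of the work by citation; in the present setting you would still have to identify $C_{D/C}$ inside $C_{B/A}|_D$ and check the cycle-level equality there, which is close in content to what the paper proves directly. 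The paper's $\pi^*$ trick bypasses all of this.
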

\begin{proof}
Let $A' \subset A$ be an integral substack. We can write $\Gysin{E}[A'] = \sum_i a_i[C_i]$ where $C_i$ are integral substacks of $C$. Let $D_i := C_i\times_{C} D$. Then we have
\[
u^!_{\cE} \circ \Gysin{E} [A'] = \sum_ j a_{j} u^!_{\cE} [C_i] = \sum_i a_i \Gysin{\cE|_D} (\eta_{\phi})_*[C_{ D_i / C_i }]
\]
On the other hand, let $B' := A'\times_A B$. Then we have 
\[
\Gysin{E|_D} \circ u^!_{\cE}[A']= \Gysin{E|_D} \circ \Gysin{\cE|_B}(\eta_{\phi})_*[C_{B'/A'}].
\]

Therefore, it is enough to show that $\sum_i a_i \cdot \pi^* \circ \Gysin{\cE|_D} (\eta_{\phi})_* [C_{D_i/C_i}] = \Gysin{\cE|_B}(\eta_{\phi})_*[C_{B'/A'}]$ where $\pi : E|_D \to D$ is the projection. By Lemma \ref{conepullback2}, we have $\pi^* \Gysin{\cE|_D} (\eta_{\phi})_* [C_{D_i/C_i}] = \Gysin{\cE|_B} (\eta_{\phi})_* [C_{\pi^*(D_i)/\pi^*(C_i)}]. $

Since $\Gysin{E}[A'] = \sum_i a_i[C_i]$, we have $[A'] = \sum_i a_i [\pi^*(C_i)]$ in $A_*(A)$. Hence we have 
\begin{align*}
& \sum_i a_i \cdot \pi^* \circ \Gysin{\cE|_D} (\eta_{\phi})_* [C_{D_i/C_i}] = \sum_i a_i \Gysin{\cE|_B}(\eta_{\phi})_*[C_{\pi^*(D_i)/\pi^*(C_i)}] \\
& = \Gysin{\cE|_B} (\eta_{\phi})_*[C_{B'/A'}].
\end{align*}
\end{proof}

Next, we prove a functoriality of semi-virtual pull-back, which is an analogue of \cite[Theorem 4.8]{Man08}. Consider the morphism of Artin stacks :
\[ \xymatrix{X \ar[r]_u & Y \ar[r]_v & Z}.
\]
such that $X,Y$ are DM stacks and $u,v$ are DM-type morphisms.
Assume that there are semi-perfect obstruction theories $\phi$ on $X/Z$, $\phi'$ on $Y/Z$, and $\phi''$ on $X/Y$. We say that $\phi, \phi', \phi''$ are compatible in the following situations.

Let $\phi' = \{ \{Y_{\alpha}\}_{\alpha \in \cI}, \phi'_{\alpha}, \psi'_{\alpha \beta}  \}$. Then, $\phi = \{ \{X_{\alpha}\}_{\alpha \in \cI}, \phi_{\alpha}, \psi_{\alpha \beta} \}$ and $\phi''=\{\{X_{\alpha}\}_{\alpha \in I}, \phi_{\alpha}'', \psi''_{\alpha \beta}\}$ where $X_{\alpha} := X \times _Y Y_{\alpha}$.

Let $\phi_{\alpha} : ((E_{X/Z})_{\alpha})\hseq \to L_{X_{\alpha}/Z}$, $\phi'_{\alpha} : ((E_{Y/Z})_{\alpha})\hseq \to L_{Y_{\alpha}/Z}$, $\phi''_{\alpha} : (E_{X/Y})\hseq \to L_{X/Y}$. Then, there is a morphism of distinguished triangles :

\[\xymatrix{u^*((E_{Y/Z})_{\alpha})\hseq \ar[d] \ar[r]^a & ((E_{X/Z})_{\alpha})\hseq \ar[r]^b \ar[d] & ((E_{X/Y})_{\alpha})\hseq \ar[r]^c \ar[d] & u^*((E_{Y/Z})_{\alpha})\hseq[1] \ar[d] \\
u^*L_{Y_{\alpha}/Z} \ar[r]^{can} & L_{X_{\alpha}/Z} \ar[r] & L_{X_{\alpha}/Y}\cong L_{X_{\alpha}/Y_{\alpha}} \ar[r] & u^*L_{Y_{\alpha}/Z}[1]
}
\]
where $(E_{X/Y}):=E_{X/Y}|_{X_{\alpha}}$. The isomorphism $L_{X_{\alpha}/Y}\cong L_{X_{\alpha}/Y_{\alpha}}$ follows from the fact that $L_{Y_{\alpha}/Y} \cong \Omega^1_{X_{\alpha}/X}=0$ since $Y_{\alpha} \to Y$ is an \'etale morphism.
Furthermore, the morphisms in above diagrams are compatible with transition isomorphisms $\psi_{\alpha \beta}, \psi'_{\alpha \beta}$. Then, we say that $\phi, \phi', \phi''$ are compatible.

Let $\cE = \Ob_{\phi},\cF=Ob_{\phi'},\cG=Ob_{\phi''}$. Then, there is an exact sequence $\cG \to \cE \to u^*\cF \to 0$. Then we have the following.

\begin{prop}\label{functoriality}
Consider the following fiber diagram of Artin stacks :
\[\xymatrix{U \ar[r] \ar@{}[rd]|{\square} \ar[d] & V \ar[r] \ar@{}[rd]|{\square} \ar[d] & W \ar[d] \\
X \ar[r]^u & Y \ar[r]^v & Z
}
\]
such that $X,Y,U,V$ are DM stacks and $u,v$ are DM-type morphisms, $W,Z$ are smooth Artin stacks or DM stacks.
Assume that there are semi-perfect obstruction theories $\phi$ on $X/Z$, $\phi'$ on $Y/Z$ and $\phi''$ on $X/Y$ where $\phi$, $\phi'$, $\phi''$ are compatible. We set $Ob_{\phi}=:\cE,Ob_{\phi'}=:\cF,Ob_{\phi''}=:\cG$. Then, we have :
\[ u^!_{\cG} \circ v^!_{\cF} = (v\circ u)^!_{\cE} : A_*(Z)_{\QQ} \to A_*(X)_{\QQ}.
\]
\end{prop}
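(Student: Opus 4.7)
My approach is to reduce the global statement to Manolache's functoriality theorem \cite[Theorem 4.8]{Man08} \'etale-locally on the cover that trivializes all three semi-perfect obstruction theories into genuine perfect obstruction theories. By linearity and Proposition \ref{rationalpushforward}, it suffices to prove the identity on an integral class $[A]$ with $A \subset W$ an integral substack. Set $B := A \times_W V$ and $C := A \times_W U$, and take the \'etale cover $\{X_\alpha \to X\}_{\alpha \in \cI}$ from the datum of $\phi$ (which agrees with the cover for $\phi''$); let $\{Y_\alpha \to Y\}$ be the corresponding cover for $\phi'$, so that $X_\alpha = X \times_Y Y_\alpha$. Let $U_\alpha, V_\alpha, C_\alpha, B_\alpha$ denote the base changes and $\cE_\alpha, \cF_\alpha, \cG_\alpha$ the restrictions of the obstruction sheaves.

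On each chart $\alpha$, we have genuine perfect obstruction theories $\phi_\alpha, \phi'_\alpha, \phi''_\alpha$, and the hypothesis of compatibility for $\phi, \phi', \phi''$ restricts to a morphism of distinguished triangles relating $(E_{Y/Z})_\alpha, (E_{X/Z})_\alpha, (E_{X/Y})_\alpha$ --- precisely the compatibility required by \cite[Theorem 4.8]{Man08} for Manolache's functoriality of virtual pull-backs. Writing $\EE_\alpha, \EE'_\alpha, \EE''_\alpha$ for the associated bundle stacks, we thus obtain
\[
(u_\alpha)^!_{\EE''_\alpha} \circ (v_\alpha)^!_{\EE'_\alpha}[A] = (v_\alpha \circ u_\alpha)^!_{\EE_\alpha}[A] \quad \text{in } A_*(U_\alpha)_\QQ.
\]
By Proposition \ref{comparison}, each virtual pull-back above can be replaced by the semi-virtual pull-back via the corresponding obstruction sheaf, yielding
\[
(u_\alpha)^!_{\cG_\alpha} \circ (v_\alpha)^!_{\cF_\alpha}[A] = (v_\alpha \circ u_\alpha)^!_{\cE_\alpha}[A].
\]

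The remaining step --- which I expect to be the main obstacle --- is the local-to-global gluing: showing that the two globally-defined cycles $u^!_\cG \circ v^!_\cF[A]$ and $(v \circ u)^!_\cE[A]$ in $A_*(U)_\QQ$ restrict under the \'etale cover $\{U_\alpha \to U\}$ to the chart-wise expressions just computed, so that the chart-wise identity forces the global one. Unwinding the definition of semi-virtual pull-back, this reduces to checking that each of the four constituent operations commutes with flat pullback along \'etale maps: deformation to the normal cone is compatible with flat pullback (compare Lemma \ref{conepullback1}); the embedding $C_{\bullet/\bullet} \hookrightarrow \fN_{\bullet/\bullet}$ is intrinsically functorial; the descent $(\eta_\phi)_*$ is by its very construction compatible with \'etale restriction (Definition-Proposition \ref{semidescent2} together with Proposition \ref{rationaldescent}); and the Gysin map for coherent sheaf stacks commutes with flat pullback by Proposition \ref{Gysinpushforwardpullback}(ii). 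For the composite $u^!_\cG \circ v^!_\cF$ the same local-global compatibility must be applied twice, invoking additionally Proposition \ref{vpullbackcompatibility}(i) to move flat pullback past the outer $u^!_\cG$ when comparing the intermediate cycle $v^!_\cF[A] \in A_*(V)_\QQ$ against its local counterpart on each $V_\alpha$. Once the two sides of the asserted identity agree on every chart $U_\alpha$, the equality propagates to $A_*(U)_\QQ$ since both sides are, by construction, obtained by descent from the \'etale cover via the same machinery of $(\eta_\phi)_*$, which identifies any cycle in the image with its \'etale-local representatives.
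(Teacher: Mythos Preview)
Your approach has a genuine gap at the final gluing step. You argue that once the two classes $u^!_\cG \circ v^!_\cF[A]$ and $(v\circ u)^!_\cE[A]$ in $A_*(U)_\QQ$ have equal pullbacks along each \'etale chart $U_\alpha \to U$, they must be equal globally. But Chow groups do not satisfy descent along \'etale (or even Zariski) covers: the restriction map $A_*(U)_\QQ \to \prod_\alpha A_*(U_\alpha)_\QQ$ is not injective in general. For a concrete obstruction, take $U=\PP^1$ covered by two copies of $\mathbb{A}^1$; the class of a point is nonzero in $A_0(\PP^1)$ yet restricts to zero on each $\mathbb{A}^1$. Your sentence ``both sides are, by construction, obtained by descent from the \'etale cover via the same machinery of $(\eta_\phi)_*$'' does not rescue this: the machinery of $(\eta_\phi)_*$ produces a cycle in $Z_*(\cE|_U)$, but after you apply the Gysin map $0^!_{\cE|_U}$ you land in the ordinary Chow group $A_*(U)_\QQ$, where the descent property is lost. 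Moreover the two sides of the identity pass through \emph{different} intermediate sheaf stacks ($\cE|_U$ versus $\cF|_V$ followed by $\cG|_U$), so there is no common object in which an \'etale-local comparison could be promoted to a global cycle identity.

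The paper's proof is designed precisely to avoid this problem. It works globally from the outset with the double deformation space $M^{\circ}_{X\times\PP^1/M^{\circ}_{Y/Z}}$ of \cite{KKP03}, constructs a \emph{global} semi-perfect obstruction theory $\phi'''$ over $X\times\PP^1$ relative to this space (Lemma \ref{keylemma1}), and then uses the $\PP^1$-family to produce a single global rational equivalence in a coherent sheaf stack whose specializations at $0$ and $\infty$ compute the two sides (via Lemma \ref{keylemma2}). The \'etale charts enter only to verify that $\phi'''$ is semi-perfect and to compute certain cycle identities inside sheaf stacks, where descent \emph{does} hold by construction; the comparison of Chow classes happens only once, globally, after the rational equivalence has already been established.
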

\begin{proof}
We first consider the double deformation space $M^{\circ}_{X\times\PP^1/M^{\circ}_{Y/Z}} \to \PP^1 \times \PP^1$ constructed in \cite{KKP03}. Its fibers over $\{0\} \times \PP^1$, $\{0\} \times \{0\}$ and $\{0\} \times \{\infty\}$ are $C_{X\times\PP^1/M^{\circ}_{Y/Z}}$, $C_{X/C_{Y/Z}}$ and $C_{X/Z}$ respectively. Again by \cite{KKP03}, we obtain $\fN_{X\times\PP^1/M^{\circ}_{Y/Z}} = \bdst{cone(g)\dual}$ where $g$ is defined by :
\[ g = (id\boxtimes U, can\boxtimes T) : u^*L_{Y/Z} \boxtimes \cO_{\PP^1}(-1) \to u^*L_{Y/Z} \boxtimes \cO_{\PP^1} \oplus L_{X/Z}\boxtimes \cO_{\PP^1}
\]
where $U$, $T$ are global sections of $\cO_{\PP^1}(1)$ corresponding to $\{0\}\subset \PP^1$ and $\{\infty \} \subset \PP^1$ and $can : u^*L_{Y/Z} \to L_{X/Z}$ is a morphism induced from the distinguished triangle. Consider a morphism :
\[ h_{\alpha} : u^*((E_{Y/Z})_{\alpha})\hseq \boxtimes \cO_{\PP^1}(-1) \to u^*((E_{Y/Z})_{\alpha})\hseq \boxtimes \cO_{\PP^1} \oplus ((E_{X/Z})_{\alpha})\hseq \boxtimes \cO_{\PP^1}.
\]
We note that $cone(g_{\alpha}) = L_{X\times \PP^1/M^{\circ}_{Y/Z}}$, where $g_{\alpha} = g|_{Y_{\alpha}}$ by \cite{KKP03}. We also claim that an induced morphism of distinguished triangles $\phi_{\alpha} : cone(h_{\alpha}) \to cone(g_{\alpha}) = L_{X\times \PP^1/M^{\circ}_{Y/Z}}$, and transition morphisms $\psi'''_{\alpha\beta}$ form a semi-perfect obstruction theory called $\phi'''$. We will prove this later.

Let $Ob_{\phi'''} = \cH$. Then, $\cH_0 := \cH|_{X \times \{0\}} = \cE$, $\cH_{\infty} := \cH|_{X\times \{\infty\}} = u^*\cF \oplus \cG$. Let $D \subset W$ be an integral substack and let $B = D \times _Z Y$, $A = D\times_Z X$. Then, it is enough to show that $(v\circ u)^!_{\cE}[D] = \Gysin{\cE}(\eta_{\phi})_*[C_{A/D}] = u^!_{\cG}\circ v^!_{\cF}[D] = u^!_{\cG}\Gysin{\cF}(\eta_{\phi'})_*[C_{B/D}].$

Consider natural projection $h : C_{A\times \PP^1/M^{\circ}_{B/D}} \to \PP^1$. Let $[C_{A\times \PP^1/M^{\circ}_{B/D}}] = \sum_i a_i [C_i]$ be a fundamental cycle where $C_i$ are integral. Then there are induced rational morphisms $h_i : C_i \dra k$. Then, by the proof of Proposition \ref{rationaldescent}, $h_i$ descend to a rational function $\bar{h}_i : \eta_{\phi'''}(C_i) \dra k$. Let $K_i := \bar{\pi(C_i)}$ where $\pi : \fN_{X\times\PP^1/M^{\circ}_{Y/Z}} \to X\times\PP^1/M^{\circ}_{Y/Z}$ is the projection. Then $\eta_{\phi'''}(C_i) \subset Ob_{\phi'''}|_{K_i}$ are integral integral substacks.

Since $C_{A\times\PP^1/M^{\circ}_{B/D}}$ is flat over $\PP^1$, all integral components of $C_{A\times\PP^1/M^{\circ}_{B/D}}$ are closures of irreducible components of $C_{A\times(\PP^1 \setminus \{\infty\})/D\times(\PP^1 \setminus \{\infty\})} \cong C_{A/D} \times (\PP^1 \setminus \{\infty\})$. Therefore, each $K_i$ is of the form $U_i \times \PP^1 \subset U\times \PP^1$.
Thus, we can choose modified proper representatives of the form $(f_i : S_i \times \PP^1 \to U_i \times \PP^1, \cV_i \surra \cH|_{S_i\times \PP^1}, W_i)$ for each $\eta_{\phi'''}(C_i)$ and therefore we have $i( [(\eta_{\phi'''}(C_i), h_i)] ) = [(f_i : S_i \times \PP^1 \to U_i \times \PP^1, \cV_i \surra \cH|_{S_i\times \PP^1}, W_i, \wtil{h}_i)] =: H_i \in W^E_*(Ob_{\phi'''})$ where $\wtil{h}_i : W_i \dra k$ are induced rational morphisms.

Let $(\cV_i)_0 := \cV_i|_{S_i \times \{0\}}$, $(\cV_i)_{\infty}:=\cV_i|_{S_i \times \{\infty\}}, (Z_i)_0 := Z_i|_{S_i \times \{0\}}, (Z_i)_{\infty} := Z_i|_{S_i \times \{\infty\}}$. Then, we obtain :
\[ (f_i)_* \Gysin{(\cV)_0}[\rou_0(W_i,\wtil{h}_i)] = (f_i)_*\Gysin{(\cV_i)_{\infty}}[\rou_{\infty}(W_i,\wtil{h}_i)].
\]


We note that $(\cH_i)_0 = \cE$, $(\cH_i)_{\infty} = u^*\cF \oplus \cG$. Let $\rou_0(\eta_{\phi'''}(C_i),\bar{h}_i) := \frac{1}{d_{W_i}} (\zeta_{S_i \times \{0\} })_*[\rou_0(\eta_{\phi'''}(C_i), \bar{h}_i)]$ and $\rou_{\infty}(\eta_{\phi'''}(C_i), \bar{h}_i) := \frac{1}{d_{W_i}} (\zeta_{S_i \times \{ \infty \} })_*[\rou_{\infty}(\eta_{\phi'''}(C_i), \bar{h}_i)]$.
By simple calculation, we can check that
\[
\Gysin{\cE|_U}[\rou_0(\eta_{\phi'''}(C_i),\bar{h}_i)]=\Gysin{\cF|_U\oplus \cG|_U}[\rou_{\infty}(\eta_{\phi'''}(C_i),\bar{h}_i)].
\]


We have $\rou_0(C_{A\times \PP^1/M^{\circ}_{B/D}}, h) = [C_{A/D}]$ and $\rou_{\infty}(C_{A\times \PP^1/M^{\circ}_{B/D}}, h) = [C_{A/C_{B/D}}]$.
Then, in a similar manner as in Proposition \ref{rationaldescent}, we can show that 
\begin{align*}
& \rou_0 \left( \sum_i a_i ( (\eta_{\phi'''})(C_i),\bar{h}_i ) \right) = (\eta_{\phi})_*( \rou_0(C_{A\times \PP^1/M^{\circ}_{B/D}}, h) ).
\end{align*}
Hence we have $\rou_0 \left( \sum_i a_i ( \eta_{\phi'''}(C_i),\bar{h}_i ) \right) = (\eta_{\phi'''})_*[C_{A/D}]$.
In a same manner, we can show that $\rou_{\infty} \left( \sum_i a_i ( \eta_{\phi'''}(C_i),\bar{h}_i ) \right) = (\eta_{u^*\phi'} \oplus \eta_{\phi''})_*[C_{A/C_{B/D}}]$.


Therefore, we get $u^!_{\cE}[D] = \Gysin{\cE|_U}(\eta_{\phi})_*[C_{A/D}] = \Gysin{\cF|_U \oplus \cG|_U}(\eta_{u^*\phi'} \oplus \eta_{\phi''})_*[C_{A/C_{B/D}}].$ Thus, it is enough to show that $\Gysin{\cF|_U\oplus \cG|_U}(\eta_{u^*\phi'} \oplus \eta_{\phi''})_*[C_{A/C_{B/D}}] = u^!_{\cG} \circ v^!_{\cF} [D].$ Therefore, we obtain the proof by the following lemmas, Lemma \ref{keylemma1} and Lemma \ref{keylemma2}.

\end{proof}

\begin{lemm}\label{keylemma1}
$\phi'''$ defined above is indeed a semi-perfect obstruction theory.
\end{lemm}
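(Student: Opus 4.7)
I will verify the three data defining a semi-perfect obstruction theory for $\phi''' = (\{X_\alpha \times \PP^1\}_{\alpha \in \cI}, \phi'''_\alpha, \psi'''_{\alpha\beta})$ on $X \times \PP^1 / M^{\circ}_{Y/Z}$: that each $\phi'''_\alpha$ is a perfect obstruction theory, that the maps $\psi'''_{\alpha\beta}$ glue the local $h^1$'s consistently (satisfying the cocycle condition), and that they are $\nu$-equivalences. Once these are in place, the restrictions of the glued obstruction sheaf $\cH$ to $X \times \{0\}$ and $X \times \{\infty\}$ are immediate from the shape of $h_\alpha$ evaluated at the sections $U$ and $T$, yielding $\cH_0 = \cE$ and $\cH_\infty = u^*\cF \oplus \cG$.

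\textbf{Perfect obstruction theories and transitions.} For the first datum, the compatibility of $\phi_\alpha$, $\phi'_\alpha$, $\phi''_\alpha$ together with the description of $g_\alpha$ from \cite{KKP03} produces a morphism of distinguished triangles, hence a map $\phi'''_\alpha : \mathrm{cone}(h_\alpha) \to \mathrm{cone}(g_\alpha) \cong L_{X_\alpha \times \PP^1 / M^{\circ}_{Y_\alpha/Z}}$. The five-lemma applied to the long exact sequences of cohomology sheaves transfers the defining perfect obstruction theory conditions (isomorphism on $h^0$, surjection on $h^{-1}$ after dualization) from $\phi_\alpha$ and $u^*\phi'_\alpha$ to $\phi'''_\alpha$; perfectness of $\mathrm{cone}(h_\alpha)$ as a complex of amplitude $[-1,0]$ parallels the argument of \cite{KKP03} for $g_\alpha$ itself, using that $\mathrm{id} \boxtimes U$ is injective. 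For the second datum, dualizing the defining triangle of $\mathrm{cone}(h_\alpha)$ yields a short exact sequence on $h^1$'s whose outer terms carry the known transitions $\psi_{\alpha\beta}$ and $u^*\psi'_{\alpha\beta}$; the five-lemma produces a unique isomorphism $\psi'''_{\alpha\beta}$ on the middle term, and naturality of this construction propagates the cocycle identity from $\psi_{\bullet\bullet}$ and $\psi'_{\bullet\bullet}$ to $\psi'''_{\bullet\bullet}$.

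\textbf{The $\nu$-equivalence (main obstacle).} For an infinitesimal lifting problem $(T' \hra T,\ f : T' \to X_\alpha \times \PP^1)$ over $M^{\circ}_{Y_\alpha/Z}$ at a closed point $p$, the obstruction class $w = w(f,T',T) \in \Ext^1(f^* L_{X_\alpha \times \PP^1 / M^{\circ}_{Y_\alpha/Z}},\, I)$ maps, via the long exact sequence of the cotangent triangle, to a pair of classes governing lifts to $X_\alpha/Z$ and (a twist of) the pulled-back lifts for $Y_\alpha/Z$. The plan is to identify $\phi'''_\alpha(w) \in h^1(\mathrm{cone}(h_\alpha)\dual)|_p \otimes I$ with a pair $(o_\alpha, o'_\alpha)$ in $h^1(((E_{X/Z})_\alpha)\cseq)|_p \otimes I \oplus u^* h^1(((E_{Y/Z})_\alpha)\cseq)|_p \otimes I$ whose components are the obstruction classes of the induced lifting problems for $X_\alpha/Z$ and $Y_\alpha/Z$ respectively. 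Once this identification is in place, the assumed $\nu$-equivalences of $\phi_\alpha$ with $\phi_\beta$ and of $\phi'_\alpha$ with $\phi'_\beta$ (preserved by the flat pullback $u^*$) together imply that $\psi_{\alpha\beta} \oplus u^*\psi'_{\alpha\beta}$ preserves $(o_\alpha, o'_\alpha)$, hence $\psi'''_{\alpha\beta}$ preserves $\phi'''_\alpha(w)$. The technical heart of the lemma is this decomposition: it requires unwinding the double deformation space $M^{\circ}_{X \times \PP^1 / M^{\circ}_{Y/Z}}$ of \cite{KKP03} and tracking how an infinitesimal lift of a morphism into $M^{\circ}_{Y/Z}$ splits along the cotangent triangle into a $Y/Z$-component and an $X/Z$-component. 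I expect to carry this out étale-locally on the $\{X_\alpha \times \PP^1\}$, adapting the classical functoriality argument of \cite{Man08} for ordinary virtual pullbacks to the semi-perfect setting.
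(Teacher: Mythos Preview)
Your handling of the first two data is adequate and matches what the paper takes for granted. The gap is in your $\nu$-equivalence argument. You propose, at an arbitrary closed point $p \in (X_\alpha \times_X X_\beta)\times \PP^1$, to identify $Ob(\phi'''_\alpha,f,T',T)$ with a pair $(o_\alpha,o'_\alpha)$ in the direct sum
\[
h^1\bigl(((E_{X/Z})_\alpha)\cseq\bigr)\big|_p\otimes I\ \oplus\ u^*h^1\bigl(((E_{Y/Z})_\alpha)\cseq\bigr)\big|_p\otimes I,
\]
and then transport it by $\psi_{\alpha\beta}\oplus u^*\psi'_{\alpha\beta}$. But $h^1(\mathrm{cone}(h_\alpha)\dual)|_p$ does not split as this direct sum at a generic point of $\PP^1$, and $\psi'''_{\alpha\beta}|_p$ is not of the form $\psi_{\alpha\beta}\oplus u^*\psi'_{\alpha\beta}$ there; so the decomposition you want simply does not exist as stated, and the step ``$\psi_{\alpha\beta}\oplus u^*\psi'_{\alpha\beta}$ preserves $(o_\alpha,o'_\alpha)$'' is not well-posed.

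The paper's argument is much shorter and avoids this by using that the $\nu$-equivalence condition is checked pointwise. Writing $p=(p',t)$ with $t\in\PP^1$, one does a case split on $t$. For $t\in\PP^1\setminus\{\infty\}$ the cone simplifies: $\mathrm{cone}(h_\alpha)|_p\cong ((E_{X/Z})_\alpha)\hseq|_{p'}$, and under this identification $\psi'''_{\alpha\beta}|_p=\psi_{\alpha\beta}|_{p'}$, so the $\nu$-equivalence at $p$ is literally the assumed $\nu$-equivalence for $\phi$. For $t=\infty$ the cone degenerates to the direct sum $u^*((E_{Y/Z})_\alpha)\hseq|_{p'}\oplus ((E_{X/Y})_\alpha)\hseq|_{p'}$, and the transition becomes $u^*(\psi'_{\alpha\beta})|_{p'}\oplus\mathrm{id}$; the $\nu$-equivalence then follows from that of $\phi'$ on the first summand and trivially on the second. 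In short, no global two-component splitting of the obstruction class over $\PP^1$ is needed, because at every closed point $\mathrm{cone}(h_\alpha)$ already reduces to one of the two known obstruction theories, and the corresponding $\nu$-equivalence hypothesis applies directly.
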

\begin{proof} Everything is clear except for a $\nu$-equivalence condition. For $\alpha,\beta \in \cI$, $p \in (X_{\alpha}\times_X X_{\beta})\times \PP^1$, it is enough to show that $\psi'''_{\alpha \beta}$ sends $Ob(\phi'''_{\alpha}|_{X_{\alpha} \times_X X_{\beta}},T',T)$ to $Ob(\phi'''_{\beta}|_{X_{\alpha}\times_X X_{\beta}}, T', T)$.

If $p$ lies on $\PP^1\setminus \{\infty\}$, we can write $p = (p',t)$ where $p' \in X_{\alpha}\times_X X_{\beta},t\in \PP^1\setminus \{\infty\}$. Then, we have $cone(h_{\alpha})|_p = (E_{X/Z})_{\alpha|_{p'}}$. In this case, we obtain $\psi'''_{\alpha \beta}|_p = \psi_{\alpha \beta}|_{p'}$. So that $\nu$-equivalence property holds.

If $p$ lies on $\{\infty\} \in \PP^1$, let $p = \{p',\{\infty\}\}$ where $p' \in X_{\alpha} \times_X X_{\beta}$, $cone(h_{\alpha})|_p=u^*(E_{Y/Z})_{\alpha}|_{p'} \oplus E_{X/Y}|_{p'}$. In this case, we have $\psi_{\alpha \beta}''' = u^*(\psi'_{\alpha \beta})|_{p'} \oplus id$. So that $\nu$-equivalence property holds.
\end{proof}

\begin{lemm}\label{keylemma2}
\[ \Gysin{\cF|_A\oplus \cG|_A}(\eta_{u^*\phi'} \oplus \eta_{\phi''})_*[C_{A/C_{B/D}}] = u^!_{\cG} \circ v^!_{\cF} [D].
\]
\end{lemm}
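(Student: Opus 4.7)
The plan is to reduce the identity to the perfect obstruction theory case on an \'etale cover, where it follows from the classical functoriality of virtual pull-backs \cite{Man08}, and then descend. An alternative direct approach would require a factorization of $\Gysin{\cF|_A\oplus\cG|_A}$ as an iterated Gysin together with an identification of the intermediate cycle on $\cG|_A$ with $(\eta_{\phi''})_*[C_{A/B'}]$ for a cycle-theoretic representative $B'$ of $v^!_{\cF}[D]$; this is in principle doable via modified proper representatives of the form $\cV_1\oplus\cV_2\twoheadrightarrow\cF\oplus\cG$, but the bookkeeping is unpleasant, so I route through the chart argument.

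Fix $\alpha\in\cI$ and restrict all the data to $X_\alpha$, obtaining corresponding $Y_\alpha, V_\alpha, U_\alpha, A_\alpha, B_\alpha, D_\alpha$ by \'etale base change. The triple $(\phi_\alpha,\phi'_\alpha,\phi''_\alpha)$ is then a genuine compatible triple of perfect obstruction theories; pick two-term locally free presentations with bundle stacks $\EE_\alpha,\FF_\alpha,\GG_\alpha$. The proof of \cite[Thm.~4.8]{Man08} (specifically the $\infty$-fiber identification in the double deformation space, which is exactly the bundle-stack analogue of the rational equivalence used just above Lemma~\ref{keylemma1}) yields
\[
u^!_{\GG_\alpha}\circ v^!_{\FF_\alpha}[D] \;=\; \Gysin{u^*\FF_\alpha\oplus\GG_\alpha}[C_{A_\alpha/C_{B_\alpha/D_\alpha}}].
\]
Proposition~\ref{comparison} converts each bundle-stack Gysin into its coherent-sheaf-stack counterpart, and the same comparison applied to the direct-sum pair (using a modified proper representative built from $F_\alpha^1\oplus G_\alpha^1\twoheadrightarrow u^*\cF|_{X_\alpha}\oplus\cG|_{X_\alpha}$) identifies the right-hand side with $\Gysin{\cF|_{A_\alpha}\oplus\cG|_{A_\alpha}}(\eta_{u^*\phi'_\alpha}\oplus\eta_{\phi''_\alpha})_*[C_{A_\alpha/C_{B_\alpha/D_\alpha}}]$. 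Hence Lemma~\ref{keylemma2} holds chart-wise.

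Next, descend. The right-hand side of the claimed identity is compatible with \'etale restriction by Proposition~\ref{vpullbackcompatibility}(i). The left-hand side is similarly compatible: $\Gysin{\cF|_A\oplus\cG|_A}$ commutes with flat pull-back by Proposition~\ref{Gysinpushforwardpullback}(ii), and the $\eta$-pushforwards for the mixed SPOT $u^*\phi'\oplus\phi''$ on $X/C_{Y/Z}$ (essentially $\phi'''|_{t=\infty}$ from Lemma~\ref{keylemma1}) glue by Definition-Proposition~\ref{semidescent2}. The \'etale cover $\{A_\alpha\to A\}$ therefore lifts the chart-wise identity to the global one.

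The main obstacle is this final descent step: one must verify that the modified proper representatives entering both sides, the cone $C_{A/C_{B/D}}$, and the $\eta$-pushforwards all restrict coherently to the \'etale charts. This is routine given the compatibility results of Sections~\ref{sec:properpush}--\ref{sec:gysin}, but it is precisely where the substantive technical work lies; everything else reduces, via Proposition~\ref{comparison} and \cite{Man08}, to the perfect-obstruction-theory case.
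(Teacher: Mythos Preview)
Your approach differs substantially from the paper's. The paper takes exactly the route you call ``unpleasant bookkeeping'': it decomposes $(\eta_{\phi'})_*[C_{B/D}]=\sum_i a_i[\eta_{\phi'}(C_i)]$, chooses modified proper representatives $(f_i,\,q_i:\cV_i\surra\cF|_{S_i},\,W_i)$ for each component, commutes $u^!_{\cG}$ past $(f_i)_*\Gysin{\cV_i}$ via Lemma~\ref{virtgysincomm} and Proposition~\ref{vpullbackcompatibility}, and reduces everything to the cycle-level identity
\[
(g_i)_*(\eta_{\phi''}\oplus q_i)_*[C_{T_i/W_i}] \;=\; d_{W_i}\cdot(\eta_{u^*\phi'}\oplus\eta_{\phi''})_*[C_{A_i/C_i}]
\]
in $Z_*(\cF|_A\oplus\cG|_A)$. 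This identity is then checked on the charts $X_\alpha$---but crucially, both sides are \emph{cycles in a coherent sheaf stack}, defined as the descent of local data via Definition-Proposition~\ref{semidescent2}. So chart-wise equality yields global equality tautologically, with no appeal to descent for Chow groups of the underlying DM stack.

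Your chart argument instead establishes the identity in $A_*(A_\alpha)_\QQ$ for each $\alpha$ and then asserts it lifts to $A_*(A)_\QQ$. That final step requires injectivity of $A_*(A)_\QQ\to\prod_\alpha A_*(A_\alpha)_\QQ$ along the \'etale cover. The compatibility results of Sections~\ref{sec:properpush}--\ref{sec:gysin} only show that the relevant constructions \emph{commute} with flat pull-back, which is necessary but not sufficient for this: you still need \'etale descent for Chow groups of DM stacks. That holds with $\QQ$-coefficients (one reduces to a proper generically-finite scheme cover and uses push-pull, as in \cite{Vis89}), but it is an external input not established in the paper, and calling it ``routine'' hides exactly this point. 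If you supply that descent statement explicitly, your argument goes through and is conceptually cleaner; the paper's direct representative computation is heavier but entirely self-contained within the machinery it has built.
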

\begin{proof}
Let $[C_{B/D}] = \sum_i a_i [C_i]$ be a fundamental cycle where $C_i$ are integral. Then, we have $(\eta_{\phi'})_*[C_{B/D}] = \sum_i a_i [\eta_{\phi'}(C_i)]$. Then, $v^!_{\cF}[D] = \sum_i a_i \Gysin{\cF|_V}[\eta_{\phi'}(C_i)]$. Let $K_i := \bar{\pi(C_i)}$ where $\pi : \fN_{Y/Z}|_V \to V$ be the projection. Choose modified proper representatives $(f_i : S_i \to K_i, q_i : \cV_i \surra \cF|_{S_i}, W_i)$ of $\eta_{\phi'}(C_i)$s. Then, $v^!_{\cF}[D] = \sum_i\frac{a_i}{d_{W_i}} \cdot (f_i)_* \Gysin{\cV_i}[W_i]$. Thus, we have :
\begin{align*}
u^!_{\cG} \circ v^!_{\cF}[D] = \sum\limits_i \frac{a_i}{d_{W_i}}\cdot u^!_{\cG}(f_i)_* \Gysin{\cV_i}[W_i].
\end{align*}

Let $R_i := S_i \times_V U$, Consider a zero section $0 : S_i \to \cV_i$ then there is a composition map $R_i \to S_i \stackrel{0}{\to} \cV_i$. Let $T_i := R_i\times_{\cV_i} W_i$. Note that $T_i \subset R_i$.
By Lemma \ref{virtgysincomm}, we have
\begin{align*}
& \sum\limits_i \frac{a_i}{d_{W_i}}\cdot u^!_{\cG}(f_i)_* \Gysin{\cV_i}[W_i] = \sum\limits_i \frac{a_i}{d_{W_{i}}}(g_i)_* u^!_{\cG} \Gysin{\cV_i}[W_i] = \sum\limits_i \frac{a_i}{d_{W_{i}}}(g_i)_* \Gysin{\cV_i|_{R_i}}u^!_{\cG}[W_i] \\
& = \sum\limits_i \frac{a_i}{d_{W_{i}}}(g_i)_* \Gysin{\cV_i|_{R_i}}\Gysin{ \cG|_{R_i} }(\eta_{\phi''})_*[C_{T_i/W_i}] = \sum\limits_i \frac{a_i}{d_{W_{i}}}(g_i)_* \Gysin{\cV_i|_{R_i} \oplus \cG|_{R_i}}(\eta_{\phi''})_*[C_{T_i/W_i}] 
\end{align*}
in $A_*(U)$.

On the other hand, we have
\[ \Gysin{\cF|_U \oplus \cG|_U}(\eta_{u^*\phi'} \oplus \eta_{\phi''})_*[C_{A/C_{B/D}}]= \sum\limits_i a_i\cdot \Gysin{\cF|_U \oplus \cG|_U}(\eta_{u^*\phi'} \oplus \eta_{\phi''})_*[C_{A_i/C_i}] 
\]
where $A_i := A\times_B C_i$.


Next we compute the cycle $(\eta_{\phi''})_*[C_{T_i/W_i}]$. Define $(\eta_{\phi''_{\alpha}}\oplus q_i)_*[(C_{T_i/W_i})_{\alpha}]$ to be a cycle obtained from the projection $(\pi''_{\alpha}\oplus q_i)_*[(\wtil{C}_{T_i/W_i})_{\alpha}]$, where $\pi''_{\alpha}$ is the projection $(E''_{\alpha})^1|_{(R_i)_{\alpha}} \to \cF|_{(R_i)_{\alpha}}$ and $(\wtil{C}_{T_i/W_i})_{\alpha} := (C_{T_i/W_i})_{\alpha}\times_{\bdst{(E''_{\alpha})\cseq}|_{ (R_i)_{\alpha} } \oplus \cV_i|_{(R_i)_{\alpha}} } (E''_{\alpha})^1|_{ (R_i)_{\alpha} } \oplus \cV_i|_{ (R_i)_{\alpha} }$, $ (C_{T_i/W_i})_{\alpha} := C_{T_i/W_i}\times_X {X_{\alpha}} $ and $(R_i)_{\alpha} := (R_i)\times_{X} X_{\alpha}$. This cycle is well-defined since we can show that integral components of $(\wtil{C}_{T_i/W_i})_{\alpha}$ are saturated invariant substacks of $(E''_{\alpha})^1|_{(R_i)_{\alpha}} \oplus \cV_i|_{(R_i)_{\alpha}}$ because $W_i$ are saturated invariant substacks of $\cV_i$.

Then we can easily check that the collections $\{ (\eta_{\phi''_{\alpha}}\oplus q_i)_*[(C_{T_i/W_i})_{\alpha}] \}_{\alpha \in \cI}$ satisfies the descent conditions therefore they descend to a cycle, which is denoted by $(\eta_{\phi''}\oplus q_i)_*[C_{T_i/W_i}] \in Z_*(\cG|_{R_i} \oplus \cF|_{R_i})$.
 
We can easily check that 
\[
\Gysin{\cG|_{R_i} \oplus \cV_i|_{R_i}}(\eta_{\phi''})_*[C_{T_i/W_i}] = \Gysin{\cG|_{R_i} \oplus \cF|_{R_i}}(\eta_{\phi''}\oplus q_i)_*[C_{T_i/W_i}]
\]
because we can compute both side by using exactly the same linear sum of modified proper representatives. Therefore, it is enough to check that
\[
(g_i)_*(\eta_{\phi''}\oplus q_i)_*[C_{T_i/W_i}] = d_{W_i}\cdot(\eta_{u^*\phi'} \oplus \eta_{\phi''})_*[C_{A_i/C_i}].
\]

We claim that
\[
(g_i)_*(\eta_{\phi''_{\alpha}}\oplus q_i)_*[(C_{T_i/W_i})_{\alpha}] = d_{W_i}\cdot(\eta_{u^*\phi'_{\alpha}}\oplus \eta_{\phi''_{\alpha}})_*[(C_{A_i/C_i})_{\alpha}].
\]

Let $(W_i')_{\alpha} := q_i((W_i)_{\alpha}) \times_{\cF|_{(S_i)_{\alpha}}} (E'_{\alpha})^1|_{(S_i)_{\alpha}}$ where $(W_i)_{\alpha} := (W_i)\times_{Y} Y_{\alpha}$ and $(S_i)_{\alpha} := S_i\times_Y Y_{\alpha}$. 

For the projections $\eta_{\phi''_{\alpha}}\oplus \pi'_{\alpha}|_{R_i} : (\bdst{(E''_{\alpha})\cseq})|_{R_i}\oplus (E'_{\alpha})^1|_{R_i} \to \cG|_{R_i}\oplus \cF|_{R_i}$ and $\eta_{\phi''_{\alpha}}\oplus \pi'_{\alpha}|_{U} : \bdst{(E''_{\alpha})\cseq}_{U}\oplus (E'_{\alpha})^1|_U  \to \cG|_U \oplus \cF|_U$, we have : 
\begin{align*}
&(\eta_{\phi''_{\alpha}}\oplus \pi'_{\alpha}|_{R_i})_*[C_{(T_i)_{\alpha}/(W_i')_{\alpha}}] = (\eta_{\phi''_{\alpha}}\oplus q_i)_*[(C_{T_i/W_i})_{\alpha}] \textrm{ and}\\
&(\eta_{\phi''_{\alpha}}\oplus \pi'_{\alpha}|_{U})_*[C_{(A_i)_{\alpha}/(\wtil{C}_i)_{\alpha}}] =
(\eta_{u^*\phi'_{\alpha}} \oplus \eta_{\phi''_{\alpha}} )_*[(C_{A_i/C_i})_{\alpha}]. 
\end{align*}

where $(\wtil{C}_i)_{\alpha} := (C_i)_{\alpha}\times_{\bdst{(E'_{\alpha})\cseq}|_U} (E'_{\alpha})^1|_U$, $(T_i)_{\alpha} := T_i \times_{X} X_{\alpha}$.



From the fact that $(g_i)_*[q_i(W_i)] = d_{W_i}[\eta_{\phi'}(C_i)]$, we can easily check that $(g_i)_*[W_i']_{\alpha} = d_{W_i}[(\wtil{C}_i)_{\alpha}]$. Therefore, in a similar manner as in the proof of Lemma \ref{conepushforward2}, we can show that 
\begin{align*}
& (g_i)_*(\eta_{\phi''_{\alpha}}\oplus q_i)_*[(C_{T_i/W_i})_{\alpha}] = (g_i)_*(\eta_{\phi''_{\alpha}}\oplus q_i)_*[C_{(T_i)_{\alpha}/(W_i)_{\alpha}}] \\
& = d_{W_i}\cdot(\eta_{u^*\phi'_{\alpha}} \oplus \eta_{\phi''_{\alpha}})_*[C_{(A_i)_{\alpha}/(C_i)_{\alpha}}] = d_{W_i}\cdot(\eta_{u^*\phi'_{\alpha}} \oplus \eta_{\phi''_{\alpha}})_*[(C_{A_i/C_i})_{\alpha}]
\end{align*}

This implies that the collection $\{(g_i)_*(\eta_{\phi''_{\alpha}}\oplus q_i)_*[(C_{T_i/W_i})_{\alpha}] \}_{\alpha\in \cI}$ descend to a cycle $d_{W_i}\cdot(\eta_{u^*\phi'} \oplus \eta_{\phi''})_*[C_{A_i/C_i}]$. Therefore we proved $
(g_i)_*(\eta_{\phi''}\oplus q_i)_*[C_{T_i/W_i}] = d_{W_i}\cdot(\eta_{u^*\phi'} \oplus \eta_{\phi''})_*[C_{A_i/C_i}].$

\end{proof}

\begin{prop}
semi-virtual pull-back $u^!_{\cE}$ defined in Definition \ref{def:semivert} defines a bivariant class in $A^*(X \stackrel{u}{\lra} Y)$.
\end{prop}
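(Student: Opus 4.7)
The plan is to verify that the semi-virtual pull-back $u^!_{\cE}$ satisfies the three Fulton--MacPherson axioms for a bivariant class (suitably formulated for DM and smooth Artin stacks with Kresch's Chow groups). Recall that $c \in A^*(X \stackrel{u}{\lra} Y)$ consists of, for every morphism $W \to Y$ with base change $Z := X \times_Y W \to W$, a homomorphism $c : A_*(W)_{\QQ} \to A_*(Z)_{\QQ}$, required to satisfy:
\begin{itemize}
\item[(A1)] compatibility with proper pushforwards along morphisms $W' \to W$ over $Y$;
\item[(A2)] compatibility with flat pullbacks along morphisms $W' \to W$ over $Y$;
\item[(A3)] compatibility with Gysin-type operations (equivalently, with other bivariant classes obtained from compatible obstruction data).
\end{itemize}
The construction in Definition \ref{def:semivert} is well-defined on any such base change because the DM-type assumption on $u$ guarantees that $Z = X \times_Y W$ is a DM stack, so the deformation to the normal cone, the embedding $C_{Z/W} \hookrightarrow f^* C_{X/Y}$, the map $(\eta_\phi)_*$, and the Gysin map of the coherent sheaf stack $f^*\cE$ all make sense and assemble into $u^!_{\cE} : A_*(W)_{\QQ} \to A_*(Z)_{\QQ}$.

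My first step is to dispatch axioms (A1) and (A2): these are literally the statements of Proposition \ref{vpullbackcompatibility}(ii) and (i) respectively, already established using Lemma \ref{conepushforward2}, Lemma \ref{conepullback2}, and the compatibility of the Gysin map of the coherent sheaf stack with proper pushforwards and flat pullbacks (Proposition \ref{Gysinpushforwardpullback}).

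For axiom (A3), it suffices to verify compatibility with refined Gysin maps coming from regular embeddings, and more generally with any bivariant class produced from a perfect obstruction theory (which by Proposition \ref{comparison} coincides with the semi-virtual pull-back of the same data viewed as a semi-perfect obstruction theory). The fundamental case of the Gysin map $\Gysin{E}$ attached to a vector bundle zero section is exactly Lemma \ref{virtgysincomm}. The general commutativity statement --- namely $c' \circ u^!_{\cE} = u^!_{\cE} \circ c'$ for another (semi-)virtual pullback $c'$ coming from a compatible tower of obstruction theories --- follows from the functoriality Proposition \ref{functoriality} applied in both orders to the resulting fiber square: both compositions equal the semi-virtual pullback of the total compatible obstruction datum on the composite.

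Since all three axioms have already been verified in the preceding sections, no new argument is required; the content of the proposition is simply that the compatibilities assembled in Propositions \ref{vpullbackcompatibility}, \ref{comparison}, and \ref{functoriality} (together with Lemma \ref{virtgysincomm}) meet the definition of a bivariant class. The one place that might deserve a brief remark is the formal verification that the operation is defined on \emph{arbitrary} base changes $W \to Y$ with $W$ a DM or smooth Artin stack (not just those where $W$ is irreducible or has a particular structure); this is immediate from Definition \ref{def:semivert} since every ingredient --- deformation to the normal cone, the closed embedding of cones, $(\eta_\phi)_*$, and $\Gysin{f^*\cE}$ --- is defined for arbitrary integral cycles and extended linearly.
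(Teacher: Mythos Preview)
Your approach matches the paper's: its proof is the one-liner ``It is clear by Proposition \ref{vpullbackcompatibility} and Proposition \ref{functoriality},'' and you are simply unpacking which axiom each proposition is meant to cover.

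There is, however, one confused step in your treatment of (A3). Functoriality (Proposition \ref{functoriality}) concerns a \emph{composition} $X \to Y \to Z$ and asserts $u^!_{\cG}\circ v^!_{\cF}=(v\circ u)^!_{\cE}$; it does not, by itself, yield commutativity in a \emph{fiber square}
\[
\xymatrix{D \ar[r]\ar[d] & C \ar[d]^{v} \\ B \ar[r]_{u} & A}
\]
where the two operations act along perpendicular edges. Your sentence ``both compositions equal the semi-virtual pullback of the total compatible obstruction datum on the composite'' conflates these two situations: there is no single composite morphism here carrying a compatible triple of obstruction theories. The correct input for (A3) in the sense of Fulton--MacPherson is commutativity with refined Gysin maps of regular embeddings; in the paper this is handled for the zero section of a vector bundle by Lemma \ref{virtgysincomm} (which you do cite), and in full generality by Proposition \ref{virtgysincomm2}, established immediately after the proposition in question via Vistoli's rational equivalence. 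The paper's own one-line proof is arguably equally terse on this point, but if you want your argument to be self-contained you should invoke Lemma \ref{virtgysincomm} (and the standard reduction of lci Gysin maps to the vector-bundle case) rather than Proposition \ref{functoriality} for (A3).
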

\begin{proof}
It is clear by Proposition \ref{vpullbackcompatibility} and Proposition \ref{functoriality}.
\end{proof}

\begin{prop}[Commutativity]\label{virtgysincomm2}
Consider the following fiber diagram
\begin{align*}
\xymatrix{
D \ar[r]^{s} \ar[d]_{r} \ar@{}[rd]|{\square} & C \ar[r] \ar[d] \ar@{}[rd]|{\square} & Z \ar[d]^{v} \\
B \ar[r] \ar[d] \ar@{}[rd]|{\square} & A \ar[r] \ar[d] & W \\
X \ar[r]_{u} & Y &
} 
\end{align*}
such that $X,Y,Z,W,A,B,C,D$ are DM stacks, $u,v$ are DM-type morphisms. Assume that there are semi-perfect obstruction theory $\phi$ on $X/Z$. Let $\phi = \{ \{X_{\alpha}\}_{\alpha \in \cI}, \phi_{\alpha}, \psi_{\alpha \alpha'} Ob_{\phi} =: \cE \}$ and $\phi$ on $X/Z$. Let $\phi' = \{ \{Z_{\beta}\}_{\beta \in \cJ}, \phi_{\beta}, \psi_{\beta \beta'}, Ob_{\phi'} =: \cG \}$. Let $\cF$ be a coherent sheaf stack on $\cA$.

Then we have
\[
v^!_{\cG}\circ u^!_{\cE} = u^!_{\cE} \circ v^!_{\cG} : A_*(\cF) \to A_*(\cF|_D).
\]
\end{prop}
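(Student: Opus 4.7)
The plan is to reduce commutativity to the functoriality statement (Proposition \ref{functoriality}) applied to the composite morphism $D \to A$ in its two different factorizations $D \to B \to A$ and $D \to C \to A$.

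First I would extend the action of the semi-virtual pullback from Chow groups of DM stacks to Chow groups of coherent sheaf stacks. For an integral cycle $[\cA] \subset \cF|_{Z_0}$ on $A$ with modified proper representative $(f : S \to Z_0,\, q : \cV \surra \cF|_S,\, W_{\cA} \subset \cV)$, I define
\[
u^!_{\cE}[\cA] \defeq \frac{1}{d_{W_{\cA}}} (\zeta_{S \times_A B})_* \bl u^!_{\cE}[W_{\cA}]\br,
\]
where the inner $u^!_{\cE}$ is applied to the ordinary cycle $[W_{\cA}]$ in the DM stack $\cV$, using the semi-perfect obstruction theory on $\cV \times_A B \to \cV$ pulled back from $\phi$. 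Well-definedness (independence of the modified proper representative and passage through extended rational equivalence) follows from arguments parallel to Lemma \ref{pullbackwelldef} together with the compatibilities of $u^!_{\cE}$ with proper pushforward and flat pullback from Proposition \ref{vpullbackcompatibility}, and with rational equivalence from Propositions \ref{rationalpushforward}--\ref{rationalpullback}.

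With this extension in place, both $v^!_{\cG} \circ u^!_{\cE}[\cA]$ and $u^!_{\cE} \circ v^!_{\cG}[\cA]$ are computed as $\frac{1}{d_{W_{\cA}}} (\zeta_{S \times_A D})_*$ applied respectively to $v^!_{\cG} u^!_{\cE}[W_{\cA}]$ and $u^!_{\cE} v^!_{\cG}[W_{\cA}]$, both now viewed as ordinary cycles in the DM stack $\cV|_{S \times_A D}$. Thus it suffices to prove commutativity for the action on $A_*(T)_\QQ$ for a DM stack $T$, with the semi-perfect obstruction theories pulled back from $\phi$ and $\phi'$. For this, I factor $D \to A$ in two ways: $D \to B \to A$ and $D \to C \to A$. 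Each factorization admits a pair of base-changed semi-perfect obstruction theories that are compatible in the sense preceding Proposition \ref{functoriality}, the distinguished triangles being the base changes of those for $\phi$ and $\phi'$. By Proposition \ref{functoriality}, the composite $v^!_{\cG} \circ u^!_{\cE}$ equals the semi-virtual pullback along $D \to A$ for the first composite semi-perfect obstruction theory, while $u^!_{\cE} \circ v^!_{\cG}$ equals it for the second.

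It remains to identify these two composite semi-virtual pullbacks. In both cases the obstruction sheaf on $D$ fits into an extension of $\cE|_D$ and $\cG|_D$, and because $u$ and $v$ arise from independent morphisms $X \to Y$ and $Z \to W$ (acting on disjoint factors in the fiber product defining $D$), the connecting maps in the two exact triangles vanish, so both obstruction sheaves split canonically as $\cE|_D \oplus \cG|_D$ with the same $\eta$-map into the intrinsic normal sheaf of $D/A$. Since $\Gysin{\cE|_D \oplus \cG|_D}$ is symmetric in its summands and the two $\eta$-projections to $\cE|_D \oplus \cG|_D$ agree, the two semi-virtual pullbacks coincide. The main obstacle will be checking on étale charts that the two composite semi-perfect obstruction theories are literally equal (not just $\nu$-equivalent via some identification): this requires a careful diagram chase through the four distinguished triangles associated to $L_{D/A}, L_{B/A}, L_{D/B}, L_{C/A}, L_{D/C}$ and a verification that the $\nu$-equivalences glue correctly across the common étale refinement $\{X_\alpha \times_Y A \times_W Z_\beta\}$ of the chosen covers. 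This should follow from the cocycle conditions for $\phi$ and $\phi'$ together with the independence of the two base-changes.
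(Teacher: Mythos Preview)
Your approach is genuinely different from the paper's, and it has a real gap.

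The paper proves commutativity directly via Vistoli's rational equivalence
\[
[C_{C_{B'/A'}\times_{B'} D'/C_{B'/A'}}] = [C_{C_{C'/A'}\times_{C'} D'/C_{C'/A'}}]
\quad\text{in }A_*(C_{B'/A'}\times_{A'} C_{C'/A'}),
\]
and then shows, by unwinding the definition through modified proper representatives and invoking Lemma~\ref{virtgysincomm}, that applying $\Gysin{\cE|_D\oplus\cG|_D}\circ(\eta_\phi\oplus\eta_{\phi'})_*$ to the left-hand side yields $v^!_{\cG}\circ u^!_{\cE}[A']$. By symmetry the right-hand side gives $u^!_{\cE}\circ v^!_{\cG}[A']$. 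No obstruction theory on $D/A$ is ever constructed.

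Your reduction to Proposition~\ref{functoriality} does not go through as stated. That proposition takes as \emph{input} a compatible triple of semi-perfect obstruction theories on a chain $X\to Y\to Z$; it does not produce one. In the commutativity situation you only have $\phi$ on $X/Y$ and $\phi'$ on $Z/W$. To invoke functoriality for the factorization $D\to B\to A$ you would need genuine semi-perfect obstruction theories on $B/A$, on $D/B$, and on $D/A$. But the restriction $E_{\alpha}|_B \to L_{X/Y}|_B \to L_{B/A}$ is not a perfect obstruction theory in general: the comparison map $\tau_{\ge -1}L_{X/Y}|_B \to \tau_{\ge -1}L_{B/A}$ is an isomorphism only when $A\to Y$ is flat (or Tor-independent from $X$), which is not assumed. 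The same obstruction blocks your direct-sum candidate on $D/A$: to check it is a perfect obstruction theory you must control $h^{-1}(L_{D/A})$ for the underived fiber product $D=B\times_A C$, and this is not the direct sum $h^{-1}(L_{B/A}|_D)\oplus h^{-1}(L_{C/A}|_D)$ in general. Your own closing paragraph flags the remaining comparison of the two composite theories as ``the main obstacle'' without resolving it; in fact the construction preceding that step already fails.

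If you want to salvage the strategy you would have to prove a strengthened functoriality statement that works with mere closed embeddings of normal cones (as in the definition of $u^!_{\cE}$ via $C_{Z/W}\hookrightarrow f^*C_{X/Y}$) rather than genuine relative obstruction theories on the intermediate maps. That is essentially a different theorem, and proving it would likely force you back through a Vistoli-type rational equivalence anyway.
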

\begin{proof}
Let $A' \subset A$ be an integral substack. Consider the Vistoli's rational equivalence(\cite[Lemma 3.16]{Vis89}, \cite[Proposition 4]{Kre99can})
\[
[ C_{ C_{B'/A'}\times_{B'} D' / C_{B'/A'} } ] = 
[ C_{ C_{C'/A'}\times_{C'} D' / C_{C'/A'} } ]
\]
in $A_*(C_{B'/A'}\times_{A'} C_{C'/A'})$. Then, it is enough to show that 
\[
\Gysin{\cE|_D \oplus \cG|_D} (\eta_{\phi} \oplus \eta_{\phi'})_* [C_{ C_{B'/A'}\times_{B'} D' / C_{B'/A'} }] = v^!_{\cG} \circ u^!_{\cE} [A'].
\]
where $(\eta_{\phi} \oplus \eta_{\phi'})_* [C_{ C_{B'/A'}\times_{B'} D' / C_{B'/A'} }]$ is a cycle which is a descent of the collection 
\begin{align*}
& \{ (\eta_{\phi_{\alpha}} \oplus \eta_{\phi'_{\beta}})_* [C_{ C_{B'/A'}\times_{B'} D' / C_{B'/A'} } \times_{D'} D'_{\alpha \beta}] \}_{\alpha\in \cI,\beta \in \cJ} \\
& = \{ (\eta_{\phi_{\alpha}} \oplus \eta_{\phi'_{\beta}})_* [C_{ ((C_{B'/A'})_{\alpha} \times_{B'} D')\times_{D'}D'_{\beta} / (C_{B'/A'})_{\alpha} }] \}_{\alpha\in \cI,\beta \in \cJ} \\
& = \{ (\eta_{\phi_{\alpha}} \oplus \eta_{\phi'_{\beta}})_* [C_{ ((C_{B'/A'})_{\alpha} \times_{B'} D'_{\beta}) / (C_{B'/A'})_{\alpha} }] \}_{\alpha\in \cI,\beta \in \cJ}
\end{align*}

where $D'_{\alpha \beta} = (D'\times_{X} X_{\alpha})\times_Z Z_{\beta}$, $(C_{B'/A'})_{\alpha} = C_{B'/A'}\times_{X} X_{\alpha}$ and $D'_{\beta} = D' \times_Z Z_{\beta}$.


We have 
\[
v^!_{\cG} \circ u^!_{\cE} [A'] = v^!_{\cG} \circ \Gysin{\cE|_B} (\eta_{\phi'})_* [C_{B'/A'}]
\]
Let $[C_{B'/A'}] = \sum_i a_i [K_i]$ be a fundamental cycle where $K_i$ are integral. Then we have $(\eta_{\phi'})_* [C_{B'/A'}] = \sum_i a_i [\eta_{\phi'}(K_i)]$. We may assume that $\eta_{\phi'}(K_i)$ is an integral substack of $\cE|_{Z_i}$ where $Z_i \subset B$ is an integral closed substack of $B$ for each $i$.
Choose a proper representative $(f_i : S_i \to Z_i, q_i : \cV_i \surra \cE|_{S_i}, W_{K_i})$ for each $\eta_{\phi'}(K_i)$. Then we have

\begin{align*}
& v^!_{\cG} \circ \Gysin{\cE|_B} (\eta_{\phi'})_* [C_{B'/A'}] = \sum_i \frac{a_i}{d_{W_{K_i}}} v^!_{\cG} (f_i)_* \Gysin{\cV_i}[W_{K_i}] = \sum_i \frac{a_i}{d_{W_{K_i}}} (f_i)_* \Gysin{\cV_i} v^!_{\cG}[W_{K_i}] \\
& = \sum_i \frac{a_i}{d_{W_{K_i}}} (f_i)_* \Gysin{\cV_i} \Gysin{\cG|_{ (\cV_i|_{T_i}) }} (\eta_{\phi'})_*[C_{ W_{K_i}\times_{S_i} T_i / W_{K_i} }]
\end{align*}
by Proposition \ref{virtgysincomm} where $T_i := S_i\times_B D$.

We can consider $C_{ W_{K_i}\times_{S_i} T_i / W_{K_i} }$ as cycle in $Z_*(\cV|_{T_i} \times_{T_i} C_{T_i/S_i} )$. Then we have
\[
\Gysin{\cV_i} \Gysin{\cG|_{ (\cV_i|_{T_i}) }} (\eta_{\phi'})_*[C_{ W_{K_i}\times_{S_i} T_i / W_{K_i} }] = \Gysin{\cG|_{T_i} \oplus \cV_i|_{T_i}} (\eta_{\phi'})_*[C_{ W_{K_i}\times_{S_i} T_i / W_{K_i} }].
\]

In a similar manner as in the proof of Proposition \ref{functoriality}, we define a cycle $(\eta_{\phi}\oplus q_i)_*[C_{ W_{K_i}\times_{S_i} T_i / W_{K_i} }]$ and we have
\[
\Gysin{\cG|_{T_i} \oplus \cV_i|_{T_i}} (\eta_{\phi'})_*[C_{ W_{K_i}\times_{S_i} T_i / W_{K_i} }] = \Gysin{\cE|_{T_i} \oplus \cG|_{T_i}}(\eta_{\phi'}\oplus q_i)_*[C_{ W_{K_i}\times_{S_i} T_i / W_{K_i} }].
\]
In a similar manner as in the proof of Proposition \ref{functoriality}, we can check that
\[
(f_i)_*\Gysin{\cE|_{T_i} \oplus \cG|_{T_i}}(\eta_{\phi}\oplus q_i)_*[C_{ W_{K_i}\times_{S_i} T_i / W_{K_i} }] = d_{W_{K_i}} \cdot \Gysin{\cE|_D \oplus \cG|_D} (\eta_{\phi} \oplus \eta_{\phi'} )_* [C_{K_i\times_{S_i} T_i / K_i}].
\]

Hence we have
\begin{align*}
& \sum_i \frac{a_i}{d_{W_{K_i}}} (f_i)_* \Gysin{\cV_i} \Gysin{\cG|_{ (\cV_i|_{T_i}) }} (\eta_{\phi'})_*[C_{ W_{K_i}\times_{S_i} T_i / W_{K_i} }] = \sum_i a_i \cdot \Gysin{\cE|_D \oplus \cG|_D} (\eta_{\phi} \oplus \eta_{\phi'} )_* [C_{K_i\times_{S_i} T_i / K_i}] \\
& = \Gysin{\cE|_D \oplus \cG|_D} (\eta_{\phi} \oplus \eta_{\phi'})_* [( C_{ C_{B'/A'}\times_{B'} D' / C_{B'/A'} } )].
\end{align*}
\end{proof}

\section{Virtual fundamental classes via semi-perfect obstruction theories and Torus localization}\label{sec:virtfund}

Using the results in the previous section, we define and show basic properties of virtual fundamental classes via semi-perfect obstruction theories. 

Consider a DM-type morphism $u : X \to \cM$ from a DM stack $X$ to $\cM$ which is a smooth Artin stack with pure dimension. We assume that there is a semi-perfect obstruction theory $\phi$ on $X/Y$ with $Ob_{\phi} = \cE$. Then, we define a virtual fundamental class via $\phi$ to be the following :

\begin{defi}\cite[Definition-Theorem 3.1]{CL11semi}
We define a virtual fundamental class via $\phi$ to be :
\[ [X,\phi]\virt := \Gysin{\cE}(\eta_{\phi})_*[C_{X/\cM}] = u^!_{\cE}[\cM].
\]
\end{defi}

The next theorem is a slight generalization of Proposition \cite[Proposition 3.1]{CL11semi}. Consider a morphism of Artin stacks :
\[ \xymatrix{X \ar[r]^u & Y \ar[r]^v & Z}.
\]
where $X,Y$ are DM stacks and $u,v$ are DM-type morphisms
Assume that there are semi-perfect obstruction theories $\phi$ on $X/\cM$, $\phi'$ on $Y/\cM$, and $\phi''$ on $X/Y$ where $\phi$, $\phi'$, $\phi''$ are compatible such that $Ob_{\phi} = \cE$, $Ob_{\phi'} = \cF$, $Ob_{\phi''} = \cG$. Then we have the following.

\begin{theo}\label{deformation invariance} In the above setting, we have :
\[u^!_{\cG}[Y,\phi']\virt = [X,\phi]\virt.
\]
\end{theo}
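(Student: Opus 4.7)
The plan is to recognize that this theorem is essentially an immediate application of the functoriality property already established in Proposition \ref{functoriality}. By definition of virtual fundamental classes via semi-perfect obstruction theories, we have
\[
[Y,\phi']\virt = v^!_{\cF}[\cM] \quad \text{and} \quad [X,\phi]\virt = (v\circ u)^!_{\cE}[\cM],
\]
where $[\cM] \in A_*(\cM)_{\QQ}$ is the fundamental class of the smooth Artin stack $\cM$ of pure dimension.

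First I would verify that the hypotheses of Proposition \ref{functoriality} are met: we take $Z = \cM$, and the compatibility of $\phi$, $\phi'$, $\phi''$ is assumed. The morphisms $u : X \to Y$ and $v : Y \to \cM$ are DM-type, and $X$, $Y$ are DM stacks while $\cM$ is a smooth Artin stack (so it plays the role allowed for $W$ and $Z$ in that proposition, namely smooth Artin stacks or DM stacks). Then Proposition \ref{functoriality} yields the equality of bivariant classes
\[
u^!_{\cG} \circ v^!_{\cF} = (v\circ u)^!_{\cE} : A_*(\cM)_{\QQ} \to A_*(X)_{\QQ}.
\]

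Next I would simply evaluate both sides at the fundamental cycle $[\cM] \in A_*(\cM)_{\QQ}$. The right-hand side gives $(v\circ u)^!_{\cE}[\cM] = [X,\phi]\virt$ by definition. The left-hand side gives $u^!_{\cG}(v^!_{\cF}[\cM]) = u^!_{\cG}[Y,\phi']\virt$, again by definition of the virtual fundamental class. Combining the two expressions completes the proof.

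There is essentially no obstacle here beyond checking that the theorem is stated in exactly the setting where Proposition \ref{functoriality} applies; all the difficulty has already been packaged into the functoriality result. The only minor point to mention is that the equality holds a priori in $A_*(X)_{\QQ}$, which matches the convention used throughout Section \ref{sec:semivirtpullback-subsection} for semi-virtual pull-backs.
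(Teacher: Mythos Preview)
Your proof is correct and follows exactly the same approach as the paper's own proof: invoke the definition $[Y,\phi']\virt = v^!_{\cF}[\cM]$ and apply Proposition~\ref{functoriality} directly. There is nothing to add.
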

\begin{proof}
Since $[Y,\phi']\virt = v^!_{\cF}[\cM]$ by definition, the theorem directly follows from Proposition \ref{functoriality}.
\end{proof}

And from Proposition \ref{comparison}, we directly obtain the following theorem.

Let $u : X \to \cM$ be a DM-type morphism from a DM stack $X$ to $\cM$ which is smooth Artin stack with pure dimension, and let $\phi : (E_{X/\cM})\hseq \to L_{X/\cM}$ be a perfect obstruction theory with $Ob_{\phi} = \cE$. Since $\phi$ is also a semi-perfect obstruction theory, there are two kinds of virtual fundamental classes. Let $[X,E_{X/\cM}]\virt$ be a virtual fundamental class defined via the perfect obstruction theory, and let $[X,\phi]\virt$ be a virtual fundamental class defined via the semi-perfect obstruction theory. Then, we have :

\begin{theo}
\[ [X,E_{X/\cM}]\virt = [X,\phi]\virt
\]
\end{theo}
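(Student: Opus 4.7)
The plan is to derive this essentially as a direct corollary of Proposition \ref{comparison} combined with the definitions of the two virtual fundamental classes. Both classes are obtained by applying some form of virtual pull-back to the fundamental class $[\cM]$; the task is therefore only to match these pull-backs.

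First, I would unwind the definitions. By construction, $[X,\phi]\virt = u^!_{\cE}[\cM]$, where $u^!_{\cE}$ is the semi-virtual pull-back introduced in Definition \ref{def:semivert} associated to the semi-perfect obstruction theory $\phi$ (which has obstruction sheaf $Ob_{\phi}=\cE$). On the other hand, by Manolache's construction in \cite{Man08}, the Behrend-Fantechi virtual fundamental class defined via the perfect obstruction theory $(E_{X/\cM})\hseq \to L_{X/\cM}$ can be written as
\[
[X,E_{X/\cM}]\virt = u^!_{\EE}[\cM],
\]
where $\EE = \bdst{E_{X/\cM}\dual}$ and $u^!_{\EE}$ is Manolache's virtual pull-back. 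This is exactly the setting appearing in Proposition \ref{comparison}, applied to the fiber square whose outer square is $X \to X \to \cM \to \cM$ (i.e.\ $W = \cM$ and $Z = X$, $A = \cM$ as an integral substack of itself, $B = X$).

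Next, I would invoke Proposition \ref{comparison} to conclude
\[
u^!_{\EE}[\cM] = u^!_{\cE}[\cM] \in A_*(X)_{\QQ}.
\]
Combining the two displayed equalities yields $[X,E_{X/\cM}]\virt = [X,\phi]\virt$, which is exactly the statement of the theorem.

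The only potentially non-routine step is verifying that the Behrend-Fantechi virtual class really equals $u^!_{\EE}[\cM]$ in the sense of Manolache, but this is built into Manolache's construction: the Gysin pull-back along the zero section of $\EE$ applied to $[C_{X/\cM}] \in Z_*(\fN_{X/\cM})$ is by definition the Behrend-Fantechi class, and this matches Manolache's $u^!_{\EE}[\cM]$ via the identification $C_{X/\cM} \hookrightarrow \fN_{X/\cM} \hookrightarrow \EE$. Hence no extra work is needed beyond quoting Proposition \ref{comparison}, and the proof is essentially immediate.
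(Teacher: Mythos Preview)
Your proposal is correct and matches the paper's own proof essentially verbatim: the paper also writes $[X,E_{X/\cM}]\virt = u^!_{\EE_{X/\cM}}[\cM] = u^!_{\cE}[\cM] = [X,\phi]\virt$ by Proposition \ref{comparison}. Your additional remark identifying the Behrend--Fantechi class with Manolache's $u^!_{\EE}[\cM]$ is a helpful clarification but not something the paper spells out.
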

\begin{proof}
$[X,E_{X/\cM}]\virt = u^!_{\EE_{X/\cM}}[\cM] = u^!_{\cE}[\cM] = [X,\phi]\virt$ by Proposition \ref{comparison}.
\end{proof}





\subsection{Torus localization} In this section, we recover the result in \cite[Theorem 4.5]{Kie16locsemi} using semi-virtual pull-back.
Let $X$ be a DM stack equipped with a $\CC^*$-action. Let $X^F$ be a $\CC^*$-fixed locus.

\begin{defi}\cite[Definition 4.1]{Kie16locsemi}
A semi-perfect obstruction theory $\phi = ( \{X_{\alpha}\}_{\alpha \in \cI}, \{\phi_{\alpha}\}, \{\psi_{\alpha \beta}\}, Ob_{\phi} )$ is called $\CC^*$-equivariant if it satisfies the followings:
\begin{enumerate}
\item 
\'Etale charts $\{X_{\alpha} \to X\}$ are $\CC^*$-equivariant.
\item
Perfect equivariant theories $\phi_{\alpha} : E_{\alpha} \to L_{X_{\alpha}}$ are objects of $D([X_{\alpha}/\CC^*])$, which is the derived category of $\CC^*$-equivariant quasi-coherent sheaves on $X_{\alpha}$.
\end{enumerate}
\end{defi}

Let us assume that there is a $\CC^*$-equivariant semi-perfect obstruction theory $\phi$ on $X$.

Let $X^{F}_{\alpha} := X^{F}\times_X X_{\alpha}$. Then we can decompose since there is a $E_{\alpha}|_{X^F_{\alpha}}$ is $\CC^*$-equivariant, we can decompose it to the fixed part and the moving part by weights:
\begin{align*}
E_{\alpha}|_{X^F_{\alpha}} = E_{\alpha}|_{X^F_{\alpha}}^{F} \oplus E_{\alpha}|_{X^F_{\alpha}}^{M}
\end{align*}
Then there is an induced morphism
$\phi_{\alpha}^F : E^F_{\alpha} \stackrel{\phi_{\alpha}^{F}}{\lra} L_{X_{\alpha}}|_{X^F_{\alpha}}^{F} \to L_{X^F_{\alpha}}$, where $L_{X_{\alpha}}|_{X^F_{\alpha}}^{F}$ is the fixed part of $L_{X_{\alpha}}|_{X^F_{\alpha}}$.

\begin{lemm}\cite[Lemma 4.2]{Kie16locsemi} Each $\phi_{\alpha}^F$ are perfect obstruction theories and they compose a perfect obstruction theory $\phi^F = (\{X^F_{\alpha}\}_{\alpha \in \cI}, \{\phi^F_{\alpha}\}, \{\psi_{\alpha \beta}|_{X^F_{\alpha}}^{F}\},  Ob_{\phi^F})$ is a semi-perfect obstruction theory on $X^F$ where $\psi_{\alpha \beta}|_{X^F_{\alpha}}^{F} : Ob(\phi_{\alpha}^F)|_{X_{\alpha} \times_X X_{\beta}} = Ob(\phi_{\alpha})|_{X_{\alpha} \times_X X_{\beta}}^F \to Ob(\phi_{\beta})|_{X_{\alpha} \times_X X_{\beta} }^F = Ob(\phi_{\beta}^F)|_{X_{\alpha} \times_X X_{\beta} }$ are induced morphisms. 
\end{lemm}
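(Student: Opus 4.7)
The plan is to verify in turn: (a) that each $\phi^F_\alpha : E^F_\alpha \to L_{X^F_\alpha}$ is a perfect obstruction theory, (b) that the restricted transition maps $\psi^F_{\alpha\beta} := \psi_{\alpha\beta}|_{X^F_\alpha}^F$ satisfy the cocycle condition so that the fixed parts $h^1(E_\alpha^\cseq)^F|_{X^F_\alpha}$ glue to a coherent sheaf $Ob_{\phi^F}$, and (c) that $\phi^F_\alpha, \phi^F_\beta$ are $\nu$-equivalent via $\psi^F_{\alpha\beta}$. Since we are working in characteristic $0$, taking $\CC^*$-fixed parts of a $\CC^*$-equivariant quasi-coherent sheaf is an exact functor, which will be the technical backbone of all three parts.

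For (a), I would follow Graber--Pandharipande: the $\CC^*$-equivariant structure on $E_\alpha$ induces decompositions $E_\alpha|_{X^F_\alpha} \cong E^F_\alpha \oplus E^M_\alpha$ and $L_{X_\alpha}|_{X^F_\alpha} \cong L_{X^F_\alpha} \oplus L^M$, where the identification $L_{X_\alpha}|_{X^F_\alpha}^F \cong L_{X^F_\alpha}$ uses smoothness of $X^F_\alpha \hookrightarrow X_\alpha$ in characteristic $0$. Since $h^0(\phi_\alpha^\vee)$ is an isomorphism and $h^1(\phi_\alpha^\vee)$ is surjective, applying the exact functor $(-)^F$ shows the analogous statements for $\phi^F_\alpha$, and $E^F_\alpha$ is still of amplitude $[-1,0]$.

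For (b), the transition $\psi_{\alpha\beta}$ is automatically $\CC^*$-equivariant: it is uniquely determined on $X_{\alpha\beta} := X_\alpha \times_X X_\beta$ by the two equivariant obstruction theories, and the cocycle relation $\psi_{\beta\gamma} \circ \psi_{\alpha\beta} = \psi_{\alpha\gamma}$ restricts directly to the fixed subsheaves. Hence the collection $\{h^1(E_\alpha^\cseq)^F|_{X^F_\alpha}\}$ carries a cocycle of isomorphisms and descends to a coherent sheaf $Ob_{\phi^F}$ on $X^F$.

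The main obstacle is (c), checking $\nu$-equivalence. Given an infinitesimal lifting problem on $X^F_{\alpha\beta} := X^F_\alpha \times_{X^F} X^F_\beta$ at a closed point $p$, with small extension $T' \hookrightarrow T$ and $f : T' \to X^F_{\alpha\beta}$, I would compose with the closed immersion $\iota : X^F_{\alpha\beta} \hookrightarrow X_{\alpha\beta}$ to get a lifting problem on $X_{\alpha\beta}$ at $p$. Since $T',T$ carry trivial $\CC^*$-action and $p$ is a fixed point, the obstruction $w(\iota \circ f, T', T) \in \Ext^1((\iota\circ f)^*L_{X_{\alpha\beta}}, I)$ lies in the fixed part, and under the splitting $L_{X_{\alpha\beta}}|_{X^F_{\alpha\beta}} \cong L_{X^F_{\alpha\beta}} \oplus L^M$ it agrees with $w(f, T', T) \in \Ext^1(f^*L_{X^F_{\alpha\beta}}, I)$. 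Applying $H^1(\phi_\alpha^\vee)$ and $H^1((\phi^F_\alpha)^\vee)$ to this common class and using the exactness of $(-)^F$ gives $Ob(\phi^F_\alpha, f, T', T) = Ob(\phi_\alpha, \iota\circ f, T', T)^F$ inside $Ob(\phi_\alpha, p)^F \otimes I = Ob(\phi^F_\alpha, p) \otimes I$. The same holds for $\beta$. Because $\psi_{\alpha\beta}$ is equivariant and sends $Ob(\phi_\alpha, \iota\circ f, T', T)$ to $Ob(\phi_\beta, \iota\circ f, T', T)$ by hypothesis, restricting to fixed parts yields $\psi^F_{\alpha\beta}(Ob(\phi^F_\alpha, f, T', T)) = Ob(\phi^F_\beta, f, T', T)$. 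The subtle point is the identification of $w(f,T',T)$ with the fixed part of $w(\iota\circ f, T', T)$, which requires the splitting of the conormal sequence for $X^F_{\alpha\beta} \hookrightarrow X_{\alpha\beta}$ to be compatible with obstruction classes; this follows from the functoriality of the cotangent complex and the vanishing of the moving part of $I$ when $\CC^*$ acts trivially on $T$.
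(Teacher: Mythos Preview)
The paper does not give its own proof of this lemma: it is stated with a citation to \cite[Lemma 4.2]{Kie16locsemi} and no proof environment follows. So there is nothing in the present paper to compare your argument against; you are essentially reconstructing Kiem's proof.

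Your overall strategy is the standard Graber--Pandharipande argument adapted to the semi-perfect setting, and the three-step outline (a)--(c) is correct. Two points deserve correction, however. First, in (a) you justify the identification $L_{X_\alpha}|_{X^F_\alpha}^F \cong L_{X^F_\alpha}$ by ``smoothness of $X^F_\alpha \hookrightarrow X_\alpha$''; this is not right, since a closed immersion is smooth only when it is open, and $X_\alpha$ need not be smooth. The correct input is the cotangent-complex argument of Graber--Pandharipande: the natural map $(\tau^{\geq -1}L_{X_\alpha}|_{X^F_\alpha})^F \to \tau^{\geq -1}L_{X^F_\alpha}$ is an isomorphism on $h^0$ and a surjection on $h^{-1}$, which is exactly what is needed. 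Second, in (b) you claim $\psi_{\alpha\beta}$ is ``automatically $\CC^*$-equivariant'' because it is ``uniquely determined''; but $\psi_{\alpha\beta}$ is extra data in the definition of a semi-perfect obstruction theory, not determined by the $\phi_\alpha$. Equivariance of $\psi_{\alpha\beta}$ should be taken as part of the hypothesis of a $\CC^*$-equivariant semi-perfect obstruction theory (the definition quoted from \cite{Kie16locsemi} is arguably incomplete on this point, but the intent is clear). With these two fixes, your argument for (c) is sound: the key identification $Ob(\phi^F_\alpha,f,T',T) = Ob(\phi_\alpha,\iota\circ f,T',T)$ inside the fixed part follows from functoriality of obstruction classes and the fact that the ideal $I$ carries the trivial $\CC^*$-action.
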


Now we assume that the collection $\{(E_{\alpha}|_{X_{\alpha}^F}^{M})\dual\}_{\alpha\in\cI}$ glues to a two term complex $N^{vir} = [N^0 \to N^1]$. 
Then we can recover the following result using semi-virtual pull-back.

\begin{theo}\cite[Theorem 4.5]{Kie16locsemi}
\[
[X,\phi]\virt = \iota_* \frac{e(N^1)[X^F,\phi^F]\virt}{e(N^0)} \in A_*^{\CC^*}(X)\otimes_{\QQ[t]}\QQ[t,t^{-1}]
\]
where $\iota : X^F \hra X$ is an inclusion.
\end{theo}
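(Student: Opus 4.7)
The plan is to build a relative semi-perfect obstruction theory on $\iota : X^F \hra X$, then combine Theorem \ref{deformation invariance} with an equivariant self-intersection computation. On each equivariant chart $X_\alpha$, the $\CC^*$-splitting $E_\alpha|_{X^F_\alpha} = E^F_\alpha \oplus E^M_\alpha$ yields a triangle $E^M_\alpha \to E_\alpha|_{X^F_\alpha} \to E^F_\alpha \to E^M_\alpha[1]$ that maps compatibly to $L_{X^F_\alpha/X_\alpha}[-1] \to \iota^*L_{X_\alpha} \to L_{X^F_\alpha} \to L_{X^F_\alpha/X_\alpha}$; the moving piece $E^M_\alpha \to L_{X^F_\alpha/X_\alpha}$ is then a relative perfect obstruction theory. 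Its obstruction sheaf is $\coker((N^1)\dual \to (N^0)\dual)$, i.e.\ the moving part $\cE^M$ of $\cE|_{X^F}$. Since $\psi_{\alpha\beta}$ is $\CC^*$-equivariant, its moving part $\psi^M_{\alpha\beta}$ supplies gluing data for a semi-perfect obstruction theory $\phi^{\mathrm{rel}}$ on $X^F/X$ forming a compatible triple with $(\phi,\phi^F)$ in the sense preceding Proposition \ref{functoriality}.

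With compatibility in hand, Theorem \ref{deformation invariance} yields
\[
[X^F,\phi^F]\virt \;=\; \iota^!_{\cE^M}\,[X,\phi]\virt
\]
in $A^{\CC^*}_*(X^F)_{\QQ}$. After tensoring with $\QQ[t,t^{-1}]$, the equivariant localization theorem of Atiyah--Bott and Edidin--Graham makes $\iota_* : A^{\CC^*}_*(X^F)\otimes_{\QQ[t]}\QQ[t,t^{-1}] \to A^{\CC^*}_*(X)\otimes_{\QQ[t]}\QQ[t,t^{-1}]$ surjective, so we may write $[X,\phi]\virt = \iota_*\beta$ for a unique class $\beta$.

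The crux is to show that $\iota^!_{\cE^M}\circ \iota_*$ equals multiplication by $e(N^0)/e(N^1)$ on $A^{\CC^*}_*(X^F)\otimes_{\QQ[t]}\QQ[t,t^{-1}]$. For an integral substack $Z\subset X^F$ viewed as a substack of $X$ via $\iota$, the fiber product $Z\times_X X^F = Z$, so the cone $C_{Z/\iota_*Z}$ is trivial and $(\eta_{\phi^{\mathrm{rel}}})_*$ sends it to the zero-section class of $\cE^M|_Z$. Using the global two-term resolution $[(N^1)\dual \to (N^0)\dual]\surra\cE^M$, one builds a modified proper representative (Definition \ref{properrep}, Remark \ref{modifiedproperexist}) that reduces $\Gysin{\cE^M|_Z}$ of the zero section to an ordinary Gysin pullback on the vector bundle $(N^0)\dual|_Z$ along its subbundle $(N^1)\dual|_Z$. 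Since all $\CC^*$-weights of $(N^0)\dual|_Z$ and $(N^1)\dual|_Z$ are nonzero on $X^F$, this yields $(e(N^0)/e(N^1))\cdot[Z]$. Combining with the previous step gives $[X^F,\phi^F]\virt = (e(N^0)/e(N^1))\beta$, so $\beta = e(N^1)[X^F,\phi^F]\virt/e(N^0)$, and applying $\iota_*$ produces the formula.

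The principal obstacle will be the self-intersection computation: the classical Graber--Pandharipande argument must be upgraded to the semi-perfect setting. \'Etale-locally on $X^F_\alpha$ the Euler-class calculation is routine from the two-term resolution of $E^M_\alpha$, but one must verify that the local contributions patch together correctly through the $\nu$-equivalences $\psi^M_{\alpha\beta}$, in the spirit of Definition-Proposition \ref{mGysin} and Proposition \ref{rationaldescent}. The hypothesis that the local resolutions assemble to a global $N^{\mathrm{vir}}=[N^0\to N^1]$ is essential here; together with the compatibility of the Gysin map with proper pushforward and flat pullback (Proposition \ref{Gysinpushforwardpullback}), it reduces the argument to a single equivariant Euler class computation on $X^F$.
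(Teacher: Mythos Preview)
There is a genuine gap in the construction of the relative obstruction theory. In the compatible-triple framework preceding Proposition~\ref{functoriality}, the relative complex on $X^F/X$ must be the cone of $\iota^*E_\alpha \to E^{X^F}_\alpha$, and with your choice $E^{X^F}_\alpha = E^F_\alpha$ this cone is $E^M_\alpha[1]$, not $E^M_\alpha$. Since $E^M_\alpha$ sits in degrees $[-1,0]$, the shift $E^M_\alpha[1]$ lives in degrees $[-2,-1]$ and is therefore \emph{not} a perfect obstruction theory in the required amplitude; Theorem~\ref{deformation invariance} does not apply. (Your triangle already shows this: the map you obtain is $E^M_\alpha \to L_{X^F_\alpha/X_\alpha}[-1]$, equivalently $E^M_\alpha[1] \to L_{X^F_\alpha/X_\alpha}$.) Consequently the identity $[X^F,\phi^F]^{\mathrm{vir}} = \iota^!_{\cE^M}[X,\phi]^{\mathrm{vir}}$ is not available, and the self-intersection step never gets off the ground.

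The paper circumvents exactly this amplitude problem by \emph{modifying} the fixed-part theory: replace $E^F_\alpha$ by $\wtil{E}^F_\alpha := E^F_\alpha \oplus N_{-1}[1]$, with $N_{-1}=(N^1)^\vee$ mapping to $L_{X^F_\alpha}$ by zero. Then $\mathrm{cone}\bigl(\iota^*E_\alpha \to \wtil{E}^F_\alpha\bigr) = N_0[1]$ with $N_0=(N^0)^\vee$, a single bundle in degree $-1$, which \emph{is} a perfect obstruction theory with obstruction sheaf $N^0$. Functoriality now gives $\iota^!_{N^0}[X,\phi]^{\mathrm{vir}} = [X^F,\wtil{\phi}^F]^{\mathrm{vir}}$, and both remaining computations become classical: $[X^F,\wtil{\phi}^F]^{\mathrm{vir}} = e(N^1)\cap[X^F,\phi^F]^{\mathrm{vir}}$ because one has added a trivial obstruction bundle $N^1$, and $\iota^!_{N^0}\iota_*\xi = e(N^0)\cap\xi$ is the ordinary self-intersection formula for a vector bundle. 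The trick is precisely to shove the degree-$-2$ piece $N_{-1}[1]$ into the absolute theory on $X^F$, so that the relative theory has the correct amplitude and the two Euler classes $e(N^0)$, $e(N^1)$ appear separately rather than as the Euler class of a two-term complex. Your ``principal obstacle'' paragraph is thereby dissolved rather than confronted.
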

\begin{proof}
We know that the natural map $\iota_* : A_*(X^F)\otimes_{\QQ[t]}\QQ[t,t^{-1}] \to A_*^{\CC}(X)\otimes_{\QQ[t]}\QQ[t,t^{-1}]$ is an isomorphism \cite{EG98loc}.
Therefore we can write $[X,\phi]\virt = \iota_*\xi$ for some $\xi \in A_*(X^F)\otimes_{\QQ[t]}\QQ[t,t^{-1}]$.
Let $[N^0 \to N^1]\dual =: [N_{-1} \to N_0]$.

Next we modify the semi-perfect obstruction theory $\phi^F$ by changing $\phi_{\alpha}^F : E_{\alpha}^F \to L_{X^F_{\alpha}}$ by $ \phi^F_{\alpha}\oplus 0 : E_{\alpha}^F\oplus [N_{-1} \to 0] \to L_{X^F_{\alpha}}$. Then it is clear that this is also a semi-perfect obstruction theory. We call this modified semi-perfect obstruction theory by $\wtil{\phi}^F$.

Since the mapping cone of the natural morphism $E_{\alpha}|_{X^F_{\alpha}} \to E_{\alpha}^F \oplus [N_{-1}][1]$ is $N_0[1]$, we can observe that there is an induced perfect obstruction theory $\phi_{\alpha}' : N_0[1] \to L_{X^F_{\alpha}/X_{\alpha}}$. In this case, obstruction bundles are just $N^0|_{X_{\alpha}}$, we can check that $(\phi^F)' = (\{X^F_{\alpha}\}_{\alpha\in \cI}, \phi_{\alpha}', N^0)$ is a relative semi-perfect obstruction theory over $X^F/X$.

Then we can observe that $\phi, \wtil{\phi}^F, (\phi^F)'$ are compatible semi-perfect obstruction theories. 
For the projection $p : X^F \to Spec(k)$, we have $[X^F,\wtil{\phi}^F]\virt = p^!_{Ob_{\wtil{\phi}^F}}[pt]$. Therefore, by functoriality of virtual pull-backs, we have $p^!_{Ob_{\wtil{\phi}^F}}[pt] = \iota_{N^0}^! \circ q_{Ob_{\phi}}^![pt]$ where $q : X \to pt$ is the projection. Thus we have
\[
[X^F,\phi^F]\virt \cap e(N^1) = [X^F,\wtil{\phi}^F]\virt = \iota_{N^0}^! \circ [X,\phi]\virt.
\]


Hence we have $[X^F,\phi^F]\virt \cap e(N^1) = \iota^!_{N^0}[X,\phi]\virt = \iota^!_{N^0}\iota_*\xi = e(N^0)\cap \xi.$ So that we have $\xi = \frac{e(N^1)[X^F,\phi^F]\virt}{e(N^0)}$.
\end{proof}

\bibliographystyle{plain}
\bibliography{library}
\end{document}